\DeclareMathOperator{\Map}{Map}
\newlength{\labwidth}
\newcommand{\labarrow}[1]{
\settowidth{\labwidth}{$\scriptstyle \;\; #1 \;\;$}
\stackrel{#1}{\smash{\hbox to \labwidth{\rightarrowfill}}
\vphantom{\longrightarrow}}
}
\newcommand{\bfn}{{\mathbf n}}
\newcommand{\bu}{\bullet}
\newcommand{\bQ}{{\mathbf Q}}
\newcommand{\bS}{{\mathbf S}}
\newcommand{\bM}{{\mathbf M}}
\newcommand{\st}{{\mathrm{st}}}
\newcommand{\lk}{{\mathrm{lk}}}
\newcommand{\bX}{{\mathbf X}}
\newcommand{\bY}{{\mathbf Y}}
\renewcommand{\S}{{\mathcal S}}
\renewcommand{\H}{{\mathcal H}}
\newcommand{\A}{{\mathcal A}}
\newcommand{\M}{{\mathcal M}}
\newcommand{\Cell}{{\mathcal C}ell}
\newcommand{\cl}{\mathrm{cl}}
\newcommand{\id}{{\mathrm{id}}}
\newcommand{\D}{{\mathcal D}}
\newcommand{\Set}{{\mathrm{Set}}}
\newcommand{\Ab}{{\mathrm{Ab}}}
\newcommand{\ad}{{\mathrm{ad}}}
\newcommand{\pre}{{\mathrm{pre}}}
\newcommand{\inj}{{\mathrm{inj}}}
\newcommand{\Top}{{\mathrm{Top}}}
\newcommand{\Sig}{{\mathrm{Sig}}}
\newcommand{\STop}{{\mathrm{STop}}}
\newcommand{\STopFun}{{\mathrm{STopFun}}}
\DeclareMathOperator*{\colim}{colim}
\newcommand {\R}{{\mathcal R}}
\newcommand {\T}{{\mathcal T}}
\newcommand{\Msym} {{\bM}_{\mathrm{sym}}}
\newcommand{\Mquad} {{\bM}_{\mathrm{quad}}}
\newcommand{\Mgeom} {{\bM}_{\mathrm{geom}}}
\newcommand{\N} {{\mathcal N}}
\newcommand {\Z}{{\mathbb Z}}
\newcommand {\C}{{\cat C}}
\newcommand {\B}{{\cat B}}
\newcommand {\fA}{{\mathfrak A}}
\newcommand{\lra}{\longrightarrow}              
\newcommand{\holim}{\mbox{\rm ho}\! \lim }
\newcommand{\cat}{\mathcal}    
\newfont{\german}       {eufm10 at 12pt}
\DeclareMathOperator{\Hom}{Hom} 
\DeclareMathOperator{\Nat}{Nat} 
\numberwithin{equation}{section}
\newtheorem{thm}{Theorem}[section]
\newcounter{numerierer}
\newcounter{leer}
\newtheorem{defn}[thm]{Definition}
\newtheorem{prop}[thm]{Proposition}
\newtheorem{cor}[thm]{Corollary}
\newtheorem{lemma}[thm]{Lemma}
\theoremstyle{definition}  
\newenvironment{definition}{\begin{defn}\rm}{\end{defn}}
\newtheorem{example}[thm]{Example}
\newtheorem{set theory}[thm]{Set Theoretic Prelude}
\newtheorem{redefinition}[thm]{Redefinition}
\newtheorem{convention}[thm]{Convention}
\newtheorem{notation}[thm]{Notation}
\newtheorem{assumption}[thm]{Assumption}
\newtheorem{remark}[thm]{Remark}
\subjclass{}
\begin{document}

\title{Multiplicative properties of Quinn spectra}
\author{Gerd Laures and James E.\ McClure}

\address{ Fakult\"at f\"ur Mathematik,  Ruhr-Universit\"at Bochum, NA1/66,
  D-44780 Bochum, Germany\newline\indent
Department of Mathematics, Purdue University, 150 N.\ University
Street, West Lafayette, IN 47907-2067}

\thanks{The second author was partially supported by NSF grants. He
thanks the Lord for making his work possible.}

\subjclass[2000]{Primary 55P43; Secondary 57R67, 57P10}

\date{July 9, 2009}

\begin{abstract}
We give a simple sufficient condition for Quinn's ``bordism-type spectra'' to
be weakly equivalent to strictly associative ring spectra. We also show that
Poincar\'e bordism and symmetric L-theory are naturally weakly equivalent to
monoidal functors.  Part of the proof of these statements involves showing
that Quinn's functor from bordism-type theories to spectra lifts to the
category of symmetric spectra.  We also give a new account of the foundations.
\end{abstract}
\maketitle

\section{Introduction}

Our main goal in this paper and its sequel is to give a systematic account of
multiplicative properties of Quinn's ``bordism-type spectra.''   The present
paper deals with associativity and the sequel with commutativity.  

We also give a new account of the foundations, and we have made our paper
mostly self-contained in the hope that it can serve as an introduction to
\cite{MR1211640}, \cite{MR1026874}, \cite{WWIII} and other work in this area.

\subsection{Quinn's bordism-type spectra}

The Sullivan-Wall manifold structure sequence is one of the central results of
surgery theory.  In his thesis (\cite{Quinn}, also see \cite{MR0282375} and
\cite{MR0668807}) Frank Quinn showed how to interpret the Sullivan-Wall
sequence as part of the 
long exact homotopy sequence of a fiber sequence of spectra.   In particular,
for each group $G$ he constructed a spectrum ${\mathbf L}(G)$ (which is now 
called the quadratic L-spectrum of $G$) whose homotopy groups are Wall's 
groups $L_*(G)$.

The construction of ${\mathbf L}(G)$ is a special case of Quinn's general 
machine for constructing spectra from 
``bordism-type theories'' (see \cite{MR1388303}).  One can see the basic idea
of this machine by considering the example of topological bordism.%
\footnote{The smooth case is technically 
more difficult because it requires careful attention to manifolds with 
corners; the second author plans to pursue this in a future paper.} 
Let $T(\Top_k)$ denote the 
Thom space of the universal
${\mathbb R}^k$-bundle with structure group $\Top_k$.%
The usual simplicial model for this space, denoted
$S_\bu T(\Top_k)$, has as $n$-simplices the continuous maps 
\[
f:\Delta^n \to T(\Top_k).
\]
Let us consider the subobject $S^{\mathord{\pitchfork}}_\bu T(\Top_k)$ 
consisting of maps 
whose restrictions to each face of $\Delta^n$ are transverse to the zero
section $B(\Top_k)\subset T(\Top_k)$.  Note that $S^{\mathord{\pitchfork}}_\bu
T(\Top_k)$ is closed under face maps but not under degeneracy maps; that is, 
it is a {\it semisimplicial set}.%
\footnote{In the literature these are often called 
$\Delta$-sets, but that terminology seems infelicitous since the category 
that governs simplicial sets is called $\Delta$. Our terminology follows
\cite[Definition 8.1.9]{WEIB}.}  There is a concept of
homotopy in the category of semisimplicial sets (\cite[Section 6]{MR0300281}),
and a transversality argument using \cite[Section 9.6]{MR1201584}
shows that
$S^{\mathord{\pitchfork}}_\bu T(\Top_k)$ is a deformation retract of $S_\bu 
T(\Top_k)$. 

Next observe that for each simplex $f:\Delta^n\to T(\Top_k)$ in
$S^{\mathord{\pitchfork}}_\bu T(\Top_k)$, the intersections of 
$f^{-1}(B(\Top_k))$ 
with the faces of $\Delta^n$ form a manifold $\Delta^n$-ad;%
\footnote{In the literature these are often called $(n+2)$-ads.}
that is, a collection of
topological manifolds $X_\sigma$, indexed by the faces of $\Delta^n$, with 
monomorphisms $X_\tau\hookrightarrow \partial X_\sigma$ for $\tau\subsetneq
\sigma$ such that
\[
\partial X_\sigma=\colim_{\tau\subsetneq \sigma} X_\tau,
\]
where the colimit is taken in the category of
topological spaces (the simplest example of a manifold $\Delta^n$-ad is the 
collection of faces of $\Delta^n$ itself).  The $\Delta^n$-ads obtained in this way 
are of degree $k$ (that is, $\dim X_\sigma=\dim\sigma-k$).

Quinn observed that something interesting happens if one considers the
semisimplicial set of {\it all} manifold $\Delta^n$-ads of degree $k$; we 
denote this semisimplicial set by $P_k$ and its realization by $Q_k$.  It 
turns out that each $P_k$ is a Kan complex whose homotopy groups are the 
topological bordism groups (shifted in dimension by $k$) and that there are 
suspension maps $\Sigma Q_k\to Q_{k+1}$ which make the sequence $\bQ=\{Q_k\}$ 
an $\Omega$ spectrum (we give proofs of these statements in Section 
\ref{spectrum}).  In Appendix \ref{a1} we show that $\bQ$ is weakly 
equivalent to $M\Top$.

An important advantage of the construction just given is that it depends only 
on the {\it category} of topological manifolds, not on the bundle theory.  
Quinn gave an axiomatization of the structures to which one can apply this
construction, which he called bordism-type theories 
\cite[Section 3.2]{MR1388303}.  One example of a bordism-type theory arises 
from Poincar\'e $\Delta^n$-ads; in this situation transversality does not hold but 
one obtains a bordism spectrum from Quinn's construction (cf.\ Section 
\ref{geom} below).  Other important examples are Ranicki's quadratic and 
symmetric algebraic Poincar\'e $\Delta^n$-ads, which lead to a purely algebraic 
description of quadratic and symmetric L-spectra (\cite{MR1211640}; also 
see Sections \ref{sym} and \ref{quad} below).

\subsection{Previous work on multiplicative structures}

In \cite{MR560997} and \cite{MR566491}, Ranicki used product structures
on the L-groups to give product formulas for the surgery obstruction and 
the symmetric signature.  In \cite[Appendix B]{MR1211640} he observed that
these products come from pairings (in the sense of \cite{MR0137117}) at the 
spectrum level, and he used one of these pairings to give a new construction 
of the assembly map in quadratic L-theory.  He also suggested that the pairings
could be obtained from a bisemisimplicial construction.  This idea, which was
developed further in \cite{MR1694381}, is a key ingredient in our work.

\subsection{Smash products in the category of spectra}
Given spectra $E$, $F$ and $G$, a pairing in the sense of \cite{MR0137117} is a 
family of maps 
\[
E_i\wedge F_j \to G_{i+j}
\]
satisfying certain conditions.  That is, a pairing relates the spaces of the
spectra rather than the spectra themselves.  Starting in the early 1960's 
topologists realized that the kind of information given by pairings of 
spectra could be captured more effectively by using smash products of 
spectra.  The earliest constructions were in the stable
category (that is, the homotopy category of spectra). A smash product that was
defined at the spectrum level and not just up to homotopy was given in 
\cite{MR0866482}; however, this satisfied associativity and commutativity 
only up to higher homotopies, which was a source of considerable 
inconvenience.   In the early 1990's there were two independent constructions 
of categories of spectra in which the smash product was associative and 
commutative up to coherent natural isomorphism.  These were the categories 
of symmetric spectra (eventually published as \cite{MR1695653}) and the 
category of $S$-modules \cite{MR1417719}.  In these categories it is possible
to speak of strictly associative and commutative ring spectra (these are
equivalent to the $A_\infty$ and $E_\infty$ ring spectra of \cite{MR0494077}). 

A later paper \cite{MR1806878} gave a version of the category of symmetric 
spectra which was based on topological spaces rather than simplicial sets, and
this is the version that we will use. (Our reason for using symmetric spectra
rather than $S$-modules is that the former have a combinatorial flavor that 
makes them well-suited to constructions using $\Delta^n$-ads.)

\subsection{Our work}
Our goal is to relate Quinn's theory of bordism-type spectra to the theory of
symmetric spectra. As far as we can tell, Quinn's original axioms are not 
strong enough to do this.  We give a stronger set of axioms for a structure 
that we call an {\it ad theory} and we show that our axioms are satisfied by 
all of the standard examples.

Next we show that there is a functor from ad theories to symmetric spectra 
which is weakly equivalent to Quinn's spectrum construction.  We also give a 
sufficient condition (analogous to the existence of Cartesian products in the 
category of topological manifolds) for the symmetric spectrum arising from an 
ad theory to be a strictly associative ring spectrum.  Finally, we show that 
Poincar\'e bordism is naturally weakly equivalent to a monoidal (that is, 
coherently multiplicative) functor from a category $\T$ (Definition 
\ref{Nov7.2}) to symmetric spectra and that symmetric L-theory is naturally 
weakly equivalent to a monoidal functor from the category of rings with 
involution to symmetric spectra.

In the sequel we will give a sufficient condition for the symmetric spectrum
arising from an ad theory to be a strictly commutative ring spectrum.  We
will also show that Poincar\'e bordism and symmetric L-theory are naturally 
weakly equivalent to symmetric monoidal functors.  Finally, we will show that
the symmetric signature from Poincar\'e bordism to symmetric L-theory can be 
realized as a monoidal natural transformation.   This will show in particular 
that the Sullivan-Ranicki orientation $M\STop\to {\mathbb L}^\bu({\mathbb Z})$ 
is an $E_\infty$ ring map.

\subsection{Outline of the paper}

A $\Delta^n$-ad is indexed by the faces of $\Delta^n$.  We will also make use
of $K$-ads, indexed by the cells of a ball complex $K$ (i.e., a regular CW 
complex with a compatible PL structure).  In Section \ref{b} we collect some 
terminology about ball complexes from \cite[pages 4--5]{MR0413113}.

In Section \ref{ax} we give the axioms for an ad theory, together with a simple
example (the cellular cocyles on a ball complex). 

In Section \ref{groups} we define the bordism sets of an ad theory and show
that they are abelian groups.

In Sections \ref{balanced}--\ref{glue} we consider the standard examples of
bordism-type theories and show that they are ad theories; this does not follow
from the existing literature because our axioms for an ad theory (especially
the gluing axiom) are much
stronger than Quinn's axioms for a bordism-type theory.  We give careful
discussions of set-theoretic issues, which does not seem to have been done
previously.
Section
\ref{balanced} gives some preliminary terminology.  Oriented topological
bordism is treated in Section \ref{classical}, geometric Poincar\'e bordism 
in Sections \ref{geom} and \ref{moregeom}, symmetric and quadratic 
Poincar\'e bordism (using ideas from \cite{MR1026874})  in Sections \ref{sym} 
and \ref{quad}.  In Section \ref{ss} we construct the symmetric
signature as a morphism of ad theories from geometric Poincar\'e bordism to 
symmetic Poincar\'e bordism.  Section \ref{glue} gives a gluing result which 
is needed for Sections \ref{geom}--\ref{quad} and may be of independent 
interest.

It is our hope that new families of ad theories will be discovered (your ad
here).

In Section \ref{fun} we use an idea of Blumberg and Mandell to show that the 
various kinds of Poincar\'e bordism are functorial---this question seems not to
have been considered in the literature.  

In Sections \ref{cohomology section}--\ref{f11} we consider the cohomology
theory associated to an ad theory; this is needed in later sections and is
important in its own right.  There is a functor (which we denote by $T^*$)
that takes a ball complex $K$ to the graded abelian group of $K$-ads modulo 
a certain natural bordism relation.  Ranicki \cite[Proposition 
13.7]{MR1211640} stated that (for symmetric and quadratic Poincar\'e bordism, 
and assuming that $K$ is a simplicial complex) $T^*$ is the cohomology theory 
represented by the Quinn spectrum $\bQ$ (Quinn stated a similar result
\cite[Section 4.7]{MR1388303} but seems to have had a different equivalence 
relation in mind). The proof of this fact in \cite{MR1211640} is not correct
(see Remark \ref{n4} below).  We give a different proof (for general ad
theories, and general $K$).  First, in Section \ref{cohomology section} we use
ideas from \cite{MR0413113} to show that $T^*$ is a cohomology theory.
In Section \ref{spectrum} we review the construction of the Quinn spectrum
$\bQ$.  Then in Section \ref{f11} we show that $T^*$ is naturally isomorphic to
the cohomology theory represented by $\bQ$ by giving a morphism of cohomology
theories which is an isomorphism on coefficients.

In Section \ref{f12} we review the definition of symmetric spectrum and show 
that the functor $\bQ$ from ad theories to spectra lifts (up to weak 
equivalence) to a functor $\bM$ from ad theories to symmetric
spectra.  In Section \ref{mult} we consider multiplicative ad theories and show
that for such a theory the symmetric spectrum $\bM$ is a strictly associative
ring spectrum.  In Section \ref{Nov8} we show that the functors $\bM$
given by the geometric and symmetric Poincar\'e bordism ad theories are
monoidal functors.

In an appendix we review some simple facts from PL topology that are needed in
the body of the paper.

\subsection*{Acknowledgments} The authors benefited from a workshop on
forms of homotopy theory held at the Fields Institute. They would like to 
thank Matthias Kreck for suggesting the problem to the first author and also
Carl-Friedrich B\"odigheimer, 
Jim Davis,
Steve Ferry,
Mike Mandell,
Frank Quinn, 
Andrew Ranicki, 
John Rognes, 
Stefan Schwede, 
Michael Weiss 
and 
Bruce Williams
for useful hints and helpful discussions. The first 
author is  grateful to the Max Planck Institute in Bonn for its hospitality.

\section{Ball complexes}
\label{b}

\begin{definition} 
\label{ball}
(i) 
Let $K$ be a finite collection of PL balls in some ${\mathbb R}^n$, and write
$|K|$ for the union $\cup_{\sigma\in K}\, \sigma$.
We say that $K$ is a  
{\it ball complex} if the interiors of the balls of $K$ are disjoint and the
boundary of each ball of $K$ is a union of balls of $K$ (thus the interiors of
the balls of $K$ give $|K|$ the structure of a regular CW complex). The balls
of $K$ will also be called {\it closed cells} of $K$.

(ii) An {\it isomorphism} from a ball complex $K$ to a ball complex $L$ is a PL
homeomorphism $|K|\to |L|$ which takes closed cells of $K$ to closed cells of 
$L$.  

(iii)
A {\it subcomplex} of a ball complex $K$ is a subset of $K$ which is a ball
complex.

(iv)
A {\it morphism} of ball complexes is the composite of an
isomorphism with an inclusion of a subcomplex.
\end{definition}

\begin{definition}
A {\it subdivision} of a ball complex $K$ is a ball complex $K'$ 
with two properties:

(a) $|K'|=|K|$, and 

(b) each closed cell of $K'$ is contained in
a closed cell of $K$. 

A subcomplex of $K$ which is also a subcomplex of $K'$ is called {\it residual}.
\end{definition}

\begin{notation}
Let $I$ denote the unit interval with its standard structure as a ball complex 
(two 0 cells and one 1 cell).
\end{notation}

\section{Axioms}
\label{ax}

\begin{definition}
A category with involution is a category together with an
endofunctor $i$ which satisfies $i^2 =1$. 
\end{definition}

\begin{example}
The set of integers $\Z$ is a poset and therefore a category.  We give it the
trivial involution.
\end{example}

\begin{definition}
\label{j1}
A {\em $\Z$-graded category} is a small
category $\A$ with involution together with involution-preserving functors 
$ d: \A \lra \Z$ (called the {\it dimension function}) and $\emptyset : \Z 
\lra \A$
such that

a) $d \, \emptyset$ is equal to the identity functor, and

b) if $f:a\to b$ is a non-identity morphism in $\A$ then $d(a)<d(b)$.

A {\it $k$-morphism} between $\Z$-graded 
categories is a functor which decreases the dimensions of objects by $k$ and 
strictly commutes with $\emptyset$ and $i$. 
\end{definition}

We will write $\emptyset_n$ for $\emptyset(n)$.

\begin{example}\label{example 1}
Given a chain complex $C$, let
$\A_C$ be the $\Z$-graded category whose objects in dimension $n$ are the
elements of $C_n$.  There is a unique morphism $a\to b$ whenever $\dim a< \dim
b$; these are the only non-identity morphisms.
$i$ is multiplication by $-1$ and the object $\emptyset_n$ is
the 0 element in $C_n$. 
\end{example}

\begin{example}
\label{STop}
Let $\A_{\mathrm{STop}}$ be the category defined as follows.  The objects of 
dimension $n$ are the $n$-dimensional oriented compact topological manifolds 
with boundary (with an empty
manifold of dimension $n$ for each $n$); in order to ensure that $\A_\STop$ is
a small category we assume in addition that each object of $\A_\STop$ is a
subspace of some ${\mathbb R}^m$.
The non-identity morphisms are the continuous monomorphisms 
$\iota:M\rightarrow N$ with $\dim M<\dim N$ and $\iota(M)\subset \partial 
N$.  The involution $i$ reverses the orientation,  and 
$\emptyset_n$ is the empty manifold of dimension $n$. 
\end{example}

For examples related to geometric and algebraic Poincar\'e bordism see
Definitions \ref{Nov7}, \ref{quasi} and \ref{q} below.

\begin{example}
Let $K$ be a ball complex and $L$ a subcomplex.
Define $\Cell(K,L)$ to be the $\Z$-graded category whose 
objects in dimension $n$ are the oriented closed $n$-cells $(\sigma,o)$ which
are not in $L$, 
together with an object $\emptyset_n$ (the empty cell of dimension $n$).
There is a unique morphism $(\sigma,o)\to (\sigma',o')$ whenever 
$\sigma\subsetneq\sigma'$ (with no requirements on the orientations)
and a unique morphism $\emptyset_n\to(\sigma,o)$ whenever $n< \dim \sigma$;
these are the only non-identity morphisms.
The involution $i$ reverses the orientation.
\end{example}

We will write $\Cell(K)$ instead of $\Cell(K,\emptyset)$.

It will be important for us to consider abstract $k$-morphisms between
categories of the form $\Cell(K_1,L_1)$, $\Cell(K_2,L_2)$ (which will not be
induced by maps of pairs in general).
The motivation for the first part of the following definition is the fact 
that, if $f$ is a chain map which lowers degrees by $k$, then $f\circ 
\partial=(-1)^k\partial \circ f$.

\begin{definition}
\label{n25}
Let $\theta:\Cell(K_1,L_1)\to\Cell(K_2,L_2)$ be a $k$-morphism.

(i)
$\theta$ is {\it incidence-compatible} if it takes incidence numbers in 
$\Cell(K_1,L_1)$ (see \cite[page 82]{MR516508}) to $(-1)^k$ times the 
corresponding incidence numbers in $\Cell(K_2,L_2)$.

(ii)
If $\A$ is a $\Z$-graded category and
$F:\Cell(K_2,L_2)\to \A$ is an $l$-morphism define an $(l+k)$-morphism
\[
\theta^*F:\Cell(K_1,L_1)\to \A
\]
to be the composite $i^{kl}\circ F\circ\theta$.
\end{definition}

Now we fix a $\Z$-graded category $\A$.

\begin{definition}
Let $K$ be a ball complex and $L$ a subcomplex.

(i) A {\it pre $K$-ad} of degree $k$ is a $k$-morphism $\Cell(K)\to \A$.

(ii) The {\it trivial pre $K$-ad} of degree $k$ is the composite
\[
\Cell(K) \xrightarrow{d}
\Z
\xrightarrow{-k}
\Z
\xrightarrow{\emptyset}
\A.
\]

(iii) A {\it pre $(K,L)$-ad} of degree $k$ is a pre $K$-ad of degree $k$ which 
restricts to the trivial pre $L$-ad of degree $k$.
\end{definition}


We write $\pre^k(K)$ for the set of pre $K$-ads of degree $k$ and 
$\pre^k(K,L)$
for the set of pre $(K,L)$-ads of degree $k$.  

There is a canonical bijection between $\pre^k(K,L)$ and the set 
of $k$-morphisms $\Cell(K,L)\to 
\A$: given a $k$-morphism $F$ the corresponding pre $(K,L)$-ad
is $\zeta^*F$, where 
$\zeta:\Cell(K)\to \Cell(K,L)$ is defined by 
\[
\zeta(\sigma,o)=
\begin{cases}
\emptyset & \text{if $\sigma$ is in $L$}, \\
(\sigma,o) & \text{otherwise}.
\end{cases}
\]
Using this bijection and Definition \ref{n25}(ii), we see that each
$k$-morphism 
\[
\theta:\Cell(K_1,L_1)\to\Cell(K_2,L_2)
\]
determines a map
\[
\theta^*:\pre^l(K_2,L_2)\to \pre^{l+k}(K_1,L_1)
\]
for every $l$.

\begin{remark}
We could have {\it defined} $\pre^k(K,L)$ to be the set of $k$-morphisms
$\Cell(K,L)\to \A$, but for our later work it's more convenient for 
$\pre^k(K,L)$ to be a subset of $\pre^k(K)$.
\end{remark}

\begin{definition}
\label{defad}
An {\it ad theory} consists of

\smallskip

(i) a $\Z$-graded category $\A$, and

(ii) for each $k$, and each ball complex pair $(K,L)$, a subset $\ad^k(K,L)$ 
of $\pre^k(K,L)$ (called the set of {\it $(K,L)$-ads of degree $k$})

\smallskip

\noindent
such that the following hold.

\smallskip

(a) $\ad^k$ is a subfunctor of $\pre^k$, and an element of $\pre^k(K,L)$ is in
$\ad^k(K,L)$ if and only if it is in $\ad^k(K)$.

(b) The trivial pre $K$-ad of degree $k$ is in $\ad^k(K)$.

(c) $i$ takes $K$-ads to $K$-ads.

(d) A pre $K$-ad is a $K$-ad if it restricts to a $\sigma$-ad for each
closed cell $\sigma$ of $K$.

(e) (Reindexing.) Suppose
\[
\theta: \Cell(K_1,L_1)\to \Cell(K_2,L_2)
\]
is an incidence-compatible $k$-isomorphism of $\Z$-graded categories.
Then the induced bijection
\[
\theta^*:\pre^l(K_2,L_2)\to
\pre^{l+k}(K_1,L_1)
\]
restricts to a bijection
\[
\theta^*:\ad^l(K_1,L_1)\to
\ad^{l+k}(K,L).
\]

(f) (Gluing.) For each subdivision $K'$ of $K$ and each $K'$-ad $F$ there is a
$K$-ad which agrees with $F$ on each residual subcomplex.

(g) (Cylinder.) There is a natural transformation 
$J:\ad^k(K)\to\ad^k(K\times I)$ (where $K\times I$ has its canonical ball
complex structure \cite[page 5]{MR0413113}) with the following properties.

\begin{itemize}

\item
$J$ takes trivial ads to trivial ads.

\item
The restriction of $J(F)$ to $K\times 0$ is the composite
\[
\Cell(K\times 0)\cong \Cell(K)\xrightarrow{F} \A.
\]

\item
The restriction of $J(F)$ to $K\times 1$ is the composite
\[
\Cell(K\times 1)\cong \Cell(K)\xrightarrow{F} \A.
\]
\end{itemize}

We call $\A$ the {\it target category} of the ad theory.
A {\it morphism} of ad theories is a functor of target categories which takes 
ads to ads.  
\end{definition}

\begin{remark}
This definition is based in part on \cite[Section 3.2]{MR1388303} and 
\cite[Theorem I.7.2]{MR0413113}.  
\end{remark}

\begin{example}\label{gamma}
Let $C$ be a chain complex and let $\A_C$ be the $\Z$-graded category of
Example \ref{example 1}.  We define an ad-theory (denoted by $\ad_C$) 
as follows.  Let 
$\cl(K)$ 
denote the cellular chain complex of $K$; specifically, 
$\cl_n(K)$ is generated by the symbols $\langle \sigma,o\rangle$ 
with $\sigma$ 
$n$-dimensional, subject to the relation 
$\langle \sigma,-o\rangle=-\langle \sigma,o\rangle$; the boundary map is given
in the usual way by incidence numbers. A 
pre $K$-ad $F$ gives a map of graded abelian groups from
$ \cl(K)$ to $C$, and $F$ is a $K$-ad if this is a chain map.
Gluing is addition and $J(F)$ is 0 on all the objects of $K\times I$ which 
are not contained in $K\times 0$ or $K\times 1$.
\end{example}

\section{The bordism groups of an ad theory}
\label{groups}

Fix an ad theory.  Let $*$ denote the one-point space.

\begin{definition}
Two elements of $\ad^k(*)$ are {\it bordant} if there is an $I$-ad which 
restricts to the given ads at the ends.
\end{definition}

This is an equivalence relation: reflexivity follows from part (g) of
Definition \ref{defad}, symmetry from part (e), and transitivity from part
(f).  

\begin{definition}
Let $\Omega_k$ be the set of bordism  classes in $\ad^{-k}(*)$.
\end{definition}

\begin{example}
\label{bordhom}
Let $C$ be a chain complex and let $\ad_C$ be the ad theory defined in Example
\ref{gamma}.  Then a $*$-ad is a cycle of $C$ and there is a bijection between
$\Omega_k$ and $H_kC$.  We will return to this example at the end of the 
section.
\end{example}

Our main goal in this section is to show that $\Omega_k$ has an abelian group 
structure (cf.\ \cite[Section 3.3]{MR1388303}).  For this we need some 
notation.

Let $M'$ be the pushout of ball complexes
\[
\xymatrix{
I
\ar[r]^-\alpha
\ar[d]_\beta
&
I\times I
\ar[d]^\gamma
\\
I\times I
\ar[r]^\delta
&
M'
}
\]
where $\alpha$ takes $t$ to $(1,t)$ and $\beta$ takes $t$ to $(0,t)$; see
Figure 1.
\begin{center}
\begin{figure}[ht]
\includegraphics[scale=0.5]{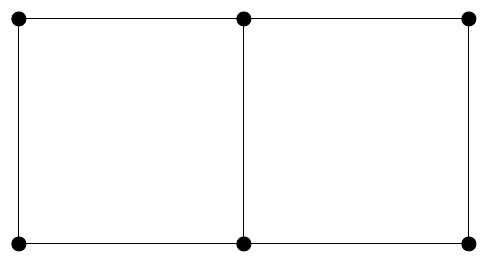}
\caption{}
\end{figure}
\end{center}
Let $M$ be the ball complex with
the same total space as $M'$ whose (closed) cells are: the union of the two 
2-cells of $M'$, the 1-cells $\gamma(I\times 0)$, 
$\delta(I\times 0)$, $\gamma(0\times I)$,
$\delta(1\times I)$ and $\gamma(I\times 1)\cup\delta(I\times 1)$, and the 
vertices of these 1-cells; see Figure 2.  
\begin{center}
\begin{figure}[ht]
\includegraphics[scale=0.5]{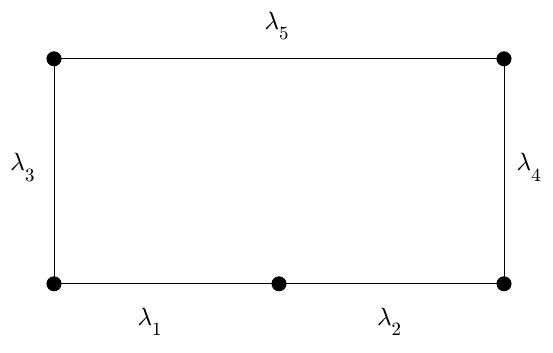}
\caption{}
\end{figure}
\end{center}
We will need explicit
parametrizations of the 1-cells of $M$: for $t\in I$ define 
\begin{align*}
\lambda_1(t) &= \gamma(t,0) \\
\lambda_2(t) &= \delta(t,0) \\
\lambda_3(t) &= \gamma(0,t) \\
\lambda_4(t) &= \delta(1,t) \\
\lambda_5(t) &= 
\begin{cases}
\gamma(2t,1) & \text{if $t\in[0,1/2]$}, \\
\delta(2t-1,1) & \text{if $t\in[1/2,1]$}
\end{cases}
\end{align*}

Let us write $\kappa$ for the isomorphism of categories
\[
\Cell(I,\{0,1\})
\to 
\Cell(*)
\]
which takes $I$ with its standard orientation to $*$ with its standard 
orientation.  The map 
\[
\kappa^*: \ad^k(*)\to\ad^{k+1}(I,\{0,1\})
\]
is a bijection by part (e) of Definition \ref{defad}.

\begin{lemma}
\label{add}
For $F,G\in \ad^k(*)$, there is an $H\in \ad^{k+1}(M)$ such that
$\lambda_1^*H=\kappa^*F$, 
$\lambda_2^*H=\kappa^*G$, and $\lambda_3^*H$ and $\lambda_4^*H$ are trivial.
\end{lemma}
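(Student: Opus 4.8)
The plan is to build $H$ by transporting the ads $F$ and $G$ onto the two $2$-cells of $M$ and then invoking the gluing axiom. First I would apply the cylinder operator of Definition \ref{defad}(g) to $F$ and to $G$, obtaining $J(F), J(G)\in\ad^{k}(I)$, and then apply $\kappa^*$-type reindexing together with the cylinder once more so as to produce elements of $\ad^{k+1}$ defined on $I\times I$. More precisely: the composite $\Cell(I,\{0,1\})\xrightarrow{\kappa}\Cell(*)\xrightarrow{F}\A$ is (corresponds to) $\kappa^*F\in\ad^{k+1}(I,\{0,1\})$; applying the cylinder functor $J$ to the underlying $*$-ad first and then reindexing, or directly applying $J$ to $\kappa^*F$ viewed as an $I$-ad, yields a $(k+1)$-ad on $I\times I$ whose restriction to one copy of $I$ is $\kappa^*F$ and which is trivial on the opposite copy (because $J$ takes trivial ads to trivial ads, and $\kappa^*F$ restricted to $\{0,1\}$ is trivial). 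Call these ads $\Phi,\Psi\in\ad^{k+1}(I\times I)$, arranged so that $\Phi$ restricts to $\kappa^*F$ on $I\times 0$ and is trivial on $I\times 1$, and similarly $\Psi$ restricts to $\kappa^*G$ on $I\times 0$ and is trivial on $I\times 1$.

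Next I would check that $\Phi$ and $\Psi$ are compatible along the edge where the two $2$-cells of $M$ are glued. By construction $M$ (or rather $M'$) is formed by identifying the edge $\alpha(I)=1\times I$ of one square with the edge $\beta(I)=0\times I$ of the other. After the reindexing above, $\Phi$ restricted to $1\times I$ and $\Psi$ restricted to $0\times I$ are both trivial $I$-ads of degree $k+1$ (they sit ``between'' a face where the ad is $\kappa^*F$ resp. $\kappa^*G$ and the trivial face, but the {\it side} edges $0\times I$ and $1\times I$ of a cylinder $J(-)$ over an $I$-ad that is trivial on $\{0,1\}$ are themselves trivial). Hence $\Phi$ and $\Psi$ agree on the glued edge, and by Definition \ref{defad}(d) they assemble to a single pre $M'$-ad $F'$ of degree $k+1$ which is in $\ad^{k+1}(M')$, since it restricts to an ad on each of the two closed $2$-cells and their faces.

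Now $M$ is a ball complex with the same total space as $M'$, and each closed cell of $M$ is a union of closed cells of $M'$; so $M'$ is {\it not} literally a subdivision of $M$, but rather $M$ is the coarser complex. The correct move is therefore to regard $M'$ as a subdivision of $M$ and apply the gluing axiom (f) in that direction: given the $M'$-ad $F'$ there is an $M$-ad $H$ agreeing with $F'$ on each residual subcomplex. The residual subcomplexes include the $1$-cells $\gamma(I\times 0)$ and $\delta(I\times 0)$ (i.e. the images of $\lambda_1,\lambda_2$) and $\gamma(0\times I)$, $\delta(1\times I)$ (the images of $\lambda_3,\lambda_4$), since each of these is a single cell of both $M$ and $M'$. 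Therefore $\lambda_1^*H=\lambda_1^*F'=\kappa^*F$, $\lambda_2^*H=\lambda_2^*F'=\kappa^*G$, and $\lambda_3^*H$, $\lambda_4^*H$ are trivial, as required. (One must double-check that the parametrizations $\lambda_i$ and the orientation conventions match those used in defining $\Phi,\Psi$ via $\kappa$ and $J$; this is where a sign, i.e. a possible application of the involution $i$ via Definition \ref{n25}(ii), might need to be inserted, but it is bookkeeping rather than substance.)

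The main obstacle is the middle step: producing $\Phi$ and $\Psi$ on $I\times I$ with precisely the prescribed boundary behavior on all four edges simultaneously, and verifying the edge-compatibility needed for (d). The cylinder axiom as stated only controls the restrictions to $K\times 0$ and $K\times 1$; it says nothing directly about $K\times 0\times I$-type side faces when $K=I$. So the real work is to show that $J$ applied to an $I$-ad that is trivial on $\{0,1\}$ produces a square-ad whose two ``vertical'' edges $0\times I$ and $1\times I$ are trivial — this follows because naturality of $J$ (Definition \ref{defad}(g), ``$J$ takes trivial ads to trivial ads'' together with the subfunctor property) forces the restriction of $J(F)$ over a face on which $F$ is trivial to be trivial. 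Once that is in hand, everything else is an application of (d), (e), and (f).
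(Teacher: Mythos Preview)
Your approach is the paper's: form the cylinders $J(\kappa^*F)$ and $J(\kappa^*G)$ on the two copies of $I\times I$, assemble them into an $M'$-ad using axiom (d), and then pass to an $M$-ad via the gluing axiom (f), noting that the $1$-cells carrying $\lambda_1,\ldots,\lambda_4$ are residual. The paper states exactly this in two sentences.

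There is, however, one misstatement you should fix. You write that $\Phi=J(\kappa^*F)$ ``restricts to $\kappa^*F$ on $I\times 0$ and is trivial on $I\times 1$''. That is false: axiom (g) says the restriction of $J(\kappa^*F)$ to \emph{both} $I\times 0$ and $I\times 1$ is $\kappa^*F$; a cylinder is not a nullbordism. What \emph{is} trivial---and what you correctly argue in your final paragraph---are the side faces $\{0\}\times I$ and $\{1\}\times I$, by naturality of $J$ applied to the inclusions $\{0\},\{1\}\hookrightarrow I$ together with the clause that $J$ takes trivial ads to trivial ads. It is this triviality on the side faces that makes the two cylinders agree along the glued edge $\gamma(1\times I)=\delta(0\times I)$ of $M'$, so (d) applies. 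The top edge $\lambda_5$ carries something nontrivial (in $M'$ it is $\kappa^*F$ on one half and $\kappa^*G$ on the other), but the lemma makes no claim about $\lambda_5$. Also, $M'$ \emph{is} a subdivision of $M$ in the paper's sense (same underlying space, each cell of $M'$ contained in a cell of $M$), so your hesitation there is unnecessary.
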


\begin{proof}
By part (d) of Definition \ref{defad}, there is an $M'$-ad which restricts to
the cylinder $J(\kappa^*F)$ on the image of $\gamma$ and to the cylinder 
$J(\kappa^*G)$ on the image of $\delta$.  The result now follows by
part (f) of Definition \ref{defad}.
\end{proof}

We will write $[F]$ for the bordism class of a $*$-ad $F$.  

\begin{definition}
\label{add2}
Given $F,G\in\ad^k(*)$, let $H$ be an $M$-ad as in Lemma \ref{add} and
define $[F]+[G]$ to be 
\[
[(\kappa^{-1})^*\lambda_5^*H].
\]
\end{definition}

We need to show that this is well-defined.  Let $F_1$ and $G_1$ be bordant to
$F$ and $G$, and let $H_1$ be an $M$-ad for which 
$\lambda_1^*H_1=\kappa^*F_1$,
$\lambda_2^*H_1=\kappa^*G_1$, and $\lambda_3^*H_1$ and $\lambda_4^*H_1$ are 
trivial.  Figure 3, together with part (e) of Definition 
\ref{defad}, gives a bordism from
$[(\kappa^{-1})^*\lambda_5^*H]$ to
$[(\kappa^{-1})^*\lambda_5^*H_1]$.

\begin{center}
\begin{figure}[ht]
\includegraphics[scale=0.5]{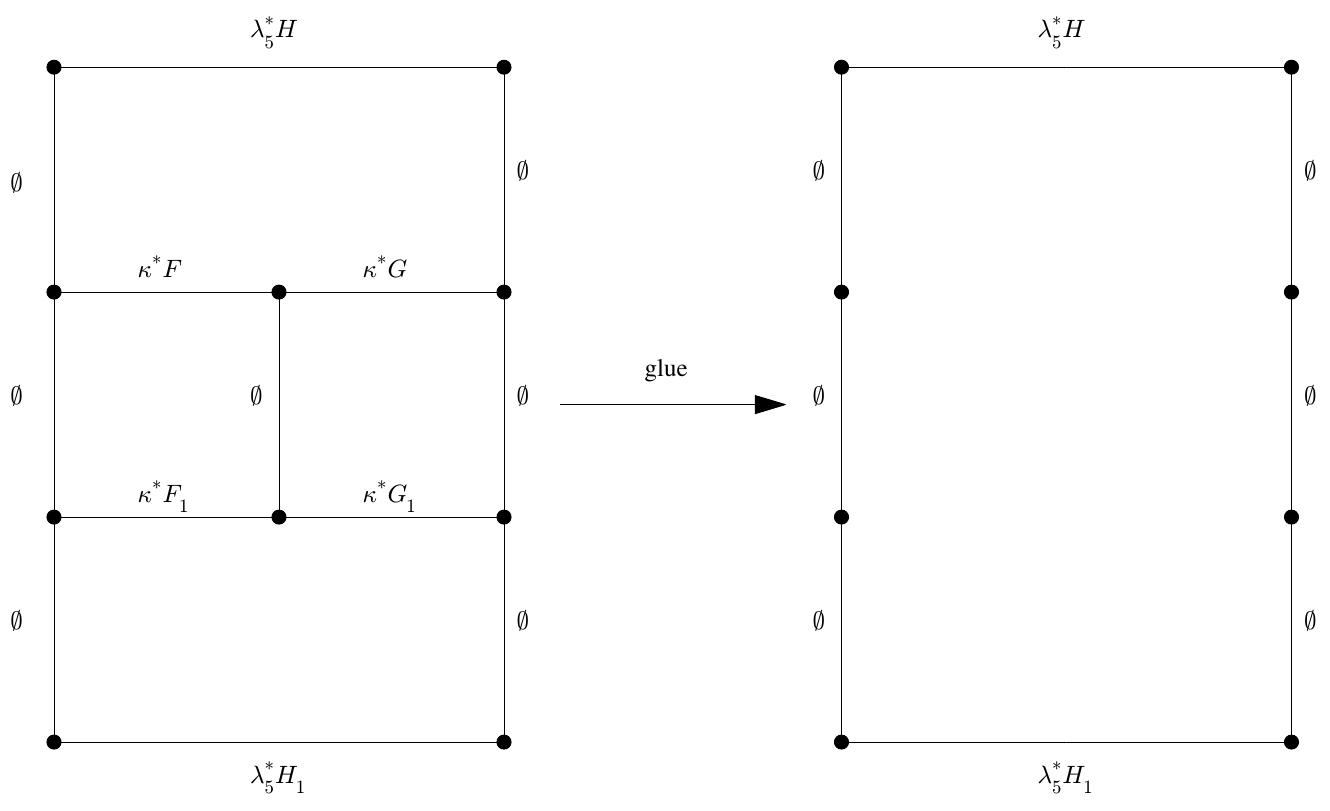}
\caption{}
\end{figure}
\end{center}

\begin{remark}
Our definition of addition agrees with that in \cite[Section 3.3]{MR1388303} 
because 
the $\Z$-graded category $\Cell(M,\lambda_3(I)\cup\lambda_4(I))$ is 
isomorphic to $\Cell(\Delta^2)$. 
\end{remark}

\begin{prop}
The operation $+$ makes $\Omega_k$ an abelian group.
\end{prop}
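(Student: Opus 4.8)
The plan is to verify each group axiom by constructing an appropriate $(K,L)$-ad on a suitable ball complex, pulling back along parametrized $1$-cells, and invoking the gluing, reindexing, and cylinder axioms of Definition \ref{defad} to produce the required bordisms. The guiding principle is the one already exploited in Lemma \ref{add} and Definition \ref{add2}: to prove a relation among bordism classes, build a $2$-dimensional ball complex $N$ with a distinguished collection of boundary $1$-cells, produce an $N$-ad whose restrictions to those $1$-cells are (up to $\kappa^*$) the $*$-ads in question, and whose restrictions to the remaining boundary $1$-cells are trivial; a planar picture of $N$ then exhibits a bordism realizing the relation. Throughout, well-definedness of $+$ has already been checked, and we may also use that $J$ gives $F$ bordant to itself and that reindexing along the orientation-reversing isomorphism $I\to I$ gives symmetry of the bordism relation.

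First I would identify the zero element: I claim $[\emptyset_{-k}]$ is a two-sided identity, where $\emptyset_{-k}$ is the trivial $*$-ad. Given $F\in\ad^{-k}(*)$, form the $M$-ad $H$ of Lemma \ref{add} with $\lambda_1^*H=\kappa^*F$ and $\lambda_2^*H=\kappa^*\emptyset_{-k}$; since $J$ takes trivial ads to trivial ads, $H$ may be taken to restrict to a trivial ad on the image of $\delta$, and then $\lambda_5^*H$ is, up to reparametrization of the half-intervals and the cylinder axiom, bordant to $\kappa^*F$. A planar picture of $M$ (shrinking the $\delta$-square to a collar) makes this explicit, giving $[F]+[\emptyset_{-k}]=[F]$; commutativity will handle the other side. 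For commutativity, I would use the reindexing axiom (e): there is an incidence-compatible $0$-isomorphism of $\Cell(M)$ that swaps the roles of $\gamma(I\times I)$ and $\delta(I\times I)$ (the reflection of Figure 1 across its diagonal), and feeding it an $M$-ad $H$ for $F,G$ produces an $M$-ad for $G,F$ whose $\lambda_5$-restriction is literally the $\lambda_5$-restriction of $H$ with the two halves interchanged; the cylinder axiom then supplies a bordism between $(\kappa^{-1})^*\lambda_5^*H$ and its flip, whence $[F]+[G]=[G]+[F]$.

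For inverses, given $F\in\ad^{-k}(*)$ I would take $G=i\circ F\circ(\text{orientation reversal})$, i.e. the reindexing of $F$ along $\kappa$ composed with the orientation-reversing automorphism of $\Cell(I,\{0,1\})$; this is the ``$F$ read backwards.'' Gluing the cylinder $J(\kappa^*F)$ to its orientation reverse along one end produces an $M$-ad (or an ad on a closely related complex, e.g.\ a ``folded strip'') whose $\lambda_5$-restriction is trivial, exhibiting $[F]+[G]=[\emptyset_{-k}]$ — this is the standard ``a path composed with its reverse is null'' argument, transcribed into the language of ads via (f). Associativity is the main obstacle: I would build a larger ball complex $P$ out of three copies of $I\times I$ glued in a row (the $2$-dimensional analogue of a $3$-simplex's relevant faces), with five distinguished $1$-cells — three ``inputs'' parametrized to receive $\kappa^*F$, $\kappa^*G$, $\kappa^*H$, and two ``outputs'' that will realize $([F]+[G])+[H]$ and $[F]+([G]+[H])$ respectively — together with auxiliary trivial $1$-cells. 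By (d) one assembles $P$-ads from cylinders on each square; by (f) one replaces a subdivision of $P$ (refined so that both bracketings appear as residual subcomplexes carrying honest $M$-ads) by a single $P$-ad; the two output $1$-cells are then two faces of the boundary of a single $2$-cell region, hence their pullbacks are bordant. The delicate points are choosing the parametrizations so that each associated-product subcomplex is genuinely isomorphic (as a $\Z$-graded category, via an incidence-compatible isomorphism) to $\Cell(M)$ so that (e) applies, and checking that the trivial $1$-cells really do restrict to trivial ads — both are bookkeeping, but the bookkeeping is where all the content lies, exactly as with the three figures accompanying Definition \ref{add2}.
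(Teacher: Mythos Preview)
Your overall strategy---assemble $2$-dimensional ball complexes from squares and invoke axioms (d), (e), (f), (g) to produce the requisite bordisms---is the same as the paper's, and your associativity argument (three squares in a row, the paper's Figure~4) is essentially identical.  Your inverse, once unwound, is also the paper's: the $*$-ad $iF$.

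The commutativity argument, however, has a genuine gap.  The only nontrivial ball-complex automorphism of $M$ swapping the two squares is the left--right reflection of the $2\times 1$ rectangle, and any incidence-compatible $0$-isomorphism $\theta$ of $\Cell(M)$ realizing it must reverse the orientation of the $1$-cells it moves: one checks that $\theta(\lambda_1,o_1)=(\lambda_2,-o_2)$ and $\theta(\lambda_5,o_5)=(\lambda_5,-o_5)$ (with the standard orientations coming from the parametrizations $\lambda_i$).  Hence
\[
\lambda_1^*(\theta^*H)=\kappa^*(iG),\qquad \lambda_2^*(\theta^*H)=\kappa^*(iF),\qquad \lambda_5^*(\theta^*H)=i\circ\lambda_5^*H,
\]
so $\theta^*H$ computes $[iG]+[iF]$, not $[G]+[F]$, and the equation you extract is $(-[G])+(-[F])=-([F]+[G])$, which holds in every group and says nothing about commutativity.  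Your appeal to the cylinder axiom to pass from $(\kappa^{-1})^*\lambda_5^*H$ to ``its flip'' does not repair this: a $*$-ad has no parametrization to flip, and flipping the $(I,\{0,1\})$-ad $\lambda_5^*H$ before applying $(\kappa^{-1})^*$ again introduces an $i$.

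The paper's commutativity argument is both simpler and correct, and---crucially---is run \emph{after} associativity, so that one already has a group.  One applies the involution $i$ directly to the $M$-ad $H$: then $iH$ is again an $M$-ad (axiom (c)) with $\lambda_1^*(iH)=\kappa^*(iF)$, $\lambda_2^*(iH)=\kappa^*(iG)$, and $(\kappa^{-1})^*\lambda_5^*(iH)=i\bigl((\kappa^{-1})^*\lambda_5^*H\bigr)$, whence Definition~\ref{add2} gives $[iF]+[iG]=-([F]+[G])$.  Since $[iF]=-[F]$ has already been shown, this reads $(-[F])+(-[G])=-([F]+[G])$; in any associative group $-(x+y)=(-y)+(-x)$, so $(-[F])+(-[G])=(-[G])+(-[F])$ and commutativity follows.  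For the identity and inverse the paper is also a bit more direct than your sketch: rather than passing through the gluing construction of Lemma~\ref{add}, it reindexes the cylinder $J(F)$ via a suitable incidence-compatible isomorphism (axiom~(e)) to obtain the needed $M$-ad in one step.
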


\begin{proof}
Let $0$ denote the bordism class of the trivial $*$-ad.
The cylinder $J(F)$, together with part (e) of Definition \ref{defad}, shows
both that $0$ is an identity element and that $[iF]$ is the inverse of $[F]$.
Figure 4, together with part (e) of Definition \ref{defad}, gives the proof of
associativity. 

\begin{center}
\begin{figure}[ht]
\includegraphics[scale=0.5]{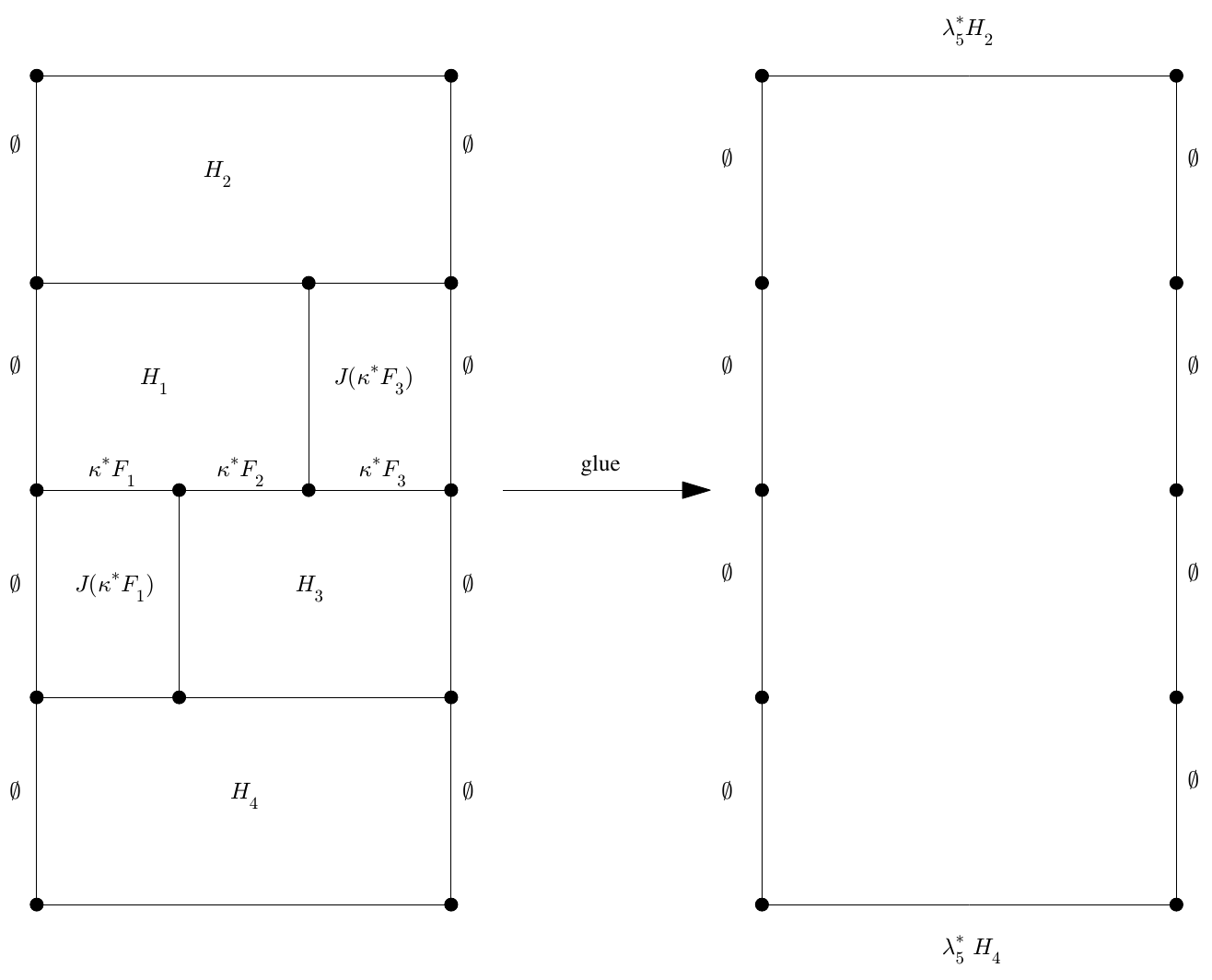}
\caption{}
\end{figure}
\end{center}

To see commutativity, let $F$, $G$ and $H$ be as in
Lemma \ref{add}.  Then $iH$ is an $M$-ad and Definition \ref{add2} gives
\[
[iF]+[iG]=[(\kappa^{-1})^*\lambda_5^* (iH)].
\]
The left-hand side of this equation is equal to $-[F]+(-[G])$, 
and the right-hand side is $-([F]+[G])$; this implies that $+$ is commutative.
\end{proof}

\begin{remark}
In Example \ref{bordhom}, the addition in $\Omega_k$ is induced by addition in
$C$, as one can see from the proof of Lemma \ref{add} and the fact that gluing
in $\ad_C$ is given by addition.
Thus $\Omega_k$ is isomorphic to $H_k C$ as an abelian group.
\end{remark}

\section{Balanced categories and functors}
\label{balanced}

For the examples in Sections \ref{classical}--\ref{quad}, it will be convenient
to have some additional terminology.

Let $\A(A,B)$ denote the set of morphisms in $\A$ from $A$ to $B$.

\begin{definition}
A {\it balanced category} is a $\Z$-graded category $\A$ together with a 
natural bijection 
\[
\eta : \A (A,B) \to \A(A,i(B))
\]
for objects $A,B$ with $\dim A< \dim B$, such that 

(a)
$\eta \circ i=i\circ\eta:\A (A,B) \to \A(i(A),B)$, and

(b)
$\eta\circ\eta$ is the identity.

If $\A$ and $\A'$ are balanced categories then
a {\it balanced functor} 
$F:\A\to \A'$ is a morphism of $\Z$-graded categories for which
\[
F\circ \eta=\eta\circ F:\A(A,B)\to \A'(F(A),i(F(B))).
\]
\end{definition}  

All of the $\Z$-graded categories in the previous section are balanced.  In
particular $\Cell(K,L)$ is balanced.

\begin{definition}
Let $\A$ be a balanced category.  A {\it balanced pre $(K,L)$-ad} with values
in $\A$ is a pre $(K,L)$-ad $F$ which is a balanced functor.
\end{definition}

\section{Example: Oriented Topological Bordism}
\label{classical}

In this section we construct an ad theory with values in the category
$\A_{\mathrm{STop}}$ of Example \ref{STop}.

Define a category $\B$ as follows:  the objects of 
$\B$ are compact orientable topological manifolds with boundary and the
non-identity morphisms are the continuous monomorphisms $\iota:M\rightarrow 
N$ with $\dim M< \dim N$ and $\iota(M)\subset \partial N$.

\begin{definition}
\label{circ}
For a ball complex $K$, let $\Cell^\flat(K)$ denote the category whose 
objects are the cells of $K$ (including an empty cell in each dimension) and 
whose morphisms are the inclusions of cells.   
\end{definition}

A balanced pre $K$-ad $F$ with values in $\A_{\mathrm{STop}}$ 
induces a functor
\[
F^\flat:\Cell^\flat(K)\to \B.
\]

Given cells $\sigma'\subsetneq\sigma$ of $K$,
let $i_{(\sigma',o'),(\sigma,o)}$ denote the map in $\Cell(K)$ from 
$(\sigma',o')$ to $(\sigma,o)$ and let $j_{\sigma',\sigma}$ denote the map in
$\Cell^\flat(K)$ from $\sigma'$ to $\sigma$.

\begin{definition}
Let $K$ be a ball complex.  A $K$-ad with values in $\A_\STop$ is a 
balanced pre $K$-ad $F$ with the following properties.

(a)
If $(\sigma',o')$ and $(\sigma,o)$ are oriented cells with
$\dim\sigma'=\dim\sigma-1$ and if
the incidence number $[o,o']$ is equal
to $(-1)^k$ (where $k$ is the degree of $F$) 
then the map
\[
F(i_{(\sigma',o'),(\sigma,o)}):F(\sigma',o')\to\partial F(\sigma,o)
\]
is orientation preserving.

(b)
For each $\sigma$, $\partial F^\flat(\sigma)$ is the colimit in Top of
$ F^\flat|_{\Cell^\flat(\partial \sigma)}.$ 
\end{definition}

\begin{remark}
\label{n28}
The sign in part (a) of this definition is needed in order for part (e) of 
Definition \ref{defad} to hold.
\end{remark}

\begin{example}
The functor $\Cell(\Delta^n)\to\A_\STop$ which takes each oriented simplex of
$\Delta^n$ to itself (considered as an oriented topological manifold) is a
$\Delta^n$-ad of degree 0.
\end{example}

We write $\ad_{\mathrm{STop}}(K)$ for the set of $K$-ads with values in 
$\A_\STop$.  

\begin{thm}
\label{Sep30}
$\ad_{\mathrm{STop}}$ is an ad theory.
\end{thm}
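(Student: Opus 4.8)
The plan is to verify the seven axioms (a)--(g) of Definition~\ref{defad} one at a time for $\ad_{\mathrm{STop}}$, leaning on the fact that the underlying target category $\A_\STop$ is balanced and that a $K$-ad has been defined purely in terms of face restrictions together with orientation and colimit conditions. First I would check the easy bookkeeping axioms. For (a), I would observe that both conditions (a) and (b) in the definition of a $K$-ad with values in $\A_\STop$ are phrased cell-by-cell, so being a $K$-ad is manifestly closed under the restriction maps $\theta^*$ coming from morphisms of ball complexes, and the condition ``$F\in\ad^k(K,L)$ iff $F\in\ad^k(K)$'' is immediate since the conditions ignore which cells go to $\emptyset$. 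Axiom (b) holds because the trivial pre $K$-ad sends every cell to an empty manifold, and both the orientation condition and the colimit condition are vacuous there (the colimit of a diagram of empty manifolds is empty). Axiom (c) holds because reversing all orientations preserves condition (b) and turns condition (a) into itself: if $[o,o']=(-1)^k$ then $[-o,-o']=(-1)^k$ as well, and orientation-preserving maps stay orientation-preserving after reversing orientations on both source and target. Axiom (d) is essentially a tautology given the definition, since being a $\sigma$-ad for every closed cell $\sigma$ is exactly conditions (a) and (b) restricted to the cells of $\sigma$ and their faces, and every pair of incident cells of $K$ lies in a common closed cell.

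Next I would handle axiom (e), reindexing. The point of the sign $(-1)^k$ built into the definition (Remark~\ref{n28}) is precisely that an incidence-compatible $k$-isomorphism $\theta$ multiplies incidence numbers by $(-1)^k$, and the definition of $\theta^*$ inserts a factor of $i^{kl}$; I would check that these two sign conventions conspire so that condition (a) for $\theta^*F$ is equivalent to condition (a) for $F$, while condition (b) transports along the isomorphism of topological-space-valued diagrams that $\theta$ induces. For axiom (g), the cylinder, I would construct $J(F)$ on $K\times I$ by sending a cell $\sigma\times I$ to $F(\sigma)\times I$ (suitably oriented), a cell $\sigma\times\{0\}$ or $\sigma\times\{1\}$ to $F(\sigma)$, and checking the three bulleted properties; the orientation condition (a) for the product cells needs care with the standard product orientation convention, and I would verify that the colimit condition (b) holds because $\partial(F(\sigma)\times I) = (\partial F(\sigma)\times I)\cup (F(\sigma)\times\{0,1\})$ is exactly the colimit of the boundary diagram of $\sigma\times I$ under $F^\flat$.

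The main obstacle will be axiom (f), the gluing axiom: given a subdivision $K'$ of $K$ and a $K'$-ad $F$, produce a $K$-ad $G$ agreeing with $F$ on every residual subcomplex. Here one cannot argue cell-by-cell in an elementary way, because for a cell $\sigma$ of $K$ one must assemble the manifolds $F(\tau)$ for the cells $\tau$ of $K'$ subdividing $\sigma$ into a single topological manifold $G(\sigma)$ with the right boundary, and the orientations must be made coherent. This is exactly the kind of statement that Section~\ref{glue} of the paper is advertised to provide (``Section~\ref{glue} gives a gluing result which is needed for Sections~\ref{geom}--\ref{quad}''), so my plan is to reduce (f) to that gluing result: one takes the colimit in $\Top$ of the diagram $F^\flat$ restricted to the cells of $K'$ lying in $\sigma$, checks via the gluing lemma that this colimit is again a compact topological manifold with boundary, organizes the orientations (using that $F$ is incidence-compatible on $K'$ and that incidence numbers in $K$ are sums of incidence numbers in $K'$), and verifies condition (b) for $G$ by a cofinality argument identifying $\partial G(\sigma)$ with the colimit over the boundary of $\sigma$. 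I would also note that $G$ is genuinely a pre $K$-ad (a functor out of $\Cell(K)$), which requires the colimit construction to be functorial in $\sigma$ with respect to the incidence maps; this functoriality, together with smallness (keeping everything inside some ${\mathbb R}^m$, as arranged in Example~\ref{STop}), is where one must be slightly careful.

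\begin{remark}
One could also phrase the verification of (f) so as to reuse it verbatim in the Poincar\'e and algebraic cases of Sections~\ref{geom}--\ref{quad}, since in each of those the only substantive issue is again assembling a subdivided family of ads into one ad over the coarser complex; the present case is the prototype.
\end{remark}
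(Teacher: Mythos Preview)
Your handling of axioms (a)--(e) and (g) is fine and matches the paper's approach; the paper also declares these ``obvious'' (apart from (g), which it treats essentially as you do, by defining $J(F)(\sigma\times I,o\times o')=F(\sigma,o)\times (I,o')$ and specifying the morphisms on the five generating types).

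The genuine gap is in your plan for axiom (f). You propose to invoke the gluing result of Section~\ref{glue}, but that section is entirely algebraic: Proposition~\ref{superglue} is a statement about chain complexes over a ring with involution and Poincar\'e duality of the assembled complex. It does not, and cannot, tell you that a colimit of topological manifolds glued along a ball-complex pattern is again a topological manifold. Indeed the paper's own advertisement, which you quote, says the result of Section~\ref{glue} is needed for Sections~\ref{geom}--\ref{quad}; the $\STop$ case is Section~\ref{classical}, which precedes these and is handled by a separate, purely topological argument. So your remark that the $\STop$ case is the ``prototype'' to be reused later has the logic reversed: the $\STop$ gluing is \emph{not} a special case of the algebraic gluing.

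What the paper actually does for (f) is reduce by induction to the case where $|K|$ is a PL $n$-ball with a single top cell and $K'$ agrees with $K$ on the boundary, and then prove directly (Proposition~\ref{f2}) that $\colim_{\sigma\in K'}F^\flat(\sigma)$ is a topological manifold with boundary $\colim_{\sigma\in \partial K'}F^\flat(\sigma)$. The proof is local: for a point $x$ lying in the interior of $F^\flat(\sigma)$, one analyzes the dual cone $D^\circ(\sigma)$ in the ball complex (Appendix, Proposition~\ref{f5}), shows it is homeomorphic to ${\mathbb R}^{n-m}$ or a half-space, and then uses the collaring theorem for topological manifolds inductively to build an embedding $U\times D^\circ(\sigma)\hookrightarrow X$ giving a Euclidean (or half-space) neighborhood of $x$. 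This is the substantive missing ingredient in your proposal; nothing in Section~\ref{glue} substitutes for it.
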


The rest of this section is devoted to the proof of Theorem \ref{Sep30}.
The only parts of Definition \ref{defad} which are not obvious are (f) and (g).

For part (g),
let $F$ be a $K$-ad; we need to define $J(F): \Cell(K\times I)\to
\A_\STop$.  First note that the statement of part (g) specifies what $J(F)$ has to 
be on the subcategories $\Cell(K\times 0)$ and $\Cell(K\times 1)$.  The
remaining objects have the form 
$(\sigma \times I,o\times o')$ and we define $J(F)$ for such an object to be 
$F(\sigma,o)\times (I,o')$, where $(I,o')$ denotes the topological manifold 
$I$ with orientation $o'$.  Next observe that, since $J(F)$ is to be a balanced
functor, we need to define $J(F)^\flat$ on the morphisms of
$\Cell^\flat(K\times I)$.  These morphisms are generated by those of the
following five kinds:

i) $(\sigma\hookrightarrow\sigma')\times 0$,

ii) $(\sigma\hookrightarrow\sigma')\times 1$,

iii) $(\sigma\hookrightarrow\sigma')\times I$,

iv) $\sigma\times (0\hookrightarrow I)$,

v) $\sigma\times (1\hookrightarrow I)$.

For the first two kinds of morphisms the definition of $J(F)$ is prescribed by
the statement of part (g).  For the third kind we define 
\[
J(F)((\sigma\hookrightarrow\sigma')\times I)
=
F(\sigma\hookrightarrow\sigma')\times I.
\]
For the fourth kind we define
\[
J(F)(\sigma\times (0\hookrightarrow I))
=
F(\sigma)\times (0\hookrightarrow I),
\]
and similarly for the fifth kind.

For part (f), let $K$ be a ball complex and $K'$ a subdivision of $K$.  The 
proof is by induction on the lowest dimensional cell of $K$ which is not 
a cell of $K'$.  For the inductive step, we may assume that $|K|$ is a PL
$n$-ball, that $K$ has exactly one $n$ cell, and 
that $K'$ is a subdivision of $K$ which agrees with $K$ on the boundary of 
$|K|$.  Let $F$ be a $K'$-ad.  It suffices to show that the colimit of 
$F^\flat$ over the cells of $K'$ is a topological manifold with boundary
and that its boundary  is the colimit of $ F^\flat$ over the cells of the 
boundary of $|K|$.  

We will prove something more general:

\begin{prop}
\label{f2}
Let $(L,L_0)$ be a ball complex pair such that $|L|$ is a PL manifold with 
boundary $|L_0|$.  Let $F$ be an $L$-ad.  Then 
$\colim_{\sigma\in L} F^\flat(\sigma)$ is a topological manifold with 
boundary $\colim_{\sigma\in L_0} F^\flat(\sigma)$.
\end{prop}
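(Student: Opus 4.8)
The plan is to prove Proposition \ref{f2} by induction on the dimension $n$ of $|L|$, with a nested induction on the number of cells of $L$. The key local model is that for a single top-dimensional cell $\sigma$ of $L$, the $L$-ad $F$ restricts to a $\sigma$-ad, and by the axioms for $\ad_\STop$ together with condition (b) of the definition of a $K$-ad with values in $\A_\STop$, the space $F^\flat(\sigma)$ is a topological manifold with boundary $\partial F^\flat(\sigma) = \colim_{\tau\subsetneq\sigma} F^\flat(\tau)$. So the statement holds when $L$ has a single cell of top dimension and $L_0$ is its boundary. For the general case I would build up $|L|$ one top-dimensional cell at a time: if $L = L' \cup_{L''} \{\text{one new } n\text{-cell } \sigma\}$, where $L''$ is the part of $\partial\sigma$ glued to $L'$, then $\colim_{\sigma\in L} F^\flat$ is obtained from $\colim_{\sigma\in L'}F^\flat$ by gluing the manifold-with-boundary $F^\flat(\sigma)$ along a submanifold $\colim_{\sigma\in L''} F^\flat$ of both boundaries.

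The heart of the argument is therefore a gluing lemma: if $X$ and $Y$ are topological manifolds with boundary, and $Z$ is a codimension-zero submanifold-with-boundary of both $\partial X$ and $\partial Y$ (a "collared" piece of each boundary), then $X\cup_Z Y$ is again a topological manifold with boundary $\big(\partial X \setminus \operatorname{int} Z\big)\cup_{\partial Z}\big(\partial Y\setminus\operatorname{int}Z\big)$. This is where the lower-dimensional inductive hypothesis enters: to know that $Z = \colim_{\sigma\in L''}F^\flat(\sigma)$ is itself a manifold with boundary (so that "submanifold of the boundary" makes sense and the collar exists), one applies Proposition \ref{f2} in dimension $n-1$ to the pair $(L'', \partial L'')$, using that $|L''|$ is a PL $(n-1)$-manifold with boundary sitting inside $\partial\sigma$. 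One also needs that $Z$ sits as a collared submanifold of $\partial F^\flat(\sigma)$ and of $\partial(\colim_{\sigma\in L'}F^\flat)$; collaring for topological manifolds with boundary is the classical Brown–Connelly theorem, and the fact that the relevant inclusions are locally flat / collared should follow because the pieces arise from a ball complex structure, where the combinatorics of the subdivision guarantee the boundary incidences are "product-like." Some care is needed here and this is precisely the sort of PL-topology input the authors defer to their appendix; I would invoke it as a citable black box.

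I would organize the induction as follows. First I would dispose of the base case $n=0$, where everything is a finite set of points. Then, fixing $n$ and assuming the proposition in all dimensions $< n$, I would induct on the number $N$ of $n$-cells of $L$. For $N=0$, $|L|$ has dimension $<n$ and we are done by the dimension induction (or $|L|$ is empty). For the inductive step from $N-1$ to $N$: pick an $n$-cell $\sigma$ of $L$ that is "free" in the sense that $|L|\setminus\operatorname{int}\sigma$ is still a PL manifold with boundary (such a $\sigma$ exists — e.g. one can choose $\sigma$ so that $\sigma\cap \overline{|L|\setminus\sigma}$ is an $(n-1)$-ball or a manifold-with-boundary neighborhood, using regularity of the ball complex and, if necessary, restricting to the earlier reduction where $|K|$ is a single PL $n$-ball). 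Let $L'$ be the subcomplex on the remaining cells, apply the inductive hypothesis to $(L', L'_0)$ for the appropriate boundary subcomplex $L'_0$, apply the dimension-$(n-1)$ hypothesis to identify the overlap $L'' = \{\tau\in L : \tau\subseteq\sigma\cap|L'|\}$ as a manifold with boundary, and then invoke the gluing lemma to conclude that $\colim_{\sigma\in L}F^\flat = \big(\colim_{\sigma\in L'}F^\flat\big)\cup_{\colim_{\sigma\in L''}F^\flat} F^\flat(\sigma)$ is a manifold with the claimed boundary. Chasing through which cells land in $L_0$ versus the interior, using that colimits commute with the decomposition, finishes the identification of the boundary.

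The main obstacle I expect is not the combinatorial bookkeeping but verifying the hypotheses of the topological gluing lemma — specifically, establishing that the inclusion of the overlap piece into the boundaries of the two manifolds being glued is collared (locally flat), so that "gluing two manifolds-with-boundary along a common boundary piece yields a manifold-with-boundary" genuinely applies. For smooth or PL manifolds this is automatic, but $\A_\STop$ lives in the topological category, where collaring requires Brown's theorem and local flatness is a real condition; one must exploit the fact that the ad-structure and the ball-complex combinatorics force the incidences $F(\sigma',o')\hookrightarrow\partial F(\sigma,o)$ to be "face inclusions" with product neighborhoods. I would handle this by isolating a clean PL-topology statement (to be proved or cited in the appendix) to the effect that the colimit construction over a ball-complex manifold automatically produces locally flat boundary inclusions, and then the rest of the proof is a formal induction.
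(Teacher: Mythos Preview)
Your approach differs substantially from the paper's. You propose a global induction, peeling off one top-dimensional cell at a time and invoking a gluing lemma for topological manifolds. The paper instead gives a purely local argument: for a point $x\in X=\colim_\sigma F^\flat(\sigma)$ lying in the interior of $F^\flat(\sigma)$, it identifies the open dual cell $D^\circ(\sigma)=D(\sigma)\setminus\dot D(\sigma)$ with an explicit quotient $E(\sigma)$ of cubes $[0,1)^T$ indexed by chains $\sigma\subsetneq\sigma_1\subsetneq\cdots\subsetneq\sigma_l$, observes that $E(\sigma)$ is homeomorphic to $\mathbb{R}^{n-m}$ (or a half-space when $\sigma\in L_0$), and then uses iterated applications of the topological collaring theorem to build an embedding $U\times E(\sigma)\hookrightarrow X$ covering a neighborhood of $x$, where $U$ is a Euclidean chart around $x$ in $F^\flat(\sigma)$. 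No induction on cells is needed.

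Your inductive step has a genuine gap at the ``free cell'' assertion. Consider $|L|=[0,2]\times[0,1]$ with vertices $(0,0),(1,0),(2,0),(0,1),(2,1)$, the five boundary edges, two interior edges from $(1,0)$ to $(0,1)$ and to $(2,1)$, and three $2$-cells (left, middle, right triangles). For the middle triangle $\sigma$ on $\{(1,0),(0,1),(2,1)\}$, the set $\sigma\cap\overline{|L|\setminus\sigma}$ is exactly the arc $(0,1)\text{--}(1,0)\text{--}(2,1)$, a $1$-ball, yet $|L|\setminus\operatorname{int}\sigma$ is two triangles meeting only at $(1,0)$ and is not a manifold there; so your stated criterion does not imply freeness. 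Worse, with the naive choice $L'=L\setminus\{\sigma\}$ \emph{no} triangle here is free: removing an outer triangle leaves a complex that is $1$-dimensional along the two boundary edges meeting at the corresponding corner vertex. One can try to repair this by also deleting the ``free faces'' of $\sigma$, and in this particular example that works for the outer triangles, but you would then need to prove that such a cell always exists in an arbitrary ball-complex structure on a PL manifold with boundary. That is a nontrivial PL statement (related to but weaker than shellability, and there are non-shellable PL balls); you have not supplied it, and it is doing most of the work. The paper's local construction sidesteps this combinatorial difficulty entirely.
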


\begin{proof}
(The proof is essentially the same as the proof of Lemma II.1.2 in 
\cite{MR0413113}.) 

Using the notation of the Appendix,
let us write $D^\circ(\sigma)$
for $D(\sigma)-\dot{D}(\sigma)$.  
If $\sigma$ is not in $L_0$ then,  
by Proposition \ref{f5}(i),  
$D^\circ(\sigma)$ is topologically homeomorphic to ${\mathbb R}^{n-m}$.
If $\sigma$ is in $L_0$ then, 
by Proposition \ref{f5}(ii) and \cite[Theorem 3.34]{MR665919},
there is a 
homeomorphism from $D^\circ(\sigma)$ to the half space ${\mathbb R}_{\geq 
0}\times {\mathbb R}^{n-m-1}$ which takes $\hat{\sigma}$ to a point on the 
boundary.

There is another way to describe $D^\circ(\sigma)$.  Given a (possibly empty)
sequence $T=(\sigma_1,\ldots,\sigma_l)$ with 
$\sigma\subsetneq\sigma_1\subsetneq\cdots\subsetneq\sigma_l$, let us write 
$[0,1)^T$ for 
$[0,1)^l$ and $T[i]$ for the sequence obtained by deleting $\sigma_i$.  Given 
$u\in [0,1)^l$ let us write $u[i]$ for the element of $[0,1)^{l-1}$ obtained by 
deleting the $i$-th coordinate of $u$.  Let $E(\sigma)$ be the 
quotient of
\[
\coprod_T\, [0,1)^T
\]
in which a point $u$ in $[0,1)^T$ with $i$-th coordinate 0 is identified with 
the point $u[i]$ in $[0,1)^{T[i]}$.  Let $\mathbf 0$ denote the equivalence 
class of
$(0,\ldots,0)\in [0,1)^T$ (which is independent of $T$).  Then there is a
homeomorphism $D^\circ(\sigma)\to E(\sigma)$ which takes $\hat{\sigma}$ to 
$\mathbf 0$.

Now consider the space $X=\colim_{\sigma\in L} F^\flat(\sigma)$.  Let $x\in X$.
There is a unique $\sigma$ for which $x$ is in the interior of
$F^\flat(\sigma)$.  Let $m$ be the dimension of $\sigma$, and $k$ the degree of
$F$.  Let $U$ be an $(m-k)$-dimensional Euclidean neighborhood of $x$ in 
$F^\flat(\sigma)$.  An easy inductive argument, using the collaring theorem 
for topological manifolds, gives an imbedding 
\[
h:U\times E(\sigma)\to X
\]
such that $h(x,{\mathbf 0})=x$ and $h(U\times E(\sigma))$ contains a 
neighborhood of $x$ in $X$.  If $\sigma$ is not a cell of
$L_0$ this shows that $x$ has an $(n-k)$-dimensional Euclidean
neighborhood in $X$.  If $\sigma$ is a cell of $L_0$
we obtain a homeomorphism from a neighborhood of $x$ in $X$ to the 
half space of dimension $n-k$ which takes $x$ to a boundary point.
\end{proof}

\begin{remark}
The description of gluing in the proof of Theorem \ref{Sep30}, together with 
the proof of Lemma \ref{add}, shows that addition in the bordism groups of 
$\ad_{\STop}$ is induced by disjoint union.  Thus the bordism groups are the 
usual oriented topological bordism groups.
\end{remark}

\section{Example: Geometric Poincar\'e ad Theories}
\label{geom}

Fix a group $\pi$ and a properly discontinuous left action of $\pi$ on a simply
connected space $Z$; then $Z$ is a universal cover of $Z/\pi$. 

Fix a homomorphism $w:\pi\to\{\pm 1\}$. 

Ranicki \cite[page 243]{MR566491} defines the bordism groups 
$\Omega^P_*(Z/\pi,w)$ 
of geometric Poincar\'e complexes over $(Z/\pi,w)$; our goal in this section 
is to define an ad theory whose bordism groups are a slightly modified 
version of Ranicki's (see Section \ref{moregeom} for a precise comparison).

Let $\Z^w$ denote the right $\pi$ action on $\Z$ determined by $w$.    

\begin{definition}
\label{Nov7.1}
Given a map $f:X\to Z/\pi$, define $S_*(X,{\mathbb Z}^f)$ to be $\Z^w 
\otimes_{\Z[\pi]} S_*(\widetilde{X})$, where $\widetilde{X}$ is the pullback of 
$Z$ to $X$ and $S_*(\widetilde{X})$ denotes the singular chain complex of
$\widetilde{X}$.
\end{definition}

\begin{set theory}
\label{j3}
In order to ensure that the category $\A_{\pi,Z,w}$ that we are about to define
is small, as required by Definition \ref{j1}, we note that there is a set
$\mathfrak X$ with the following properties. 
\begin{itemize}
\item
The elements of $\mathfrak X$ are topological spaces.
\item
Every subspace of every ${\mathbb R}^n$ is in $\mathfrak X$.
\item
The Cartesian product of two spaces in $\mathfrak X$ is in $\mathfrak X$.
\item
$\mathfrak X$ is closed under pushouts. 
\end{itemize}
The verification that there is such an $\mathfrak X$ is left to the reader.
The reason for requiring these properties can be seen from the proofs of
Theorem \ref{Oct6} and Lemma \ref{j4}. 
\end{set theory}

\begin{definition} 
\label{Nov7}
We define a category 
$\A_{\pi,Z,w}$ as follows.  An object of $\A_{\pi,Z,w}$ is a triple
\[
(X,\,f:X\to Z/\pi,\,\xi\in S_*(X,{\mathbb Z}^f)),
\] 
where 
$X$ is a space in $\mathfrak X$ which has the homotopy type of a finite CW
complex.
Non-identity morphisms $(X,f,\xi)\to (X',f',\xi')$ exist only when 
$\dim \xi< \dim\xi'$, in which case the morphisms
are the maps $g:X\to X'$ such that 
$f'\circ g=f$.  
\end{definition}

\begin{remark}
\label{j2} 
(i) An important special case will be the category $\A_{e,*,1}$, where 
$e$ is the trivial group, $*$ is the one-point space and $1$ is the 
homomorphism from $e$ to $\{\pm 1\}$.

(ii) For technical reasons we will make a small change in the definition of
$\A_{\pi,Z,w}$ in Section \ref{ss}.
\end{remark}

$\A_{\pi,Z,w}$ is a balanced $\Z$-graded category, where the 
dimension of
$(X,f,\xi)$ is $\dim \xi$,
$i$ takes
$(X,f,\xi)$ to $(X,f,-\xi)$, and $\emptyset_n$ is the $n$-dimensional object 
with $X=\emptyset$.

Next we must say what the $K$-ads with values in 
$\A_{\pi,Z,w}$ are.  We will build this up gradually by considering 
several properties that a pre $K$-ad can have.

For a balanced pre $K$-ad $F$ we will use the notation
\[
F(\sigma,o)=(X_\sigma, f_\sigma,\xi_{\sigma,o});
\]
note that $X_\sigma$ and $f_\sigma$ do not depend on $o$. 

Recall Definition \ref{circ}.

\begin{definition} (cf.\ \cite[page 50]{MR1026874})
\label{well}
A functor $X$ from $\Cell^\flat(K)$ to topological spaces is
{\it well-behaved} if the following conditions hold:

(a)
For each inclusion $\tau\subset\sigma$, the map $X_\tau\to X_\sigma$ is a 
cofibration.

(b)
For each cell $\sigma$ of $K$, the map
\[
\colim_{\tau\subsetneq\sigma} \, X_\tau\to X_\sigma
\]
is a cofibration.
\end{definition}

If $F$ is a balanced pre $K$-ad for which $X$ is well-behaved,
let $X_{\partial \sigma}$ denote $\colim_{\tau\subsetneq\sigma} \, X_\tau$,
and let $\widetilde{X}_{\partial\sigma}$ be the pullback of $Z$ to 
$X_{\partial\sigma}$.

In order to describe
the Poincar\'e duality property that a $K$-ad should have,
we need some preliminary definitions.

We give the ring $\Z[\pi]$ the $w$-twisted involution (see 
\cite[page 196]{MR566491}).

\begin{definition}
Given a ring $R$ with involution and a left $R$-module $M$, define $M^t$ to be 
the right $R$-module obtained from the involution.
\end{definition}

\begin{definition}
Let $(\sigma,o)$ be an oriented cell of $K$.
Define $\zeta_{\sigma,o}$ be the image
of $\xi_{\sigma,o}$ under the map 
\begin{multline*}
\Z^w
\otimes_{\Z[\pi]}
S_*(\widetilde X_\sigma)
\stackrel{1\otimes AW}{\longrightarrow}
\Z^w
\otimes_{\Z[\pi]} 
(S_*(\widetilde X_\sigma)\otimes S_*(\widetilde X_\sigma))
\cong
S_*(\widetilde X_\sigma)^t \otimes_{\Z[\pi]} 
S_*(\widetilde X_\sigma)
\\
\to
S_*(\widetilde X_\sigma)/S_*(\widetilde{X}_{\partial\sigma})^t
\otimes_{\Z[\pi]}
S_*(\widetilde X_\sigma),
\end{multline*}
where $AW$ is the Alexander-Whitney map. 
\end{definition}

Our next definition gives a sufficient condition for $\zeta_{\sigma,o}$ to be a
cycle.

\begin{definition} 
\label{j5}
$F$ is {\it closed} if
for each $(\sigma,o)$ the chain $\partial \xi_{\sigma,o}$ 
is the sum of the images in $S_*(X_\sigma,\Z^{f_\sigma})$ of the chains
$\xi_{\sigma',o'}$,
where $(\sigma',o')$ runs through the oriented cells
for which the incidence number $[o,o']$ is $(-1)^{\deg F}$
(see Remark \ref{n28} for an explanation of the sign).
\end{definition}

\begin{remark}
An equivalent definition of closed uses the functor $\cl$ defined in
Example \ref{gamma}.  Given a cell $\sigma$ of $K$ there is a map of graded 
abelian groups
\[
\cl(\sigma)\to S_*(X_\sigma,\Z^{f_\sigma})
\]
which takes $\langle \tau,o\rangle$ to the image of $\xi_{\tau,o}$ in 
$S_*(X_\sigma,\Z^{f_\sigma})$.  $F$ is closed if this is a chain map for each
$\sigma$.
\end{remark}

\begin{convention}
From now on we will often use the convention that a cochain complex can be
thought of as a chain complex with the opposite grading.  For example, this is
needed in our next definition. 
\end{convention}

\begin{definition}
\label{slant}
Let $C$ and $D$ be chain complexes of left $R$-modules for some ring $R$ with
involution.
Define a chain map
\[
\Hom_R(D,R)\otimes (C^t\otimes_R D)\to C^t,
\]
called the {\it slant product},
by
\[
x\setminus (\alpha\otimes\beta) =(-1)^{|x||\alpha|} \alpha\cdot x(\beta).
\]
\end{definition}

Since $H_*(C^t)$ is the same graded abelian group as $H_*(C)$, 
the slant product induces a map
\[
H^i(\Hom_R(D,R))\otimes H_j(C^t\otimes_R D)\to H_{j-i}C
\]
for each $i,j$.

\begin{definition}
\label{geomad}
$F$ is a $K$-ad if 

(a)
it is balanced and closed and $X$ is well-behaved,  
and

(b) for each $(\sigma,o)$ the slant product with $\zeta_{\sigma,o}$ is an 
isomorphism
\[
H^*(\Hom_{\Z[\pi]} (S_*(\widetilde X_{\sigma}),{\Z[\pi]}))
\to
H_{\dim\sigma-\deg F-*}(\widetilde X_\sigma,\widetilde{X}_{\partial\sigma}).
\]
\end{definition}

We write $\ad_{\pi,Z,w}(K)$ for the set of $K$-ads with values in
$\A_{\pi,Z,w}$.

\begin{thm}
\label{Oct6}
$\ad_{\pi,Z,w}$ is an ad theory.
\end{thm}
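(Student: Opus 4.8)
The plan is to verify the seven axioms (a)--(g) of Definition \ref{defad} for $\ad_{\pi,Z,w}$, following the same strategy as the proof of Theorem \ref{Sep30}. Axioms (a), (b), (c) are essentially formal: (b) holds because the trivial pre $K$-ad has all $X_\sigma=\emptyset$, so the slant product condition is vacuous and closedness is immediate; (c) holds because replacing $\xi$ by $-\xi$ replaces $\zeta_{\sigma,o}$ by $-\zeta_{\sigma,o}$, which does not affect whether slant product is an isomorphism, and closedness is preserved since the incidence-number sign condition is symmetric in a way compatible with negation; (a) requires checking that the defining conditions (well-behavedness of $X$, closedness, and the duality isomorphism) are detected on $\ad^k(K)$, which follows because all the conditions are stated cell-by-cell. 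Axiom (d) is built into the definition: a pre $K$-ad is a $K$-ad precisely when it restricts to a $\sigma$-ad for each closed cell $\sigma$, since well-behavedness, closedness and the slant-product isomorphism are all conditions indexed by cells $\sigma$ of $K$. Axiom (e) (reindexing) is where the sign conventions earn their keep: an incidence-compatible $k$-isomorphism $\theta$ multiplies incidence numbers by $(-1)^k$, which is exactly what is needed so that the closedness condition (Definition \ref{j5}, with its $(-1)^{\deg F}$) is preserved under $\theta^*$; the duality isomorphism is preserved because $\theta$ is an isomorphism of categories inducing homeomorphisms on the relevant spaces, and the factor $i^{kl}$ in $\theta^*F$ just negates $\xi$ an even or odd number of times without disturbing the isomorphism condition. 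One must check the dimension bookkeeping in the target of the slant product shifts correctly by $k$, but this is routine.

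The substantive work is in axioms (f) (gluing) and (g) (cylinder), just as for $\ad_{\STop}$. For (g), given a $K$-ad $F$ I would define $J(F)$ on objects $(\sigma\times I, o\times o')$ to be the triple $(X_\sigma\times I, f_\sigma\circ\mathrm{pr}, \xi_{\sigma,o}\times [I,o'])$, where $\times [I,o']$ denotes the evident chain-level cross product with the fundamental chain of the interval; on the five types of generating morphisms of $\Cell^\flat(K\times I)$ one uses $F$ on the ``$\times 0$'' and ``$\times 1$'' and ``$\times I$'' families and the inclusions $X_\sigma\to X_\sigma\times I$ for the ``$\sigma\times(0\hookrightarrow I)$'' families. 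The three required properties (takes trivial ads to trivial ads; restricts correctly to $K\times 0$ and $K\times 1$) are immediate; the real content is verifying that $J(F)$ is again a $K$-ad, i.e., that $X\times I$ is well-behaved (cofibrations are preserved under $\times I$ and the relevant colimits), that $J(F)$ is closed (the boundary formula $\partial(\xi\times[I])=(\partial\xi)\times[I]\pm\xi\times\partial[I]$ matches the incidence-number structure of $K\times I$, which is the point of the sign in Remark \ref{n28}), and that the slant-product duality holds for the product --- this is Poincar\'e--Lefschetz duality for a product with an interval, where $\zeta_{\sigma\times I}$ is the cross product of $\zeta_{\sigma}$ with the fundamental class of $(I,\partial I)$, and one invokes the K\"unneth theorem together with the fact that slant product is multiplicative for cross products.

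For (f) (gluing), the reduction is identical to that in Section \ref{classical}: by induction on the lowest-dimensional cell of $K$ not in the subdivision $K'$, one reduces to the case where $|K|$ is a PL $n$-ball with a single top cell and $K'$ subdivides it rel boundary, given a $K'$-ad $F$. One must produce a single object $(X, f, \xi)$ of $\A_{\pi,Z,w}$ together with compatible maps from the $X_\sigma$, $\sigma\in K'$, realizing the required $K$-ad whose restriction to the boundary subcomplex agrees with $F$. I expect this to be the main obstacle, and I would handle it via the general gluing result promised in Section \ref{glue}: take $X=\colim_{\sigma\in K'}X_\sigma$ (well-defined as a finite-CW-homotopy-type space in $\mathfrak X$ by the set-theoretic assumptions of \ref{j3} and well-behavedness), let $f$ be the induced map to $Z/\pi$, and let $\xi$ be the image of a suitable sum of the $\xi_{\sigma,o}$ under the collapse/assembly map on twisted singular chains. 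Closedness of the glued ad follows from closedness of $F$ together with the fact that the interior incidence numbers of $K'$ cancel in pairs (the algebraic analogue of $\partial\circ\partial=0$), leaving only the boundary contributions --- this is exactly the kind of cancellation recorded in the remark following Example \ref{bordhom}. The duality isomorphism for the glued object is the crux: one must show the slant product with $\zeta_X$ is an isomorphism, and this I would deduce by a Mayer--Vietoris argument over the cells of $K'$, using that each $X_\sigma$ satisfies duality (as $F$ is a $K'$-ad) and that the subdivision does not change the underlying space $|K|$, so the glued chain $\zeta_X$ represents the same relative fundamental class as for the original single cell; the compatibility of slant products with the Mayer--Vietoris sequences and the five lemma then finish it. The technical heart --- making the colimits, the twisted chain complexes, and the cofibration conditions interact correctly --- is precisely what Section \ref{glue} is designed to supply, so I would cite that result once it is available rather than reprove it here.
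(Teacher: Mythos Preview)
Your proposal is correct and follows essentially the same route as the paper: the reduction of (f) to a single top cell, the construction $V_\tau=\colim_{\sigma\in K'}X_\sigma$ with $\xi$ the sum of the $\xi_{\sigma,o'}$, closedness via cancellation of interior incidence numbers (the paper cites Proposition~\ref{f7}(ii)), and duality via the gluing result of Section~\ref{glue} (Proposition~\ref{superglue}) all match the paper exactly.

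The one organizational difference worth noting is in axiom (g). The paper does not verify the cylinder conditions by hand; instead it introduces a general external product operation $\times$ on pre-ads (landing in $\A_{\pi_1\times\pi_2,Z_1\times Z_2,w_1\cdot w_2}$) and proves once and for all that products of ads are ads (Lemma~\ref{prod}). It then defines a specific $I$-ad $G$ with values in $\A_{e,*,1}$ by $G(\sigma,o)=(\sigma,\mathrm{pt},\pm\mathrm{id})$ and sets $J(F)(\sigma\times I,o\times o')=F(\sigma,o)\times G(I,o')$, so that $J(F)$ is an ad simply because it is isomorphic to $F\times G$. Your direct construction is the same object, but you are re-doing the K\"unneth/duality verification in the special case of a product with an interval. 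The paper's packaging is more economical because Lemma~\ref{prod} is needed again later for the multiplicative structure (Sections~\ref{mult} and~\ref{Nov8}); your approach works but duplicates effort. Also, a small caution: the actual proof of Proposition~\ref{superglue} in Section~\ref{glue} is not a Mayer--Vietoris/five-lemma argument but a spectral-sequence comparison over $\holim_{\sigma}$, so your parenthetical sketch of how that section works is not quite right --- though since you defer to it, this does not affect the correctness of your proof of Theorem~\ref{Oct6}.
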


For the proof we need a definition and a lemma.
For $i=1,2$, suppose given
a group $\pi_i$, a properly discontinuous left action of $\pi_i$ on a simply
connected space $Z_i$, and a homomorphism $w_i:\pi\to\{\pm 1\}$.

\begin{definition}
(i) For $i=1,2$, let $(X^i,f^i,\xi^i)$ be an object of 
$\A_{\pi_i,Z_i,w_i}$.  Define 
\[
(X^1,f^1,\xi^1)\times (X^2,f^2,\xi^2)
\]
to be the following object of $\A_{\pi_1\times \pi_2,Z_1\times Z_2,w_1 \cdot
w_2}$:
\[
(X^1\times X^2, f^1\times f^2, \xi^1\times \xi^2).
\]

(ii) For $i=1,2$, suppose given 
a ball complex $K_i$ and a
pre $K_i$-ad $F_i$ of degree $k_i$ with values in $\A_{\pi_i,Z_i,w_i}$. 
Define a pre $(K_1\times K_2)$-ad $F_1\times F_2$ with values in  
$\A_{\pi_1\times \pi_2,Z_1\times Z_2,w_1 \cdot w_2}$
by 
\[
(F_1\times F_2)(\sigma\times\tau,o_1\times o_2)
=
i^{k_2\dim\sigma}F_1(\sigma,o_1)\times F_2(\tau,o_2).
\]
\end{definition}

\begin{lemma}  
\label{prod}
For $i=1,2$, suppose given
a ball complex $K_i$ and a 
$K_i$-ad $F_i$ with values in $\A_{\pi_i,Z_i,w_i}$.
Then $F_1\times F_2$ is a $(K_1\times K_2)$-ad. \qed
\end{lemma}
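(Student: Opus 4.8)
The plan is to check directly that $F_1\times F_2$ satisfies conditions (a) and (b) of Definition \ref{geomad}. Recall that the cells of $K_1\times K_2$ are the products $\sigma\times\tau$ with $\sigma$ a cell of $K_1$ and $\tau$ a cell of $K_2$, that $\dim(\sigma\times\tau)=\dim\sigma+\dim\tau$, and that the codimension-one faces of $\sigma\times\tau$ are exactly the cells $\sigma'\times\tau$ with $\sigma'$ a facet of $\sigma$ and the cells $\sigma\times\tau'$ with $\tau'$ a facet of $\tau$. The underlying principle is that all of the structures entering Definitions \ref{well} and \ref{geomad} --- singular chains, the pullback of the universal cover, the group rings, incidence numbers, and the slant product --- are multiplicative, so each condition for $F_1\times F_2$ will reduce to the corresponding conditions for $F_1$ and $F_2$. (In particular $X^1\times X^2$ is again in $\mathfrak X$ and has the homotopy type of a finite CW complex, so $F_1\times F_2$ does take values in $\A_{\pi_1\times\pi_2,Z_1\times Z_2,w_1\cdot w_2}$.) The sign $i^{k_2\dim\sigma}$ in the definition of $F_1\times F_2$ is inserted precisely to make these reductions come out right; as in Remark \ref{n28}, the various sign conventions force this choice.

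The first three properties are routine. That $F_1\times F_2$ is balanced follows from the description of the balanced structure on $\A_{\pi,Z,w}$ together with the sign $i^{k_2\dim\sigma}$. That the underlying space functor $(X^1\times X^2)_{\sigma\times\tau}=X^1_\sigma\times X^2_\tau$ is well-behaved follows from a pushout-product argument: a product of two cofibrations is a cofibration, and the pushout-product of two cofibrations is a cofibration; since $\colim_{\rho\subsetneq\sigma\times\tau}(X^1\times X^2)_\rho$ is precisely the domain of the pushout-product of $X^1_{\partial\sigma}\to X^1_\sigma$ and $X^2_{\partial\tau}\to X^2_\tau$, conditions (a) and (b) of Definition \ref{well} for $X^1$ and $X^2$ give the same for $X^1\times X^2$. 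That $F_1\times F_2$ is closed follows from the Leibniz formula $\partial(\xi^1\times\xi^2)=\partial\xi^1\times\xi^2+(-1)^{\dim\xi^1}\xi^1\times\partial\xi^2$, the fact that the incidence numbers in $K_1\times K_2$ are the incidence numbers of $K_1$ on the faces $\sigma'\times\tau$ and $(-1)^{\dim\sigma}$ times those of $K_2$ on the faces $\sigma\times\tau'$, and the closedness of $F_1$ and $F_2$ --- here once more the sign $i^{k_2\dim\sigma}$ is exactly what is needed to match the two contributions to the required incidence numbers of $K_1\times K_2$.

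The substance is condition (b), the Poincar\'e duality isomorphism, for each cell $\sigma\times\tau$. Here $\widetilde{(X^1\times X^2)}_{\sigma\times\tau}=\widetilde X^1_\sigma\times\widetilde X^2_\tau$ as a space with $(\pi_1\times\pi_2)$-action, and $\Z[\pi_1\times\pi_2]\cong\Z[\pi_1]\otimes_\Z\Z[\pi_2]$ as rings with involution (the $(w_1\cdot w_2)$-twisted involution on the left corresponding to the tensor of the twisted involutions on the right). Since the singular chains of a cover are free over the group ring, the Eilenberg--Zilber maps give $(\pi_1\times\pi_2)$-equivariant chain homotopy equivalences
\[
S_*(\widetilde X^1_\sigma\times\widetilde X^2_\tau)\simeq S_*(\widetilde X^1_\sigma)\otimes_\Z S_*(\widetilde X^2_\tau),
\]
compatibly with the subobjects $\widetilde X^i_{\partial\sigma}$ (hence also for the relative chains) and with $\Hom$ over the group rings. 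The first step is to verify that under these identifications the class $\zeta_{\sigma\times\tau,\,o_1\times o_2}$ corresponds, up to a sign absorbed by $i^{k_2\dim\sigma}$, to the external product $\zeta^1_{\sigma,o_1}\otimes\zeta^2_{\tau,o_2}$; this requires chasing the Alexander--Whitney map and the involutions through the Eilenberg--Zilber equivalence. Granting this, the slant product with $\zeta_{\sigma\times\tau}$ is identified with the external tensor product of the slant products with $\zeta^1_{\sigma,o_1}$ and $\zeta^2_{\tau,o_2}$, and since the chain complexes involved are free (so there are no Tor terms), a K\"unneth argument shows that the tensor product of two homology isomorphisms is again an isomorphism. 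As hypothesis (b) for $F_1$ and $F_2$ says each factor is an isomorphism, so is the product; the degrees match because $\dim(\sigma\times\tau)-\deg(F_1\times F_2)=(\dim\sigma-k_1)+(\dim\tau-k_2)$.

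The main obstacle is this last compatibility: keeping track, simultaneously for the absolute and the relative chain complexes, of the Koszul signs produced by the Alexander--Whitney map, the $w_i$-twisted involutions on $\Z[\pi_i]$, and the Eilenberg--Zilber shuffles, so that the slant product with $\zeta_{\sigma\times\tau}$ genuinely factors as the external product of the two slant products. I expect everything to come out consistent --- this is exactly why the sign $i^{k_2\dim\sigma}$ appears in the definition of $F_1\times F_2$ --- but it is the only part of the argument that is not immediate.
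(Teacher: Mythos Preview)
The paper gives no proof of this lemma at all: the statement is marked with \qed\ and left to the reader. Your approach is the natural one and is correct in outline; the three preliminary checks (balanced, well-behaved via the pushout-product, closed via the Leibniz rule and the product formula for incidence numbers) are exactly right, and the reduction of the duality condition to a K\"unneth argument is the standard method.

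You have also correctly located the only genuine content: the compatibility of the Alexander--Whitney map with the external cross product, so that $\zeta_{\sigma\times\tau}$ really corresponds to $\zeta^1_\sigma\otimes\zeta^2_\tau$ under the Eilenberg--Zilber equivalence. This is not a gap in your argument so much as the one step you flag as needing care. It does work: the point is that the Alexander--Whitney diagonal is coassociative and compatible with the Eilenberg--Zilber shuffle map up to natural chain homotopy, and since both $\zeta^i$ are already homology classes (the slant product in Definition~\ref{geomad}(b) only depends on the homology class of $\bar\varphi_*(\mathbf i)$), the chain-level sign bookkeeping collapses on passing to homology. The sign $i^{k_2\dim\sigma}$ is there, as you say, to make the closed condition come out with the correct incidence numbers; it does not interfere with the duality statement, which is insensitive to a global sign on $\zeta$. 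So your proposal is a complete proof once that compatibility is invoked.
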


\begin{proof}[Proof of \ref{Oct6}.]

The only parts of Definition \ref{defad} which are not obvious are (f)
and (g).

Part (f).
Let $F$ be a $K'$-ad with 
\[
F(\sigma,o)=(X_\sigma,f_\sigma,\xi_{\sigma,o}).
\]
We need to define a $K$-ad $E$ which agrees with $F$ on each
residual subcomplex of $K$.  As in the proof of Theorem \ref{Sep30}, 
we may assume by induction that $K$ is a ball complex structure for the $n$
disk with one $n$ cell $\tau$, and
that $K'$ is a subdivision of $K$ which agrees with $K$ on the boundary. 
We only need to define $E$ on the top cell $\tau$ of $K$.  We define
$E(\tau,o)$ to be $(V_\tau, e_\tau, \theta_{\tau,o})$, where
\begin{itemize}
\item
$V_\tau=\colim_{\sigma\in K'} X_\sigma$,
\item
$e_\tau:V_\tau\to Z/\pi$ is the obvious map, and
\item
$\theta_{\tau,o}$ is
\[
\sum_{(\sigma,o')} \xi_{\sigma,o'},
\]
where $(\sigma,o')$ runs through the $n$-dimensional cells of $K'$
with orientation induced by $o$.  
\end{itemize}
We need to check that $V_\tau$ has the homotopy type of a finite CW complex.  
$V_\tau$ can be built up by iterated pushouts, and because $F$ is 
well-behaved each of these is homotopy equivalent to the corresponding 
homotopy pushout.  The result now follows from the fact that a homotopy 
pushout of spaces which are homotopy equivalent to finite CW complexes
is also homotopy equivalent to a finite CW complex (which we leave as an 
exercise for the reader).

To conclude the proof of part (f) we note that
$E$ is closed by Proposition \ref{f7}(ii) and that part (b) of Definition
\ref{geomad}
follows from Proposition \ref{superglue} below.

For part (g)
we need a preliminary definition.  Recall Remark \ref{j2}.
Define an $I$-ad $G$ with values in $\A_{e,*,1}$ as follows.
For a cell $\sigma$ of $I$, the identity map id of $\sigma$ is a 
singular chain of the space $\sigma$; define $G(\sigma,o)$ to be $(\sigma,f,\pm
\mathrm{id})$, where $f$ is the map to a point and the $\pm$ is $+$ iff 
$o$ is the standard orientation of $\sigma$.

Now let $F$ be a $K$-ad.  We define $J(F)$ on objects $(\sigma\times I,o\times
o')$ to be $F(\sigma,o)\times G(I,o')$.  The rest of the definition of $J(F)$
is analogous to the corresponding part of the proof of Theorem \ref{Sep30}.  
$J(F)$ is an ad because it is isomorphic to $F\times G$.
\end{proof}

\begin{remark}
The description of gluing in the proof just given, together with the proof of
Lemma \ref{add}, shows that addition in the bordism groups of 
$\ad_{\pi,Z,w}$ is induced by disjoint union.  
\end{remark}

\section{More about Geometric Poincari\'e ad theories}
\label{moregeom}

In this section
we prove some facts about the bordism groups of $\ad_{\pi,Z,w}$, and we also
consider a relation between $\ad_\STop$ and $\ad_{e,*,1}$ (see Remark 
\ref{j2} for the notation). 

The definition of $\ad_{\pi,Z,w}$ in Section \ref{geom} depended upon the
choice of $\mathfrak X$ in Set Theoretic Prelude \ref{j3}.  Let ${\mathfrak X}'$
be a set which contains $\mathfrak X$ and satisfies the conditions in \ref{j3},
and let $\ad'_{\pi,Z,w}$ be the resulting ad theory.

\begin{lemma}
\label{j4}
The morphism $\ad_{\pi,Z,w}\to \ad'_{\pi,Z,w}$ induces an isomorphism of 
bordism groups.
\end{lemma}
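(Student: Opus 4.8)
The plan is to show that the inclusion of ad theories $\ad_{\pi,Z,w}\to \ad'_{\pi,Z,w}$ is surjective and injective on bordism groups by a "swindle" replacing spaces in $\mathfrak X'$ with homeomorphic spaces in $\mathfrak X$. The key point is that the ad conditions in Definition \ref{geomad} are homotopy-theoretic (well-behavedness is about cofibrations, closedness is about chain-level identities, and the Poincar\'e duality condition is about the slant product being a homology isomorphism), so they are invariant under replacing an object $(X,f,\xi)$ by $(X',f',\xi')$ where $X'\to X$ is a homeomorphism. Since every space homotopy equivalent to a finite CW complex is homotopy equivalent to a compact subspace of some $\mathbb R^m$ (indeed, a finite simplicial complex can be embedded in Euclidean space), and such subspaces lie in $\mathfrak X$, any $*$-ad or $I$-ad with values in $\A'_{\pi,Z,w}$ can be pushed into $\A_{\pi,Z,w}$.

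First I would make precise the "replacement" construction: given a $K$-ad $F$ with values in $\A'_{\pi,Z,w}$, choose for each cell $\sigma$ of $K$ a homeomorphism $X_\sigma\xrightarrow{\cong} X_\sigma''$ with $X_\sigma''\in\mathfrak X$, compatibly as $\sigma$ varies (this requires a little care since the $X_\sigma$ for $\sigma\subsetneq\sigma'$ are subspaces of $X_{\sigma'}$ via cofibrations; one can build the replacements by induction over skeleta, using that $\mathfrak X$ is closed under the relevant pushouts and Cartesian products, exactly as in Set Theoretic Prelude \ref{j3}). Transporting $f$ and $\xi$ along these homeomorphisms gives a $K$-ad $F''$ with values in $\A_{\pi,Z,w}$, together with an isomorphism $F\cong F''$ over $\A'_{\pi,Z,w}$. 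For surjectivity on $\Omega_k$, apply this with $K=*$: every class in the bordism group of $\ad'$ is represented by a $*$-ad, which we replace by a $*$-ad of $\ad_{\pi,Z,w}$ in the same $\ad'$-bordism class, hence the map on $\Omega_k$ is onto.

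For injectivity, suppose $F_0,F_1$ are $*$-ads with values in $\A_{\pi,Z,w}$ that become $\ad'$-bordant, via an $I$-ad $W$ with values in $\A'_{\pi,Z,w}$. Apply the replacement construction to $W$, but arranged so that the homeomorphisms at the two endpoint cells of $I$ are the identities on $X_{F_0}$ and $X_{F_1}$ (possible since those spaces are already in $\mathfrak X$, and the inductive construction over $I$ can be started from the boundary $\partial I$). This yields an $I$-ad $W''$ with values in $\A_{\pi,Z,w}$ restricting to $F_0$ and $F_1$ at the ends, so $[F_0]=[F_1]$ in the bordism group of $\ad_{\pi,Z,w}$.

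The main obstacle I anticipate is the compatible choice of homeomorphisms in the inductive step — one must replace a whole diagram of spaces and cofibrations, not just a single space, and check that the colimits defining $X_{\partial\sigma}$ are preserved (so that "closed," "well-behaved," and the duality isomorphism in Definition \ref{geomad} transport correctly). This is essentially the same bookkeeping as in the proof of part (f) of Theorem \ref{Oct6}, where $V_\tau=\colim_{\sigma\in K'}X_\sigma$ is built by iterated pushouts and shown to stay in the allowed class; I would cite that argument and the closure properties of $\mathfrak X$ rather than repeat it. Everything else is formal: the ad axioms only constrain homotopy types and chain-level data, both of which are manifestly homeomorphism-invariant, so once the replacement is in hand the two maps $\Omega_k(\ad_{\pi,Z,w})\to\Omega_k(\ad'_{\pi,Z,w})$ are seen to be mutually inverse on the nose (the composite back is the identity since $\A_{\pi,Z,w}\subset\A'_{\pi,Z,w}$).
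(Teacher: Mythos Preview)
Your overall strategy---surjectivity by replacing a $*$-ad, injectivity by replacing an $I$-ad rel endpoints---matches the paper's. But the mechanism you propose for the replacement does not work. You write ``choose for each cell $\sigma$ a homeomorphism $X_\sigma\xrightarrow{\cong} X_\sigma''$ with $X_\sigma''\in\mathfrak X$''. No such homeomorphism need exist: the only constraint on a space appearing in an object of $\A'_{\pi,Z,w}$ is that it lies in $\mathfrak X'$ and has the homotopy type of a finite CW complex. Since $\mathfrak X'$ is an arbitrary enlargement of $\mathfrak X$ satisfying the closure properties of Set Theoretic Prelude \ref{j3}, it may contain, say, an infinite-dimensional contractible space (e.g.\ the Hilbert cube), which has the homotopy type of a point but is not homeomorphic to any subspace of any ${\mathbb R}^n$, and need not be homeomorphic to anything in $\mathfrak X$. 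Your own first paragraph correctly observes that what is guaranteed is a \emph{homotopy equivalence} to a space in $\mathfrak X$; the switch to ``homeomorphism'' in the next paragraph is the gap.

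The paper handles this by using homotopy equivalences together with mapping cylinders rather than transporting along homeomorphisms. For surjectivity: given a $*$-ad $(X,f,\xi)$ with $X\in\mathfrak X'$, pick a homotopy equivalence from some $X''\in\mathfrak X$ to $X$; the mapping cylinder of this homotopy equivalence is itself an $I$-ad (in $\ad'$) connecting the given $*$-ad to one lying in $\mathfrak X$. For injectivity: given an $I$-ad $F$ in $\mathfrak X'$ with endpoint spaces $X_0,X_1\in\mathfrak X$, choose $Y\in\mathfrak X$ homotopy equivalent to $X_I$ and replace $X_I$ by the double mapping cylinder $(X_0\times I)\cup Y\cup(X_1\times I)$, which lies in $\mathfrak X$ by the closure of $\mathfrak X$ under products and pushouts. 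This avoids entirely the need to ``transport the ad structure along a map'', which for a mere homotopy equivalence would require additional argument (the chain $\xi$ pushes forward, not back, and the resulting $*$-ad is not obviously bordant to the original without invoking a cylinder anyway).
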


\begin{proof}
To see that
the map of bordism groups is onto we note that every object of
${\mathfrak X}'$ is homotopy equivalent to an object of $\mathfrak X$ (because
every finite CW complex can be imbedded in Euclidean space) and that
the mapping cylinder of a homotopy equivalence of $*$-ads is an $I$-ad.
To see that it is one-to-one, let $F$ be an $I$-ad in ${\mathfrak X}'$ with
\[
F(\sigma,o)=(X_\sigma,f_\sigma,\xi_{\sigma,o})
\]
and suppose that $X_0$ and $X_1$ are in $\mathfrak X$.
Let $Y$ be an object of $\mathfrak X$ which is 
homotopy equivalent to $X_I$; replacing $X_I$ by the double
mapping cylinder
\[
(X_0\times I) \cup Y \cup (X_1\times I)
\] 
gives the required bordism in $\mathfrak X$. 
\end{proof}

Next we
compare the bordism groups of $\ad_{\pi,Z,w}$ with the groups 
$\Omega^P_*(Z/\pi,w)$ defined in \cite[page 243]{MR566491}.  Our definition 
differs from Ranicki's in two ways.  First of all, a $*$-ad 
in our sense is a triple $(X,f:X\to Z/\pi, \xi\in Z_n(X,\Z^f))$ but a
geometric Poincar\'e complex over $(Z/\pi,w)$ in Ranicki's sense is a triple 
$(X,f:X\to Z/\pi, [X]\in H_n(X,\Z^f))$.  This does not affect the bordism 
groups because of the following lemma.

\begin{lemma} 
\label{j6}
Let $(X,f:X\to Z/\pi, \xi)$ be a 
$*$-ad, and let $\xi'$ be a cycle homologous to $\xi$.  Then the
$*$-ads $(X,f:X\to Z/\pi, \xi)$ and $(X,f:X\to Z/\pi, \xi')$ are bordant.
\end{lemma}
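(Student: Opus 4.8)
The plan is to realize the bordism between $(X,f,\xi)$ and $(X,f,\xi')$ as an explicit $I$-ad whose underlying space is essentially $X\times I$, using that $\xi'-\xi$ is a boundary. First I would choose a chain $\eta\in S_{n+1}(X,\Z^f)$ with $\partial\eta=\xi'-\xi$. Set $G(\sigma,o)$ to be $(X\times\sigma, f\circ\mathrm{pr}_X, \chi_{\sigma,o})$ for each cell $\sigma$ of $I$, where the chains $\chi_{\sigma,o}$ are built from $\xi$, $\xi'$ and $\eta$ via the standard product-with-interval construction: on the two $0$-cells $0,1$ we put (the images of) $\xi$ and $\xi'$ respectively, and on the $1$-cell of $I$ we put a chain in $S_{n+1}(X\times I,\Z^{f\circ\mathrm{pr}_X})$ whose boundary is $\xi'\times\{1\} - \xi\times\{0\}$; the obvious candidate is $\eta\times\iota$ corrected by the product $\xi\times[0,1]$ (Eilenberg--Zilber prism chain), so that $\partial$ of it restricts correctly to the ends. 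The balanced and well-behaved conditions are immediate since all the inclusions of cells of $I$ into $I$ induce cofibrations $X\times\tau\hookrightarrow X\times\sigma$, and $X$ obviously has the homotopy type of a finite CW complex.

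The substance is then to verify that $G$ is a $K$-ad in the sense of Definition \ref{geomad}, i.e.\ that it is closed and that the slant-product duality isomorphism of part (b) holds on each cell. Closedness on the $0$-cells is automatic (the ends are cycles by hypothesis), and on the $1$-cell it amounts exactly to the prism identity $\partial(\text{prism chain}) = \xi'\times\{1\}-\xi\times\{0\} \pm (\text{degenerate boundary term})$, which one arranges by the choice above together with the fact that $\partial\eta=\xi'-\xi$. For the duality condition, on the end $0$-cells $G$ restricts to $(X,f,\xi)$ and $(X,f,\xi')$, and here I would invoke Lemma \ref{j6}'s hypothesis indirectly: since $\xi$ and $\xi'$ are homologous, the fundamental classes $[\xi]=[\xi']\in H_n(X,\Z^f)$ agree, and the slant product with $\zeta_{\sigma,o}$ factors through the homology class of $\zeta_{\sigma,o}$ (as noted just after Definition \ref{slant}), so the duality isomorphism on the end cells is the same map in either description. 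On the top $1$-cell, the relevant relative homology is that of the pair $(X\times I, X\times\partial I)$, which by homotopy invariance and excision is a shift of $H_*(X)$, and the slant product with the prism class is the duality map for $X$ suspended; checking it is an isomorphism reduces to the corresponding statement for the original $*$-ad, which holds because $(X,f,\xi)$ is a $*$-ad.

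The main obstacle I anticipate is bookkeeping of signs and of the twisted coefficients: the chain $\zeta_{\sigma,o}$ is defined via the Alexander--Whitney map and a passage to $\Z^w\otimes_{\Z[\pi]}(-)$, and one must check that under the product-with-$I$ construction the class $\zeta$ for the $1$-cell is genuinely the external product of $\zeta_{\xi}$ with the fundamental class of $I$ up to the sign conventions fixed in Section \ref{geom} (e.g.\ the $i^{k_2\dim\sigma}$ twist appearing in the definition of $F_1\times F_2$). In fact the cleanest route is probably to avoid ad hoc prism chains altogether and instead observe that the desired $I$-ad is isomorphic to $F\times G_0$ for a suitable $I$-ad $G_0$ with values in $\A_{e,*,1}$ carrying the homology between $\xi$ and $\xi'$ on a point --- but since on a point ``the homology between $\xi$ and $\xi'$'' is itself just a choice of $\eta$ with $\partial\eta=\xi'-\xi$, one still needs to know that $(\ast,f,\eta)$-type data assembles into a genuine $I$-ad, i.e.\ the duality on the $1$-cell. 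So the real work is precisely this one duality verification on the top cell, and I would handle it by the homotopy-invariance argument above rather than by direct chain-level computation.
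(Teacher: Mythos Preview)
Your approach is essentially the paper's: build an explicit $I$-ad on $X\times I$ whose chain on the top cell is $\xi\times\iota + \eta\times\kappa$, where $\iota$ is the identity $1$-chain of $I$ and $\kappa$ the $0$-chain at the point $1$ (note your expression ``$\eta\times\iota$'' has the wrong dimension---you want $\eta$ crossed with a point, not with the $1$-cell). The paper's proof is a one-liner that simply writes down this chain and stops; the duality check on the top cell, which you correctly single out as the only substantive verification, is left implicit.
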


\begin{proof} Since $\xi'$ is homologous to $\xi$ there is a chain $\theta$ 
with 
\[
d\theta=\xi'-\xi.
\]
Define an $I$-ad $H$ by letting $H$ take the cells 0,1 and I (with their
standard orientations) respectively to $(X,f,\xi)$, $(X,f,\xi')$, and 
$(X\times I,h,\xi\times \iota+ \theta\times \kappa)$,
where $h$ is the composite of the projection $X\times I\to X$ with $f$, $\iota$
is the chain given by the identity map of $I$, and $\kappa$ is the 0-chain 
represented by the point 1.  
\end{proof}

The second difference between our definition and Ranicki's is that in
\cite{MR566491} the symbol $\widetilde{X}$ denotes a universal cover of $X$ 
(that is, a cover which is universal on each component).  This presumably 
means that our bordism groups are different from those in \cite{MR566491}.  
Our reason for making this change is that the definition we give is somewhat 
simpler and seems to provide the natural domain for the symmetric signature 
(see Section \ref{ss}).  One could, if desired, modify our definition so that 
the bordism groups would be equal to those in \cite{MR566491}.

We conclude this section with a relation between $\ad_\STop$ and $\ad_{e,*,1}$.
Intuitively one would expect a morphism of ad 
theories $\ad_\STop\to \ad_{e,*,1}$, but an object of $\A_\STop$ does not 
determine an object of $\A_{e,*,1}$ because it doesn't come equipped with a 
chain $\xi$.  Instead we will construct a diagram
\[
\ad_\STop
\leftarrow
\ad_\STopFun
\rightarrow
\ad_{e,*,1}
\]
in which the first morphism induces an isomorphism of bordism groups (and
therefore a weak equivalence of Quinn spectra).

Let $\A_{\mathrm{STopFun}}$ be the category defined as follows.  The 
objects of dimension $n$ are pairs $(M,\xi)$, where $M$ is an $n$-dimensional 
oriented compact topological 
manifold with boundary which is a subspace of some ${\mathbb R}^n$ and $\xi\in
S_n(M)$ is a representative for the fundamental class of $M$.
The non-identity morphisms are the continuous monomorphisms
$\iota:M\rightarrow N$ with $\dim M<\dim N$ and $\iota(M)\subset \partial 
N$.  The involution $i$ reverses the orientation,  and
$\emptyset_n$ is the empty manifold of dimension $n$.

There is a forgetful functor $\A_\STopFun\to \A_\STop$,
and we define a $K$-ad with values in
$\A_\STopFun$ to be a pre $K$-ad such that
\begin{itemize}
\item
its image under the forgetful functor is a $K$-ad, and
\item
it satisfies Definition \ref{j5}.
\end{itemize}

The proofs of Theorems \ref{Sep30} and \ref{Oct6} show that this is an ad
theory; we denote it by $\overline{\ad}_\STop$.  
The proof of Lemma \ref{j6} shows that the morphism
$\overline{\ad}_\STop\to\ad_\STop$ gives an isomorphism of bordism groups as
required.  

Finally, we define a morphism $\A_\STopFun\to \A_{e,*,1}$
by taking $(M,\xi)$ to $(M,f,\xi)$, where $f$ is the map to a point (to see
that this really lands in $\A_{e,*,1}$ we use the fact, proved in
\cite{MR0645390}, that a
compact topological manifold is homotopy equivalent to a finite complex).  This
gives a morphism of ad theories 
$\A_\STopFun \rightarrow \A_{e,*,1}$ as required
(using the collaring theorem for topological manifolds to verify Definition
\ref{well}).

\section{Example: Symmetric Poincar\'e ad theories}
\label{sym}

Recall Set Theoretic Prelude \ref{j3}.

\begin{set theory}
\label{j8}
We note that there is a set $\mathfrak S$ with the following properties.
\begin{itemize}
\item
The elements of $\mathfrak S$ are sets.
\item
Every finite subset of $\mathbb Z$ is in $\mathfrak S$.
\item
For every $X\in\mathfrak X$ and every $n$ the set of continuous maps 
$\Delta^n\to X$ is in $\mathfrak S$.
\item
The Cartesian product of two sets in $\mathfrak S$ is in $\mathfrak S$.
\item
$\mathfrak S$ is closed under pushouts.
\end{itemize}
The verification that there is such an $\mathfrak S$ is left to the reader.
The reason for requiring these properties can be seen from 
the proof of Theorem \ref{Oct16} and Section \ref{ss}.
\end{set theory}

Fix a ring $R$ with involution.   

\begin{definition}
\label{j7}
(i) The {\it free $R$-module generated by a set
$A$}, denoted $R\langle A\rangle$, is the set of functions from $A$ to $R$ 
which are nonzero for only finitely many elements of $A$.  

(ii) Let $\M$ be the category of left $R$-modules of the form $R\langle
A\rangle$ with $A\in {\mathfrak S}$; the morphisms are the $R$-module maps.

(iii) Let $\C$ be the category of chain complexes in $\M$.

(iv) A chain complex of left $R$-modules is called {\it finite} if it is 
finitely generated in each degree and zero in all but finitely many degrees, 
and {\it homotopy finite} if it is chain homotopy equivalent over $R$ to a 
finite 
chain complex.

(v) Let $\D$ be the full subcategory of $\C$ whose objects are the homotopy
finite chain complexes.
\end{definition}

Recall that, for a complex $C$ of left $R$-modules, $C^t$ is the complex of 
right $R$-modules obtained from $C$ by applying the involution of $R$.  Give
$C^t\otimes_R C$ the $\Z/2$ action that switches the factors.

Let $W$ be the standard resolution of $\Z$ by $\Z[\Z/2]$-modules.

\begin{definition}  A {\it quasi-symmetric complex of dimension $n$} is a 
pair $(C,\varphi)$, where $C$ is an object of $\D$ and $\varphi$ 
is a $\Z/2$-equivariant map 
\[
W\to C^t\otimes_R C
\]
of graded abelian groups which raises degrees by $n$.
\end{definition}

\begin{remark}
(i) An important example of a quasi-symmetric complex is the symmetric signature
of an object of $\A_{\pi,Z,w}$; see Section \ref{ss} for the definition.

(ii) A symmetric complex in the sense of Ranicki (\cite[Definition 
1.6(i)]{MR1211640}) is a 
quasi-symmetric complex for which $\varphi$ is a chain map.  

(iii) The concept of symmetric complex can be motivated as follows.  A symmetric
bilinear form on a vector space $V$ over a field $\mathbb F$ is a $\Z/2$
equivariant map $V\otimes V\to \mathbb F$.  This is the same thing as an
element of $\Hom_{\Z/2}(\Z, V^*\otimes_{\mathbb F} V^*)$.  In order to
generalize this concept to chain complexes we replace $V^*$ by $C$, $\mathbb F$
by $R$, and Hom by Ext; an element of the Ext group is represented by a 
symmetric complex.  Thus a symmetric complex is a homotopy version of a
symmetric bilinear form.
\end{remark}

\begin{definition}
\label{quasi}
We define a category 
$\A^R$ as follows.  The objects of $\A^R$ are the 
quasi-symmetric complexes.  Non-identity morphisms $(C,\varphi)\to
(C',\varphi')$ exist only when $\dim (C,\varphi)<\dim (C',\varphi')$, in which
case the morphisms are the
$R$-linear chain maps  $f:C\to C'$.  
\end{definition}

$\A^R$ is a balanced $\Z$-graded category, where $i$ takes $(C, \varphi)$ to
$(C, -\varphi)$ and $\emptyset_n$ is the $n$-dimensional object for which $C$ is zero in all degrees.

Next we must say what the $K$-ads with values in $\A^R$ are.  We will build up
to this gradually, culminating in Definition \ref{R-ad}.  

For a balanced pre $K$-ad $F$ we will use the notation
\[
F(\sigma,o)=(C_\sigma,\varphi_{\sigma,o}).
\]

\begin{definition}
(i) A map in $\M$ is a {\it strong monomorphism} if it is the inclusion of a
direct summand
\[
M\hookrightarrow M\oplus N
\]
with $M$ and $N$ in $\M$.

(ii)
A map of chain complexes over $R$ is a {\it cofibration} if it is a strong
monomorphism in each dimension.
\end{definition}

\begin{definition} 
\label{well2}
A functor $C$ from $\Cell^\flat(K)$ to chain complexes over $R$ is called {\it 
well-behaved} if the following conditions hold:

(a)
$C$ takes each morphism to a cofibration.

(b)
For each cell $\sigma$ of $K$, the map
\[\colim_{\tau\subsetneq\sigma}
C_\tau
\lra C_\sigma
\]
is a  cofibration.
\end{definition}

For a well-behaved functor $C$ we write
$C_{\partial\sigma}$  for $\colim_{\tau\subsetneq\sigma} C_\tau$.

For our next definition, recall Example \ref{gamma}.

\begin{definition} 
$F$ is {\it closed} if, for each $\sigma$, the map
\[
\cl(\sigma)\to \Hom(W,C_\sigma^t\otimes_R C_\sigma)
\]
which takes $\langle \tau,o\rangle$ to the composite
\[
W \labarrow{\varphi_{\tau,o}}
C_\tau^t\otimes C_\tau
\to
C_\sigma^t\otimes_R C_\sigma
\]
is a chain map.
\end{definition}

In particular, if $F$ is balanced and closed and $C$ is well-behaved then for 
each $\sigma$ the 
composite
\[
\bar{\varphi}:W\to C_\sigma^t\otimes_R C_\sigma \to 
(C_\sigma/C_{\partial\sigma})^t
\otimes_R 
C_\sigma
\]
is a chain map.

Let $\mathbf i$ be a generator of $H_0(W)$.  Recall Definition \ref{slant}.

\begin{definition}  
\label{R-ad}
$F$ is a {\it $K$-ad} if 

(a)
it is balanced and closed and $C$ is well-behaved, and

(b)
for each $\sigma$ the slant product with $\bar{\varphi}_*({\mathbf i})$ is an
isomorphism
\[
H^*(\Hom_R(C_\sigma,R))
\to
H_{\dim \sigma-\deg F-*}(C_\sigma/C_{\partial\sigma}).
\]
\end{definition}

We write $\ad^R(K)$ for the set of $K$ ads with values in $\A^R$.

\begin{remark}
When $K$ is a simplicial complex, a $K$-ad is almost the same thing as a
symmetric complex (\cite[Definition 3.4]{MR1211640}) 
in $\Lambda^*(K)$ (\cite[Definition 4.1 and Proposition 5.1]{MR1211640}).  The 
only difference is that in \cite{MR1211640} the splitting maps $C_\sigma\to
C_{\partial\sigma}$ of the underlying graded $R$-modules are part of the 
structure.
\end{remark}

\begin{thm}
\label{Oct16}
$\ad^R$ is an ad theory.
\end{thm}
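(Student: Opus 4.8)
The plan is to verify the axioms (a)--(g) of Definition \ref{defad} for $\ad^R$, following the same strategy used for Theorems \ref{Sep30} and \ref{Oct6}. Axioms (a), (b), (c) and (e) should be essentially formal: (a) is immediate from the form of Definition \ref{R-ad}, which only refers to the restrictions of $F$ to individual cells; (b) holds because for the trivial ad each $C_\sigma$ is zero and the slant product condition is vacuous; (c) holds because negating $\varphi$ does not affect whether $\bar\varphi$ is a chain map or whether the slant product is an isomorphism (one also checks that balancedness is preserved, using the balanced structure on $\A^R$ coming from $i(C,\varphi)=(C,-\varphi)$); and (e) is the reindexing axiom, which holds because an incidence-compatible $k$-isomorphism $\theta$ just permutes cells and adjusts signs by $(-1)^k$, which is exactly matched by the sign $(-1)^k$ appearing in the definition of ``closed'' and by the definition of $\theta^*F = i^{kl}\circ F\circ\theta$. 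Axiom (d) is built into Definition \ref{R-ad}, since being a $K$-ad is defined cell-by-cell; one only needs to observe that balancedness and well-behavedness are also local in this sense, which is clear.

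Next I would handle the cylinder axiom (g). As in the proof of Theorem \ref{Oct6}, the right approach is to produce a standard $I$-ad and define $J(F)$ as a product. Concretely, let $G$ be the $I$-ad with values in $\A^{\mathbb Z}$ (or the appropriate base) that assigns to a cell $\sigma$ of $I$ the cellular chain complex of $\sigma$ equipped with the symmetric structure given by a diagonal approximation; then set $J(F)$ on $(\sigma\times I, o\times o')$ to be $F(\sigma,o)\otimes G(I,o')$, with the analogous prescription on morphisms coming from tensoring chain maps. One needs a product construction for $\A^R$ analogous to Lemma \ref{prod}: given a $K_1$-ad over $R_1$ and a $K_2$-ad over $R_2$ one gets a $(K_1\times K_2)$-ad over $R_1\otimes R_2$, using the tensor product of chain complexes and of the $\varphi$'s (with a Koszul sign $i^{k_2\dim\sigma}$), and using the Künneth theorem to see that the slant-product isomorphism is preserved. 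Taking $R_1 = R$, $R_2 = \mathbb Z$ then gives $J(F)\cong F\otimes G$ as an ad, and the three bulleted conditions in (g) are immediate from the construction. Well-behavedness is preserved because a tensor product of cofibrations of chain complexes of free modules is again a cofibration.

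The main obstacle is the gluing axiom (f). By the inductive reduction used in the other proofs, one reduces to the case where $K$ is a ball-complex structure on the $n$-disk with a single top cell $\tau$, and $K'$ is a subdivision agreeing with $K$ on the boundary; given a $K'$-ad $F$ one must produce a value $E(\tau,o)$ extending $F$ on the residual subcomplex, i.e.\ on $\partial$. The natural candidate is $C_\tau := \colim_{\sigma\in K'} C_\sigma$ with $\varphi_\tau := \sum_{(\sigma,o')}\varphi_{\sigma,o'}$ summed over top cells of $K'$ with induced orientation. The hard part is checking the two conditions of Definition \ref{R-ad} for this $E(\tau,o)$: first that $C_\tau$ is homotopy finite (this uses that a homotopy pushout of homotopy-finite complexes is homotopy finite, plus well-behavedness of $F$ to identify the colimit with the homotopy colimit), and second that the slant product with $\bar\varphi_*(\mathbf i)$ is an isomorphism $H^*(\Hom_R(C_\tau,R))\to H_{n-\deg F-*}(C_\tau/C_{\partial\tau})$. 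This Poincar\'e duality statement is precisely the kind of ``superglue'' result referenced as Proposition \ref{superglue} in the proof of Theorem \ref{Oct6}; I expect the proof here to invoke the same gluing machinery (from Section \ref{glue}), combined with the observation that $C_{\partial\tau}$, being the colimit over the boundary subdivision, is identified with the corresponding boundary complex of $K$ because $K'$ and $K$ agree there. That $E$ is closed follows formally from the additivity of $\varphi_\tau$ over the top cells and the closedness of $F$, as in Proposition \ref{f7}(ii) of the geometric case.
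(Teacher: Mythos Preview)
Your proposal is correct and follows essentially the same approach as the paper's proof: the paper likewise declares (a)--(e) routine, constructs the cylinder $J(F)$ as a tensor product with a fixed $I$-ad $G$ in $\A^{\mathbb Z}$ (using simplicial chains of $\Delta^1$ and the extended Alexander--Whitney map) via the product Lemma~\ref{prod2}, and handles gluing by the same inductive reduction, defining $E(\tau,o)$ with $D_\tau=\colim_{\sigma\in K'} C_\sigma$ and $\varphi_\tau$ the sum over top cells, then citing Proposition~\ref{f7}(ii) for closedness and Proposition~\ref{superglue} for the Poincar\'e duality isomorphism. Your explicit mention of homotopy finiteness of $D_\tau$ is a point the paper leaves implicit here (it handles the analogous issue in the proof of Theorem~\ref{Oct6} and simply says the argument is ``similar''), so you are not missing anything.
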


For the proof we need a product operation on ads.  Recall the chain map
\[
\Delta:W\to W\otimes W
\]
from \cite[page 175]{MR560997}.

\begin{definition}
(i) For $i=1,2$, let $R_i$ be a ring with involution and let $(C^i,\varphi^i)$
be an object of 
$\A^{R_i}$.  Define 
\[
(C^1,\varphi^1)\otimes (C^2,\varphi^2)
\]
to be the following object of $\A^{R_1\otimes R_2}$:
\[
(C^1\otimes C^2,\psi),
\]
where $\psi$ is the composite 
\begin{multline*}
W\xrightarrow{\Delta} W\otimes W
\xrightarrow{\varphi^1\otimes\varphi^2}
((C^1)^t\otimes_{R_1} C^1) \otimes ((C^2)^t\otimes_{R_2} C^2) 
\\
\cong (C^1\otimes C^2)^t\otimes_{R_1\otimes R_2}\,(C^1\otimes C^2).
\end{multline*}

(ii) For $i=1,2$, suppose given 
a ball complex $K_i$ and a
pre $K_i$-ad $F_i$ of degree $k_i$ with values in $\A^{R_i}$. 
Define a pre $(K_1\times K_2)$-ad $F_1\otimes F_2$ with values in  
$\A^{R_1\otimes R_2}$
by 
\[
(F_1\otimes F_2)(\sigma\times\tau,o_1\times o_2)
=
i^{k_2\dim \sigma} F_1(\sigma,o_1)\otimes F_2(\tau,o_2).
\]
\end{definition}

\begin{lemma}
\label{prod2}
For $i=1,2$, suppose given 
a ball complex $K_i$ and a 
$K_i$-ad $F_i$ with values in $\A^{R_i}$. 
Then $F_1\otimes F_2$ is a $(K_1\times K_2)$-ad. \qed
\end{lemma}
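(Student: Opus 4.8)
The plan is to verify the three nontrivial conditions in Definition \ref{R-ad} for $F_1\otimes F_2$, namely that it is balanced and closed, that $C^1\otimes C^2$ is well-behaved, and that the slant-product duality holds on each cell $\sigma\times\tau$ of $K_1\times K_2$. The sign twist $i^{k_2\dim\sigma}$ in the definition of $F_1\otimes F_2$ is exactly what is forced by the Koszul sign rule (compare Definition \ref{n25}(i)); once one commits to tracking signs carefully this becomes bookkeeping rather than a genuine difficulty, so I would state the sign conventions once and then suppress them. The "balanced" condition is immediate from the definition of the involution on $\A^{R_1\otimes R_2}$ (it acts by $-1$ on $\psi$) together with the fact that $F_1$, $F_2$ are balanced.

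For "closed," I would first recall that closedness of $F_i$ says that the assignment $\langle\tau,o\rangle\mapsto(\text{inclusion})\circ\varphi_{\tau,o}$ is a chain map $\cl(\sigma_i)\to\Hom(W,C^i_{\sigma_i}{}^t\otimes_{R_i}C^i_{\sigma_i})$. Using the standard identification $\cl(\sigma\times\tau)\cong\cl(\sigma)\otimes\cl(\tau)$ (which is where the incidence-number signs enter, and which matches the sign in $F_1\otimes F_2$), the claim reduces to the statement that the tensor product of two chain maps into $\Hom(W,-)$, composed with $\Delta:W\to W\otimes W$, is again a chain map. Since $\Delta$ is a chain map (cited from \cite{MR560997}) and $\Hom(W,-\otimes-)$ carries tensor products of chain maps to chain maps via the Eilenberg--Zilber/diagonal comparison, closedness of $F_1\otimes F_2$ follows formally. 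Well-behavedness of $C^1\otimes C^2$ is handled the same way one always handles products: a tensor product of cofibrations (= levelwise split monos) is a cofibration, and the colimit $\colim_{\rho\subsetneq\sigma\times\tau}(C^1\otimes C^2)_\rho$ is computed from the colimits over $\sigma$ and over $\tau$ because $\Cell^\flat(\partial(\sigma\times\tau))$ is built from $\partial\sigma$, $\partial\tau$ in the evident pushout way; I would cite Definition \ref{well2} and the fact that $-\otimes_R-$ preserves the relevant pushouts and split monos.

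The main obstacle is part (b) of Definition \ref{R-ad}: showing that slant product with $\bar\psi_*(\mathbf i)$ is an isomorphism
\[
H^*(\Hom_{R_1\otimes R_2}(C^1_\sigma\otimes C^2_\tau,\,R_1\otimes R_2))
\to
H_{\dim(\sigma\times\tau)-\deg(F_1\otimes F_2)-*}\bigl((C^1_\sigma\otimes C^2_\tau)/(C^1_\sigma\otimes C^2_\tau)_{\partial}\bigr).
\]
Here I would use the identification $(C^1_\sigma\otimes C^2_\tau)_{\partial(\sigma\times\tau)}\cong(C^1_{\partial\sigma}\otimes C^2_\tau)+(C^1_\sigma\otimes C^2_{\partial\tau})$, so that the quotient is $(C^1_\sigma/C^1_{\partial\sigma})\otimes(C^2_\tau/C^2_{\partial\tau})$, and the K\"unneth theorem (applicable because we are over $\Z$-modules after forgetting, or because the complexes are homotopy finite and we can reduce to finite free complexes) to split both sides as tensor products. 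The key point is that the slant product for the pair $(C^1\otimes C^2,\psi)$ is, up to the Koszul sign already built into the definition of $\psi$ via $\Delta$, the tensor product of the slant products for $(C^1,\varphi^1)$ and $(C^2,\varphi^2)$ — this is because $\mathbf i\in H_0(W)$ is grouplike for $\Delta$, i.e.\ $\Delta_*\mathbf i=\mathbf i\otimes\mathbf i$. Granting that, the product map is a tensor product of two isomorphisms (the duality maps for $F_1$ and $F_2$), hence an isomorphism, and the degree shift works out since $\dim(\sigma\times\tau)=\dim\sigma+\dim\tau$ and $\deg(F_1\otimes F_2)=k_1+k_2$. The one calculation I would actually carry out in the paper is the verification that the composite defining $\bar\psi$ sends $\mathbf i$ to (the appropriate sign times) $\bar\varphi^1_*(\mathbf i)\otimes\bar\varphi^2_*(\mathbf i)$; everything else is an appeal to K\"unneth and naturality.
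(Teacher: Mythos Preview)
The paper does not give a proof of this lemma at all: the statement is terminated with a \qed, indicating that the authors regard it as routine and leave it to the reader. So there is no ``paper's own proof'' to compare against; your outline is exactly the kind of verification the authors intend the reader to supply.

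Your plan is correct in substance. A couple of refinements are worth noting. For part (b), rather than invoking K\"unneth to split the homology of source and target, it is cleaner to argue directly at the chain level: the slant product with $\bar\psi_*(\mathbf i)$ is (up to sign) the tensor product of the two individual slant-product chain maps, and a tensor product of quasi-isomorphisms between bounded complexes of projective modules is again a quasi-isomorphism. This avoids any worry about Tor terms. Second, the identification $\Hom_{R_1\otimes R_2}(C^1_\sigma\otimes C^2_\tau,\,R_1\otimes R_2)\cong\Hom_{R_1}(C^1_\sigma,R_1)\otimes\Hom_{R_2}(C^2_\tau,R_2)$ requires the $C^i_\sigma$ to be degreewise finitely generated, which they need not be in $\D$; this is exactly where you must invoke homotopy finiteness to replace each $C^i_\sigma$ by a chain-homotopy-equivalent finite free complex before making the comparison. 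Your parenthetical ``over $\Z$-modules after forgetting'' does not help here, since the statement to be proved is about $R_1\otimes R_2$-module homology; drop that option and keep the reduction to finite complexes.
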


\begin{proof}[Proof of \ref{Oct16}.]
We only need to verify parts (f) and (g) of Definition \ref{defad}.

The proof of part (f) is similar to the corresponding proof in Section
\ref{geom}.  
Let $F$ be a $K'$-ad with
\[
F(\sigma,o)=(C_\sigma,\varphi_{\sigma,o}).
\]
We need to define a $K$-ad $E$ which agrees with $F$ on each
residual subcomplex of $K$.
We may assume that $K$ is a ball complex structure for the $n$
disk with one $n$ cell $\tau$, and
that $K'$ is a subdivision of $K$ which agrees with $K$ on the boundary.
We only need to define $E$ on the top cell $\tau$ of $K$.  We define
$E(\tau,o)$ to be $( D_\tau, \kappa_{\tau,o})$, where
\begin{itemize}
\item
$D_\tau=\colim_{\sigma\in K'} C_\sigma$, and
\item
$\kappa_{\tau,o}$ is the sum of the composites
\[
W 
\labarrow{\varphi_{\sigma,o'}} 
C_\sigma^t\otimes_R C_\sigma
\to
D_\tau^t\otimes_R D_\tau,
\]
where $(\sigma,o')$ runs through the $n$-dimensional cells of $K'$ with 
orientations induced by $o$.
\end{itemize}
Then $E$ is closed by Proposition \ref{f7}(ii) and part (b) of Definition
\ref{geomad}
follows from Proposition \ref{superglue} below.

For part (g)
we need a preliminary definition.  
Define an $I$-ad $G$ with values in $\A^\Z$ as follows.
Let $0,1,I$ denote the three cells of $I$, with their standard
orientations. Define $G(0)$ to be $(\Z,\epsilon)$ where $\epsilon$ is
the augmentation, and similarly for 
$G(1)$.
Define $G(I)$ to be $(C_*(\Delta^1),\varphi)$, where $C_*$ denotes 
simplicial chains and $\varphi$ is the composite
\[
W\cong W\otimes{\mathbb Z}\stackrel{1\otimes\iota}{\to}
W\otimes C_1(\Delta^1)
\to
C_*(\Delta^1)\otimes C_*(\Delta^1);
\]
here $\iota$ is the element of $C_1(\Delta^1)$ represented by the identity map
and unlabeled arrow is the extended Alexander-Whitney map. 

Now let $F$ be a $K$-ad.  We define $J(F)$ on objects $(\sigma\times I,o\times
o')$ to be $F(\sigma,o)\otimes G(I,o')$.  The rest of the definition of $J(F)$
is analogous to the corresponding part of the proof of Theorem \ref{Sep30}.
$J(F)$ is an ad because it is isomorphic to $F\otimes G$.
\end{proof}

\begin{remark}
(i) The description of gluing in the proof just given, together with the 
proof of Lemma \ref{add}, shows that addition in the bordism groups of
$\ad^R$ is induced by direct sum.

(ii) A proof similar to that of Lemma \ref{j4} shows that a different choice 
of $\mathfrak S$ in Set Theoretic Prelude \ref{j8} gives a morphism of ad 
theories which induces an isomorphism on bordism groups.

(iii) The bordism groups of $\ad^R$ are the same (in fact they have the same
definition) as the groups $L^*({\mathbb A}^h(R))$ of \cite[Example 
1.11]{MR1211640}.
\end{remark}

\section{The symmetric signature}
\label{ss}

In this section we define a morphism of ad theories
\[
\Sig: \ad_{\pi,Z,w} \to \ad^{\Z[\pi]^w}
\]
called the {\it symmetric signature}
(here $\Z[\pi]^w$ denotes
$\Z[\pi]$ with the $w$-twisted involution \cite[page 196]{MR566491}).

As motivation, let us begin with the special case $(\pi,Z,w)=(e,*,1)$ (see 
Remark \ref{j2}).  An object of $\A_{e,*,1}$ is a pair $(X,\xi)$ and we define
\[
\Sig(X,\xi)=(S_*(X),\varphi_{X,\xi})
\]
where $\varphi_{X,\xi}$ is the composite 
\[
W\cong
W\otimes \Z
\xrightarrow{1\otimes \xi}
W\otimes S_*(X)
\to
S_*(X)\otimes S_*(X);
\]
the unlabeled arrow is the extended Alexander-Whitney map (see
\cite[Definition 2.10(a) and Remark 2.11(a)]{MR1969208}
for an explicit formula). Note that the third
condition in Set Theoretic Prelude \ref{j8} implies that 
$S_p(X)$ is in $\M$ for each $p$ (see Definition \ref{j7}) which is required for
Definition \ref{quasi}.

Next we consider a general triple $(\pi,Z,w)$.  Let $R$ denote $\Z[\pi]^w$. 
Let $(X,f,\xi)$ be an object of $\A_{\pi,Z,w}$ and recall that we write 
$\widetilde{X}$ for the pullback of $Z$ along $f$.  We define a map
\[
\varphi_{\widetilde{X},\xi}:W\to 
S_*(\widetilde{X})^t\otimes_R S_*(\widetilde{X})
\]
to be the composite
\begin{multline*}
W\cong
W\otimes \Z
\stackrel{1\otimes \xi}{\longrightarrow}
W\otimes
(\Z^w
\otimes_R
S_*(\widetilde{X}))
\cong
\Z^w
\otimes_R
(W \otimes S_*(\widetilde{X}))
\\
\to
\Z^w
\otimes_R
(S_*(\widetilde{X}) \otimes S_*(\widetilde{X}))
\cong
S_*(\widetilde{X})^t\otimes_R S_*(\widetilde{X}),
\end{multline*}
where the unlabeled arrow is induced by the extended Alexander-Whitney map.

Intuitively one would expect to define the symmetric signature by 
$\Sig(X,f,\xi)=(S_*(\widetilde{X}),\varphi_{\widetilde{X},\xi})$; the difficulty
with this is that $S_p(\widetilde{X})$ won't be an object of $\M$ in general.

To deal with this we redefine $\A_{\pi,Z,w}$.  By a {\it lifting function} for
$f:X\to Z/\pi$ we mean a function $\Phi$ that assigns to 
each map from a simplex to $X$ a lift to $\widetilde{X}$.

\begin{redefinition}
$\A_{\pi,Z,w}$
to be the category defined as follows.  The objects are quadruples
\[
(X,\,f:X\to Z/\pi,\,\xi\in S_*(X,{\mathbb Z}^f),\Phi),
\]
where
$X$ is a space in $\mathfrak X$ which has the homotopy type of a finite CW
complex and
$\Phi$ is a lifting function for $f$.
Non-identity morphisms $(X,f,\xi,\Phi)\to (X',f',\xi',\Phi')$ exist only when
$\dim \xi< \dim\xi'$, in which case the morphisms
are the maps $g:X\to X'$ such that
$f'\circ g=f$ and the diagram
\[
\xymatrix{
\Map(\Delta^p,X)
\ar[d]_\Phi
\ar[r]
&
\Map(\Delta^p,X')
\ar[d]^{\Phi'}
\\
\Map(\Delta^p, \widetilde{X})
\ar[r]
&
\Map(\Delta^p, \widetilde{X'})
}
\]
commutes for all $p$.
\end{redefinition}

It is easy to check that the forgetful functor from this version of 
$\A_{\pi,Z,w}$ to the previous version induces an isomorphism of bordism groups.

Now we define 
\[
\Sig: \A_{\pi,Z,w} \to \A^R.
\]
Let $(X,f,\xi,\Phi)$ be an object of $\A_{\pi,Z,w}$. 
For each $p\geq 0$, let $U_p$ be the set of maps $\Delta^p\to X$ and let $C_p$
be the free $R$-module $R\langle U_p\rangle$.
The lifting $\Phi$ gives an 
isomorphism of graded $R$-modules
\[
C_*\cong S_*(\widetilde{X}).
\]
We give $C_*$ the differential induced by this isomorphism and we define
\[
\psi:W\to C_*^t\otimes_R C_*
\]
to be the map determined by this isomorphism and 
the map $\varphi_{\widetilde{X},\xi}$ defined above.  Finally, we define
$\Sig(X,f,\xi,\Phi)$ to be $(C,\psi)$.

\begin{prop}
\label{j9}
$\Sig:\A_{\pi,Z,w}\to \A^R$ induces a morphism of ad theories 
\[
\Sig:\ad_{\pi,Z,w}\to \ad^R. 
\]
\qed
\end{prop}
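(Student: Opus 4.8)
The plan is to show that the functor $\Sig:\A_{\pi,Z,w}\to\A^R$ (in the redefined version of the source category) carries $K$-ads to $K$-ads in the sense of Definition~\ref{R-ad}, and that it commutes with the structure ($\emptyset$, $i$, and the dimension function) so that it is genuinely a morphism of ad theories. Since a morphism of ad theories is just a functor of target categories taking ads to ads, once the functoriality is checked (which is routine: a morphism $g:X\to X'$ with $f'g=f$ induces an $R$-linear chain map $S_*(\widetilde X)\to S_*(\widetilde X')$, hence via the lifting-function isomorphisms a map $C_*\to C'_*$, and compatibility of the $\varphi$'s follows from naturality of the extended Alexander--Whitney map) the content is entirely in the ``ads go to ads'' claim.

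First I would fix a $K$-ad $F$ with values in $\A_{\pi,Z,w}$, write $F(\sigma,o)=(X_\sigma,f_\sigma,\xi_{\sigma,o},\Phi_\sigma)$, and consider $\Sig\circ F$. I need to verify the three conditions packaged into Definition~\ref{R-ad}(a): that $\Sig\circ F$ is balanced, that it is closed, and that the underlying functor $C$ to chain complexes over $R$ is well-behaved in the sense of Definition~\ref{well2}. Balancedness is immediate from the sign conventions, since $i$ on both sides is negation of the form $\varphi$ and $\Sig$ is built from $\xi$ linearly. Well-behavedness: $F$ being a $K$-ad means the functor $X$ on $\Cell^\flat(K)$ is well-behaved (Definition~\ref{well}), so each $X_\tau\to X_\sigma$ and each $\colim_{\tau\subsetneq\sigma}X_\tau\to X_\sigma$ is a cofibration; applying the singular-chains functor (to the pulled-back covers, then tensoring down over $R$) turns a cofibration of spaces into an inclusion of a direct summand in each degree, because $S_*$ sends a cofibration $A\hookrightarrow B$ to a split monomorphism $S_*(\widetilde A)\to S_*(\widetilde B)$ of free $R$-modules; one must note that this respects the colimits, i.e.\ $C_{\partial\sigma}\cong S_*(\widetilde X_{\partial\sigma})$, which uses that singular chains commute with the relevant colimits along cofibrations. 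Closedness of $\Sig\circ F$ should follow from closedness of $F$ together with the fact that $\varphi_{\widetilde X,\xi}$ is natural in $\xi$ and additive: the defining chain-map condition for $F$ (that $\partial\xi_{\sigma,o}$ is the signed sum of the $\xi_{\sigma',o'}$) is transported by the linear construction $\xi\mapsto\varphi_{\widetilde X,\xi}$ to the corresponding chain-map condition for the $\varphi_{\sigma,o}$, exactly as in the ``equivalent definition of closed'' remark preceding Definition~\ref{slant}.

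The heart of the matter is Definition~\ref{R-ad}(b): for each cell $\sigma$, slant product with $\bar\varphi_*(\mathbf i)$ must be an isomorphism
\[
H^*(\Hom_R(C_\sigma,R))\to H_{\dim\sigma-\deg F-*}(C_\sigma/C_{\partial\sigma}).
\]
Here I would identify $C_\sigma$ with $S_*(\widetilde X_\sigma)$ via the lifting function and $C_{\partial\sigma}$ with $S_*(\widetilde X_{\partial\sigma})$, so that $C_\sigma/C_{\partial\sigma}$ computes $H_*(\widetilde X_\sigma,\widetilde X_{\partial\sigma})$. Then I must check that under these identifications the class $\bar\varphi_*(\mathbf i)$ — the image of a generator of $H_0(W)$ under the composite defining $\bar\varphi$ — agrees, up to sign, with the class $\zeta_{\sigma,o}$ appearing in Definition~\ref{geomad}(b). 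This is a chain-level comparison of two cap-product-type pairings built from Alexander--Whitney: the form $\varphi_{\widetilde X,\xi}$ evaluated on $\mathbf i$ is precisely the image of $\xi$ under $1\otimes AW$ followed by the projection, which is exactly the definition of $\zeta_{\sigma,o}$. One also checks that the slant product of Definition~\ref{slant}, applied to this class, recovers the cap/slant pairing used in \ref{geomad}(b), so that the isomorphism required in \ref{R-ad}(b) for $\Sig\circ F$ is literally the isomorphism assumed in \ref{geomad}(b) for $F$. \textbf{The main obstacle} I expect is precisely this bookkeeping: matching the two normalizations of the Alexander--Whitney / extended Alexander--Whitney maps, the two sign conventions (the $(-1)^k$'s governing incidence numbers and the $(-1)^{|x||\alpha|}$ in the slant product), and verifying that passing to the relative quotient commutes with everything — i.e.\ that $\bar\varphi$ really induces on homology the same duality map as the one built directly from $\zeta_{\sigma,o}$ in Section~\ref{geom}. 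Once that identification is in hand, condition \ref{R-ad}(b) for $\Sig\circ F$ is a direct consequence of condition \ref{geomad}(b) for $F$, and the proof is complete. (Lemma~\ref{prod} and Lemma~\ref{prod2} are not strictly needed here but confirm that $\Sig$ is compatible with the product structures, which is why it is phrased as a morphism of ad theories rather than merely a levelwise construction.)
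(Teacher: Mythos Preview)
The paper does not give a proof of this proposition at all: it is stated with a \qed\ and nothing more, so the authors are treating the verification as routine. Your outline is correct and is exactly the kind of argument one would write out to justify that \qed; in particular you have correctly identified that the only point with any content is matching the duality condition of Definition~\ref{R-ad}(b) with that of Definition~\ref{geomad}(b), and your observation that $\bar\varphi_*(\mathbf i)$ recovers the class $\zeta_{\sigma,o}$ (since the extended Alexander--Whitney map restricted to $W_0$ is the ordinary Alexander--Whitney diagonal) is the right reason this works. One small remark: for well-behavedness you do not actually need the topological cofibration hypothesis to get split monomorphisms on chains, since $S_n(\widetilde X_\tau)\hookrightarrow S_n(\widetilde X_\sigma)$ is already the inclusion of a free summand whenever $\widetilde X_\tau\to\widetilde X_\sigma$ is injective (the complementary summand being generated by the singular simplices not landing in $\widetilde X_\tau$); the cofibration condition on $X$ is used only to guarantee that $C_{\partial\sigma}$ really computes $S_*(\widetilde X_{\partial\sigma})$, as you note.
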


\section{Example: Quadratic Poincar\'{e} ad theories}
\label{quad}

We use the notation of the previous section.

\begin{definition}
A {\it quasi-quadratic complex of dimension $n$} is a pair $(C,\psi)$ where 
$C$ is an object of $\D$ and $\psi$ is an element of
$( W\otimes_{\Z/2} (C^t\otimes_R C))_n$.
\end{definition}

\begin{definition}
\label{q}
We define a category 
$\A_R$ as follows.  The objects of $\A_R$ are the 
quasi-quadratic complexes.  
Non-identity morphisms $(C,\psi)\to (C',\psi')$ exist only when $\dim
(C,\psi)<\dim (C',\psi')$, in which case the morphisms are 
the $R$-linear chain maps  $f:C\to C'$.  
\end{definition}

$\A_R$ is a balanced $\Z$-graded category, where $i$ takes $(C, \psi)$ to
$(C, -\psi)$ and $\emptyset_n$ is the $n$-dimensional object for which $C$
is zero in all degrees.

A balanced pre $K$-ad $F$ has the form
\[
F(\sigma,o)=(C_\sigma,\psi_{\sigma,o}).
\]

\begin{definition}
$F$ is {\it closed} if, for each $\sigma$, the map
\[
\cl(\sigma)
\to
W\otimes_{\Z/2} (C_\sigma^t\otimes_R C_\sigma)
\]
which takes $\langle \tau,o\rangle$ to the image of
$\psi_{\tau,o}$ is a chain map. 
\end{definition}

Next we define a nonpositively graded complex of $\Z/2$-modules
\[
V_0\to V_{-1} \to \cdots
\]
by letting
\[
V_{-n}=\Hom_{\Z/2}(W_n,\Z[\Z/2]).
\]
There is an isomorphism
\[
W\otimes_{\Z/2} (C^t\otimes_R C)
\cong
\Hom_{\Z/2}(V,C^t\otimes_R C).
\]
The composite
\[
N:W\to \Z\to V
\]
induces a homomorphism
\[
N^*:W\otimes_{\Z/2} (C^t\otimes_R C)
\to
\Hom_{\Z/2}(W,C^t\otimes_R C)
\]
called the {\it norm map}.  We write $\N$ for the functor
\[
\A_R\to \A^R
\]
which takes $(C,\psi)$ to $(C,N^*(\psi))$.

\begin{definition}
$F\in\pre_R(K)$ is a {\it $K$-ad} if

(a)
it is balanced and closed and $C$ is well-behaved, and

(b)
$\N\circ F$ is a $K$-ad.
\end{definition}

\begin{thm}
\label{n13}
$\ad_R$ is an ad theory.
\end{thm}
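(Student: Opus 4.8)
The plan is to verify the seven conditions of Definition \ref{defad} for $\ad_R$, following the template already established for $\ad_{\STop}$, $\ad_{\pi,Z,w}$ and especially $\ad^R$ in Theorem \ref{Oct16}. Conditions (a)--(e) are formal: being balanced, closed, and well-behaved are conditions that can be checked cell by cell and are preserved by the relevant restriction and reindexing operations, and the condition that $\N\circ F$ be a $K$-ad is pulled back from the already-established functoriality of $\ad^R$; since $\N$ strictly commutes with $i$ (up to the sign conventions, as $N^*$ is $\Z/2$-equivariant) and with $\emptyset$, and sends trivial pre $K$-ads to trivial pre $K$-ads, parts (a)--(e) follow immediately. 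So, as in the previous proofs, the only parts requiring real work are the gluing axiom (f) and the cylinder axiom (g).

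For part (f), I would first establish a product lemma for $\ad_R$ analogous to Lemma \ref{prod2}: given a ring $R_i$ with involution and a $K_i$-ad $F_i$ with values in $\A_{R_i}$ for $i=1,2$, the appropriately sign-twisted external product $F_1\otimes F_2$ is a $(K_1\times K_2)$-ad with values in $\A_{R_1\otimes R_2}$. The quadratic external product is built from the same diagonal $\Delta:W\to W\otimes W$ used in the symmetric case, now applied to $W\otimes_{\Z/2}(C^t\otimes_R C)$; one checks that $\N$ is monoidal with respect to these products, i.e. $\N(F_1\otimes F_2)\cong \N(F_1)\otimes\N(F_2)$, so that the defining condition ``$\N\circ F$ is a $K$-ad'' for the product reduces to Lemma \ref{prod2}. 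Given this, the gluing argument copies the $\ad^R$ case verbatim: by induction reduce to the case where $K$ is a ball structure on the $n$-disk with a single $n$-cell $\tau$ and $K'$ a subdivision agreeing with $K$ on the boundary, define $E(\tau,o)=(D_\tau,\kappa_{\tau,o})$ with $D_\tau=\colim_{\sigma\in K'} C_\sigma$ and $\kappa_{\tau,o}$ the sum over top cells of $K'$ of the evident composites in $W\otimes_{\Z/2}(C_\sigma^t\otimes_R C_\sigma)\to W\otimes_{\Z/2}(D_\tau^t\otimes_R D_\tau)$, and then invoke Proposition \ref{f7}(ii) for closedness and Proposition \ref{superglue} for the Poincar\'e duality condition (b)---here applied to $\N\circ E$, which is the glued-up symmetric ad, so one is genuinely reusing the symmetric result rather than reproving it.

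For part (g), I would define the cylinder functor using the same quadratic-analogue trick as in Section \ref{geom} and Theorem \ref{Oct16}: construct a specific $I$-ad $G$ with values in $\A_\Z$ by taking the cells $0,1$ to $(\Z,\epsilon)$ (with $\epsilon$ from the augmentation, suitably interpreted quadratically via the norm map, so that $\N(G)$ is the symmetric $I$-ad $G$ from the proof of \ref{Oct16}) and the cell $I$ to $(C_*(\Delta^1),\psi)$ for an appropriate quadratic structure $\psi$ lifting the symmetric one; then set $J(F)=F\otimes G$ on objects $(\sigma\times I,o\times o')$ and extend on morphisms exactly as in the proof of Theorem \ref{Sep30}. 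Since $J(F)\cong F\otimes G$ it is an ad by the product lemma, it takes trivial ads to trivial ads because $G$ restricted to $0$ and $1$ is (quadratically) trivial in the appropriate sense, and the restrictions to $K\times 0$ and $K\times 1$ are the prescribed composites.

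I expect the main obstacle to be the correct definition of the quadratic $I$-ad $G$ and the verification that it behaves well under $\N$. Unlike the symmetric case, where one simply uses the extended Alexander--Whitney map landing in $C_*(\Delta^1)\otimes C_*(\Delta^1)$, the quadratic structure lives in $W\otimes_{\Z/2}(C_*(\Delta^1)^t\otimes C_*(\Delta^1))$, and one must produce a class there whose norm is the symmetric structure used in \ref{Oct16} and whose restrictions to the endpoints give the (quadratic) augmentation classes. Once that model is pinned down, everything else is a transcription of the symmetric arguments, with the single genuinely new input being the compatibility of the norm map $N^*$ with external products and with colimits---both of which are straightforward since $N^*$ is induced by a fixed map $N:W\to V$ of complexes of $\Z/2$-modules and all the functors involved are additive and commute with the relevant colimits.
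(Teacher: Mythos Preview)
Your treatment of parts (a)--(e) and of gluing (f) is fine and matches the paper: one defines $E(\tau,o)=(D_\tau,\kappa_{\tau,o})$ by colimit and summation of the $\psi_{\sigma,o'}$, and since $\N\circ E$ is then exactly the symmetric ad obtained by gluing $\N\circ F$, the Poincar\'e condition follows from Proposition~\ref{superglue} already proved for $\ad^R$.

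The genuine gap is in the cylinder (g). Your plan rests on a quadratic\,$\otimes$\,quadratic external product and on a quadratic $I$-ad $G$ whose norm is the symmetric $G$ of Theorem~\ref{Oct16}. Neither is available. The diagonal $\Delta:W\to W\otimes W$ you invoke produces the symmetric\,$\otimes$\,symmetric product; to multiply two quadratic structures one would need a $\Z/2$-equivariant chain map $V\to V\otimes V$, and Ranicki's framework does not supply one. More decisively, the proposed $G$ cannot exist even at an endpoint: a quadratic structure on $(\Z,0)$ is an integer $m$, and its norm is the symmetric structure $2m:\Z\to\Z$ (since $T$ acts trivially on $\Z\otimes\Z$, symmetrization is multiplication by $1+T=2$). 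Thus $N^*\psi$ can never equal the augmentation $\epsilon$, which corresponds to the identity $\Z\to\Z$. This is the chain-level shadow of the fact that the symmetrization $L_0(\Z)\to L^0(\Z)$ is far from surjective.

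The paper sidesteps this by using the \emph{mixed} product it defines just before the theorem: $(C,\varphi)\otimes(D,\psi)\in\A_{R_1\otimes R_2}$ for $(C,\varphi)\in\A^{R_1}$ symmetric and $(D,\psi)\in\A_{R_2}$ quadratic, built from Ranicki's diagonal $\Delta:V\to W\otimes V$. Lemma~\ref{n14} says this takes ads to ads; its proof is the homotopy-commutativity of
\[
\xymatrix{
W\ar[r]^N\ar[d]_\Delta & V\ar[d]^\Delta\\
W\otimes W\ar[r]^{1\otimes N} & W\otimes V
}
\]
together with Lemma~\ref{prod2}. With this in hand the cylinder $J(F)$ is formed using the \emph{symmetric} $I$-ad $G$ already constructed in the proof of Theorem~\ref{Oct16}, and no quadratic lift of $G$ is needed. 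Your instinct that ``the main obstacle is the correct definition of the quadratic $I$-ad $G$'' was exactly right; the resolution is that one does not construct it at all.
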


For the proof we need a product operation.  Ranicki (\cite[pages 
174--175]{MR560997}) defines a chain map 
\[
\Delta:V\to W\otimes V.
\]

\begin{definition}
(i) Let $R_1$ and $R_2$ be rings with involution.  
Let $(C,\varphi)$ be an object of $\A^{R_1}$ and let
$(D,\psi)$ be an object of $\A_{R_2}$.  Define
\[
(C,\varphi)\otimes (D,\psi)
\]
to be the following object of $\A_{R_1\otimes R_2}$:
\[
(C\otimes D,\omega),
\]
where $\omega$ is the element of 
\[
W\otimes_{\Z/2}((C\otimes D)^t
\otimes_{R_1\otimes R_2}
(C\otimes D))
\]
corresponding to the composite
\begin{multline*}
V\labarrow{\Delta}W\otimes V
\labarrow{\varphi\otimes \psi}
(C^t\otimes_{R_1} C)
\otimes
(D^t\otimes_{R_2} D)
\cong
(C\otimes D)^t
\otimes_{R_1\otimes R_2}
(C\otimes D).
\end{multline*}

(ii) Suppose given ball complexes $K_1$ and $K_2$, a pre 
$K_1$-ad $F_1$ of degree $k_1$ with values in $\A^{R_1}$, and a pre $K_2$-ad 
$F_2$ of degree $k_2$ with values
in $\A_{R_2}$.
Define a pre $(K_1\times K_2)$-ad $F_1\otimes F_2$ with values in
$\A_{R_1\otimes R_2}$
by
\[
(F_1\otimes F_2)(\sigma\times\tau,o_1\times o_2)
=
i^{k_2\dim\sigma}F_1(\sigma,o_1)\otimes F_2(\tau,o_2).
\]
\end{definition}

\begin{lemma}
\label{n14}
Suppose given ball complexes $K_1$ and $K_2$, a $K_1$-ad $F_1$ with values in 
$\A^{R_1}$, and a $K_2$-ad $F_2$ with values in $\A_{R_2}$.  Then $F_1\otimes 
F_2$ is a $(K_1\times K_2)$-ad. \qed
\end{lemma}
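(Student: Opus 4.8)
The plan is to verify, for the pre-$(K_1\times K_2)$-ad $F_1\otimes F_2$ with values in $\A_{R_1\otimes R_2}$, the two conditions in the definition of a $K$-ad with values in $\A_R$: that (a) it is balanced and closed and its underlying functor of chain complexes is well-behaved, and (b) $\N\circ(F_1\otimes F_2)$ is a $(K_1\times K_2)$-ad with values in $\A^{R_1\otimes R_2}$. Condition (a) is bookkeeping parallel to the proofs of Lemmas \ref{prod} and \ref{prod2}; condition (b) is the real content, and I would deduce it from Lemma \ref{prod2}.

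For (a): the underlying functor of chain complexes of $F_1\otimes F_2$ sends $\sigma\times\tau$ to $C^1_\sigma\otimes C^2_\tau$, and it is well-behaved by the same argument as for Lemma \ref{prod2}, since tensoring a strong monomorphism in $\M$ with any object of $\M$ is again a strong monomorphism and the colimits in Definition \ref{well2} commute with $\otimes$, so that the relevant maps are cofibrations by a pushout--product argument. The sign $i^{k_2\dim\sigma}$ in the definition of $F_1\otimes F_2$ is exactly what is needed for $F_1\otimes F_2$ to be balanced (cf.\ Remark \ref{n28}). Closedness of $F_1\otimes F_2$ follows from closedness of $F_1$ and $F_2$, using that $\Delta\colon V\to W\otimes V$ is a chain map and that, with signs, $\cl(\sigma\times\tau)\cong\cl(\sigma)\otimes\cl(\tau)$, so that the map of Example-\ref{gamma} type defining closedness at a product cell is a composite of chain maps.

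For (b), the key point is the identity
\[
\N\circ(F_1\otimes F_2)\;=\;F_1\otimes(\N\circ F_2)
\]
of pre-$(K_1\times K_2)$-ads with values in $\A^{R_1\otimes R_2}$, where the right-hand side is the symmetric-valued product of Lemma \ref{prod2}. Unwinding the definitions of the two products and of $\N$, this comes down to the equality of $\Z/2$-equivariant chain maps
\[
(1_W\otimes N)\circ\Delta\;=\;\Delta\circ N\colon W\longrightarrow W\otimes V,
\]
relating the diagonals $\Delta\colon W\to W\otimes W$ and $\Delta\colon V\to W\otimes V$ to the norm $N\colon W\to V$, which I would check directly from the explicit formulas of \cite[pages 174--175]{MR560997}. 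Granting the identity, $F_1$ is a symmetric $K_1$-ad by hypothesis and $\N\circ F_2$ is a symmetric $K_2$-ad because $F_2$ is a $K_2$-ad with values in $\A_{R_2}$; hence $F_1\otimes(\N\circ F_2)$, and therefore $\N\circ(F_1\otimes F_2)$, is a $(K_1\times K_2)$-ad with values in $\A^{R_1\otimes R_2}$ by Lemma \ref{prod2}, which is condition (b).

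The only step that is not routine is this compatibility of the two diagonal approximations with the norm map, so that is where I expect the work to be. I anticipate Ranicki's formulas give the identity on the nose; if instead they yield only a $\Z/2$-equivariant chain homotopy between $(1_W\otimes N)\circ\Delta$ and $\Delta\circ N$, the conclusion still follows, since the well-behaved and balanced conditions for $\N\circ(F_1\otimes F_2)$ do not see the structure map, its closedness already follows from that of $F_1\otimes F_2$ by composing with the chain map $N^{*}$, and the slant-product isomorphism of Definition \ref{R-ad}(b) depends on the structure map only through the homology class $\bar{\varphi}_*({\mathbf i})$, which is preserved by an equivariant chain homotopy.
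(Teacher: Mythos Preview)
Your proposal is correct and follows essentially the same route as the paper. The one difference worth noting is in how the key compatibility is obtained: rather than hoping for the strict equality $(1_W\otimes N)\circ\Delta=\Delta\circ N$ from Ranicki's explicit formulas and then falling back to a chain homotopy, the paper observes directly that the square
\[
\xymatrix{
W \ar[r]^N \ar[d]_\Delta & V \ar[d]^\Delta \\
W\otimes W \ar[r]^-{1\otimes N} & W\otimes V
}
\]
homotopy commutes, by the abstract reason that homotopy classes of chain maps out of $W$ into any chain complex $A$ are just $H_0(A)$ (since $W$ is a free resolution of $\Z$). Your fallback argument---that a homotopy suffices because the duality condition in Definition \ref{R-ad}(b) depends only on the homology class $\bar{\varphi}_*({\mathbf i})$---is exactly what the paper is using, so there is no gap.
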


\begin{proof}[Proof of Lemma \ref{n14}]
First observe that the set of homotopy classes
of chain maps from $W$ to a chain complex $A$ is the same as $H_0(A)$.
It follows that the diagram
\[
\xymatrix{
W
\ar[r]^N
\ar[d]_\Delta
&
V
\ar[d]^\Delta
\\
W\otimes W
\ar[r]^-{1\otimes N}
&
W\otimes V
}
\]
homotopy commutes.
The result follows from this and Lemma \ref{prod2}.
\end{proof}

The proof of Theorem \ref{n13} is now completely analogous to that of 
Theorem \ref{Oct16}.

\section{Gluing}
\label{glue}

Our goal in this section is to prove a result (Proposition \ref{superglue})
which completes the proofs of Theorems \ref{Oct6}, \ref{Oct16}, and \ref{n13}.
First we need some terminology.

Let $R$ be a ring with involution.  

Recall Definition \ref{j7}(v).
Let $\A$ be the $\Z$-graded category defined as follows.
The objects of dimension $n$ are pairs 
$(C,\zeta)$, where $C$ is an object of $\D$ and $\zeta$ is an 
$n$-dimensional element of $C^t\otimes_R C$. 
Non-identity morphisms $(C,\zeta)\to (C',\zeta')$ exist only when $\dim
(C,\zeta)< \dim (C',\zeta')$, in which case the
morphisms 
are the $R$-linear chain maps  $f:C\to C'$.
$i$ takes $(C,\zeta)$ to $(C,-\zeta)$ and $\emptyset_n$ is the $n$-dimensional
object for which $C$ is 0 in all degrees.

A balanced pre $K$-ad $F$ with values in $\A$ is {\it closed} if for each
$\sigma$ the elements $\zeta_{\tau,o}$ determine a chain map $\cl(\sigma)\to 
C_\sigma^t\otimes_R C_\sigma$.  $F$ is a {\it $K$-ad} if it is balanced and 
closed, $C$ is well-behaved, and the slant product with $\zeta_{\sigma,o}$ is 
an isomorphism
\[
H^*(\Hom_R(C_\sigma,R))\to H_{\dim \sigma-\deg F-*} 
(C_\sigma/C_{\partial\sigma})
\]
for each $\sigma$.

\begin{definition}
A {\it Poincar\'e pair} is a morphism $(C,\zeta)\to (D,\omega)$ in $\A$ with 
the property that the pre $I$-ad $G$ defined by $G(1)=(C,\zeta)$, 
$G(I)=(D,\omega)$ and $G(0)=\emptyset$ is an ad.
\end{definition}

\begin{definition} Let $K$ be a ball complex and 
$C:\Cell^\flat(K)\to\D$ a well-behaved functor.  Define $C_K\in \D$ 
to be $ \colim_{\sigma\in K}\, C_\sigma $.
\end{definition}

Now let $(L,L_0)$ be a ball complex pair such that
$|L|$ is an orientable homology manifold with boundary $|L_0|$, and fix an 
orientation for $|L|$. (For the proofs of Theorems \ref{Oct6}, \ref{Oct16}, and
\ref{n13} we only need the special case where $|L|$ is a PL ball).

\begin{definition}
Let $C:\Cell^\flat(L)\to\D$ be a well behaved functor and let 
\[
\nu:\cl\to C
\]
be a natural transformation.  Denote the value of $\nu$ on $\langle
\sigma,o\rangle$ by $\nu_{\sigma,o}$. Define $\nu_L\in C_L$ (resp., 
$\nu_{L_0}\in
C_{L_0}$) to be
\[
\sum_{(\sigma,o)}\, \nu_{\sigma,o},
\]
where $(\sigma,o)$ runs through the top-dimensional cells of $L$ (resp.,
$L_0$) oriented compatibly with $|L|$.
\end{definition}

\begin{prop}
\label{superglue}
Let $F$ be an $L$-ad and write $F(\sigma,o)=(C_\sigma, \zeta_{\sigma,o})$.
Then
\[
(C_{L_0},\zeta_{L_0})
\to
(C_L,\zeta_L)
\]
is a Poincar\'e pair.
\end{prop}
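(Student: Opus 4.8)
The plan is to reduce Proposition~\ref{superglue} to a statement about the chain-level behavior of the slant product under the colimits $C_L$ and $C_{L_0}$, and then to prove that statement by induction on the cells of $L$, exactly along the lines of the inductive arguments already used in Sections~\ref{geom}--\ref{quad}. The first step is to recall what must be checked: the pre $I$-ad $G$ with $G(1)=(C_{L_0},\zeta_{L_0})$, $G(I)=(C_L,\zeta_L)$ and $G(0)=\emptyset$ is an ad precisely when (i) $G$ is closed, i.e.\ $\partial\zeta_L$ equals (up to the sign $(-1)^{\deg F}$) the image of $\zeta_{L_0}$ under $C_{L_0}\to C_L$, and (ii) the slant product with $\bar\zeta_L$ is an isomorphism
\[
H^*(\Hom_R(C_L,R)) \to H_{\dim L-\deg F-*}(C_L/C_{L_0}).
\]
Part (i) is essentially formal: since $F$ is a closed $L$-ad, the elements $\zeta_{\sigma,o}$ assemble into a chain map $\cl(\sigma)\to C_\sigma^t\otimes_R C_\sigma$ for each $\sigma$, and summing over the top cells of $L$ with compatible orientations, the boundary contributions from interior codimension-one cells cancel in pairs (because $|L|$ is a homology manifold, each such cell is a face of exactly two top cells with opposite induced orientations), leaving exactly the contribution of the top cells of $L_0$; this is the same bookkeeping that appears in the definition of $\nu_L$ and $\nu_{L_0}$.

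The real content is part (ii). Here I would set up a Mayer--Vietoris / excision argument: for a top cell $\sigma$ of $L$, Proposition~\ref{f5} and the duality built into Definition~\ref{geomad}(b) (resp.\ \ref{R-ad}(b)) give that slant product with $\zeta_{\sigma,o}$ is an isomorphism $H^*(\Hom_R(C_\sigma,R))\to H_{*}(C_\sigma/C_{\partial\sigma})$. Because $C$ is well-behaved, the quotients $C_\sigma/C_{\partial\sigma}$ fit into short exact sequences of complexes, and $C_L/C_{L_0}$ is built from the $C_\sigma/C_{\partial\sigma}$ for $\sigma\notin L_0$ by iterated (homotopy) pushouts; dually, $\Hom_R(C_L,R)$ is built from the $\Hom_R(C_\sigma,R)$ by the corresponding limit diagram. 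The slant product with $\zeta_L$ is compatible with these decompositions (this is where the chain map $\cl\to C$ and the naturality of $\setminus$ in Definition~\ref{slant} are used), so one gets a map of long exact sequences and concludes by the five lemma, inducting on the number of cells. The orientability/homology-manifold hypothesis on $|L|$ is what makes the local duality statements glue: at an interior cell $\sigma$ the link is a homology sphere, at a boundary cell a homology disk, and this is exactly what Proposition~\ref{f5} encodes.

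The main obstacle I anticipate is keeping the signs and the degree shifts consistent throughout the induction---the factor $(-1)^{\deg F}$ in the closedness condition, the sign $(-1)^{|x||\alpha|}$ in the slant product, and the incidence-number signs coming from $\cl$ all have to line up so that the comparison maps of exact sequences genuinely commute rather than commute-up-to-sign. A secondary technical point is that $C_L$ and $C_{L_0}$ must be verified to lie in $\D$ (homotopy finite), which follows because each pushout of homotopy-finite complexes along a cofibration is homotopy finite, using well-behavedness as in the proof of Theorem~\ref{Oct6}. Once the signs are pinned down, the five-lemma induction is routine, and the case where $|L|$ is a PL ball---the only case actually needed for Theorems~\ref{Oct6}, \ref{Oct16} and \ref{n13}---is the base of that induction with $L_0$ the boundary sphere.
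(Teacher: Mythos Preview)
Your treatment of closedness (part (i)) is fine; the real difficulty is part (ii), and there your sketch has a genuine gap. You claim that ``the slant product with $\zeta_L$ is compatible with these decompositions,'' meaning the pushout decomposition of $C_L/C_{L_0}$ by the pieces $C_\sigma/C_{\partial\sigma}$ and the limit decomposition of $\Hom_R(C_L,R)$ by the $\Hom_R(C_\sigma,R)$. But $\zeta_L=\sum_\sigma \zeta_{\sigma,o'}$ mixes all top cells at once: the slant product $\setminus\zeta_L:\Hom_R(C_L,R)\to C_L/C_{L_0}$ does \emph{not} restrict to a map $\Hom_R(C_\tau,R)\to C_\tau/C_{\partial\tau}$ for an individual top cell $\tau$, because a cochain supported on $C_\tau$ is paired against the pieces $\zeta_{\sigma,o'}$ for $\sigma\neq\tau$ as well. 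So there is no commuting ladder of long exact sequences to which to apply the five lemma, and an induction on the number of cells breaks down at the very first step. (Your aside that the PL-ball case is the ``base'' of such an induction is also off: that case is the \emph{application}, not the starting point, and intermediate subcomplexes in a cell-by-cell induction are not homology manifolds in general.)

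The paper resolves exactly this mismatch by inserting an intermediate object. It observes that $\Hom_R(C_L,R)=\Nat_R(C,\underline{R})$ is an inverse limit over the cells of $L$, and it defines a ``local slant product'' $\Upsilon:\Nat_R(C,\underline{R})\to\Nat(\cl,C)$ which \emph{does} restrict cell-by-cell, since $\Upsilon$ sends $\nu$ to the natural transformation $\langle\sigma,o\rangle\mapsto \nu\setminus\zeta_{\sigma,o}$. The key lemma (Lemma~\ref{f3}) then identifies $H_*(\Nat(\cl,C))$ with $H_{*+\dim|L|}(C_L/C_{L_0})$ via the ``sum over top cells'' map; this is where the homology-manifold hypothesis is actually used, through the dual-cell / star--link structure of Appendix~A, and it is proved by filtering the functor rather than the ball complex. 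With both sides now expressed as (homotopy) limits over the same diagram, the restrictions $\Upsilon|_\sigma$ are isomorphisms on homology precisely because $F$ is an $L$-ad, and a comparison of the resulting $\lim^p$ spectral sequences finishes the argument. The moral is that one must first convert the colimit side $C_L/C_{L_0}$ into a limit-type description before any cellwise comparison with $\Hom_R(C_L,R)$ can be set up.
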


\begin{remark}
\label{f4}
The corresponding statements for the ad theories $\ad_{\pi,Z,w}$, $\ad^R$ and
$\ad_R$ are consequences of this.
\end{remark}

The rest of this section is devoted to the proof of Proposition
\ref{superglue}.  What we need to show is that the slant product
\[
H^*(\Hom_R(C_L,R))\to H_{\dim |L|-\deg F-*}(C_L/C_{L_0})
\]
is an isomorphism.

The first step in the proof is to give an alternate description of
$H_{\dim |L|-*}(C_L/C_{L_0})$.

Let 
\[
B:\Cell^\flat(L)\to \D
\]
be a well-behaved functor and consider the chain complex 
\[
\Nat(\cl,B)
\]
of natural transformations of graded abelian groups; the differential is 
given by
\[
\partial(\nu)=\partial\circ \nu-(-1)^{|\nu|}\nu\circ \partial.
\]
Define 
\[
\Phi:\Nat(\cl,B)\to 
B_L/B_{L_0}
\]
by 
\[
\Phi(\nu)=\nu_L.
\]
Then $\Phi$ is a chain map by Proposition \ref{f7}(ii); note that $\Phi$ 
increases degrees by $\dim |L|$.

\begin{lemma} {\rm (cf.\ \cite[Digression 3.11]{MR1026874})}
\label{f3}
$\Phi$ induces an isomorphism 
\[
H_*(\Nat(\cl,B))
\to
H_{*+\dim |L|}(B_L/B_{L_0})
\]
for every well-behaved $B:\Cell^\flat(L)\to \D$.
\end{lemma}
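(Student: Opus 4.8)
The plan is to prove the lemma by induction on the number of cells of $L$, using the standard ``cellular'' devissage that is familiar from the theory of ball complexes (as in \cite[Chapter II]{MR0413113}). First I would handle the base case, where $L$ consists of a single cell $\sigma$ (so $|L|$ is a single ball and $L_0$ is its boundary sphere). In that case $\Nat(\cl,B)$ is, up to the sign conventions packaged into $\cl$, just $\Hom$ of the cellular chains of a ball into $B_\sigma$, and one checks directly that $\Phi$ sends the fundamental cochain to $\nu_L = \nu_{\sigma,o}$, the image of the top cell; since the cellular chain complex of a ball is acyclic except at the top, $H_*(\Nat(\cl,B))$ is concentrated in the appropriate degree and $\Phi$ is seen to be an isomorphism onto $H_*(B_L/B_{L_0})$ there. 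This is really the assertion that a ball complex structure on a ball computes the (relative) homology of the ball correctly, repackaged through the functor $B$ and the well-behavedness hypothesis (which guarantees that the relevant colimits agree with homotopy colimits, so that $B_L/B_{L_0}$ has the homology one expects).

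For the inductive step I would pick a top-dimensional cell $\sigma$ of $L$ that is not in $L_0$, and write $L' = L \setminus \{\sigma\}$ (a subcomplex), together with the subcomplex $L'' \subset L'$ consisting of the proper faces of $\sigma$ together with whatever of $L_0$ meets them — arranged so that $(L', L_0 \cup L'')$ and $(\{\sigma\} \cup \partial\sigma, \partial\sigma)$ fit into a decomposition of $(L, L_0)$. The point is that removing $\sigma$ produces a short exact sequence of chain complexes
\[
0 \to \Nat(\cl_{\bar\sigma}, B) \to \Nat(\cl_L, B) \to \Nat(\cl_{L'}, B) \to 0
\]
where $\bar\sigma$ denotes the subcomplex generated by $\sigma$ (the first map is ``extend by zero'', the second is restriction), and a matching short exact sequence of the target complexes $B_L/B_{L_0}$ built from well-behavedness (the colimits split off the contribution of $\sigma$ as a cofiber). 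The map $\Phi$ is compatible with both sequences, so comparing the two long exact sequences and invoking the inductive hypothesis on $L'$ and the base case on $\bar\sigma$, the five lemma finishes the step. One has to be a little careful that $|L'|$ is still a homology manifold with the right boundary — this is where one uses that $\sigma$ was a top cell and standard facts about ball complexes on manifolds — but orientability of $|L|$ gives a consistent orientation throughout, so the sign bookkeeping in the definitions of $\nu_L$ and $\nu_{L'}$ matches up.

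The main obstacle is not the inductive mechanism but the verification that the colimit $B_L/B_{L_0}$ really does compute the correct homotopy-theoretic invariant, i.e.\ that all the pushouts occurring when one builds $C_L$ out of the $C_\sigma$ are homotopy pushouts. This is exactly what the well-behavedness hypothesis on $B$ is designed to deliver (cofibrations into each $B_\sigma$ and cofibrations out of the latching objects $B_{\partial\sigma}$), so the real work is in organizing the bookkeeping — choosing the order in which to attach cells and checking that the short exact sequences above are genuinely exact at the chain level — rather than in any deep input. I expect the cleanest route is to follow the proof of \cite[Digression 3.11]{MR1026874} closely, since that reference proves the analogous statement in essentially the same setting; the only adaptation needed is to phrase everything in terms of the functor $\cl$ and the category $\D$ as set up in this paper, and to carry the relative version (with $L_0$) rather than the absolute one.
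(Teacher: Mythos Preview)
Your approach --- inducting on the cells of $L$ rather than filtering $B$ --- is genuinely different from the paper's, but as written it has two real gaps.

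The more serious one is the claim that $|L'|=|L|\setminus\mathrm{int}(\sigma)$ is again a homology manifold with boundary.  This is exactly where the argument breaks.  If $\sigma$ has faces in $L_0$ (which is unavoidable in general: take $|L|=[0,2]\times[0,1]$ with two square $2$-cells), then removing $\mathrm{int}(\sigma)$ produces a space with corners where $\partial\sigma$ meets $|L_0|$; it is not a manifold with boundary, and the definition of $\Phi$ (which requires a coherent orientation of the top cells of $|L'|$) no longer makes sense.  You cannot simply restrict to interior top cells, since there need not be any, and once you have removed some cells you may be forced to remove boundary-touching ones.  So the inductive hypothesis cannot be invoked.

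The second gap is the short exact sequence on the $\Nat$ side.  The kernel of the restriction $\Nat(\cl_L,B)\to\Nat(\cl_{L'},B|_{L'})$ consists of those $\nu$ with $\nu_{\tau,o}=0$ for every $\tau\neq\sigma$; this is $B_\sigma$ (shifted by $\dim\sigma$), not $\Nat(\cl_{\bar\sigma},B|_{\bar\sigma})$.  Your proposed ``extend by zero'' map from $\Nat(\cl_{\bar\sigma},B|_{\bar\sigma})$ is not a chain map: if $\tau$ is a proper face of $\sigma$ and also a face of some other top cell $\sigma'$, then the $\cl$-part of the differential carries $\nu_{\tau,o}$ into the $\sigma'$-component, which you have set to zero.

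The paper avoids both problems by filtering $B$ rather than $L$.  One first replaces $B$ by a finitely generated model, then filters by the minimal dimension of a cell on which $B$ is supported, and then by internal degree; the associated graded pieces are ``atomic'' functors $A$ concentrated on the cells containing a fixed $\rho$ and in a single degree.  For such $A$ one computes $\Nat(\cl,A)$ directly as the cellular cochain complex of $(\st(\hat\rho),\lk(\hat\rho))$ in the dual cell structure, and the homology manifold hypothesis on $|L|$ enters exactly once, as the local statement $H^*(|L|,|L|-\hat\rho)\cong H^*(D^n,S^{n-1})$.  This is the content of \cite[Digression 3.11]{MR1026874}, and following that argument is indeed the cleanest route --- but it is not the cell-by-cell induction you sketched.
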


The proof is deferred to the end of this section.

Continuing with the proof of Proposition \ref{superglue}, we observe that
\begin{equation}
\label{f10}
\Hom_R(C_L,R)=\Nat_R(C,\underline{R}),
\end{equation}
where $\Nat_R$ denotes the chain complex of natural transformations of graded
$R$-modules and $\underline{R}$ denotes the constant functor with value $R$.
There is a slant product
\[
\Upsilon:\Nat_R(C,\underline{R})
\to
\Nat(\cl,C)
\]
which takes $\nu$ to the composite
\[
\cl\labarrow{\zeta}
C^t\otimes_R C
\labarrow{1\otimes \nu}
C^t\otimes_R \underline{R}
=
C^t
\]
(note that $C^t$ and $C$ are the same as functors to graded abelian groups).
The diagram 
\[
\xymatrix{
H_{-*}(\Nat_R(C,\underline{R}))
\ar[r]^-{H_*\Upsilon}
\ar[d]_=
&
H_{-*-\deg F}(\Nat(\cl,C))
\ar[d]_\cong^{\text{Lemma \ref{f3}}}
\\
H^*(\Hom_R(C_L,R))
\ar[r]
&
H_{\dim |L|-\deg F-*}(C_L/C_{L_0})
}
\]
commutes, so to prove Proposition \ref{superglue} it suffices to show that 
$\Upsilon$ is a homology isomorphism.

Next observe that 
\[
\Nat_R(C,\underline{R})
=
\lim_{\sigma\in L}\, \Nat_R(C|_{\Cell^\flat(\sigma)},\underline{R})
\]
and that
\[
\Nat(\cl,C)=\lim_{\sigma\in L}\, \Nat(\cl,C|_{\Cell^\flat(\sigma)}).
\]
Moreover, the natural maps
\[
\lim_{\sigma\in L}\, \Nat_R(C|_{\Cell^\flat(\sigma)},\underline{R}) 
\to 
\holim_{\sigma\in L}\, \Nat_R(C|_{\Cell^\flat(\sigma)},\underline{R})
\]
and
\[
\lim_{\sigma\in L}\, \Nat(\cl,C|_{\Cell^\flat(\sigma)})
\to
\holim_{\sigma\in L}\, \Nat(\cl,C|_{\Cell^\flat(\sigma)})
\]
are homology isomorphisms by 
\cite[Theorem 19.9.1(2)]{MR1944041} (using the fact that $C$ and $\cl$ are
well-behaved).
Thus there are spectral sequences 
\[
\text{lim}^p H_q(\Nat_R(C|_{\Cell^\flat(\sigma)},\underline{R}))
\Rightarrow
H_{q-p}(\Nat_R(C,\underline{R}))
\]
and 
\[
\text{lim}^p H_q(\Nat(\cl,C|_{\Cell^\flat(\sigma)}))
\Rightarrow
H_{q-p}(\Nat(\cl,C))
\]
(see 
\cite[Section XI.7]{MR0365573} for the construction; note that in the category
of chain complexes $H_*$ plays the role of $\pi_*$).
By \cite[Proposition XI.6.2]{MR0365573} we have
$\text{lim}^p=0$ for $p>\dim|L|$, so 
these spectral sequences converge strongly. 

The slant products
\[
\Upsilon|_\sigma: \Nat_R(C|_{\Cell^\flat(\sigma)},\underline{R})
\to
\Nat(\cl,C|_{\Cell^\flat(\sigma)})
\]
give a map of inverse systems and hence a map of spectral sequences. 
By equation \eqref{f10} and Lemma \ref{f3} the maps
\[
H_*(\Upsilon|_\sigma): 
H_*(\Nat_R(C|_{\Cell^\flat(\sigma)},\underline{R}))
\to
H_{*-\deg F} (\Nat(\cl,C|_{\Cell^\flat(\sigma)}))
\]
agree up to isomorphism with the slant products
\[
H^*(\Hom_R(C_\sigma,R))
\to
H_{\dim\sigma-*}(C_\sigma/C_{\partial\sigma})
\]
which are isomorphisms because $F$ is an ad.  Thus the $\Upsilon|_\sigma$ 
give an isomorphism at $E^2$. Since $\lim \Upsilon|_\sigma$ is $\Upsilon$ we 
see that $H_*(\Upsilon)$ is an isomorphism, which completes the proof of 
Proposition \ref{superglue}.

\begin{proof}[Proof of Lemma \ref{f3}]
The proof is similar to the proof of \cite[Digression 3.11]{MR1026874}.

First of all, the proof of \cite[Lemma 3.4]{MR1026874} adapts to our situation
to show that $B$ is weakly equivalent to a well-behaved functor which is 
finitely generated (that is, one which takes each $\sigma$ to a finitely 
generated complex).  The source and target of $\Phi$ both preserve weak
equivalences (see the argument at the top of page 71 in \cite{MR1026874}) and
so we may assume that $B$ is finitely generated.

Because $B$ is well-behaved, the source and target of $\Phi$ both have the 
property that they take short exact sequences of well-behaved functors to 
short exact sequences.  We give $B$ a decreasing filtration by letting the 
$i$-th filtration $B[i]$ take $\sigma$ to the sum of the images of 
$B_{\sigma'}\to B_{\sigma}$ with $\sigma'\subset\sigma$ and
$\dim\sigma'\geq i$.  Then the sequence
\[
0\to B[i+1]\to B[i]\to B[i]/B[i+1]\to 0 
\]
is a short exact sequence of well-behaved functors, so it suffices (by 
induction on $i$) to show that the lemma is true for the quotients 
$B[i]/B[i+1]$ when $0\leq i\leq n$.  Now each quotient $B[i]/B[i+1]$ is a 
direct sum
\[
\oplus_{\dim\rho=i}\, B[\rho]
\]
where $B[\rho]_\sigma$ is the image of $B_\rho\to B_\sigma$ if
$\rho\subset\sigma$ and 0 otherwise.  Next we give  $B[\rho]$ an
increasing filtration by letting the $j$-th filtration $B[\rho,j]$ be the 
functor which takes $\sigma$ to the part of $B[\rho]_\sigma$ in dimensions 
$\leq j$.
The sequence
\[
0\to B[\rho,j]\to B[\rho,j+1]\to B[\rho,j+1]/B[\rho,j]\to 0
\]
is a short exact sequence of well-behaved functors for each $j$.  Since $B$ is
finitely generated, it suffices to show that the lemma is true for the 
quotients $B[\rho,j+1]/B[\rho,j]$.  

Fix $\rho$ and $j$.  To lighten the notation let us denote
$B[\rho,j+1]/B[\rho,j]$ by $A$.

The functor $A$ takes $\rho$ to a chain complex which
consists of an abelian group (call it $\fA$) in dimension $j$ and 0 in all
other dimensions.   It takes every cell containing $\rho$ to this same chain
complex.  Let $M$ be the subcomplex of $L$ consisting of all cells which 
contain $\rho$ and their faces.
Let $N$ be the subcomplex of $M$ consisting of all cells which do not contain
$\rho$.  Then the chain complex 
$\Nat(\cl,A)$ is isomorphic to the 
cellular cochain complex 
$C^{j-*}(M,N;\fA)$.  Next we use results from the Appendix.  
Proposition \ref{f8} gives a ball complex structure on 
the pair $(|\st(\hat\rho)|,|\lk(\hat\rho)|)$.  
There is a bijection between the cells of the cells of  $M$ which are not in
$N$ and the cells of 
$|\st(\hat\rho)|$ which are not in $|\lk(\hat\rho)|$; this bijection preserves 
incidence numbers and therefore induces an isomorphism
\[
C^*(M,N;\fA)\cong C^*(|\st(\hat\rho)|,|\lk(\hat\rho)|;\fA).
\]

Now 
\[
H^*(|\st(\hat\rho)|,|\lk(\hat\rho)|;\fA)=H^*(|L|,|L|-\hat\rho;\fA).
\]
Thus $H^*(|\st(\hat\rho)|,|\lk(\hat\rho)|;\fA)$ is 0 if 
$*\neq \dim |L|$ or if $\rho$ is in $L_0$.  In the remaining case, we note that
$|\st(\hat\rho)|$ is a homology manifold with boundary $|\lk(\hat\rho)|$, and
thus (by Proposition \ref{f7}(i)) the map 
\[
H^*(|\st(\hat\rho)|,|\lk(\hat\rho)|;\fA)
\to
\fA
\]
which takes a cocycle to its value on the sum of the top-dimensional cells of
$|\st(\hat\rho)|$ is an isomorphism.

To sum up, we have shown that if $*\neq j-\dim |L|$ or if $\rho$ is in $L_0$
then
$H_*(\Nat(\cl,A))$ is 0, and that the map
\[
H_{j-\dim |L|}(\Nat(\cl,A))
\to
\fA
\]
which takes $\nu$ to $\nu_L$ is an isomorphism.  

Now if $\rho$ is not in $L_0$ then $A_{L_0}$ is 0, so 
$H_*(A_L/A_{L_0})$ is 
$\fA$ when $*=j$ and 0 otherwise.  This proves the lemma in this case.

If $\rho$ is in $L_0$ then 
$A_L=A_{L_0}$, so the domain and 
target of the map in the lemma are both 0.
\end{proof}

\section{Functoriality}
\label{fun}

We begin by considering symmetric Poincar\'e ad theories.  

Recall Definition \ref{j7}.  We will let $R$ vary in this section so we write
$\M_R$ instead of $\M$.  We want to make $\M_R$ a functor of $R$.  It would be
natural to attempt to do this as follows: if $p:R\to S$ is a homomorphism of 
rings with involution and $M$ is an object of $\M_R$ define $p_*M= S\otimes_R 
M$. Unfortunately this cannot be correct for two reasons.  First, $S\otimes_R 
M$ is not of the form $R\langle A\rangle$ and hence is not in $\M_S$.  A more
serious difficulty is that if $q:S\to T$ is another homomorphism of 
rings with involution then $(qp)_*M$ is isomorphic to, but not equal to, 
$q_*p_*M$.

A similar problem arises in algebraic $K$-theory, and Blumberg and Mandell have
given a solution (see the proof of Theorem 8.1 in \cite{BM})
which also works for our situation.  We redefine $\M_R$ 
by letting its objects be the sets in $\mathfrak S$; the morphisms are still
the $R$-module maps 
$R\langle A\rangle$ to $R\langle B \rangle$, which we think of as (possibly
infinite) matrices with values in $R$.
Given $p:R\to S$ and an object $A$ of $\M_R$ we define $p_*A=A$; for a morphism
$\alpha:A\to B$ in $\M_R$ we let $p_*\alpha$ be the matrix obtained by applying
$p$ to the entries of the matrix $\alpha$.

It now follows that $\A_R$ and $\ad^R$ are functors of $R$.  Quadratic
Poincar\'e ad theories can be dealt with in a similar way.

\begin{notation}
(i) Let $\R$ be the category of rings with involution.

(ii) Let $\ad_{\mathrm{sym}}$ be the 
functor from $\R$ to the category of ad-theories that takes $R$ to $\ad^R$ .

(iii) Let $\ad_{\mathrm{quad}}$ be the
functor from $\R$ to the category of ad-theories that takes $R$ to $\ad_R$.
\end{notation}

Next we consider functoriality of $\ad_{\pi,Z,w}$ (as redefined in Section
\ref{ss}). 

\begin{definition}
\label{Nov7.2}
(i) Let $\T$ be the category whose objects are the triples $(\pi,Z,w)$;  
the morphisms from 
$(\pi,Z,w)$ to $(\pi',Z',w')$ are pairs $(h,g)$, where $h:\pi\to \pi'$ is a 
homomorphism with $w=w'\circ h$ and $g$ is a $\pi$-equivariant map $Z\to Z'$.

(ii) Let $\rho:\T\to \R$ be the functor which takes $(\pi,Z,w)$ to
$\Z[\pi]$ with the $w$-twisted involution.
\end{definition}

For an object $(X,f,\xi,\Phi)$ of $\ad_{\pi,Z,w}$ we write $f^*Z$
for the pullback of $Z$ along $f$ (this was denoted $\widetilde{X}$ in earlier
sections).  A morphism in $\T$ induces a functor $\A_{\pi,Z,w}\to 
\A_{\pi',Z',w'}$ by taking $(X,f,\xi,\Phi)$ to $(X, \bar{g}f, \eta, \Psi)$, 
where $\bar{g}$ is the map $Z/\pi\to Z'/\pi'$ induced by $g$, $\eta$ 
corresponds to $\xi$ under the isomorphism 
\[
S_*(X,\Z^f)\cong S_*(X,\Z^{\bar{g}f})
\]
(see Definition \ref{Nov7.1}),
and $\Psi$ is determined by $\Phi$ together with the canonical map 
$f^*Z\to f^*\bar{g}^*Z'$.

With these definitions $\ad_{\pi,Z,w}$ is a functor of $(\pi,Z,w)$.  

\begin{notation}
Let
$\ad_{\mathrm{geom}}$ be the functor from $\T$ to the category of
ad-theories that takes $(\pi,Z,w)$ to $\ad_{\pi,Z,w}$.
\end{notation}

Finally, we note that 
$\Sig$ (defined in Section \ref{ss}) is a natural
transformation from $\ad_{\mathrm{geom}}$ to 
$\ad_{\mathrm{sym}}\circ \rho$.

\section{The cohomology theory associated to an ad theory}
\label{cohomology section}

Fix an ad theory.

For a ball complex $K$ with a subcomplex $L$, we will say that two 
elements $F,G$ of $\ad^k(K,L)$ are {\it bordant} if there is a 
$(K\times I,L\times I)$-ad which restricts to $F$ on $K\times 0$ and $G$ on 
$K\times 1$.

\begin{definition}
Let $T^k(K,L)$ be the set of bordism classes in $\ad^k(K,L)$.
\end{definition}

\begin{remark}
(i) $T^{k}(*)$ is the same as $\Omega_{-k}$.

(ii) For the ad theory in Example \ref{gamma}, $T^k(K,L)$ is $H^k(K,L;C)$.
\end{remark}

Our goal in this section is to show that $T^*$ is a cohomology theory.

We will define addition in $T^k(K,L)$ using the method of Section 
\ref{groups}.  First we need a generalization of the functor $\kappa$.

\begin{definition}
\label{n7}
Let 
\[
\kappa:\Cell(I\times K,(\{0,1\}\times K)\cup (I\times L))
\to
\Cell(K,L)
\]
be the isomorphism of categories which takes 
$I\times(\sigma,o)$ (where $I$
is given its standard orientation) to $(\sigma,o)$.  
\end{definition}

\begin{remark}
\label{n26}
$\kappa$ is incidence-compatible (Definition \ref{n25}(i)) so it induces a
bijection
\[
\kappa^*:\ad^k(K,L)\to \ad^{k+1}(I\times K,(\{0,1\}\times K)\cup (I\times L))
\]
by part (e) of Definition \ref{defad}.
\end{remark}

Now let $M$ and $M'$ be the ball complexes defined in Section 
\ref{groups}.  Lemma \ref{add}
generalizes to show that, given $F,G\in \ad^k(K,L)$, there is an
$H\in\ad^{k+1}(M\times K,M\times L)$ such that 
$(\lambda_1 \times \id)^*H=\kappa^*F$,
$(\lambda_2 \times id)^*H=\kappa^*G$, and $(\lambda_3\times\id)^*H$ and 
$(\lambda_4\times \id)^*H$ 
are trivial.  Then we define $[F]+[G]$ to be
\[
[(\kappa^{-1})^*(\lambda_5\times \id)^*H].
\]
The proof that this is well-defined and that 
$T^k(K,L)$ is an abelian group is the same as in Section \ref{groups}.

Next we show that $T^k$ is a homotopy functor.

Using the notation of \cite[page 5]{MR0413113}, let $Bi$ be the category whose 
objects are pairs of ball complexes and whose morphisms are composites of 
inclusions of subcomplexes and isomorphisms.  Let $Bh$ be the category with 
the same objects whose morphisms are homotopy classes of continuous maps of 
pairs.  

\begin{prop}
\label{n11}
For each $k$, the functor $T^k:Bi\to \Ab$ factors uniquely through $Bh$.
\end{prop}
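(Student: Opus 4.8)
The plan is to deduce Proposition~\ref{n11} from one invariance lemma for $T^{k}$ together with a mapping-cylinder presentation of the morphisms of $Bh$. The lemma is that \emph{restriction along an elementary collapse induces an isomorphism}: if the subcomplex pair $(K_{0},L_{0})$ is obtained from $(K,L)$ by a collapse, then the restriction $T^{k}(K,L)\to T^{k}(K_{0},L_{0})$ is an isomorphism. (This will cover, in particular, the inclusion of a pair into the mapping cylinder of a simplicial map, and the inclusion of either end of a ``subdivision cylinder''.) The presentation is the standard one: given a continuous map of pairs $\phi\colon(|K|,|L|)\to(|K'|,|L'|)$, simplicial approximation — after subdividing $(K,L)$ to $(K_{1},L_{1})$ and passing to simplicial subdivisions, see the appendix — replaces $\phi$ up to homotopy of maps of pairs by a simplicial map $\psi$, whose mapping cylinder $(M,M_{0})$ is a ball complex pair collapsing onto $(K',L')$ and containing $(K_{1},L_{1})$ as a subcomplex; a subdivision cylinder $(N,N_{0})$ on $|K|\times I$ (a ball complex collapsing onto each of its two ends) connects $(K_{1},L_{1})$ back to $(K,L)$. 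Thus in $Bh$ every $[\phi]$ factors along the chain
\[
(K',L')\hookrightarrow(M,M_{0})\hookleftarrow(K_{1},L_{1})\hookrightarrow(N,N_{0})\hookleftarrow(K,L),
\]
in which the first, third and fourth inclusions are collapses.

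Granting the lemma, I would define $\phi^{*}\colon T^{k}(K',L')\to T^{k}(K,L)$ to be the composite of the restriction maps along this chain, the three collapse-restrictions being inverted. Comparing two such choices through the mapping cylinder of a homotopy and applying the lemma again shows $\phi^{*}$ depends only on $[\phi]$; a similar comparison gives $(\phi\circ\psi)^{*}=\psi^{*}\circ\phi^{*}$ and shows $\phi^{*}$ agrees with the usual restriction map when $\phi$ underlies a morphism of $Bi$. This produces a functor $\bar T^{k}\colon Bh\to\Ab$ extending $T^{k}$, and it is unique because any extension must agree with $T^{k}$ on images of $Bi$-morphisms and must invert collapse inclusions, so its value on $[\phi]$ is forced by the displayed chain.

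To prove the lemma, induct on the number of elementary collapses and treat a single one, $K=K_{0}\cup\sigma$ with free face $\tau\subset\sigma$. A collar-absorption argument (Appendix) reduces this to two sub-statements. \emph{Cylinder invariance}: for every $(K,L)$ the two inclusions $(K,L)\hookrightarrow(K\times I,L\times I)$ induce the same, invertible map on $T^{k}$. This is easy: the Cylinder axiom, Definition~\ref{defad}(g), provides a natural section $J$ of the restriction map (it lands in the relative ads since it preserves trivial ads), so the map is onto; a $(K\times I,L\times I)$-ad is by definition a bordism between its two ends, so the two inclusions induce the same map; and $J$ followed by restriction is the identity on $T^{k}$ by one more use of \ref{defad}(d),(f). \emph{Subdivision invariance}: $T^{k}(K,L)\cong T^{k}(K',L')$ for a subdivision $K'$ of $K$ with induced $L'$.

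Subdivision invariance is the main obstacle, and it is where the Gluing axiom, Definition~\ref{defad}(f), enters. One first checks that gluing a $K'$-ad to a $K$-ad descends to a well-defined homomorphism $T^{k}(K',L')\to T^{k}(K,L)$: independence of the glued representative, and the fact that a bordism of $K'$-ads glues to a bordism of $K$-ads, both from \ref{defad}(f) applied to suitable product ball complexes. The crux is that this homomorphism is bijective; by induction on the number of subdivided cells one reduces to the case in which $|K|$ is a PL ball with a single top cell, and there the inverse and bijectivity are obtained by combining $J$ with a further application of \ref{defad}(f) to the subdivision cylinder over $|K|\times I$, inside which $K$ and $K'$ appear as residual subcomplexes so that restriction to them is computed by gluing. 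This parallels the subdivision invariance of mock bundles in \cite{MR0413113}, with the elementary ``restrict over the smaller cells'' operation there replaced by \ref{defad}(f); the one genuinely delicate point is that subdivision invariance and cylinder invariance, and their inductions, must be set up and carried out together.
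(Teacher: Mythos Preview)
Your proposal essentially reconstructs the argument of Buoncristiano--Rourke--Sanderson \cite{MR0413113}, whereas the paper's proof simply cites it: the paper observes that $\ad^k$ satisfies their axioms~E (extension along elementary expansions, which is Lemma~\ref{Dec8}) and~G (locality, which is part~(d) of Definition~\ref{defad}), and then invokes Proposition~I.6.1 and Theorem~I.5.1 of \cite{MR0413113} to obtain the $\Set$-valued factorization $T^k:Bh\to\Set$; the upgrade to $\Ab$ is immediate because the factored map on a general morphism is of the form $T^k(g)^{-1}\circ T^k(h)$ with $g,h$ in $Bi$, hence a composite of group homomorphisms. So the paper is a two-line verification of hypotheses, while you are attempting the whole BRS machinery from scratch.

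There is a genuine gap in your cylinder-invariance step. You establish that restriction $T^k(K\times I,L\times I)\to T^k(K,L)$ is onto (via $J$), that the two end-restrictions coincide, and that $\mathrm{res}\circ J=\mathrm{id}$ on $T^k(K,L)$. But none of this gives \emph{injectivity} of restriction, which you need for ``invertible''. What is missing is that $J\circ\mathrm{res}=\mathrm{id}$ on $T^k(K\times I,L\times I)$, i.e.\ that every $(K\times I)$-ad $H$ is bordant, \emph{as a $(K\times I)$-ad}, to $J(H|_{K\times 0})$. That requires producing an actual $(K\times I\times I)$-ad with prescribed boundary, and the phrase ``by one more use of \ref{defad}(d),(f)'' does not supply one: the inclusion $K\times\partial(I\times I)\hookrightarrow K\times I\times I$ is not a composite of elementary expansions (the $2$-cell $I\times I$ has no free face), so an extension argument needs an intermediate subdivision that you have not specified.

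A cleaner route, which is also what underlies the BRS citation, avoids your cylinder/subdivision reduction and proves collapse invariance directly from Lemma~\ref{Dec8}. Surjectivity of $\ad^k(K,L)\to\ad^k(K_1,L_1)$ along an elementary expansion is exactly Lemma~\ref{Dec8}. For injectivity on $T^k$, observe that if $(K_1,L_1)\hookrightarrow(K,L)$ is an elementary expansion adding cells $\sigma',\sigma$ (with $\sigma'$ the free face), then
\[
(K_1\times I)\cup(K\times\{0,1\})\ \hookrightarrow\ K\times I
\]
is again an elementary expansion, adding $\sigma'\times I$ and $\sigma\times I$ with $\sigma'\times I$ free in $\sigma\times I$. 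Hence a bordism in $\ad^k(K_1\times I,L_1\times I)$ between the restrictions of two $(K,L)$-ads, glued to those ads on $K\times\{0,1\}$, extends by Lemma~\ref{Dec8} to a bordism in $\ad^k(K\times I,L\times I)$. This gives bijectivity on $T^k$ for each elementary collapse without any separate cylinder or subdivision lemma, and is the form in which the paper (via \cite{MR0413113}) actually uses the extension property.
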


The functor $Bh\to \Ab$ given by the lemma will also be denoted by $T^k$.

For the proof of Proposition \ref{n11} we need a preliminary fact.

\begin{definition}(cf.\ \cite[page 5]{MR0413113})\
An inclusion of pairs $(K_1,L_1)\to (K,L)$ is an {\it elementary expansion} if

(a)
$L_1=L\cap K_1$, 

(b)
$K$ has exactly two cells (say $\sigma$ and $\sigma'$) that 
are not in $K_1$, with $\dim \sigma'=\dim \sigma-1$ and 
$\sigma'\subset\partial \sigma$, and 

(c)
$\sigma$ and $\sigma'$ are either both in $L$ or both not in $L$.
\end{definition}

\begin{lemma}
\label{Dec8}
If $(K_1,L_1)\to (K,L)$ is an elementary expansion then the restriction
\[
\ad^k(K,L)\to \ad^k(K_1,L_1)
\]
is onto.
\end{lemma}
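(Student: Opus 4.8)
The plan is to split into two cases according to part (c) of the definition of elementary expansion, depending on whether $\sigma$ and $\sigma'$ lie in $L$. \emph{Case 1: $\sigma,\sigma'\in L$.} Since $L$ is a subcomplex, $\sigma\in L$ forces every face of $\sigma$ into $L$, so every cell of $\overline{\sigma}$ other than $\sigma$ and $\sigma'$ lies in $L\cap K_1=L_1$; hence if $F\in\ad^k(K_1,L_1)$ is given, $F$ is trivial on every cell of $\overline{\sigma}$ that lies in $K_1$. I would extend $F$ to a pre $K$-ad $G$ by taking $G$ to be the trivial pre-ad on $\sigma$ and on $\sigma'$. Then $G$ restricts to the trivial $\overline{\tau}$-ad on $\overline{\sigma}$ and on $\overline{\sigma'}$, which are ads by part (b) of Definition \ref{defad}, and to $F|_{\overline{\tau}}$ on every other closed cell $\overline{\tau}$; so $G\in\ad^k(K)$ by part (d), $G$ restricts to the trivial pre $L$-ad by construction, and $G|_{K_1}=F$.

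\emph{Case 2: $\sigma,\sigma'\notin L$.} Since the cells of $K$ are those of $K_1$ together with $\sigma$ and $\sigma'$, we have $L\subseteq K_1$, so $L=L_1$ and any $K$-ad extending a $(K_1,L_1)$-ad is automatically a $(K,L)$-ad. Note that $\sigma'$ is a \emph{free} face of $\sigma$: it is not a face of any cell of $K$ other than $\sigma$, since any such cell would lie in $K_1$ and hence contain $\sigma'$, contradicting $\sigma'\notin K_1$. Write $d=\dim\sigma$ and let $\sigma''$ be the subcomplex of $K_1$ whose cells are the cells of $\overline{\sigma}$ other than $\sigma$ and $\sigma'$; then $|\sigma''|$ is a PL $(d-1)$-ball meeting $|\sigma'|$ in $\partial\sigma'=\partial\sigma''$. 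Given $F\in\ad^k(K_1,L_1)$, restrict it to a $\sigma''$-ad $F_{\sigma''}$ (using part (a)) and form the cylinder $J(F_{\sigma''})\in\ad^k(\sigma''\times I)$ using part (g).

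The geometric input I would use is a subdivision $K'$ of $K$ with the following properties: it agrees with $K$ away from $\overline{\sigma}$; it contains $K_1$ as a subcomplex (hence $K_1$ is a residual subcomplex of $K$); and on $|\overline{\sigma}|$ it is the image of the canonical ball complex on $\sigma''\times I$ under a PL identification carrying $\sigma''\times 0$ identically onto $\sigma''$ and $(\sigma''\times 1)\cup(\partial\sigma''\times I)$ onto $\sigma'$. This is the standard fact that a free face may be collared (cf.\ the collaring results recalled in the Appendix). Let $\phi$ be the resulting incidence-compatible $0$-isomorphism from the part of $K'$ supported on $|\overline{\sigma}|$ to the canonical ball complex $\sigma''\times I$. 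Transporting $J(F_{\sigma''})$ along $\phi$, and using that $J(F_{\sigma''})$ restricts to $F_{\sigma''}$ on $\sigma''\times 0$ by part (g), one gets a pre $K'$-ad $\widetilde{F}$ that equals $F$ on $K_1$ and equals $\phi^{*}J(F_{\sigma''})$ on the cells inside $\overline{\sigma}$ — the two descriptions agree on the overlap $\sigma''$. By part (d), applied to the ad $J(F_{\sigma''})$ and using part (e) for $\phi$, $\widetilde{F}$ restricts to an ad on each closed cell of $K'$, so $\widetilde{F}\in\ad^k(K')$; then part (f) produces a $K$-ad $G$ agreeing with $\widetilde{F}$ on every residual subcomplex, in particular $G|_{K_1}=\widetilde{F}|_{K_1}=F$. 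Since $L=L_1$, $G\in\ad^k(K,L)$, which completes Case 2.

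The formal steps above follow the patterns already used in the proofs of Lemma \ref{add} and Theorems \ref{Sep30} and \ref{Oct6}. The one genuinely new ingredient, and the step I expect to be the main obstacle, is the construction of the subdivision $K'$ together with the isomorphism $\phi$ — that is, realizing ``$\overline{\sigma}$ is a collar on $\sigma''$'' at the level of ball complexes and verifying that it fits together with the unchanged part of $K$.
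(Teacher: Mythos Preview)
Your proposal is correct and follows essentially the same route as the paper's proof. Both split into the trivial case $\sigma,\sigma'\in L$ and the main case, and in the main case both extend $F|_{\sigma''}$ to $\sigma''\times I$ via the cylinder axiom (g), transport this to a subdivision of $\overline{\sigma}$ realizing it as $\sigma''\times I$, and then apply gluing (f). The only difference is cosmetic: the paper localizes to $\overline{\sigma}$ (observing it suffices to extend $F|_A$ to a $\sigma$-ad, where $A$ is your $\sigma''$) while you phrase the subdivision globally on $K$; these are equivalent via part (d). Your anticipated obstacle, the existence of the PL identification carrying $\sigma''\times 0$ identically onto $\sigma''$ and the complementary boundary onto $\sigma'$, is exactly what the paper handles by citing Theorem~3.34 of Rourke--Sanderson: the pair $(\sigma,\sigma')$ is PL isomorphic to $(D^n,S^{n-1}_-)$, whence $(\sigma,|A|)\cong(D^n,S^{n-1}_+)\cong(S^{n-1}_+\times I,S^{n-1}_+\times 0)$, and pushing the ball complex $A$ through these identifications yields the required subdivision.
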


The proof is deferred to the end of this section.

\begin{proof}[Proof of Proposition \ref{n11}]
The functor $\ad^k$ satisfies axioms E and G on page 15 of 
\cite{MR0413113}; axiom E is Lemma \ref{Dec8} and axiom G is part (d) of
Definition \ref{defad}.  Now Proposition I.6.1 and Theorem I.5.1 of
\cite{MR0413113} show that 
\[
T^k:Bi\to \Set
\]
factors uniquely to give a functor $T^k:Bh\to \Set$.
Specifically (with the notation of Definition \ref{ball}) if $f:(|K'|,|L'|)\to 
(|K|,|L|)$ is a map of pairs then $T^k(f)$ is
defined to be $T^k(g)^{-1}T^k(h)$, where $g$ and $h$ are certain morphisms in
$Bi$.  But then $T^k(f)$ is a homomorphism, so we obtain a functor 
$T^k:Bh\to \Ab$.
\end{proof}

Next we observe that excision is an immediate consequence of part (e) of
Definition \ref{defad}.

The first step in constructing the connecting homomorphism is to construct a
suitable suspension isomorphism.

\begin{lemma}
$\kappa^*$ induces an isomorphism
\[
T^k L \to T^{k+1}(I\times L,\{0,1\}\times L).
\] 
\end{lemma}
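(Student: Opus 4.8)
The plan is to get the whole statement out of Remark~\ref{n26}, which already gives that $\kappa^*$ is a \emph{bijection} $\ad^k(L)\to\ad^{k+1}(I\times L,\{0,1\}\times L)$; the only thing left is to see that this bijection is compatible with the bordism relation and with the group structure. The one structural input I would isolate first is that $\kappa$ of Definition~\ref{n7} is natural in the pair on which it is defined, in particular with respect to the inclusions $L\times 0\hookrightarrow L\times I$ and $L\times 1\hookrightarrow L\times I$; this is clear because $\kappa$ is literally ``forget the $I$-coordinate'', which commutes strictly with the subcomplex inclusions in question, so the corresponding square of reindexing maps commutes (the sign attached to $\kappa^*$ by Definition~\ref{n25}(ii) being the same on both sides). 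Applying Remark~\ref{n26} to the pair $(L\times I,\emptyset)$ and using the evident identifications $I\times(L\times I)=(I\times L)\times I$ and $\{0,1\}\times(L\times I)=(\{0,1\}\times L)\times I$ of ball complexes, we get a bijection
\[
\kappa^*\colon \ad^k(L\times I)\to\ad^{k+1}\bigl((I\times L)\times I,(\{0,1\}\times L)\times I\bigr),
\]
and naturality says the restriction of $\kappa^*W$ to $(I\times L)\times 0$ (resp.\ $(I\times L)\times 1$) is $\kappa^*$ of the restriction of $W$ to $L\times 0$ (resp.\ $L\times 1$).

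Given this, the descent to bordism classes is purely formal in both directions. If $W\in\ad^k(L\times I)$ is a bordism from $F$ to $G$, then $\kappa^*W$ is, by the displayed bijection and the naturality statement, a bordism from $\kappa^*F$ to $\kappa^*G$ in the sense used to define $T^{k+1}(I\times L,\{0,1\}\times L)$; so $\kappa^*$ descends to a well-defined map $T^kL\to T^{k+1}(I\times L,\{0,1\}\times L)$, and it is surjective because $\kappa^*$ is already onto on ads. For injectivity, a bordism $V$ between $\kappa^*F$ and $\kappa^*G$ is of the form $\kappa^*W$ for a unique $W\in\ad^k(L\times I)$, again by Remark~\ref{n26}; naturality then forces $W|_{L\times 0}=F$ and $W|_{L\times 1}=G$ (using that $\kappa^*$ is injective on ads), so $W$ witnesses $[F]=[G]$ in $T^kL$. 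Finally, $\kappa^*$ is a homomorphism because the addition on $T^k(K,L)$ was defined in the paragraph before Proposition~\ref{n11} purely in terms of $\kappa^*$ and the reindexing maps $(\lambda_i\times\id)^*$ attached to the ball complexes $M$, $M'$, and the suspension $\kappa^*$ commutes with all of these by the functoriality of reindexing built into Definition~\ref{n25}(ii) and part~(e) of Definition~\ref{defad}; the bookkeeping is the same as in Section~\ref{groups}.

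The only point that needs genuine care---the part I expect to be the main obstacle---is keeping the ball-complex identifications honest: one must verify that $I\times L\times I$, $(I\times L)\times I$, and the analogous products with $M$ and $M'$ really are equal (or canonically isomorphic by an incidence-compatible reindexing) ball complexes, so that Remark~\ref{n26} applies verbatim and the various restrictions agree on the nose rather than merely up to homotopy. Once that is pinned down, no homotopy-theoretic input is needed and the rest is manipulation of the definitions.
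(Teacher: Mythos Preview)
Your argument is correct and follows the same overall line as the paper's: both use Remark~\ref{n26} for the bijection at the level of ads and then verify compatibility with bordism and with addition. You are more explicit than the paper about why the bijection descends to $T$-groups (the paper compresses this into the single sentence ``$\kappa^*$ is a bijection by Remark~\ref{n26}''), while the paper is more explicit about the homomorphism step: rather than appealing abstractly to functoriality of reindexing, it writes down the evident incidence-compatible isomorphism
\[
\theta:\Cell\bigl(M\times I\times L,(M\times\{0,1\}\times L)\bigr)\to\Cell(M\times L)
\]
and checks directly that $\theta^*(H)$ witnesses $[\kappa^*F]+[\kappa^*G]=\kappa^*([F]+[G])$. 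Your ``commutes by functoriality'' is exactly the content of this $\theta$, so the two arguments amount to the same thing; the paper's version has the advantage of making transparent that there are two different $\kappa$'s in play (the one for $L$ and the one appearing in the definition of addition) and that they interact via $\theta$.
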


\begin{proof}
$\kappa^*$ is a bijection by Remark \ref{n26}.
To see that it is a homomorphism, let $F,G\in \ad^k(L)$ and let $H\in
\ad^{k+1}(M\times K,M\times L)$ be as in the definition of addition.
Let 
\[
\theta:
\Cell(M\times I\times K,(M\times\{0,1\}\times K)\cup (M\times I\times L))
\to
\Cell(M\times K,M\times L)
\]
be the evident isomorphism. Then $\theta^*(H)$ is an $(M\times I \times K)$-ad
with the property that $(\lambda_1 \times \id)^*\theta^*(H)=\kappa^*\kappa^*F$,
$(\lambda_2 \times \id)^*\theta^*(H)=\kappa^*\kappa^*G$, and
$(\lambda_3\times\id)^*\theta^*(H)$ and 
$(\lambda_4\times\id)^*\theta^*(H)$ are trivial.  Thus $[\kappa^*F]+[\kappa^*G]$
is $[(\kappa^{-1})^*(\lambda_5\times \id)^*\theta^*(H)]$, which simplifies to
$[(\lambda_5\times \id)^*H]$, and this is $\kappa^*[F+G]$.
\end{proof}

\begin{remark}
The statement of the lemma might look strange in view of the fact that, for a
space $X$, $(I\times X)/(\{0,1\}\times X)$ is homotopic to $\Sigma X\vee S^1$ 
rather than $\Sigma X$.  But if $E$ is a cohomology theory 
then $E^k(X)\cong \tilde{E}^k(X\vee S^0)$, so the lemma agrees with the
expected behavior of the suspension map for unreduced cohomology theories.
\end{remark}

Now observe that excision gives an isomorphism
\[
T^k(I\times L, \{0,1\}\times L)
\labarrow{\cong}
T^k((1\times K)\cup (I\times L), (1\times K) \cup (0 \times L))
\]
(where $(1\times K) \cup (I\times L)$ is thought of as a subcomplex of $I\times 
K$) and that the map 
\[
(|(1\times K)\cup (I\times L)|, |(1\times K) \cup (0 \times L)|)
\to
(|I\times K|,|(1\times K) \cup (0 \times L)|)
\]
is a homotopy equivalence of pairs.  It follows that the restriction map 
\[
T^k(I\times K,(1\times K) \cup (0 \times L))
\to
T^k(I\times L, \{0,1\}\times L)
\]
is an isomorphism.

\begin{definition}
The connecting homomorphism
\[
T^k(L)\to T^{k+1}(K,L)
\]
is the negative of the composite
\[
T^k(L)\labarrow{\kappa^*}
T^{k+1}(I\times L, \{0,1\}\times L)
\stackrel{\cong}{\leftarrow}
T^{k+1}(I\times K,(1\times K) \cup (0 \times L))
\to
T^{k+1}(K,L)
\]
where the last map is induced by the inclusion 
\[
(0\times K,0\times L)\to
(I\times K,(1\times K) \cup (0 \times L)).
\]
\end{definition}

For an explanation of the sign see the proof of Proposition \ref{n12}(ii).

\begin{thm} 
\label{theorem 4.2} 
$T^*$ is a cohomology theory.
\end{thm}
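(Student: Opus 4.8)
The plan is to verify the Eilenberg--Steenrod axioms for $T^*$ on the category $Bh$ of ball complex pairs and homotopy classes of maps, using the pieces already assembled in this section. The homotopy invariance, additivity and excision axioms are essentially done: homotopy invariance is Proposition \ref{n11}, excision was noted to follow immediately from part (e) of Definition \ref{defad}, and the wedge/additivity axiom follows because an ad on a disjoint union of ball complexes is exactly a tuple of ads on the pieces (so $\ad^k$, and hence $T^k$, carries finite disjoint unions to products; for ball complexes only finite disjoint unions occur). The connecting homomorphism $\delta: T^k(L)\to T^{k+1}(K,L)$ has just been defined as (the negative of) a composite of the suspension isomorphism $\kappa^*$, the excision/homotopy isomorphism, and the restriction along $(0\times K, 0\times L)\hookrightarrow (I\times K,(1\times K)\cup(0\times L))$. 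One should first check naturality of $\delta$ with respect to maps of pairs; this is routine from the naturality of each of the three constituent maps, together with the fact (implicit in the construction via \cite{MR0413113}) that $T^*$ of a map is built from restrictions and reindexing isomorphisms, which commute with all the maps in sight.

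The substantive point is exactness of the sequence
\[
\cdots \to T^k(K,L)\to T^k(K)\to T^k(L)\labarrow{\delta} T^{k+1}(K,L)\to\cdots .
\]
I would prove this by the standard device of reducing to a mapping-cylinder / cofiber situation. Since $T^*$ is already a homotopy functor, we may replace the inclusion $L\hookrightarrow K$ by any homotopy-equivalent cofibration; in the ball-complex world we may work directly with the subcomplex inclusion and use the ball complex $I\times K$ with the subcomplex $A=(1\times K)\cup (0\times L)\cup(0\times K)=(1\times K)\cup(0\times K)$, wait---more precisely I would use the pair $(I\times K,\,(1\times K)\cup(I\times L))$, whose total space deformation retracts onto $K$ (via projection to $0\times K$) and which contains $(0\times K,\,0\times L)$ as a residual-type subpair. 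The key algebraic input is the same gluing/bordism technology used in Section \ref{groups}: a class in $T^k(K)$ restricts to $0$ in $T^k(L)$ precisely when there is a $(L\times I)$-ad realizing that bordism, and one uses part (d) (restriction to cells) and part (f) (gluing over subdivisions) of Definition \ref{defad} to glue such a bordism onto the given $K$-ad along $L$, producing a $(K,L)$-ad whose image in $T^k(K)$ is the original class; this gives exactness at $T^k(K)$. Exactness at $T^k(L)$ (i.e. $\ker\delta = \operatorname{im}(T^k(K)\to T^k(L))$) is dual: a class in $T^k(L)$ that dies under $\delta$ extends over $I\times K$ rel the relevant subcomplex, and restricting the extension to $0\times K$ gives the desired lift. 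Exactness at $T^{k+1}(K,L)$ follows similarly by unwinding the definition of $\delta$ through the excision isomorphism: a relative class pushed into $T^{k+1}(K)$ is nullbordant exactly when it comes from a class on $L$ via $\delta$, again by a gluing argument.

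I expect the main obstacle to be bookkeeping rather than conceptual difficulty: one must keep careful track of the isomorphism $\kappa^*$, the excision identifications, and the various sign conventions (the connecting map is defined with a sign, and the reindexing axiom (e) introduces $(-1)^k$ factors), so that the three exactness verifications glue together consistently. The geometric content---extending and restricting ads along subcomplexes of $I\times K$---is handled uniformly by parts (d), (e), (f) and the cylinder axiom (g) of Definition \ref{defad}, exactly as in Section \ref{groups}, so no new foundational work is needed; the care goes into checking that the deformation retraction $I\times K\simeq K$ and the excision isomorphism are compatible with the $Bi\to Bh$ machinery of Proposition \ref{n11}. Once these are in place, the Eilenberg--Steenrod axioms are satisfied and $T^*$ is a cohomology theory.
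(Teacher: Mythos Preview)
Your overall plan is right and matches the paper's: the substantive content is exactness, and the other axioms are already in hand.  But there is a real gap in the mechanism you invoke for the key extension step.

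For exactness at $T^k(K)$ you correctly want to combine $F\in\ad^k(K)$ with a nullbordism $H\in\ad^k(L\times I)$ of $F|_L$.  Part (d) does let you assemble these into an ad on the subcomplex $(K\times 1)\cup(L\times I)$ of $K\times I$.  But then you must \emph{extend} this ad to all of $K\times I$, so that its restriction to $K\times 0$ gives the desired element of $\ad^k(K,L)$.  You cite part (f) for this, but part (f) goes in the wrong direction: it passes from a \emph{subdivision} $K'$ of $K$ to $K$, not from a \emph{subcomplex} to a larger complex.  There is no subdivision in sight here.  The paper's device is Lemma \ref{Dec8}: the inclusion $(K\times 1)\cup(L\times I)\hookrightarrow K\times I$ is a composite of elementary expansions, and one proves that ads extend over any elementary expansion (that lemma is itself established using a PL identification of the new cell with a cylinder over its old boundary, then invoking the cylinder axiom (g) and only then part (f)).  Without isolating and proving this extension-over-expansions lemma, your argument for exactness at $T^k(K)$ does not go through.

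Your sketch for exactness at $T^k(L)$ (``extends over $I\times K$ rel the relevant subcomplex, restrict to $0\times K$'') is also too thin.  The paper handles this case by first proving exactness for a general \emph{triple} $M\subset L\subset K$ (this is the argument just described, relativized), then applying the triple sequence to $(1\times K)\cup(0\times L)\subset \{0,1\}\times K\subset I\times K$ together with excision to produce the required lift.  That reduction is not visible in your outline.  So the missing idea throughout is Lemma \ref{Dec8}; once you have it, the three exactness checks become the short direct arguments the paper gives.
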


\begin{proof}
It only remains to verify that the sequence
\[
T^{k-1}K\to T^{k-1}L\to T^k(K,L)\to T^k K \to T^k L
\]
is exact for every pair $(K,L)$.

{\it Exactness at $T^k(K)$.} We prove 
more generally that the sequence
\[
T^k(K,L)\to T^k(K,M) \to T^k(L,M)
\]
is exact for every triple $M\subset L\subset K$.

Clearly the composite 
$T^k(K,L)\to T^k(K,M) \to T^k(L,M)$ is trivial.  On the other hand, if 
$[F]\in T^k(K,M)$ maps to 0 in $T^k(L,M)$, then there is a bordism $H\in
\ad^k(L\times I,M\times I)$ from $F|_L$ to $\emptyset$.  We obtain an ad 
\[
H'\in
\ad^k((K\times 1)\cup (L\times I), M\times I)
\]
by letting $H'$ be $F$ on $K\times 
1$ and $H$ on $L\times I$.  The inclusion 
\[
(K\times 1)\cup (L\times I)\to K\times I
\]
is a composite of elementary expansions, so by Lemma \ref{Dec8} there 
is an $H''\in\ad^k(K\times I,M\times I)$ which restricts to $H'$.  But now 
$H''|_{K\times 0}$ is in $\ad^k(K,L)$ and is bordant to $F$, so $[F]$ is in the
image of $T^k(K,L)$.

{\it Exactness at $T^k(K,L)$.} The composite
\[
T^k(I\times K,(1\times K) \cup (0 \times L))
\to
T^k(K,L)
\to
T^k(K)
\]
takes $F$ to $F|_{0\times K}$, but this is bordant to $F|_{1\times K}$ which is
0.  It follows that the composite
\[
T^{k-1}L\to T^k(K,L)\to T^k(K)
\]
is trivial.  On the other hand, if $F\in \ad^k(K,L)$ becomes 0 in $T^k(K)$
then there is an $H\in\ad^k(I\times K,(1\times K) \cup (0 \times L))$ 
with $H|_{0\times K}=F$.  Thus $F$ is in the image of 
\[
T^k(I\times K,(1\times K) \cup (0 \times L))
\to
T^k(K,L)
\]
and hence in the image of the connecting homomorphism.

{\it Exactness at $T^{k-1}L$.}  
The composite
\[
T^{k-1}K\to T^{k-1}L\to T^k(K,L)
\]
is equal to the composite
\[
T^{k-1}K
\labarrow{\kappa^*}
T^k(I\times K,\{0,1\}\times K)
\to
T^k(I\times K,(1\times K)\cup (0\times L))
\to
T^k(K,L)
\]
and the composite of the last two maps is clearly trivial.
On the other hand, suppose that $x\in T^{k-1}(L)$ maps trivially to
$T^k(K,L)$.  By definition of the connecting homomorphism, there is a $y\in
T^k(I\times K,(1\times K)\cup (0\times L))$ such that $y$ restricts to
$\kappa^*x$ in $T^k(I\times L,\{0,1\}\times L)$ and to 0 in $T^k(0\times
K,0\times L)$.  Since the restriction map 
\[
T^k(\{0,1\}\times K,(1\times K)\cup (0\times L))
\to
T^k(0\times K,0\times L)
\]
is an isomorphism by excision, we see that $y$ restricts trivially to
$T^k(\{0,1\}\times K,(1\times K)\cup (0\times L))$.  Now the exact sequence of
the triple 
\[
(1\times K)\cup (0\times L)\subset \{0,1\}\times K\subset I\times K
\]
implies that there is a $z\in T^k(I\times K,\{0,1\}\times K)$ that restricts to
$y$.  Then $z$ restricts to $\kappa^*x$ in $T^k(I\times L,\{0,1\}\times L)$
and therefore $(\kappa^*)^{-1}z\in T^k(K)$ restricts to $x$.
\end{proof}

\begin{proof}[Proof of \ref{Dec8}.]
Let $F\in \ad^k(K_1,L_1)$.
Let $\sigma'$ and $\sigma$ be as in the definition of elementary expansion.
If $\sigma'$ and $\sigma$ are in $L$ then we can extend
$F$ to $\Cell(K,L)$ by letting it take $\sigma'$ and $\sigma$ to 
$\emptyset$.  So assume that $\sigma'$ and $\sigma$ are not in $L$.  
Let $A$ be the sub-ball-complex of $K$ which is the union of the cells of 
$\partial\sigma$ other than $\sigma'$.  It suffices to show that the 
restriction of $F$ to $\Cell(A)$ extends to $\Cell(\sigma)$. By Theorem 3.34 
of \cite{MR665919}, the pair $(\sigma,\sigma')$ is PL isomorphic to the pair 
$(D^n,S^{n-1}_-)$ (where $n$ is the dimension of $\sigma$, $D^n$ is a 
standard $n$-ball and $S_-^{n-1}$ is the lower hemisphere of its boundary). 
Under this isomorphism $A$ corresponds to a subdivision of the upper hemisphere
$S_+^{n-1}$. Moreover, the pair $(D^n, S_+^{n-1})$ is PL isomorphic to 
$(S_+^{n-1}\times I, S_+^{n-1}\times 0)$.  Thus the 
pair $(A\times I,A\times 0)$ 
is PL isomorphic to a subdivision of the pair 
$(\sigma,A)$.  Part (g) of Definition \ref{defad} extends $F$ to 
$\Cell(A\times I)$,
and now part (f) of Definition \ref{defad} gives a corresponding extension of
$F$ to $\Cell(\sigma)$. 
\end{proof}

\section{The spectrum associated to an ad theory}
\label{spectrum}

\begin{definition}
\label{n15}
Let $\Delta_\inj$ denote the category whose objects are the sets
$\{0,\ldots,n\}$ and whose morphisms are the monotonically increasing 
injections.  By a {\it semisimplicial set} we mean a contravariant functor 
from $\Delta_\inj$ to Set.
\end{definition}

Thus a semisimplicial set is a simplicial set without degeneracies.  In the
literature these are often called $\Delta$-sets, but this seems awkward because
$\Delta$ is the category that governs simplicial sets.

The geometric realization of a semisimplicial set is defined by
\[
|A|=\bigl(\coprod \Delta^n \times A_n \bigr)/\mathord{\sim},
\]
where $\sim$ identifies $(d^iu,x)$ with $(u,d_ix)$.

\begin{definition}
Let $*$ denote the semisimplicial set with a single element (also denoted $*$) 
in each degree.  A {\it basepoint} for a semisimplicial set is a 
semisimplicial map from $*$.
\end{definition}

\begin{remark}
Geometric realization of semisimplicial sets is a left adjoint (for example by
\cite[Proposition 2.1]{MR0300281}), but it does not preserve quotients because
it does not take the terminal object $*$ to a point.
\end{remark}

Now fix an ad theory.  First we construct the spaces of the spectrum.

\begin{definition}
(i) For $k\geq 0$, let $P_k$ be the semisimplicial 
set with $n$-simplices
\[ 
(P_k)_n=\ad^k(\Delta^n)
\]
and the obvious face maps.  Give $P_k$ the basepoint determined by the
elements $\emptyset$.

(ii) Let $Q_k$ be $|P_k|$.
\end{definition}

Next we define the structure maps of the spectrum.  
For this we will use the semisimplicial analog of the Kan suspension.  

\begin{definition}
\label{n19}
Given a based semisimplicial set $A$, define $\Sigma A$ to be the based
semisimplicial set for which the only 0-simplex is $*$ and the (based) set of 
$n$ simplices for $n\geq 1$ is $A_{n-1}$. 
The face operators $d_i:(\Sigma A)_n\to (\Sigma A)_{n-1}$ agree with those of
$A$ for $i<n$ and $d_n$ takes all simplices to $*$.
\end{definition}

\begin{remark}
The motivation for this construction is that the cone on a simplex is a simplex
of one dimension higher.
\end{remark}

\begin{lemma}
\label{n17}
There is a natural homeomorphism
$\Sigma|A| \cong |\Sigma A|$.
\end{lemma}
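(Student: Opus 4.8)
The plan is to write down an explicit homeomorphism and to check that it is natural in $A$. Because geometric realization does not collapse $*$ to a point (the preceding remark), one has to be slightly careful about which quotient of $|A|\times I$ deserves to be called $\Sigma|A|$, but the real content is a cone construction at the level of simplices.

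Recall $|A|=\bigl(\coprod_n\Delta^n\times A_n\bigr)/\!\sim$, so $\Sigma|A|$ is realized as a quotient of $|A|\times I$; a point of $\Sigma|A|$ thus has a representative $(u,a,t)$ with $u\in\Delta^m$, $a\in A_m$, $t\in I$. The elementary fact to exploit is that $\Delta^{m+1}$ is the cone on $\Delta^m$: there is a quotient map
\[
c_m\colon \Delta^m\times I\longrightarrow \Delta^{m+1},\qquad c_m(u,t)=(1-t)\,\delta^{m+1}(u)+t\,e_{m+1},
\]
where $\delta^{m+1}\colon\Delta^m\hookrightarrow\Delta^{m+1}$ is the last coface and $e_{m+1}$ the last vertex; so $c_m$ agrees with the inclusion $\delta^{m+1}$ on $\Delta^m\times\{0\}$, collapses $\Delta^m\times\{1\}$ to $e_{m+1}$, and restricts to a homeomorphism over $\mathring\Delta^m\times(0,1)$. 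The point to record is the face compatibility: $c_m$ carries the $i$-th coface to the $i$-th coface for $0\le i\le m$, and carries $\Delta^m\times\{0\}$ onto the last face. These incidences match the face operators of $\Sigma A$ from Definition \ref{n19} exactly, since for $a\in A_m$ regarded as an $(m{+}1)$-simplex of $\Sigma A$ the first $m{+}1$ faces are $d_0a,\dots,d_ma$ and the last face is the basepoint.

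Now I would define $\widetilde\Phi\colon|A|\times I\to|\Sigma A|$ by $\widetilde\Phi\bigl((u,a),t\bigr)=[\,c_m(u,t),a\,]$, with $a\in A_m$ viewed as an $(m{+}1)$-simplex of $\Sigma A$. The face compatibility above, together with Definition \ref{n19}, shows $\widetilde\Phi$ respects the identifications defining $|A|$, so it is a well-defined continuous map. Since $c_m$ is surjective, $\widetilde\Phi$ maps each compact box $\Delta^m\times\{a\}\times I$ onto the closed cell of $|\Sigma A|$ indexed by $a$; as $|\Sigma A|$ carries the weak topology determined by these closed cells, and each of these restrictions of $\widetilde\Phi$ is a closed map, $\widetilde\Phi$ is a quotient map. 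It therefore descends to a homeomorphism from $(|A|\times I)\big/\!\sim_{\widetilde\Phi}$ onto $|\Sigma A|$. Finally one reads off from the formula that $\widetilde\Phi$ collapses $|A|\times\{1\}$ to the unique $0$-cell of $|\Sigma A|$, sends $|A|\times\{0\}\cup|*|\times I$ onto the subcomplex $|\Sigma *|=|*|\subseteq|\Sigma A|$ (in fact $\widetilde\Phi$ restricted to $|*|\times I$ is the map $\widetilde\Phi$ for the semisimplicial set $*$), and is injective elsewhere; this identifies $(|A|\times I)\big/\!\sim_{\widetilde\Phi}$ with $\Sigma|A|$, and the resulting homeomorphism $\Phi\colon\Sigma|A|\xrightarrow{\ \cong\ }|\Sigma A|$ is natural in $A$ because a based semisimplicial map is given levelwise and $\Sigma$ merely reindexes it.

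The step I expect to be the main obstacle is not a single computation but keeping the bookkeeping straight: verifying the face compatibility of the cone maps $c_m$ against Definition \ref{n19}, and then identifying the quotient $(|A|\times I)\big/\!\sim_{\widetilde\Phi}$ with the topological suspension $\Sigma|A|$, which is delicate precisely because $|*|$ is not a point, so neither ``end'' of the suspension is literally crushed to a point. Once these are in place the proof is routine.
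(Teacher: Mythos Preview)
Your argument is correct and follows the same approach as the paper: the paper's proof consists of the single formula $[t,[u,x]]\mapsto[\langle t,u\rangle,x]$ with $\langle t,u\rangle=((1-t)u,t)$, which is precisely your cone map $c_m$, and you have supplied the verifications (face compatibility with Definition~\ref{n19}, well-definedness, quotient property) that the paper leaves implicit. Your explicit acknowledgement of the basepoint subtlety---that $|*|$ is not a point, so one must specify what $\Sigma|A|$ means---is more careful than the paper, which simply asserts the homeomorphism; in practice both arguments produce the same map $|A|\times I\to|\Sigma A|$ and identify its quotient as the appropriate suspension.
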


\begin{proof}
If $t\in [0,1]$ and $u\in \Delta^{n-1}$ let us write $\langle t,u\rangle$ for 
the point $((1-t)u,t)$ of $\Delta^n$.
The homeomorphism takes $[t,[u,x]]$ (where $[\ ]$ denotes equivalence class) to
$[\langle t,u\rangle,x]$.
\end{proof}

Next observe that for each $n$ there is an isomorphism of $\Z$-graded
categories
\[
\theta: 
\Cell(\Delta^{n+1},\partial_{n+1}\Delta^{n+1}\cup \{n+1\})
\to
\Cell(\Delta^n) 
\]
which lowers degrees by 1,
defined as follows: a simplex $\sigma$ of $\Delta^{n+1}$ which is not in
$\partial_{n+1}\Delta^{n+1}\cup \{n+1\}$ contains the vertex $n+1$.  Let 
$\theta$ take $\sigma$ (with its canonical orientation) to the simplex of 
$\Delta^n$ spanned by the vertices of $\sigma$ other than $n+1$ (with 
$(-1)^{\dim\sigma-1}$ times its canonical orientation).  $\theta$ is
incidence-compatible (this is the reason for the sign in its definition) so
by part (e) of Definition
\ref{defad} it induces a bijection
\[
\theta^*:
\ad^k(\Delta^n)
\to
\ad^{k+1}(\Delta^{n+1},\partial_{n+1}\Delta^{n+1}\cup \{n+1\}).
\]

The composites
\[
\ad^k(\Delta^n)
\labarrow{\theta^*}
\ad^{k+1}(\Delta^{n+1},\partial_{n+1}\Delta^{n+1}\cup \{n+1\})
\to
\ad^{k+1}(\Delta^{n+1})
\]
give a semisimplicial map
\[
\Sigma P_k\to P_{k+1}.
\]

\begin{definition}
Let $\bQ$ be the spectrum consisting of the spaces $Q_k$ with the structure maps
\[
\Sigma Q_k=\Sigma|P_k|\cong |\Sigma P_k|\to |P_{k+1}|=Q_{k+1}.
\]
\end{definition}

In the rest of this section we show:

\begin{prop}
\label{n5}
$\bQ$ is an $\Omega$ spectrum.
\end{prop}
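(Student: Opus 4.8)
The plan is to identify $\pi_n(Q_k,\emptyset)$ with the group $T^k(\Delta^n,\partial\Delta^n)$ of Section~\ref{cohomology section}, and to show that the adjoint of the structure map induces, on homotopy groups, the bijection coming from the reindexing isomorphism $\theta$ of Section~\ref{spectrum}. Since $T^*$ is a cohomology theory (Theorem~\ref{theorem 4.2}) we have $T^k(\Delta^n,\partial\Delta^n)\cong T^{k-n}(*)=\Omega_{n-k}$, and the weak equivalence $Q_k\to\Omega Q_{k+1}$ then follows degree by degree.

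First I would check that each $P_k$ is a Kan complex. For $n\geq 1$ the inclusion of a horn $\Lambda^n_i\hookrightarrow\Delta^n$ is an elementary expansion: take $\sigma=\Delta^n$, $\sigma'=\partial_i\Delta^n$, and $L=\emptyset$ in the definition. Hence by Lemma~\ref{Dec8} the restriction $\ad^k(\Delta^n)\to\ad^k(\Lambda^n_i)$ is surjective, i.e.\ every horn in $P_k$ can be filled. Consequently $\pi_n(Q_k,\emptyset)=\pi_n(|P_k|,\emptyset)=\pi_n(P_k,\emptyset)$ (the standard comparison for realizations of Kan semisimplicial sets, cf.\ \cite{MR0300281}), and the latter group is the set of $n$-simplices of $P_k$ with trivial boundary — that is, the elements of $\ad^k(\Delta^n,\partial\Delta^n)$ — modulo simplicial homotopy. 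A routine argument using the cylinder axiom (Definition~\ref{defad}(g)), the subdivision axiom (f) and the gluing axiom — the same circle of ideas used in Section~\ref{groups} — shows that on boundary-trivial ads the simplicial homotopy relation coincides with the bordism relation of Section~\ref{cohomology section}, so that $\pi_n(Q_k,\emptyset)\cong T^k(\Delta^n,\partial\Delta^n)$, naturally in $k$.

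Next I would analyze the structure map. Unwinding the definition of the Kan suspension together with Lemma~\ref{n17}, one sees that $Q_k\to\Omega Q_{k+1}$ induces on $\pi_n$ the map
\[
T^k(\Delta^n,\partial\Delta^n)\to T^{k+1}(\Delta^{n+1},\partial\Delta^{n+1}),\qquad [x]\mapsto[\theta^*x].
\]
Here $\theta^*x$ is again boundary-trivial: by construction it is trivial on $\partial_{n+1}\Delta^{n+1}\cup\{n+1\}$, while $\theta$ carries each remaining face $\partial_i\Delta^{n+1}$ (for $i\leq n$) onto the face $\partial_i\Delta^n$, on which $x$ is trivial. The same bookkeeping shows that $\theta^*$ restricts to a \emph{bijection} $\ad^k(\Delta^n,\partial\Delta^n)\to\ad^{k+1}(\Delta^{n+1},\partial\Delta^{n+1})$; applying the reindexing axiom (e) to $\theta\times\mathrm{id}_I$ (with a further evident reindexing replacing $(\partial_{n+1}\Delta^{n+1}\cup\{n+1\})\times I$ by $\partial\Delta^{n+1}\times I$) shows this bijection carries bordisms to bordisms, hence descends to an isomorphism on bordism classes. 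Therefore $\pi_n(Q_k)\to\pi_{n+1}(Q_{k+1})$ is an isomorphism for all $n$ and $k$, so $Q_k\to\Omega Q_{k+1}$ is a weak equivalence and $\bQ$ is an $\Omega$ spectrum. One also recovers $\pi_n(Q_k)\cong T^{k-n}(*)=\Omega_{n-k}$ from homotopy invariance of $T^*$ (Proposition~\ref{n11}) together with the suspension and excision properties of the cohomology theory.

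The step I expect to be the main obstacle is matching the simplicial homotopy relation with the bordism relation of Section~\ref{cohomology section} — converting a homotopy living on a simplex into one living on a prism $\Delta^n\times I$ and back again — which requires some care with subdivisions and repeated use of the gluing axiom; relatedly, one must keep track of the signs built into the reindexing axiom when verifying that $\theta^*$ respects bordisms.
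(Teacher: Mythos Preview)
Your approach is correct but takes a genuinely different route from the paper's. The paper introduces a right adjoint $\Omega$ to the semisimplicial Kan suspension: for a based semisimplicial set $A$, the $n$-simplices of $\Omega A$ are those $(n+1)$-simplices $x$ of $A$ with $d_{n+1}x=*$ and $(d_0)^{n+1}x=*$. One checks directly from the definition of $\theta^*$ that the adjoint of $\Sigma P_k\to P_{k+1}$ is an \emph{isomorphism of semisimplicial sets} $P_k\cong\Omega P_{k+1}$. The proposition then reduces to a general lemma about Kan complexes (your verification that each $P_k$ is Kan via Lemma~\ref{Dec8} is the same as the paper's): for any Kan complex $A$, the adjoint $|\Omega A|\to\Omega|A|$ is a weak equivalence. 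This is proved purely combinatorially by identifying $\pi_n|\Omega A|$ with $[S^n,\Omega A]$ and $\pi_{n+1}|A|$ with $[S^{n+1},A]$, and noting that $\Sigma S^n=S^{n+1}$ so the adjunction bijection does the job; one only has to check that the simplicial homotopy relation on $\rho_n(\Omega A)$ matches that on $\rho_{n+1}(A)$, which is immediate from the definitions.

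The main difference is that the paper never leaves the semisimplicial category and never compares the simplicial homotopy relation with the $(\Delta^n\times I)$-bordism relation---precisely the step you flag as the main obstacle. Your route through $T^k(\Delta^n,\partial\Delta^n)$ does work (an incidence-compatible isomorphism between $\Cell(\Delta^{n+1},L)$ for suitable $L$ and $\Cell(\Delta^n\times I,\partial\Delta^n\times I)$ converts one relation into the other via axiom~(e)), and it has the virtue of making the identification $\pi_nQ_k\cong\Omega_{n-k}$ explicit at this stage rather than waiting for Section~\ref{f11}. The paper's argument, on the other hand, is shorter, avoids the bookkeeping with prisms and subdivisions, and isolates a reusable general fact about Kan semisimplicial sets.
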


First we observe that the semisimplicial Kan suspension $\Sigma$ has a right
adjoint:

\begin{definition}
For a based semisimplicial set $A$ define a semisimplicial set $\Omega A$ 
by letting the $n$-simplices of $\Omega A$ be the $(n+1)$-simplices $x$ of
$A$ which satisfy the conditions
\[
d_{n+1} x=*\quad \text{and}\quad  (d_0)^{n+1}x=*.
\]
The face maps are induced by those of $A$.
\end{definition}

It's easy to check that the adjoint of the map $\Sigma P_k\to P_{k+1}$ is an 
isomorphism
\[
P_k\cong \Omega P_{k+1};
\]
it therefore suffices to relate the semisimplicial $\Omega$ to the usual one.

Recall (\cite[page 329]{MR0300281}) that a semisimplicial set $A$ is a {\it Kan
complex} if every map $\Lambda_{n,i}\to A$ (where
$\Lambda_{n,i}$ is defined on page 323 of \cite{MR0300281})
extends to a map $\Delta^n\to A$.
Proposition \ref{n5} 
follows from the next two facts.

\begin{lemma}
\label{n6}
If $A$ is a Kan complex then
the adjoint of the composite
\[
\Sigma|\Omega A|\cong |\Sigma \Omega A|\to |A|
\]
is a weak equivalence. 
\end{lemma}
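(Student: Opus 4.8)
The plan is to unwind the statement into a claim about a single natural map and then prove that map is a weak equivalence by comparing a semisimplicial path--loop fibration for $A$ with the topological one. Since $\Sigma$ is left adjoint to $\Omega$ on based semisimplicial sets, and reduced suspension is left adjoint to based loops on spaces, the displayed composite $\Sigma|\Omega A|\cong|\Sigma\Omega A|\to|A|$ (the second arrow being the realization of the counit $\Sigma\Omega A\to A$, the homeomorphism being Lemma~\ref{n17}) is adjoint to a based map $j\colon|\Omega A|\to\Omega|A|$, and what must be shown is that $j$ is a weak homotopy equivalence.

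The main construction is a semisimplicial path space $EA$: let its $n$-simplices be the $(n+1)$-simplices $x$ of $A$ with $(d_0)^{n+1}x=*$ (equivalently, with top vertex at the basepoint), with face operators $d_0,\dots,d_n$ inherited from $A$; the simplicial identities show this defining condition is stable under those faces. Evaluation at the remaining face, $p(x)=d_{n+1}x$, is a based semisimplicial map $p\colon EA\to A$ whose fibre over the basepoint vertex is precisely $\Omega A$. I would then verify two things. First, that $p$ is a Kan fibration (so that $EA$ is itself a Kan complex): a lifting problem for a horn $\Lambda_{m,i}\to EA$ against a compatible $\Delta^m\to A$ translates, via the definition of $EA$, into a single horn-filling problem $\Lambda_{m+1,i}\to A$ with one further face prescribed to be $*$, which is solvable because $A$ is Kan; for $i\neq 0$ the condition $(d_0)^{m+1}(\mathrm{filler})=*$ is then automatic, and for $i=0$ it is arranged by the standard loop-space manoeuvre of choosing the missing face appropriately before filling. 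Second, that $|EA|$ is contractible: $EA$ is a (based) d\'ecalage of $A$, so the extra-degeneracy contraction applies; since semisimplicial sets have no degeneracies this is made precise by passing to the free simplicial set on $EA$, whose geometric realization is naturally homeomorphic to $|EA|$ (both functors being left adjoints out of the category of semisimplicial sets that agree on representables).

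With these in hand I would invoke the fact, due to \cite{MR0300281}, that the geometric realization of a Kan fibration of semisimplicial sets is a quasifibration, so that $|\Omega A|\to|EA|\to|A|$ is a quasifibration with contractible total space. There is an evident map from this to the based path--loop fibration $\Omega|A|\to P|A|\to|A|$ over $|A|$: a point $[u,x]$ of $|EA|$, with $x$ an $(n+1)$-simplex of $A$ and $u\in\Delta^n$, goes to the path $t\mapsto |x|\bigl((1-t)u+t\,e_{n+1}\bigr)$ in $|A|$, which runs from $|p(x)|(u)$ to the basepoint; this covers the identity of $|A|$ and, by naturality of the two $\Sigma\dashv\Omega$ adjunctions, restricts on fibres to $j$. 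Comparing the long exact homotopy sequences — both total spaces being contractible — then shows that $j$ induces isomorphisms on all homotopy groups, hence is a weak equivalence.

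The genuinely delicate points are all in the path-space construction: checking that $p$ is a Kan fibration in the problematic case $i=0$, and proving $|EA|$ contractible in the degeneracy-free setting, where one really must identify $|EA|$ with the realization of the free simplicial set on $EA$ (or supply a direct semisimplicial deformation), the usual one-line argument being unavailable. Confirming that the comparison map restricts to $j$ on fibres is fiddly but routine. An alternative route, which avoids the path space, would be to compute the combinatorial homotopy groups of the Kan complexes $\Omega A$ and $A$ directly and match them through $j$; that trades the contractibility argument for a verification that $\Omega A$ is Kan and for tracking the same compatibility, so it does not obviously simplify matters.
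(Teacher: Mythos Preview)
Your path-space approach is sound in outline but takes a genuinely different route from the paper. The paper does exactly what you mention in your final sentence as an ``alternative'': it computes combinatorial homotopy groups directly. Using Rourke--Sanderson's identification $\pi_n|B|\cong[S^n,B]$ for a Kan complex $B$, it observes that $\Omega A$ is Kan, identifies $[S^n,B]$ with the set $\rho_n(B)$ of $n$-simplices having all faces at $*$ modulo the relation ``there is a $z$ with $d_0z=y$, $d_1z=y'$, $d_iz=*$ for $i>1$'', and then notes that the $\Sigma\dashv\Omega$ adjunction gives a bijection $\rho_n(\Omega A)\cong\rho_{n+1}(A)$ compatible with these relations (since $\Sigma S^n=S^{n+1}$). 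This is a few lines; your assessment that it ``does not obviously simplify matters'' is off.

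Two remarks on the details of your path-space argument. First, your worry about the horn $\Lambda_{m,0}$ is unnecessary: if $i=0$ then $x_1$ is among the given faces, and the top vertex of any filler $x\in A_{m+1}$ equals $(d_0)^m(d_1x)=(d_0)^m x_1=*$ automatically. So $p$ is a Kan fibration by a uniform argument. Second, and more seriously, your proposed proof that $|EA|$ is contractible via d\'ecalage and free simplicial sets is shaky: the extra-degeneracy contraction of a d\'ecalage uses the degeneracies of $A$, which do not exist here, and passing to $F(EA)$ does not supply them (there is no reason $F(EA)$ should be the d\'ecalage of $F(A)$). The clean fix is to argue directly: since $EA$ is Kan (as a Kan fibration over a Kan complex), one has $\pi_n|EA|\cong[S^n,EA]$, and the latter vanishes by an explicit horn-filling in $A$---for instance, given $x\in(EA)_n$ with all faces at $*$, fill $\Lambda_{n+2,n+2}$ in $A$ with $d_0=x$ and $d_1=\cdots=d_{n+1}=*$ to get the required null-homotopy. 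Note that this direct computation already has the same flavour as the paper's proof, which suggests that the path-space machinery, while correct once repaired, is not buying much here.
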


\begin{lemma}
\label{n3}
For each $k$, $P_k$ is a Kan complex.
\end{lemma}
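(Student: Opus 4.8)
The plan is to reduce to Lemma~\ref{Dec8}. For $n\geq 1$ and $0\leq i\leq n$, recall that the horn $\Lambda_{n,i}$ is the subcomplex of $\Delta^n$ consisting of the faces $\partial_j\Delta^n$ with $j\neq i$ together with all of their subfaces; it is a ball complex by Definition~\ref{ball}(iii), and a count of which faces of $\Delta^n$ lie in some $\partial_j\Delta^n$ with $j\neq i$ shows that the only two cells of $\Delta^n$ not lying in $\Lambda_{n,i}$ are the top cell and the face $\partial_i\Delta^n$.

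First I would identify a semisimplicial map $\varphi\colon\Lambda_{n,i}\to P_k$ with an element of $\ad^k(\Lambda_{n,i})$. Such a $\varphi$ amounts to a family of ads, one in $\ad^k(\partial_j\Delta^n)$ for each $j\neq i$, agreeing on all common subfaces; assembling these gives a pre $\Lambda_{n,i}$-ad, and since the restriction of an ad to any cell is again an ad (part~(a) of Definition~\ref{defad}, as $\ad^k$ is a subfunctor), this pre $\Lambda_{n,i}$-ad restricts to a $\sigma$-ad on every closed cell $\sigma$ of $\Lambda_{n,i}$ and hence is a $\Lambda_{n,i}$-ad by part~(d). Conversely, a $\Lambda_{n,i}$-ad restricts to such a family. (As always, the identifications $\partial_j\Delta^n\cong\Delta^{n-1}$ here are the ones built into the face maps of $P_k$, including their effect on the canonical orientations.)

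Next I would observe that the inclusion $(\Lambda_{n,i},\emptyset)\hookrightarrow(\Delta^n,\emptyset)$ is an elementary expansion in the sense of the definition just before Lemma~\ref{Dec8}, with $\sigma$ the top cell of $\Delta^n$ and $\sigma'=\partial_i\Delta^n$: conditions (a) and (c) are trivial since the subcomplex is empty, and (b) holds by the first paragraph together with $\dim\partial_i\Delta^n=\dim\Delta^n-1$ and $\partial_i\Delta^n\subset\partial\Delta^n$. Lemma~\ref{Dec8} then says that the restriction $\ad^k(\Delta^n)\to\ad^k(\Lambda_{n,i})$ is onto; combined with the identification of the previous paragraph, this is exactly the statement that every map $\Lambda_{n,i}\to P_k$ extends to a map $\Delta^n\to P_k$, so $P_k$ is a Kan complex. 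The only slightly delicate point is the orientation and face-map bookkeeping in the middle step, but it involves no geometry: all the real content---extending an ad across an elementary expansion via the cylinder and gluing axioms---has already been isolated in the proof of Lemma~\ref{Dec8}.
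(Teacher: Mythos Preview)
Your proof is correct and follows exactly the paper's approach: identify semisimplicial maps $\Lambda_{n,i}\to P_k$ with elements of $\ad^k(\Lambda_{n,i})$, then apply Lemma~\ref{Dec8} to the elementary expansion $\Lambda_{n,i}\hookrightarrow\Delta^n$. The only difference is that the paper packages your first paragraph into a citation of Remark~\ref{n1} (the bijection between semisimplicial maps $(A,B)\to(P_k,*)$ and $\ad^k(A,B)$ for strict semisimplicial pairs), whereas you spell out the argument directly using parts~(a) and~(d) of Definition~\ref{defad}.
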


\begin{proof}[Proof of Lemma \ref{n6}.]
Let $S^n$ denote the based semisimplicial set with one nontrivial simplex in
degree $n$.  For a based Kan complex $B$, Remark 6.5 of \cite{MR0300281} gives
a bijection
\[
\pi_n(|B|)\cong [S^n,B]
\]
where $[\ ,\ ]$ denotes based homotopy classes of based semisimplicial maps 
(the homotopy relation is defined at the beginning of \cite[Section 
6]{MR0300281}).  It is easy to check that $\Omega A$ is a Kan complex if $A$ 
is. It therefore suffices to show that the adjunction induces a map
\[
[S^n,\Omega A]\to [\Sigma S^n, A]
\]
and that this map is a bijection.

For this, we first observe that for a based semisimplicial set $B$ the set of
based semisimplicial maps $S^n\to B$ can be identified with the set (which 
will be denoted by $\rho_n(B)$) of $n$-simplices of $B$ with all faces at the
basepoint.  Moreover, if $B$ is Kan then (by lines $-10$ to $-7$ of page 333 of
\cite{MR0300281}) the set $[S^n,B]$ is the quotient of $\rho_n(B)$ by the
relation which identifies $y$ and $y'$ if there is a $z$ with $d_0z=y$,
$d_1z=y'$, and $d_iz=*$ for $i>1$.  The desired bijection is immediate from
this and the fact that $\Sigma S^n$ is $S^{n+1}$.
\end{proof}

For the proof of Lemma \ref{n3} we need to introduce a useful class
of semisimplicial sets.

\begin{definition}
\label{n2}
A semisimplicial set is $\it strict$ if two simplices are equal whenever they
have the same set of vertices.
\end{definition}

Note that a strict semisimplicial set is the same thing as an ordered simplicial
complex. 

The geometric realization of a strict semisimplicial set $A$ has a 
canonical ball complex structure (which will also be denoted by $A$) and 
the cells have canonical orientations.

\begin{remark}
\label{n1}
We will make important use of the following observation 
(\cite[page 140]{MR1211640}):
for a pair $(A,B)$ of strict semisimplicial sets, there is a canonical 
bijection between the set of semisimplicial maps $(A,B)\to (P_k,*)$ and 
$\ad^k(A,B)$.
\end{remark}

\begin{proof}[Proof of Lemma \ref{n3}.]
By Remark \ref{n1}, it suffices to show that every element of
$\ad^k(\Lambda_{n,i})$ extends to an element of $\ad^k(\Delta^n)$, and this is
true by Lemma \ref{Dec8}.
\end{proof}

\section{$\bQ$ represents $T^*$}
\label{f11}

In this section we prove:

\begin{thm} \label{Theorem 5.2} The cohomology theory represented by $\bQ$ is 
naturally isomorphic to $T^*$.
\end{thm}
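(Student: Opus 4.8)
The plan is to construct an explicit morphism of cohomology theories $\mu\colon T^*\to \bQ^*$, where $\bQ^*$ denotes the cohomology theory represented by the $\Omega$ spectrum $\bQ$ (Proposition \ref{n5}), so that $\bQ^k(K,L)$ is the group of based homotopy classes of maps $(|K|,|L|)\to (Q_k,*)$; then to check that $\mu$ is an isomorphism on coefficients; and finally to conclude that $\mu$ is an isomorphism by the standard argument that a morphism of cohomology theories which is an isomorphism on $S^0$ is an isomorphism on all finite CW pairs.

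To construct $\mu$, I would start with the case where $A$ is a strict semisimplicial set (an ordered simplicial complex) with subcomplex $B$. By Remark \ref{n1}, $\ad^k(A,B)$ is the set of based semisimplicial maps $(A,B)\to(P_k,*)$, and geometric realization sends such a map $F$ to a based map $|F|\colon(|A|,|B|)\to(Q_k,*)$; set $\mu[F]=[\,|F|\,]$. That this descends to bordism classes is the central technical point: a bordism over $(A\times I,B\times I)$ must be turned into a semisimplicial homotopy of the two realizations, which I would do by a simplicial approximation argument in the semisimplicial setting, using that each $P_k$ is a Kan complex (Lemma \ref{n3}) and the surjectivity of restriction along elementary expansions (Lemma \ref{Dec8}). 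For a general ball complex pair $(K,L)$ one replaces it by an ordered simplicial subdivision $(K',L')$ and uses the canonical isomorphisms $T^k(K,L)\cong T^k(K',L')$ and $\bQ^k(K,L)\cong\bQ^k(K',L')$, which hold because $T^k$ is a homotopy functor (Proposition \ref{n11}) and $\bQ^k$ obviously is; independence of the subdivision follows by passing to a common refinement. One then checks that $\mu$ is additive, matching the definition of addition in $T^*$ (Section \ref{groups}, via the complexes $M$ and $M'$) against the infinite loop space structure of $Q_k$, and that it commutes with the connecting homomorphisms. The latter amounts to comparing the connecting map of $T^*$, built from $\kappa^*$ and excision, with the structure maps $\Sigma Q_k\to Q_{k+1}$ of $\bQ$, which are built from the reindexing isomorphism $\theta^*$ attached to collapsing $\partial_{n+1}\Delta^{n+1}\cup\{n+1\}$; since both are phrased in terms of that same reindexing, they agree up to the sign that is precisely the reason for the sign in the definition of the connecting homomorphism.

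For the computation on coefficients, $T^{-n}(*)=\Omega_n$ is the set of bordism classes in $\ad^{-n}(*)$, while $\bQ^{-n}(*)=\pi_{n+k}(Q_k)$ for $k\geq\max(0,-n)$. Because $P_k$ is Kan, $\pi_{n+k}(Q_k)=\pi_{n+k}(|P_k|)=[S^{n+k},P_k]$, and by the description of semisimplicial homotopy classes used in the proof of Lemma \ref{n6} this is $\rho_{n+k}(P_k)=\ad^k(\Delta^{n+k},\partial\Delta^{n+k})$ modulo an explicit relation. The incidence-compatible reindexing isomorphism between $\Cell(\Delta^0)$ and $\Cell(\Delta^{n+k},\partial\Delta^{n+k})$—equivalently, iterating the structure maps of $\bQ$—identifies this with $\ad^{-n}(*)$ and carries the relation to the bordism relation, so $\bQ^{-n}(*)\cong\Omega_n$; tracing the definition of $\mu$ shows that it realizes this isomorphism.

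Finally, a morphism of cohomology theories on finite CW pairs which is an isomorphism on $S^0$ is an isomorphism everywhere: the suspension isomorphism makes it an isomorphism on all spheres, and then one inducts over the cells of $X/A$ using the long exact sequences of pairs and the five lemma. Since every ball complex pair has the homotopy type of a finite CW pair and both $T^*$ and $\bQ^*$ are homotopy functors, $\mu$ is a natural isomorphism, which is the assertion of the theorem. The hard part will be the well-definedness of $\mu$—converting a ball-complex bordism into an honest homotopy of maps into the Kan complexes $Q_k$ by simplicial approximation—together with the compatibility of $\mu$ with the connecting homomorphisms, which forces a careful comparison, including orientation signs, between the $\kappa^*$-and-excision description of the connecting map of $T^*$ and the $\theta^*$-built structure maps of $\bQ$.
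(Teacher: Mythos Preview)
Your overall architecture---build a comparison map on strict semisimplicial pairs via Remark \ref{n1}, check it on a point, then appeal to the five-lemma---is exactly the paper's. The key difference is the \emph{direction} of the map: the paper defines $\Xi:\bQ^*(|A|,|B|)\to T^*(A,B)$, not $\mu:T^*\to\bQ^*$. This choice is what makes the argument work cleanly. Both groups are quotients of the same set $\ad^k(A,B)$, but by different relations: $T^*$ uses bordisms over the ball complex $A\times I$, while $\bQ^*$ (via \cite[Section~6]{MR0300281}) uses semisimplicial homotopies, i.e.\ ads over the ``geometric product'' $A\otimes I$. The crucial observation is that $A\otimes I$ is a \emph{subdivision} of $A\times I$, so the gluing axiom (Definition \ref{defad}(f)) immediately turns an $(A\otimes I)$-ad into an $(A\times I)$-ad with the same ends. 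Thus well-definedness of $\Xi$ is a one-line consequence of gluing.

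Your direction requires the opposite implication: from an $(A\times I)$-ad you must produce an $(A\otimes I)$-ad with the same restrictions to $A\times 0$ and $A\times 1$. This is a subdivision of ads, and no axiom supplies it. Your proposed ``simplicial approximation'' does not obviously do the job: the Kan property of $P_k$ and Lemma \ref{Dec8} let you extend ads along elementary expansions, but passing from $A\times I$ to the finer $A\otimes I$ is not a sequence of elementary expansions relative to the ends, and simplicial approximation in the usual sense replaces continuous maps by simplicial ones up to homotopy, which is not what is at stake here. So the step you yourself flag as the hard part is a genuine gap in your outline. The rest of your sketch (additivity, compatibility with connecting maps, the coefficient computation) matches the paper's Proposition \ref{n12} and would go through once the map is defined; the fix is simply to reverse the arrow.
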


\begin{remark}
\label{n4}
Theorem  \ref{Theorem 5.2} includes as a special case  the statement that the 
semisimplicial sets ${\mathbb L}_n(\Lambda^*(K))$ and 
${\mathbb H}^n(K;{\mathbb L}_\bullet(\Lambda))$ in Proposition 13.7 of 
\cite{MR1211640} are weakly equivalent; the statement given in 
\cite{MR1211640} that they are actually isomorphic is not correct (because 
the sets in the 8th and 9th line of the proof are not isomorphic).
\end{remark}

Let $\S$ denote the category of pairs of finite strict semisimplicial sets
(see Definition \ref{n2}) and semisimplicial maps.  Let $\H$ be the homotopy 
category of finite CW pairs and let $R:\S\to \H$ be geometric realization.  A 
map $(f,g)$ in $\S$ is a {\it weak equivalence} if $(Rf,Rg)$ is a weak 
equivalence in $\H$.  Let $w^{-1}\S$ be the category obtained from $\S$ by 
inverting the weak equivalences.  

\begin{lemma}
$R$ induces an equivalence of categories 
\[
w^{-1}\S\to \H
\]
\end{lemma}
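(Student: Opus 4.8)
The plan is to check that the induced functor $R\colon w^{-1}\S\to\H$ is essentially surjective and fully faithful.

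Essential surjectivity amounts to the statement that every finite CW pair is, as a pair, homotopy equivalent to the geometric realization of a pair of finite strict semisimplicial sets, i.e.\ of finite ordered simplicial complexes. I would deduce this from the classical fact that a finite CW complex is homotopy equivalent to a finite simplicial complex: given a finite CW pair $(X,A)$, choose such an equivalence for $X$ and replace the inclusion of $A$ by the inclusion of the base of the mapping cylinder of a simplicial approximation of the resulting map out of a triangulation of $A$; this produces a finite simplicial pair weakly equivalent to $(X,A)$, and any total order on its vertices exhibits it as an object of $\S$.

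For full faithfulness the one tool needed is the simplicial approximation theorem, in the following form. For $A\in\S$ write $\mathrm{sd}\,A$ for the barycentric subdivision, with a simplex (a flag of simplices of $A$) ordered by dimension; then $\mathrm{sd}\,A$ is again an object of $\S$, and the last--vertex map $\ell_A\colon\mathrm{sd}\,A\to A$ is a morphism of $\S$ whose realization is homotopic to the identity of $|A|$, so $\ell_A$ and its iterates $\ell_A^n\colon\mathrm{sd}^nA\to A$ are weak equivalences (and the same holds for pairs). I will use two forms of simplicial approximation: (a) for any map of pairs $f\colon(|A|,|B|)\to(|C|,|D|)$ there are $n\ge0$ and a morphism $g\colon(\mathrm{sd}^nA,\mathrm{sd}^nB)\to(C,D)$ of $\S$ with $|g|\simeq f$; and (b) if two such morphisms have homotopic realizations, then after composing with suitable iterated last--vertex maps their $\mathrm{sd}^m$'s become simplicially homotopic for $m$ large, where a simplicial homotopy is a morphism out of the prism triangulation of $-\times\Delta^1$. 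The relative statements I would obtain by first approximating over $B$ and then extending over $A$. Granting (a), fullness is immediate: the roof $(A,B)\xleftarrow{\ell^n}(\mathrm{sd}^nA,\mathrm{sd}^nB)\xrightarrow{g}(C,D)$ gives a morphism $g\circ(\ell^n)^{-1}$ of $w^{-1}\S$ whose image under $R$ is $[\,|g|\,]=[f]$, since $R(\ell^n)$ is the identity in $\H$.

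Faithfulness is the step I expect to be the main obstacle, since it requires a usable description of the morphisms of $w^{-1}\S$. I would argue in three stages. First: a simplicial homotopy $A\times\Delta^1\to C$ forces its two restrictions to $A$ to agree in $w^{-1}\S$, because both inclusions $A\rightrightarrows A\times\Delta^1$ become inverse to the (weak equivalence) projection $A\times\Delta^1\to A$; the same holds for pairs. Second: every morphism of $w^{-1}\S$ is represented by a roof whose left leg is a weak equivalence---one reduces a general zig--zag by repeatedly replacing a piece $X\xrightarrow{g}Y\xleftarrow{\;\sim\;}Z$ with $X\xleftarrow{\ell^k}\mathrm{sd}^kX\xrightarrow{\tilde g}Z$, where $\tilde g$ is a simplicial approximation (via (a)) of the composite of $|g|$ with a homotopy inverse of the right leg, and where the resulting square commutes in $w^{-1}\S$ by stage one together with (b) and the invertibility of the last--vertex maps. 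Third: if two such roofs over $(A,B)$ have homotopic realizations, then passing via (a) and (b) to a common subdivision on which the two legs become simplicially homotopic, stage one gives equality in $w^{-1}\S$; this shows $R$ is faithful. The real work is entirely in the bookkeeping of these reductions: the (non-strict) naturality of the last--vertex map $\ell$, the compatibility of iterated subdivisions, and carrying the subcomplex $B\subset A$ through each step.
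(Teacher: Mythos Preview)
There is a genuine gap in your approach, and it stems from the nature of the morphisms in $\S$. Since a strict semisimplicial set has no degeneracies, a semisimplicial map $f:A\to C$ between strict semisimplicial sets must send each $n$-simplex of $A$ to an $n$-simplex of $C$; in particular $f$ is injective on the vertex set of every simplex. Your argument uses the last-vertex map $\ell_A:\mathrm{sd}\,A\to A$ as a morphism of $\S$, but this map collapses simplices (for instance, in $\mathrm{sd}\,\Delta^1$ the edge from the barycentre of $\{1\}$ to the barycentre of $\{0,1\}$ has both endpoints sent to the vertex $1$), so $\ell_A$ is simply not a semisimplicial map. The same obstruction kills the appeal to simplicial approximation: the classical theorem produces a simplicial map, which is typically degenerate, and there is no way to approximate, say, a constant map $|A|\to|\ast|$ by any morphism $\mathrm{sd}^nA\to\ast$ in $\S$ when $A$ has more than one vertex. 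Thus both your fullness and your faithfulness arguments break down at the first step.

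The paper sidesteps this by introducing the larger category $\S'$ of all finite semisimplicial pairs (not necessarily strict) and invoking \cite[Theorem I.4.3 and Remark I.4.4]{MR0413113} for the equivalence $w^{-1}\S'\to\H$; the hard work---done with tools adapted to the semisimplicial setting rather than classical simplicial approximation---is carried by that reference. The passage from $\S$ to $\S'$ is then easy: $\S\hookrightarrow\S'$ is a full subcategory and every object of $\S'$ is weakly equivalent in $\S'$ to a strict one (the second derived subdivision), so $w^{-1}\S\to w^{-1}\S'$ is an equivalence. If you want to give a direct argument, you would need to replace subdivision-plus-last-vertex by a construction that actually lives in $\S$; the ingredients in \cite{MR0300281} and \cite{MR0413113} (collapses, the geometric product $\otimes$, and the mapping-cylinder construction for semisimplicial maps) are designed for exactly this purpose.
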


\begin{proof}
Let $\S'$ be the category of pairs of finite semisimplicial sets and 
semisimplicial maps, with weak equivalences defined by geometric realization, 
and let $w^{-1}\S'$ be the category obtained by inverting the weak equivalences.
Geometric realization induces an equivalence
\[
w^{-1}\S'\to \H
\]
by \cite[Theorem I.4.3 and Remark I.4.4]{MR0413113}.  Moreover, 
the map $w^{-1}\S\to w^{-1}\S'$ is an equivalence because 
every object of $\S'$ is weakly equivalent to an object of $\S$ (see
\cite[Proof of Theorem I.4.1]{MR0413113}; note that the second derived
subdivision of a semisimplicial set is a strict semisimplicial set).  
\end{proof}

Theorem \ref{Theorem 5.2} follows from the lemma and

\begin{prop}
\label{n12}
There is a 
natural transformation 
\[
\Xi: \bQ^*(|A|,|B|)\to T^*(A,B)
\]
of functors on $\S$ with the following properties:

(i) $\Xi$ is a bijection when $A=*$ and $B$ is empty.

(ii) The diagram
\[
\xymatrix{
Q^k(|B|)
\ar[r]^\Xi
\ar[d]
&
T^k(B)
\ar[d]
\\
Q^{k+1}(|A|,|B|)
\ar[r]^\Xi
&
T^{k+1}(A,B)
}
\]
commutes, where the vertical arrows are the connecting homomorphisms.

(iii) $\Xi$ is a homomorphism.
\end{prop}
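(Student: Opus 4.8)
The plan is to build $\Xi$ from the semisimplicial approximation machinery together with the identification of Remark \ref{n1}. Since $\bQ$ is an $\Omega$ spectrum by Proposition \ref{n5}, for a finite strict semisimplicial pair $(A,B)$ one has $\bQ^k(|A|,|B|)=[(|A|,|B|),(Q_k,*)]$, the set of homotopy classes of maps of pairs, and $Q_k=|P_k|$ with $P_k$ a Kan semisimplicial set by Lemma \ref{n3}. Given a representative $f$, I would use the standard comparison between continuous and semisimplicial homotopy classes (\cite{MR0300281}) to replace $f$, up to homotopy of pairs, by the realization $|g|$ of a semisimplicial map $g\colon(A',B')\to(P_k,*)$, where $(A',B')$ is a subdivision of $(A,B)$ (the basepoint condition on $B'$ causes no trouble since $f$ is already constant on $|B|$); by Remark \ref{n1}, $g$ is an element of $\ad^k(A',B')$, and I define $\Xi([f])$ to be the image of its bordism class under the canonical isomorphism $T^k(A',B')\cong T^k(A,B)$ provided by Proposition \ref{n11} (the underlying spaces of $A'$ and $A$ being equal). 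Well-definedness follows by approximating a homotopy of pairs $|A|\times I\to Q_k$ by a semisimplicial map on a subdivision of the ball complex $A\times I$, which exhibits a bordism; naturality with respect to semisimplicial maps is automatic, since the construction only uses geometric realization and $T^k$ is a homotopy functor.

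Property (i) is a matter of unwinding definitions: taking $A$ to be a point and $B$ empty, $\bQ^k(|A|)=\pi_0(Q_k)=\pi_0|P_k|$, and since $(P_k)_0=\ad^k(*)$ and $(P_k)_1=\ad^k(\Delta^1)$, the set $\pi_0|P_k|$ is precisely the set of bordism classes in $\ad^k(*)$, which is $\Omega_{-k}=T^k(*)$; tracing the construction identifies $\Xi$ with this bijection. For property (iii), addition in $T^k$ is by construction read off the ball complex $M$ of Section \ref{groups} (the addition in Section \ref{cohomology section} generalizes that of Section \ref{groups} verbatim), while addition in $\bQ^k$ is the infinite-loop-space addition on $Q_k$, i.e.\ loop concatenation under $Q_k\cong\Omega Q_{k+1}\cong\cdots$. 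After approximating two classes by ads $\bar F$ and $\bar G$ on a common subdivision, one checks directly, comparing the pair-of-pants complex $M$ with the concatenation map and using Lemma \ref{add}, that the loop-sum of $|\bar F|$ and $|\bar G|$ is homotopic to the realization of the $M$-construction applied to $\bar F$ and $\bar G$; this yields $\Xi([f]+[f'])=\Xi([f])+\Xi([f'])$.

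The main obstacle is property (ii). The connecting homomorphism of $\bQ^*$ is assembled from the structure maps $\Sigma Q_k\to Q_{k+1}$, which are the realizations of the semisimplicial maps $\Sigma P_k\to P_{k+1}$ built in Section \ref{spectrum} from the degree-lowering isomorphism $\theta\colon\Cell(\Delta^{n+1},\partial_{n+1}\Delta^{n+1}\cup\{n+1\})\to\Cell(\Delta^n)$; the connecting homomorphism of $T^*$ is built from $\kappa^*$ (Definition \ref{n7}) together with excision and a homotopy equivalence of pairs. Since $\Xi$ visibly commutes with restriction maps and with the excision isomorphisms of both theories, the content of (ii) reduces to showing that $\Xi$ carries the suspension isomorphism coming from $\Sigma P_k\to P_{k+1}$ to the suspension isomorphism $\kappa^*$. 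This is the delicate step: one must match the semisimplicial Kan suspension, a cone-type operation with $(\Sigma A)_{n+1}=A_n$, against the cylinder $I\times K$ that underlies $\kappa$, using the homeomorphism $\Sigma|A|\cong|\Sigma A|$ of Lemma \ref{n17} and keeping careful track of the signs in $\theta$ and in that homeomorphism. I expect that the sign inserted into the definition of the $T^*$ connecting homomorphism---left unexplained there, with a forward reference to this proof---is exactly the one forced by this comparison, so that verifying (ii) simultaneously justifies that sign convention.
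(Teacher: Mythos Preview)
Your construction of $\Xi$ takes an unnecessary detour through subdivisions. The paper exploits the Kan property of $P_k$ more fully: by \cite[Remark 6.5]{MR0300281}, the natural map from semisimplicial homotopy classes $[(A,B),(P_k,*)]$ to continuous homotopy classes $[(|A|,|B|),(Q_k,*)]$ is already a bijection, with no subdivision of the source required. Combined with Remark \ref{n1}, this identifies $\bQ^k(|A|,|B|)$ with $\ad^k(A,B)$ modulo the semisimplicial homotopy relation of \cite[Section 6]{MR0300281}, which uses the \emph{geometric product} $A\otimes I$ (a specific strict triangulation of $|A|\times I$). Since $T^k(A,B)$ is by definition $\ad^k(A,B)$ modulo bordism through the ball complex $A\times I$, and since $A\otimes I$ is a subdivision of $A\times I$, the gluing axiom shows that the former relation implies the latter, and $\Xi$ is simply induced by the identity on $\ad^k(A,B)$. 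This makes (i) immediate and naturality for inclusions transparent; for arbitrary semisimplicial maps the paper, like you, passes through mapping cylinders.

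For (iii) the paper takes a shorter route than your direct comparison of loop concatenation with the $M$-construction: once (ii) is in hand, the reduced suspension square commutes, and addition in any cohomology theory can be written as suspend--pinch--desuspend, so (iii) follows from (ii) together with naturality of $\Xi$. Your outline for (ii) is correct in spirit---the core is matching the Kan-suspension/$\theta$ description of the spectrum structure against the cylinder/$\kappa$ description of the $T^*$ connecting map---but note that $\Sigma A$ is not strict, so Remark \ref{n1} does not apply to it directly; the paper organizes the comparison via the Kan \emph{cone} $CA$ and explicit incidence-compatible isomorphisms $\mu,\nu$ of cell categories, and the sign in the $T^*$ connecting homomorphism does indeed arise, as you anticipate, from a $t\mapsto 1-t$ reversal when comparing the standard topological connecting map with the cone collapse.
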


The remainder of the section is devoted to the proof of Proposition \ref{n12}.
We begin with the construction of $\Xi$.

Recall that $T^*(A,B)$ is $\ad^k(A,B)$ modulo the equivalence relation 
$\sim$ defined by $F\sim G$ if and only if there is an $H\in 
\ad^k(A\times I,B\times I)$ which restricts to $F$ and $G$ on 
$A\times 0$ and $A\times 1$. (We remind the reader that we are using the same
symbol for a strict semisimplicial set and the ball complex it determines;
thus a symbol such as $A\times I$ denotes a product of ball complexes).

There is a similar description of $\bQ^*(|A|,|B|)$. By Proposition \ref{n3} and 
\cite[Remark 6.5]{MR0300281}, $\bQ^k(|A|,|B|)$ is the set $[(A,B),(P_k,*)]$ 
of homotopy classes of semisimplicial maps.  The homotopy relation for 
semisimplicial maps is defined at the beginning of Section 6 of 
\cite{MR0300281}; it uses the ``geometric product'' $\otimes$ defined in 
\cite[Section 3]{MR0300281}.  Using Remark \ref{n1} above, we see that 
$\bQ^k(|A|,|B|)$ is $\ad^k(A,B)$ modulo the equivalence relation $\sim'$ 
defined by: $F\sim' G$ if and only if there is an $H\in 
\ad^k(A\otimes I,B\otimes I)$ which restricts to $F$ on $A\otimes 0$ 
and to $G$ on $A\otimes 1$. 

We can now define $\Xi$: given an element $x\in\bQ^k(|A|,|B|)$, 
choose an element $F$ of $\ad^k(A,B)$ which represents it and let
$\Xi(x)$ be the class of $F$.
To see that this is well-defined, note that
$A\otimes I$ is a subdivision of $A\times I$, so by the gluing property
of ad theories we see that $F\sim' G$ implies $F\sim G$.  

\begin{remark}
The definition of $\Xi$ was suggested by the argument on page 140 of 
\cite{MR1211640}.
\end{remark}

The definition of $\otimes$ shows that 
$\Xi$ is the identity map when $A=*$ and $B$ is empty.

Next we check that $\Xi$ is natural.  It is obviously natural for
inclusions of pairs. If $(f,g):(A,B)\to (A',B')$ is any 
semisimplicial map, let $M_f$ and $M_g$ be the mapping cylinders
as defined on page 327 of \cite{MR0300281}; these are 
strict semisimplicial sets and have the property that there is an 
inclusion
\[
(i,j):(A,B)\to (M_f,M_g),
\]
an inclusion 
\[
(i',j'):(A',B')\to (M_f,M_g)
\]
which is a weak equivalence,
and a homotopy
$|(i',j')|\circ |(f,g)|\simeq |(i,j)|$.
Then $|(f,g)|^*:Q^*(|A'|,|B'|)\to Q^*(|A|,|B|)$ is equal to 
$(|(i',j')|^*)^{-1}|(i,j)|^*$, and
similarly for $(f,g)^*:T^*(A',B')\to T^*(A,B)$.  Hence
$(f,g)^*\circ\Xi=\Xi\circ|(f,g)|^*$.

For the proof of part (ii) of Proposition \ref{n12} we need the Kan cone
construction (because the Kan suspension of a strict semisimplicial set is 
not strict in general). 

\begin{definition}
Let $A$ be a semisimplicial set.  Define a semisimplicial set 
$CA$ as follows.  The 0-simplices of $CA$ are the 0-simplices 
of $A$ together with a 0-simplex $c$.
For $n\geq 1$ the $n$ simplices of $CA$ are $A_n\coprod 
A_{n-1}$.  If the inclusions of $A_n$ and $A_{n-1}$ in $(CA)_n$ are denoted 
by $f$ and $g$ then
the face maps $d_i:(CA)_n\to(CA)_{n-1}$ are defined by
\[
d_i f(x)=f(d_ix)
\]
for all $i$ and
\[
d_i g(x)=
\begin{cases}
c & \text{if $n=1$ and $i=0$} \\
g(d_i x) & \text{if $n>1$ and $i<n$} \\
f(x) & \text{if $i=n$}.
\end{cases}
\]
\end{definition}

We leave it to the reader to check that $|CA|\cong C|A|$,
where $C|A|$ denotes $I\wedge (|A|_+)$ (we choose the basepoint of $I$ to be 1).
Note that there is an inclusion $A\to CA$ and that the quotient 
$CA/(A\cup c)$ is $\Sigma (A_+)$ (where $A_+$ denotes $A$ with a disjoint
basepoint).

If $A$ is strict then $CA$ is also.

\begin{proof}[Proof of Proposition \ref{n12}(ii).]
The unreduced suspension isomorphism
\[
Q^k(|B|)\to Q^{k+1}(C|B|, |B|\cup |c|)
\]
is defined as follows: given $f:|B|\to Q_k$ the composite 
\[
C|B|\labarrow{Cf} CQ_k\to \Sigma Q_k \to Q_{k+1}
\]
takes $|B|\cup |c|$ to the basepoint, and therefore represents an element of
$Q^{k+1}(C|B|, |B|\cup |c|)$.

There is an isomorphism of categories
\[
\mu: \Cell(CB,B\cup c)
\to
\Cell(I\times B, \{0,1\}\times B)
\]
defined as follows: a simplex
$\sigma$ of $CB$ which is not in $B\cup c$ corresponds to a simplex 
$\sigma'$ of $B$; let $\mu$ take $\sigma$ (with its canonical orientation) 
to $I\times \sigma'$ (with $(-1)^{\dim\sigma'}$ times its canonical 
orientation).

There is a similar isomorphism
\[
\nu:\Cell(CA,B\cup c)
\to
\Cell(I\times A, (1\times A)\cup (0\times B)).
\]

Both $\mu$ and $\nu$ are incidence-compatible (Definition \ref{n25}(i)) so part
(e) of Definition \ref{defad} applies.

It is easy to check that the diagram
\[
\xymatrix{
Q^k(|B|)
\ar[rr]^-\Xi
\ar[d]_\cong
&&
T^k(B)
\ar[d]^{\kappa^*}
\\
Q^{k+1}(|CB|,|B\cup c|)
\ar[r]^-\Xi
&
T^{k+1}(CB,B\cup c)
\ar[r]^-{\mu^*}
&
T^{k+1}(I\times B,\{0,1\}\times B)
\\
Q^{k+1}(|CA|,|B\cup c|)
\ar[u]^\cong
\ar[d]
\ar[r]^-\Xi
&
T^{k+1}(CA,B\cup c)
\ar[u]^\cong
\ar[r]^-{\nu^*}
&
T^{k+1}(I\times A,(1\times A)\cup (0\times B))
\ar[u]^\cong
\ar[d]
\\
Q^{k+1}(|A|,|B|)
\ar[rr]^-\Xi
&&
T^{k+1}(A,B)
}
\]
commutes.  The vertical composite on the right is by definition the negative 
of the connecting homomorphism, so it suffices to show the same for the 
vertical composite on the left.

The connecting homomorphism 
\[
Q^k(|B|)\to Q^{k+1}(|A|,|B|)
\]
is defined to be the composite 
\begin{multline*}
Q^k(|B|)
\stackrel{\cong}{\to}
Q^{k+1}(C|B|,|B|\cup |c|)
\cong
\tilde{Q}^{k+1}(C|B|/(|B|\cup |c|))
\\
\to
\tilde{Q}^{k+1}(|A|\cup C|B|)
\stackrel{\cong}{\leftarrow}
\tilde{Q}^{k+1}(|A|/|B|)
\end{multline*}
where the third and fourth maps are induced by the evident quotient maps.

It now suffices to note that the diagram
\[
\xymatrix{
C|B|/(|B|\cup c)
\ar[rd]^i
&
\\
|A|\cup C|B|
\ar[u]
\ar[d]
&
C|A|/(|B|\cup c)
\\
|A|/|B|
\ar[ru]^j
}
\]
homotopy commutes, where $i$ takes $t\wedge b$ to the class of $(1-t)\wedge 
b$ and $j$ takes the class of $a$ to the class of $0\wedge a$. (The homotopy 
is given by $h(a,s)=s\wedge a$ for $a\in |A|$ and $h(t\wedge b)=s(1-t)\wedge
b$.) The negative sign mentioned above comes from the $1-t$ in the definition of
$i$.
\end{proof}

It remains to prove part (iii) of Proposition \ref{n12}.

First recall that for any cohomology theory $E$ the addition in 
$E^k(|A|,|B|)$ is the
composite
\begin{multline*}
E^k(|A|,|B|) \times E^k(|A|,|B|)
=
\tilde{E}^k(|A|/|B|) \times \tilde{E}^k(|A|/|B|)
\\
\cong
\tilde{E}^{k+1}(\Sigma(|A|/|B|))
\times
\tilde{E}^{k+1}(\Sigma(|A|/|B|))
\stackrel{\cong}{\leftarrow}
\tilde{E}^{k+1}(\Sigma(|A|/|B|)\vee\Sigma(|A|/|B|))
\\
\stackrel{p^*}{\to}
\tilde{E}^{k+1}(\Sigma(|A|/|B|))
\cong
\tilde{E}^k(|A|/|B|)
=
E^k(|A|,|B|)
\end{multline*}
where $p$ is the pinch map.

It therefore suffices to observe that, by part (ii) and naturality, the diagram
\[
\xymatrix{
\tilde{Q}^k(|A|/|B|)
\ar[r]^\Xi
\ar[d]_\cong
&
\tilde{T}^k(|A|/|B|)
\ar[d]^\cong
\\
\tilde{Q}^{k+1}(\Sigma(|A|/|B|))
\ar[r]^\Xi
&
\tilde{T}^{k+1}(\Sigma(|A|/|B|))
}
\]
commutes, where the vertical arrows are the suspension
isomorphisms of the reduced cohomology theories $\tilde{Q}^*$ and $\tilde{T}^*$.

\section{The symmetric spectrum associated to an ad theory}
\label{f12}

Symmetric spectra were originally defined simplicially (\cite[Definition
1.2.1]{MR1695653}).  The topological definition is the obvious analog
(\cite[Example 4.2]{MR1806878}):

\begin{definition}
A symmetric spectrum $\bX$ consists of 

(i)
a sequence $X_0,X_1,\ldots$ of pointed topological spaces, 

(ii)
a pointed map $s:S^1\wedge X_k\to X_{1+k}$ for each $k\geq 0$, and 

(iii)
a based left $\Sigma_k$-action on $X_k$, 

such that the composition
\[
S^p\wedge X_k
\labarrow{S^{p-1}\wedge s}
S^{p-1}\wedge X_{1+k}
\to
\cdots
\to
X_{p+k}
\]
is $\Sigma_p\times \Sigma_k$-equivariant for each $p\geq 1$ and $k\geq 0$.
\end{definition}

Our first goal in this section is to define a symmetric spectrum associated to
an ad theory.  In order to have a suitable $\Sigma_k$ action we will construct
the $k$-th space of the spectrum as the geometric realization of a $k$-fold 
multisemisimplicial set; the $\Sigma_k$ action will come from permutation of 
the semisimplicial directions.

By a $k$-fold multisemisimplicial set we mean a functor from 
$\Delta_\inj^k$ to sets (see Definition \ref{n15}).
Given a multiindex $\bfn=(n_1,\ldots,n_k)$, let
$\Delta^\bfn$ denote the product
\[
\Delta^{n_1}\times\cdots\times\Delta^{n_k}.
\]
The geometric realization of a $k$-fold multisemisimplicial set $A$ is 
\[
|A|=\bigl(\coprod \Delta^\bfn \times A_\bfn \bigr)/\mathord{\sim},
\]
where $\sim$ denotes the evident equivalence relation.

Now fix an ad theory. 

\begin{definition}\label{9.1} For each $k\geq 1$, define a 
$k$-fold multisemisimplicial
set $R_k$ by 
\[
(R_k)_{\bfn}=\ad^k(\Delta^\bfn).
\]
Let $M_k$ be the geometric realization of $R_k$.
For $k=0$, let $R_0$ be the set of $*$-ads of degree 0 and
let $M_0$ be $R_0$ with the discrete topology.
\end{definition}


Our next definition gives the left action of $\Sigma_k$ on $M_k$.  An element 
of $M_k$ has the form $[u,F]$, where $u=(u_1,\ldots,u_k)\in \Delta^\bfn$, $F\in
\ad^k(\Delta^\bfn$), and $[\ ]$ denotes equivalence class.  Given 
$\eta\in\Sigma_k$ let $\epsilon(\eta)$ denote 0 if $\eta$ is even and 1 if 
$\eta$ is odd.  

\begin{definition}
\label{9.2}
Define
\[
\eta([u,F])=
[(u_{\eta^{-1}(1)},\ldots,u_{\eta^{-1}(k)}),i^{\epsilon(\eta)} \circ F\circ 
\eta_\#].
\]
Here $i$ is the involution in the target category of the ad theory 
and 
$\eta_\#$ is the map
\[
\Cell(\Delta^{n_{\eta^{-1}(1)}}\times\cdots\times\Delta^{n_{\eta^{-1}(k)}})
\to
\Cell(\Delta^{n_1}\times\cdots\times\Delta^{n_k})
\]
which takes 
\[
(\sigma_{\eta^{-1}(1)}\times\cdots\times \sigma_{\eta^{-1}(k)},
o_{\eta^{-1}(1)}\times\cdots\times o_{\eta^{-1}(k)})
\]
to
\[
(\sigma_1\times\cdots\times \sigma_k, o_1\times\cdots\times o_k).
\]
\end{definition}

It remains to define the suspension maps.

\begin{definition}\label{12.4}
{\rm (i)}
For each ball complex $K$ let
\[
\lambda:\Cell(\Delta^1 \times K,\partial \Delta^1\times K)
\to
\Cell(K)
\]
be the incidence-compatible isomorphism of categories which takes 
$\Delta^1\times(\sigma,o)$ (where $\Delta^1$ is given its standard 
orientation) to $(\sigma,o)$.

{\rm (ii)}
Given $t\in[0,1]$ let $\bar{t}$ denote the point
$(1-t,t)$ of $\Delta^1$.

{\rm (iii)}
Given $k\geq 1$ let 
\[
s:S^1\wedge M_k\to M_{1+k}
\]
be the map which takes $[t,[u,F]]$ to
$[(\bar{t},u),\lambda^*(F)]$.  
\end{definition}

\begin{prop} 
The sequence $M_0,M_1,\ldots$, with the $\Sigma_k$-actions given by Definition
\ref{9.2} and the suspension maps given by Definition \ref{12.4}(iii), is a
symmetric spectrum.
\qed
\end{prop}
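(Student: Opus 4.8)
The plan is to verify the three conditions in the definition of a symmetric spectrum in turn. Almost all of this is formal bookkeeping with the reindexing maps; the one substantive point is the $\Sigma_p\times\Sigma_k$-equivariance of the iterated structure map, and within that a sign computation.

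First, dispose of the pointed-space and pointed-map requirements. For $k\geq 1$ the space $M_k$ is pointed by the (fat) basepoint coming from the sub-multisemisimplicial set of trivial ads, and $M_0$ by the trivial $*$-ad; by part (e) of Definition \ref{defad} the reindexing maps $\lambda^*$ built from Definition \ref{12.4}(i) take trivial ads to trivial ads. The map $s$ of Definition \ref{12.4}(iii) is well defined on the smash product $S^1\wedge M_k$: since $\lambda^*(F)$ lies in $\ad^{k+1}(\Delta^1\times\Delta^\bfn,\partial\Delta^1\times\Delta^\bfn)$, its restriction to either vertex of the $\Delta^1$-factor is trivial, so for $t\in\{0,1\}$ the face-identification in the realization rewrites $[(\bar t,u),\lambda^*(F)]$ as a point of the basepoint subspace of $M_{1+k}$; likewise $[t,[u,F]]$ with $F$ trivial maps into that subspace. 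Continuity, and compatibility with the multisemisimplicial face operators so that the formula descends to $M_{1+k}$, are routine.

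Next, check that Definition \ref{9.2} defines a based left $\Sigma_k$-action on $M_k$ by homeomorphisms. The point requiring care is that the assignment $F\mapsto i^{\epsilon(\eta)}\circ F\circ\eta_\#$ takes ads to ads; granting that $i$ takes ads to ads (part (c) of Definition \ref{defad}), this comes down, via parts (d) and (e) of Definition \ref{defad}, to the statement that the orientation conventions of Definition \ref{9.2} together with the twist $i^{\epsilon(\eta)}$ compensate exactly for the signs that permuting the factors of the product ball complex $\Delta^\bfn$ introduces in the incidence numbers. The formula plainly commutes with the multisemisimplicial face operators, so it descends to a self-homeomorphism of $M_k$; that this is a left action follows from $(\eta\eta')_\#=\eta'_\#\circ\eta_\#$, from $i^2=\id$ together with the additivity modulo $2$ of $\eta\mapsto\epsilon(\eta)$, and from the naturality of $i$; and it fixes the basepoint because $i$ and $\eta_\#$ preserve trivial ads.

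The main obstacle is the equivariance condition. Unwinding Definitions \ref{9.2} and \ref{12.4}(iii), the iterated composite $S^p\wedge M_k\to M_{p+k}$ sends $[t_1\wedge\cdots\wedge t_p\wedge[u,F]]$ to $[(\bar t_1,\dots,\bar t_p,u),(\lambda^{(p)})^*F]$, where $(\lambda^{(p)})^*$ is the $p$-fold iterate of $\lambda^*$, so that $(\lambda^{(p)})^*F$ is an ad on $\Delta^1\times\cdots\times\Delta^1\times\Delta^\bfn$ (with $p$ leading factors $\Delta^1$) restricting trivially whenever one of the first $p$ coordinates lies on $\partial\Delta^1$. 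Here $\Sigma_p$ acts on $S^p=(S^1)^{\wedge p}$ by permuting smash factors and on $M_{p+k}$ through the block inclusion $\Sigma_p\hookrightarrow\Sigma_{p+k}$, while $\Sigma_k$ acts on $[u,F]$ by Definition \ref{9.2} and on $M_{p+k}$ through the complementary block inclusion. The $\Sigma_k$-half of the equivariance is the easy one, since the $\Sigma_k$-action touches only the last $k$ coordinates while $(\lambda^{(p)})^*$ touches only the first $p$, and the global involutions $i^{\epsilon}$ commute with everything by naturality. The $\Sigma_p$-half reduces to showing that $(\lambda^{(p)})^*F$ is anti-symmetric in its $p$ equal $\Delta^1$-directions --- that permuting them by $\tau\in\Sigma_p$ multiplies $(\lambda^{(p)})^*F$ by $i^{\epsilon(\tau)}$ --- which is exactly the sign supplied by the $i^{\epsilon}$-twists in Definitions \ref{9.2} and \ref{12.4}, and which comes from the fact that exchanging two $\Delta^1$-factors of a product complex introduces the Koszul sign $(-1)^{1\cdot 1}=-1$. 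I expect the real work to lie in setting up $\lambda^{(p)}$ and its orientation conventions precisely, in verifying this anti-symmetry directly from Definition \ref{12.4}(i) while tracking the signs that Definition \ref{n25}(ii) contributes at each application of $\lambda^*$, and in the elementary check that the block inclusion $\Sigma_p\times\Sigma_k\hookrightarrow\Sigma_{p+k}$ carries the $i^{\epsilon}$-signs additively.
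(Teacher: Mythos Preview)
The paper offers no proof at all (the statement carries only a \qed), so there is nothing to compare your argument against; your outline is more detailed than anything the authors provide, and it identifies the right checkpoints.

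There is, however, a genuine difficulty in the second paragraph that you have glossed over. You assert that the assignment $F\mapsto i^{\epsilon(\eta)}\circ F\circ\eta_\#$ takes ads to ads because ``the orientation conventions of Definition~\ref{9.2} together with the twist $i^{\epsilon(\eta)}$ compensate exactly for the signs that permuting the factors \dots\ introduces in the incidence numbers.'' This is not so: the map $\eta_\#$ is \emph{not} incidence-compatible in the sense of Definition~\ref{n25}(i), and composing with the global involution $i^{\epsilon(\eta)}$ cannot repair it. For the transposition $\eta=(1\,2)$, the incidence number between $(\alpha\times\beta,\,p\times q)$ and its codimension-one face $(\alpha'\times\beta,\,p'\times q)$ is $[p,p']$, whereas between their images $(\beta\times\alpha,\,q\times p)$ and $(\beta\times\alpha',\,q\times p')$ it is $(-1)^{\dim\beta}[p,p']$. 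The discrepancy is a Koszul sign depending on the individual cell, and a single global application of $i$ cannot absorb it (indeed $i$ on $\Cell$ preserves incidence numbers). So axiom~(e) does not apply to $\eta_\#$ or to $i^{\epsilon(\eta)}\circ\eta_\#$, and the well-definedness of the $\Sigma_k$-action on $M_k$ does not follow from the argument you give. The same issue recurs in your equivariance check for the iterated structure map: the ``anti-symmetry'' of $(\lambda^{(p)})^*F$ in its $\Delta^1$-directions is exactly the assertion that the action by a transposition in $\Sigma_p$ preserves ads, and your derivation of it inherits the gap.

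This appears to be a point the paper simply does not address. In each of the concrete ad theories considered (Sections~\ref{classical}--\ref{quad}) one can verify directly that $i^{\epsilon(\eta)}\circ F\circ\eta_\#$ is again an ad, but whether this follows from the axioms of Definition~\ref{defad} alone is not clear from what is written. You should flag this rather than claim it is routine.
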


We will denote this symmetric spectrum by $\bM$.

\begin{example}
\label{Nov12}
Let us write $\bM_{\pi,Z,w}$ (resp., $\bM^R$) for the symmetric spectrum
associated to $\ad_{\pi,Z,w}$ (resp., $\ad^R$).  The morphism of ad theories
\[
\Sig:
\ad_{\pi,Z,w}\to\ad_{\Z[\pi]^w}
\]
(see Proposition \ref{j9})
induces a map 
\[
\bM_{\pi,Z,w}\to \bM^{\Z[\pi]^w}.
\]
\end{example}

In the remainder of this section we show that $\bM$ is weakly equivalent (in an
appropriate sense) to the spectrum $\bQ$ defined in Section \ref{spectrum}.

For $k\geq 1$, let $Q'_k$ be the realization of the semisimplicial set 
with $n$-simplexes
\[
(R_k)_{(0,\ldots,0,n)}
\]
Then $Q'_k$ is homeomorphic to $Q_k$, and there is an obvious map $Q'_k\to 
M_k$, so we get a map
\[
Q_k\to M_k
\]
for $k\geq 1$.

\begin{prop}\label{Proposition 6.2}
The map $Q_k\to M_k$ is a weak equivalence.
\end{prop}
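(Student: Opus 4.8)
The plan is to show that the inclusion $Q_k \to M_k$ is a weak equivalence by realizing it as one leg of a diagram in which the other maps are already known to be weak equivalences, and then invoking a Kan-complex/homotopy-group argument analogous to the one used for Proposition \ref{n5}. The key observation is that $M_k = |R_k|$ is the realization of a $k$-fold multisemisimplicial set, and such a realization can be computed iteratively: fixing all but the last semisimplicial coordinate, $R_k$ is (in the last variable) a semisimplicial object in semisimplicial sets, and $|R_k|$ is the realization of the ``diagonal'' or, equivalently, the realization of the total complex. I would use the standard fact (the multisemisimplicial analog of the Eilenberg--Zilber theorem for realizations) that $|R_k|$ is naturally homeomorphic to the realization of the semisimplicial set $n \mapsto |(R_k)_{(\bu,\ldots,\bu,n)}|$, where now the inner realization is over the first $k-1$ coordinates. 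More useful here: there is a filtration of $M_k$ by the ``skeleta'' in the first $k-1$ directions, and the subspace $Q'_k \subset M_k$ is exactly the $0$-skeleton in those directions.

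The main step is to prove that each $(R_k)_{(n_1,\ldots,n_{k-1},\bu)}$, regarded as a semisimplicial set in the last variable for fixed $(n_1,\ldots,n_{k-1})$, is a Kan complex, and more precisely that the inclusion of the ``constant in the first $k-1$ directions'' part is a weak equivalence. Concretely: by Remark \ref{n1}, for a pair of finite strict semisimplicial sets $(A,B)$ the semisimplicial maps $(A,B)\to(P_k,*)$ biject with $\ad^k(A,B)$, and one checks by the same argument as Lemma \ref{n3} (using Lemma \ref{Dec8}, the elementary-expansion extension result) that the relevant multisemisimplicial set satisfies the extension condition in each coordinate separately. Since $\Delta^{n_1}\times\cdots\times\Delta^{n_{k-1}}$ is a ball complex that collapses to a point, the restriction map $\ad^k(\Delta^\bfn)\to\ad^k(\Delta^{(0,\ldots,0,n_k)})$ is, up to the homotopy relation built from the cylinder functor $J$, a bijection on bordism classes; this is essentially the content of Proposition \ref{n11} (homotopy invariance of $T^*$) applied to the pair $(\Delta^{n_1}\times\cdots\times\Delta^{n_{k-1}}\times\Delta^{n_k},\ \Delta^{n_1}\times\cdots\times\Delta^{n_{k-1}}\times\partial\Delta^{n_k})$ and its restriction along the collapse of the first factors.

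Assembling these: I would argue that the map $Q'_k\to M_k$ induces an isomorphism on all homotopy groups by comparing $\pi_n$ of each side with sets of homotopy classes of semisimplicial maps out of spheres (as in the proof of Lemma \ref{n6}, citing \cite[Remark 6.5]{MR0300281}), where a spherical class is represented by an ad on a strict semisimplicial sphere; by Lemma \ref{Dec8} and homotopy invariance of $T^*$ (Theorem \ref{theorem 4.2}, Proposition \ref{n11}) such a class is determined, up to homotopy, by its restriction to the subsphere sitting in the last semisimplicial direction, which is the image of $Q'_k$. Since $Q_k \cong Q'_k$ by construction, this gives the result. The step I expect to be the main obstacle is the careful bookkeeping of the multisemisimplicial face operators and the verification that the iterated-realization/skeletal-filtration comparison is compatible with the ad-theoretic structure (in particular that the extension property of Lemma \ref{Dec8} really does apply coordinatewise to the multisemisimplicial horns one encounters); the signs and the precise identification of horns in $\Delta^\bfn$ with iterated mapping-cylinder/elementary-expansion data are where the real work lies, everything else being formal once that is in place.
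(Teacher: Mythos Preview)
Your proposal identifies several correct ingredients that also appear in the paper's proof: the iterated-realization viewpoint, the fact that $Q'_k$ sits inside $M_k$ as a $0$-skeleton in the extra semisimplicial directions, and the idea that the face maps in those directions are weak equivalences because of homotopy invariance of $T^*$ (via Lemma \ref{Dec8}). These observations match the content of Lemma \ref{Lemma 6.4}(i) in the paper.

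However, there is a genuine gap in your final step. You propose to conclude by ``comparing $\pi_n$ of each side with sets of homotopy classes of semisimplicial maps out of spheres,'' invoking \cite[Remark 6.5]{MR0300281}. That result applies to semisimplicial \emph{sets} satisfying the Kan condition; it gives no description of $\pi_n(|X_\bu|)$ when $X_\bu$ is a semisimplicial \emph{space}, nor of $\pi_n(|R_k|)$ for a multisemisimplicial set $R_k$. Knowing that $R_k$ is Kan in each coordinate separately, or that the face maps of the associated semisimplicial space are weak equivalences, does not by itself let you represent an arbitrary element of $\pi_n(M_k)$ by an ad on a strict semisimplicial sphere lying in the image of $Q'_k$. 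Concretely: for a semisimplicial space $X_\bu$ with all face maps weak equivalences, the inclusion $X_0\hookrightarrow |X_\bu|$ is in general only a \emph{homology} isomorphism, not a weak equivalence, unless one has extra input such as simple connectivity or an $H$-space structure.

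This is exactly the missing idea. The paper proceeds by induction, interpolating via $R_k^m$ (so that only one semisimplicial direction is added at a time, and the inductive hypothesis can be applied to an auxiliary ad theory $\ad[\Delta^p]$ to control the levels). It then shows (Lemma \ref{Lemma 6.4}(iii)) that the inclusion is a homology isomorphism via the spectral sequence of a semisimplicial space, and separately (Lemma \ref{Lemma 6.4}(iv)) that the map is, up to weak equivalence, an $H$-map between grouplike $H$-spaces, using that $Q_k\simeq \Omega\hat Q_{k+1}$ and a May--Segal type argument to commute $\Omega$ past the realization. Only the combination of these two facts yields the weak equivalence. Your outline omits both the spectral-sequence step and the $H$-space step, and the direct $\pi_n$ computation you propose in their place cannot be carried out with the tools you cite.
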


\begin{prop}
\label{n16}
The diagram
\[
\xymatrix{
\Sigma Q_k
\ar[r]
\ar[d]
&
\Sigma M_k
\ar[d]
\\
Q_{1+k}
\ar[r]
&
M_{1+k}
}
\]
commutes up to homotopy.
\end{prop}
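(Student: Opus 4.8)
The plan is first to unwind both composites and see that they factor through a bisemisimplicial piece of $M_{1+k}$, then to produce the homotopy from the standard prism decomposition using the cylinder and gluing axioms. Write an element of $\Sigma Q_k=\Sigma|P_k|$ as $[t,[u,F]]$ with $F\in\ad^k(\Delta^n)$, $u\in\Delta^n$ and $t$ the suspension coordinate. Unwinding Definitions~\ref{9.2} and~\ref{12.4} together with the homeomorphism $Q_k\cong Q'_k$, the composite $\Sigma Q_k\to\Sigma M_k\xrightarrow{\,s\,}M_{1+k}$ carries $[t,[u,F]]$ to $[(\bar t,u),\lambda^{*}F]$, a point in the multidegree-$(1,0,\dots,0,n)$ part of $M_{1+k}$; and unwinding Lemma~\ref{n17}, the structure map $\Sigma P_k\to P_{1+k}$ (which is $\theta^{*}$ followed by the inclusion of $\ad^{k+1}(\Delta^{n+1},\partial_{n+1}\Delta^{n+1}\cup\{n+1\})$ into $\ad^{k+1}(\Delta^{n+1})$) and $Q_{1+k}\cong Q'_{1+k}$, the composite $\Sigma Q_k\to Q_{1+k}\to M_{1+k}$ carries $[t,[u,F]]$ to $[\langle t,u\rangle,\theta^{*}F]$ with $\langle t,u\rangle=((1-t)u,t)\in\Delta^{n+1}$, a point in the multidegree-$(0,\dots,0,n+1)$ part. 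Both therefore factor through the subspace $N_{1+k}\subseteq M_{1+k}$ obtained by realizing the sub-multisemisimplicial set of $R_{1+k}$ in which all but the first and last coordinates vanish — equivalently, $N_{1+k}$ is the realization of the bisemisimplicial set $(p,m)\mapsto\ad^{1+k}(\Delta^p\times\Delta^m)$ — and since $\lambda^{*}F$ is trivial on $\partial\Delta^1\times\Delta^n$ and $\theta^{*}F$ is trivial on $\partial_{n+1}\Delta^{n+1}\cup\{n+1\}$, both descend correctly to the smash. So it suffices to homotope the resulting two maps inside $N_{1+k}$, naturally in $(n,F)$.

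Next I would record two ways in which the bidegree-$(1,n)$ datum $\lambda^{*}F$ and the bidegree-$(0,n+1)$ datum $\theta^{*}F$ are matched. On the categorical side, the composite $\theta^{-1}\circ\lambda\colon\Cell(\Delta^1\times\Delta^n,\partial\Delta^1\times\Delta^n)\to\Cell(\Delta^{n+1},\partial_{n+1}\Delta^{n+1}\cup\{n+1\})$ is an incidence-compatible isomorphism of $\Z$-graded categories, so by the Reindexing axiom (e) of Definition~\ref{defad} the induced bijection on ads sends $\theta^{*}F$ to $\lambda^{*}F$. On the geometric side, the prism $\Delta^1\times\Delta^n$ has its standard triangulation into simplices $\tau_0,\dots,\tau_n$, and the affine map $c\colon\Delta^1\times\Delta^n\to\Delta^{n+1}$ with $c(0,j)=e_j$ and $c(1,j)=e_{n+1}$ is simplicial for that triangulation, restricts to an isomorphism $\tau_n\xrightarrow{\,\cong\,}\Delta^{n+1}$, collapses each $\tau_j$ with $j<n$ onto a proper face, carries $\partial\Delta^1\times\Delta^n$ into $\partial_{n+1}\Delta^{n+1}\cup\{n+1\}$, and is homotopic, rel that subcomplex, to the map $(\bar t,u)\mapsto\langle t,u\rangle$ (both induce a degree-one self-map of the quotient, which is homotopy equivalent to $S^1$; if the two degrees have opposite signs one absorbs the discrepancy into the target involution $i$, which acts trivially on the quotient up to homotopy). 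The content of the proposition is that the ``$\lambda$-cell'' and the ``$\theta$-cell'', which share no face in the bisemisimplicial set, nonetheless carry the same map of the prism up to homotopy.

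To build the homotopy I would interpolate the single ``$\lambda$-cell'' through the triangulated prism. Apply the Cylinder axiom (g) to $\lambda^{*}F$, giving $J(\lambda^{*}F)\in\ad^{1+k}((\Delta^1\times\Delta^n)\times I)$, and push it (by an induction over the cells of $P_k$, using the Gluing axiom (f)) onto the ball-complex structure on $(\Delta^1\times\Delta^n)\times I$ given by the mapping cylinder of the simplicial map $c$ — the coarse prism at one end, a copy of $\Delta^{n+1}$ at the other, re-triangulated if necessary so that every cell is a product of simplices while the coarse-prism end is left untouched. Realizing the resulting ad produces a homotopy in $N_{1+k}$ from $[t,[u,F]]\mapsto[(\bar t,u),\lambda^{*}F]$ to $[t,[u,F]]\mapsto[c(\bar t,u),\theta^{*}F]$, the identification of the $\Delta^{n+1}$-end using that the restriction of $\lambda^{*}F$ to $\tau_n$ is exactly $\theta^{*}F$ under $c|_{\tau_n}$ (and that $\theta^{*}F$ is trivial on everything $c$ collapses). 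Finally, feeding the homotopy $c\simeq(\bar t,u)\mapsto\langle t,u\rangle$ of the previous paragraph into the bidegree-$(0,n+1)$ cell carrying $\theta^{*}F$ connects $[c(\bar t,u),\theta^{*}F]$ to $[\langle t,u\rangle,\theta^{*}F]$. Concatenating the two homotopies gives the result.

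The step I expect to be delicate is the verification, with all signs, that the restriction of $\lambda^{*}F$ to the top simplex $\tau_n$ is precisely $\theta^{*}F$, and more generally that the ad obtained on the triangulated prism is the expected pullback of $\theta^{*}F$ along $c$. This is exactly what the factor $(-1)^{\dim\sigma-1}$ in the definition of $\theta$ and the factor $i^{\epsilon(\eta)}$ in Definition~\ref{9.2} were arranged to make true — they are what force $\theta^{-1}\circ\lambda$ to be incidence-compatible — so the bookkeeping should close up, but it has to be carried out carefully. A secondary, purely combinatorial annoyance is arranging the mapping-cylinder ball complex so that all of its cells are products of simplices (for $k=1$ this forces a re-triangulation of the interior prism cells) and checking that different admissible choices yield homotopic maps into $M_{1+k}$.
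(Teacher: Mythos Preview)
Your unwinding of the two composites is correct and matches the paper. Your instinct to use the cylinder $J(F)$ is also right. The gap is in the step ``realizing the resulting ad produces a homotopy in $N_{1+k}$.'' An ad on an arbitrary ball complex does not by itself give a map into $M_{1+k}$: points of $M_{1+k}$ are pairs $[(u_1,\dots,u_{1+k}),G]$ with $G$ an ad on a \emph{single} product of simplices $\Delta^{\bfn}$, so to turn an ad on a ball complex into a path in $M_{1+k}$ you need the ball complex to carry a compatible multisemisimplicial structure (a multisimplicial analogue of Remark~\ref{n1}), not merely ``cells that happen to be products of simplices.'' Your mapping-cylinder-of-$c$ construction does not obviously have such a structure: one end is supposed to be the bidegree-$(1,n)$ cell $\Delta^1\times\Delta^n$ and the other the bidegree-$(0,n+1)$ cell $\Delta^{n+1}$, but these are different shapes, so the interpolating complex cannot be $(\Delta^1\times\Delta^n)\times I$ with a product-of-simplices cell structure having those as its two ends. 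The gluing axiom only lets you pass from a subdivision to a coarsening of the \emph{same} underlying space, which does not bridge this mismatch.

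The paper sidesteps this entirely by going up one degree in \emph{both} semisimplicial directions at once: it works in the single bidegree-$(1,n+1)$ cell $\Delta^1\times\Delta^0\times\cdots\times\Delta^0\times\Delta^{n+1}$ of $M_{1+k}$. The key observation (stated as a lemma) is that there is an incidence-compatible isomorphism
\[
\mu_n:\Cell\bigl(\Delta^1\times\Delta^0\times\Delta^{n+1},\ (\{1\}\times\Delta^0\times\Delta^{n+1})\cup(\Delta^1\times\Delta^0\times\{n+1\})\cup(\{0\}\times\Delta^0\times\partial_{n+1}\Delta^{n+1})\bigr)\longrightarrow\Cell(\Delta^n\times I)
\]
which restricts to $\theta$ on the $\{0\}\times\Delta^0\times\Delta^{n+1}$ face and to $\lambda$ on the $\Delta^1\times\Delta^0\times\partial_{n+1}\Delta^{n+1}$ face. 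Then $\mu_n^*(J(F))$ is a single ad on $\Delta^1\times\Delta^0\times\Delta^{n+1}$, hence a single $(1,0,\dots,0,n+1)$-simplex of $R_{1+k}$, and the homotopy is an explicit path inside that one cell of $M_{1+k}$, moving the first coordinate from $\bar t$ to $1$ and the last from $(u,0)$ to $\langle t,u\rangle$. No triangulation of prisms, no mapping cylinders, no gluing needed.
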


Before proving these we deduce some consequences.  As in \cite{MR1806878}, let 
the forgetful functor from symmetric spectra to ordinary spectra (which are 
called prespectra in \cite{MR1806878}) be denoted by $\mathbb U$.  It is 
shown in \cite{MR1806878} that the right derived functor $R{\mathbb U}$ is an 
equivalence of homotopy categories.

\begin{cor} 
{\rm (i)}
 $\bM$ is a positive $\Omega$ spectrum (that is, the map
$M_k\to \Omega M_{1+k}$ is a weak equivalence for $k\geq 1$).

{\rm (ii)}
$R{\mathbb U}$ takes $\bM$ to $\bQ$.

{\rm (iii)}
The homotopy groups of $\bM$ are the bordism groups of the ad theory.
\end{cor}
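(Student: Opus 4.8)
The plan is to obtain all three parts as formal consequences of Propositions \ref{Proposition 6.2} and \ref{n16}, together with the fact (Proposition \ref{n5}) that $\bQ$ is an $\Omega$ spectrum and the properties of $R{\mathbb U}$ recalled above. For (i), I would pass to adjoints: since a homotopy between maps out of $\Sigma Q_k$ corresponds to a homotopy between the adjoint maps into $\Omega M_{1+k}$, Proposition \ref{n16} yields a homotopy-commutative square
\[
\xymatrix{
Q_k \ar[r] \ar[d] & \Omega Q_{1+k} \ar[d] \\
M_k \ar[r] & \Omega M_{1+k}
}
\]
whose horizontal arrows are the adjoint structure maps and whose vertical arrows are induced by the maps $Q_k\to M_k$ and $Q_{1+k}\to M_{1+k}$. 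Here the top arrow is a weak equivalence since $\bQ$ is an $\Omega$ spectrum, the left arrow is a weak equivalence by Proposition \ref{Proposition 6.2} (using $k\geq 1$), and the right arrow is a weak equivalence because it is $\Omega$ applied to the weak equivalence $Q_{1+k}\to M_{1+k}$ of Proposition \ref{Proposition 6.2} (using $1+k\geq 1$, and that $\Omega$ preserves weak equivalences of the CW spaces involved). As homotopic maps induce the same map on homotopy groups, the bottom arrow $M_k\to\Omega M_{1+k}$ is a weak equivalence for all $k\geq 1$, which is (i).

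For (ii), Propositions \ref{Proposition 6.2} and \ref{n16} together say that the maps $Q_k\to M_k$ ($k\geq 1$) form a morphism $\bQ\to{\mathbb U}\bM$ of prespectra in the homotopy category which is a level weak equivalence in all positive levels; after replacing the structure maps of $\bQ$ and ${\mathbb U}\bM$ by cofibrations (via mapping cylinders, which changes neither stable homotopy type) one gets such a morphism on the nose. A morphism of prespectra that is a weak equivalence in all sufficiently high levels is a $\pi_*$-isomorphism, hence a stable equivalence, so $\bQ$ and ${\mathbb U}\bM$ agree in the stable homotopy category. On the other hand $\bM$, being a positive $\Omega$ spectrum by (i), is fibrant in the positive stable model structure on symmetric spectra of \cite{MR1806878} (every topological space is fibrant), so $R{\mathbb U}\bM$ is represented by the underlying prespectrum ${\mathbb U}\bM$. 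Combining the two, $R{\mathbb U}\bM=\bQ$, which is (ii).

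For (iii), since $\bM$ is a positive $\Omega$ spectrum its homotopy groups are those of $R{\mathbb U}\bM=\bQ$; concretely, $\pi_n\bM=\colim_k\pi_{n+k}(M_k)\cong\colim_k\pi_{n+k}(Q_k)=\pi_n\bQ$ by Proposition \ref{Proposition 6.2}. Now $\pi_n\bQ=\bQ^{-n}(*)$, which is $T^{-n}(*)$ by Theorem \ref{Theorem 5.2}, and $T^{-n}(*)=\Omega_n$ by the identification of $T^{k}(*)$ with $\Omega_{-k}$ recorded in Section \ref{cohomology section}; thus $\pi_*\bM\cong\Omega_*$, the bordism groups of the ad theory. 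The genuinely substantial input is all in Propositions \ref{Proposition 6.2}, \ref{n16} and \ref{n5}; within this corollary the only step requiring a little care is the rectification used in (ii) to turn the homotopy-commuting ladder into an actual map of prespectra, and this is standard.
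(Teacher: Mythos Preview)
Your argument is correct and follows the same overall strategy as the paper; the differences are minor and worth noting.

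For (i) your adjoint-square argument is exactly what the paper means by ``immediate from the proposition.''

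For (ii) there are two small divergences. First, to conclude that $R{\mathbb U}\bM$ is represented by ${\mathbb U}\bM$, the paper does not invoke fibrancy in the positive stable model structure; instead it observes that a positive $\Omega$ spectrum is \emph{semistable} (citing Schwede), so the map from $\bM$ to any stable fibrant replacement is already a $\pi_*$-isomorphism and hence is preserved by ${\mathbb U}$. Your route via positive fibrancy is also valid, but note that the sentence ``replacing the structure maps of $\bQ$ and ${\mathbb U}\bM$ by cofibrations \ldots\ one gets such a morphism on the nose'' is not literally correct: making the structure maps into cofibrations does not by itself strictify the homotopy-commuting ladder. The paper handles this by constructing an explicit intermediary spectrum $\bX$ whose $k$-th space is the iterated mapping cylinder of the sequence $\Sigma^{k-1}Q_1\to\Sigma^{k-2}Q_2\to\cdots\to Q_k$, with the inclusions as structure maps; the collapse gives a strict weak equivalence $\bX\to\bQ$, and the homotopies from Proposition~\ref{n16} are used to build a strict map $\bX\to{\mathbb U}\bM$. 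This is precisely the ``standard rectification'' you allude to, but written out.

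For (iii) the paper simply says it is immediate from (ii); your extra detour through Theorem~\ref{Theorem 5.2} is unnecessary, since once $R{\mathbb U}\bM\simeq\bQ$ is known the homotopy groups of $\bM$ are those of $\bQ$, which were already identified with $\Omega_*$ in Section~\ref{spectrum}.
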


\begin{proof}  Part (i) is immediate from the proposition. 

For part (ii), first recall that $(R{\mathbb U})\bM$ is defined to be 
$\mathbb U$
of a fibrant replacement of $\bM$.  But by \cite[Example 4.2]{MR2421129}
$\bM$ is semistable, which means that the map from $\bM$ to its fibrant 
replacement is a
$\pi_*$-isomorphism.  It follows that $(R{\mathbb U})\bM$ is (up to weak
equivalence) ${\mathbb U}\bM$, and it therefore suffices to show that $\bQ$ is 
weakly equivalent to ${\mathbb U}\bM$.  
Define a spectrum $\bX$ as follows: 
$X_0$
is $*$, $X_1$ is $Q_1$, and and for $k\geq 2$ $X_k$ is
the iterated mapping cylinder of the sequence of maps
\[
\Sigma^{k-1}Q_1 
\labarrow{\Sigma^{k-2}s}
\Sigma^{k-2}Q_{2} 
\labarrow{\Sigma^{k-3}s}
\cdots
\Sigma Q_{k-1}
\labarrow{s}
Q_k
\]
The maps $\Sigma X_k\to X_{1+k}$ are defined to be the obvious inclusion maps.
Then there are evident weak equivalences $\bX\to \bQ$ and (using Propositions
\ref{Proposition 6.2} and \ref{n16}) $\bX\to \bM$, which proves part (ii).

Part (iii) is immediate from part (ii).
\end{proof}

For the proof of Proposition \ref{Proposition 6.2} we will
use an idea adumbrated on page 695 of \cite{MR1694381}.

First we interpolate between $Q_k$ and $M_k$.  For $1\leq m\leq k$ let 
$R_k^m$ be the $m$-fold multisemisimplicial set defined by 
\[
(R_k^m)_\bfn=(R_k)_{0,\ldots,0,\bfn}.
\]
We have $|R_k^1|=Q_k$ and $|R_k^k|=M_k$, so it suffices to show that the 
inclusion $|R_k^{m-1}|\to|R_k^m|$ is a weak equivalence for each $m\geq 2$.
We will prove this for each $k$ by induction on $m$, so we
assume
\[
\tag{*}
\text{$|R_k^{m'-1}|\to|R_k^{m'}|$ is a weak equivalence if $m'<m$.}
\]

Next we observe that the realization $|R_k^m|$ can be obtained by first 
realizing in the last $m-1$ semisimplicial directions and then realizing in 
the remaining direction.  Namely, for each $p\geq 0$ let $R_k^m[p]$ be the 
$(m-1)$-fold semisimplicial set with 
\[
(R_k^m[p])_\bfn=
(R_k)_{0,\ldots,0,p,\bfn}.
\]
As $p$ varies we obtain a semisimplicial space $|R_k^m[\bu]|$ whose 
realization is $|R_k^m|$.  Now $R_k^{m-1}$ is $R_k^m[0]$, and the inclusion 
$|R_k^{m-1}|\to|R_k^m|$ is the inclusion of the space of 0-simplices 
$|R_k^m[0]|$ in $|R_k^m|$.  It therefore suffices to show that the latter map
is a weak equivalence, and this is part (v) of:

\begin{lemma}\label{Lemma 6.4}  
{\rm (i)}
In the semisimplicial space $|R_k^m[\bu]|$, all face maps
are homotopy equivalences. 

{\rm (ii)}
For each $p$, all of the face maps from $|R_k^m[p]|$ to $|R_k^m[p-1]|$ 
are homotopic.

{\rm (iii)}
The map $|R_k^m[0]|\to |R_k^m|$ is a homology isomorphism.

{\rm (iv)}
The map $|R_k^m[0]|\to |R_k^m|$ is (up to weak equivalence) an $H$-map 
between grouplike $H$-spaces.

{\rm (v)}
The map $|R_k^m[0]|\to |R_k^m|$ is a weak equivalence.
\end{lemma}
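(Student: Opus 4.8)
The plan is to prove (i)--(v) in that order, for fixed $k$ and by induction on $m$ using the hypothesis (*); parts (i) and (ii) carry all the geometric content, (iii) is a spectral sequence argument, and (iv)--(v) are formal. For (i), a face operator in the $p$-th multisemisimplicial direction of $R_k^m[\bu]$ is induced by restriction along a face inclusion $d^j\colon\Delta^{p-1}\hookrightarrow\Delta^p$. Since $\Delta^p$ collapses onto the face $d^j\Delta^{p-1}$, I would argue exactly as in the proof of Lemma \ref{Dec8}: the relevant pairs are identified, by subdivisions, with pairs of the form $(K\times I,\,K\times 0)$, so the cylinder transformation $J$ of Definition \ref{defad}(g) together with the gluing axiom Definition \ref{defad}(f) produces, naturally in the remaining $m-1$ directions, a multisemisimplicial section of the face map together with a semisimplicial homotopy (again built from $J$) from the other composite to the identity. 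Realizing the last $m-1$ directions then shows each face map $|R_k^m[p]|\to|R_k^m[p-1]|$ is a homotopy equivalence, the inductive hypothesis (*) being available to recognize the spaces $|R_k^m[p]|$ when needed.

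For (ii), the point is that a cylinder ad built via $J$ restricts to the original ad on \emph{both} ends of the cylinder, so the two adjacent facets of the relevant prism carry the same ad. Using this (and the PL bookkeeping of Lemma \ref{Dec8}'s type), one constructs for each $i$ a ``degeneracy-like'' map $s_i\colon|R_k^m[p-1]|\to|R_k^m[p]|$ with $d_is_i=d_{i+1}s_i=\mathrm{id}$ on the nose. Given such an $s_i$, part (i) makes $d_i$ a homotopy equivalence with one-sided (hence two-sided) homotopy inverse $s_i$, so $d_{i+1}=d_{i+1}(s_id_i)\simeq(d_{i+1}s_i)d_i=d_i$; chaining over $i$ shows all face maps $|R_k^m[p]|\to|R_k^m[p-1]|$ are homotopic.

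For (iii), feed (i) and (ii) into the standard spectral sequence of the skeletal filtration of the realization of the semisimplicial space $|R_k^m[\bu]|$, which has $E^1_{p,q}=H_q(|R_k^m[p]|)$ and $d^1=\sum_i(-1)^i(d_i)_*$. By (ii) this alternating sum equals $(d_0)_*$ for $p$ even and $0$ for $p$ odd, and by (i) $(d_0)_*$ is an isomorphism, so $E^2$ is concentrated in the column $p=0$, where it is $H_q(|R_k^m[0]|)$; since the edge homomorphism is induced by the inclusion $|R_k^m[0]|\hookrightarrow|R_k^m|$, that inclusion is a homology isomorphism. For (iv), gluing of ads makes both $|R_k^m|$ and $|R_k^m[0]|=|R_k^{m-1}|$ into homotopy-associative, homotopy-commutative $H$-spaces by the construction used for the group structure on bordism (Section \ref{groups}, Theorem \ref{theorem 4.2}); the inclusion respects these additions, hence is an $H$-map, and both $H$-spaces are grouplike because the induced monoid structures on $\pi_0$ are the bordism groups (using (*) and Proposition \ref{n5} for the source, and (iii) together with the $H$-map property for the target). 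Finally (v): grouplike $H$-spaces are simple spaces, so an $H$-map between them that is a homology isomorphism is a weak equivalence by the homology Whitehead theorem; combining (iii) and (iv) gives that $|R_k^m[0]|\to|R_k^m|$ is a weak equivalence, completing the induction.

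I expect (i) to be the main obstacle, with (ii) close behind: once the face maps of $|R_k^m[\bu]|$ are known to be homotopy equivalences that are all mutually homotopic, the rest is essentially formal, but establishing this requires the careful PL-geometric use of the cylinder and gluing axioms --- identifying the pairs $(\Delta^p,d^j\Delta^{p-1})$ and the relevant prisms with subdivisions of products with $I$, in the spirit of Lemma \ref{Dec8} --- while keeping track of naturality in the suppressed multisemisimplicial directions so that everything descends to maps of realizations.
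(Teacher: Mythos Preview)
Your approach has genuine gaps in (i) and (iv), both caused by the same issue: the gluing axiom (Definition~\ref{defad}(f)) only asserts \emph{existence} of a glued ad, with no naturality or continuity.

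In (i), you want a section $R_k^m[p-1]\to R_k^m[p]$ of multisemisimplicial sets, ``naturally in the remaining $m-1$ directions.'' The cylinder $J$ is indeed natural in $K$, but to pass from an ad on $\Delta^{p-1}\times\Delta^{\bfn}\times I$ to one on $\Delta^p\times\Delta^{\bfn}$ you must invoke axiom~(f), and there is no reason the resulting $(\Delta^p\times\Delta^{\bfn})$-ad restricts compatibly under face maps in the $\bfn$ directions. So you do not obtain a map of multisemisimplicial sets, only a collection of unrelated choices. The same obstruction blocks the degeneracy-like maps $s_i$ you propose in (ii).

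In (iv), you claim the construction of Section~\ref{groups} gives $H$-space structures on $|R_k^m[0]|$ and $|R_k^m|$. It does not: that construction defines addition on bordism \emph{classes} by choosing (via Lemma~\ref{add}, hence via gluing) an auxiliary ad $H$ for each pair $(F,G)$, and there is no continuous choice of $H$. You have no space-level multiplication to speak of.

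The paper's route is different. For (i) it introduces, for each ball complex $L$, an auxiliary ad theory $\ad[L]$ with $\ad[L]^j(K)\cong\ad^j(K\times L)$, so that $R_k^m[p]=R[\Delta^p]_k^{m-1}$; the inductive hypothesis~(*) then identifies $|R_k^m[p]|$ with $Q[\Delta^p]_k$, whose homotopy groups are the groups $T^*(\Delta^p)$. Since $T^*$ is a cohomology theory (Theorem~\ref{theorem 4.2}) and face inclusions $\Delta^{p-1}\hookrightarrow\Delta^p$ are homotopy equivalences, the face maps are weak, hence homotopy, equivalences. Part~(ii) is then immediate from the semisimplicial identity $d_id_{i+1}=d_id_i$ by left-cancelling the homotopy equivalence $d_i$; no degeneracy-like maps are needed. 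For (iv), the paper does not build an $H$-structure by hand: via~(*) it reduces to the map $Q_k\to|Q[\Delta^\bullet]_k|$ and uses the weak equivalences $Q_k\simeq\Omega\hat Q_{k+1}$ and $|Q[\Delta^\bullet]_k|\simeq\Omega|\hat Q[\Delta^\bullet]_{k+1}|$ (the latter from standard facts about realization commuting with loops) to exhibit the map, up to weak equivalence, as a loop map between loop spaces. Your arguments for (iii) and (v) are correct and match the paper.
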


For the proof of the lemma we need an auxiliary construction.   Let ad denote
the ad theory we have fixed and let $\A$ be its target category.  Given a ball
complex $L$ we can define a new $\Z$-graded category $\A [L]$ by letting the
set of objects in dimension $n$ be 
$\pre^{-n}(L)$.  Now we define an ad theory $\ad [L]$ with values in $\A [L]$ 
by letting
$\ad [L]^j(K)$ consist of the pre-$K$-ads which correspond to
$(K\times L)$-ads under the bijection
\[
\pre [L]^j (K) \cong
\pre^j(K\times L).
\]

Let us write 
$\bQ [L]$ and $R [L]^m_k$ for the spectrum
and the multisemisimplicial sets constructed from the theory $\ad [L]$.

\begin{proof}[Proof of Lemma \ref{Lemma 6.4}] 
Part (i). First note that 
$R_k^m[p]$ is the same thing as
$R[\Delta^p]^{m-1}_k$.  By the inductive hypothesis (*) we know that 
$|R[\Delta^p]^{m-1}_k|$ is weakly equivalent to $Q [\Delta^p]_k$.  
The homotopy groups of $Q [\Delta^p]_k$ are (up to a shift in dimension)
the bordism groups of the ad theory $\ad [\Delta^p]$, and
inspection of the definitions shows that these
are the groups $T^{-*}(\Delta^p)$.
This implies that all face maps 
in $R_k^m[\bu]$
are weak equivalences, and hence homotopy equivalences since
all spaces are CW complexes.

Part (ii) follows from part (i) and the semisimplicial identities.

Part (iii) follows from part (ii) and the homology spectral sequence of a
semisimplicial space (cf.\ \cite[Theorem 11.4]{MR0420610}),
but note that our situation
is simpler because there are no degeneracy maps.

Part (iv): 
Let $Q[\Delta^\bu]_k$ denote the semisimplicial space which is
$Q[\Delta^p]_k$ in degree $p$.
By the inductive hypothesis (*), it suffices to show that the map
\[
Q_k=Q[\Delta^0]_k \to |Q[\Delta^\bu]_k|
\]
is (up to weak equivalence) an $H$-map between grouplike $H$-spaces, and this
is a consequence of the following commutative diagram 
(where $\hat{Q}_{k+1}$ denotes the basepoint component of $Q_{k+1}$; note 
that $\Omega\hat{Q}_{k+1}$ is the same thing as $\Omega Q_{k+1}$):
\[
\xymatrix{
Q_k
\ar[d]_{\simeq}
\ar[r]
&
|Q[\Delta^\bu]_k|
\ar[d]_{\simeq}^{\alpha}
\\
\Omega\hat{Q}_{k+1}
\ar[r]
\ar[rd]
&
|\Omega\hat{Q}[\Delta^\bu]_{k+1}|
\ar[d]_{\simeq}^\beta
\\
&
\Omega|\hat{Q}[\Delta^\bu]_{k+1}|
}
\]
Here $\alpha$ is a weak equivalence by 
\cite[Theorem A.4(ii)]{MR0339152},
and 
$\beta$ is a weak equivalence by 
\cite[Theorem 12.3]{MR0420610} (this is where
we need to use basepoint-components).

Part (v) now follows from parts (iii) and (iv) and 
\cite[Corollary IV.3.6 and Corollary IV.7.9]{MR516508}.
\end{proof}

We now turn to the proof of Proposition \ref{n16}.  For simplicity
we will do the case $k=2$; the general case is exactly the same but the 
notation is a little more complicated.

First let us give an explicit description of the maps in the diagram.  

Given an element $F\in \ad^2(\Delta^n)$, let us write $F'$ for the
corresponding element of $\ad^2(\Delta^0\times \Delta^n)$.  Then
the map $Q_2\to M_2$ takes $[u,F]$ to $[(1,u),F']$ (where 1 denotes the 
unique element of $\Delta^0$).  

Hence the clockwise composite in the diagram of 
\ref{n16} takes an element $[t,[u,F]]$ of $\Sigma Q_2$ to 
$[(\bar{t},1,u),\lambda^*(F')]$ (see Definition \ref{12.4}).

To describe the counterclockwise composite
we need some notation.  Recall that the homeomorphism in Lemma \ref{n17} takes
$[t,[u,x]]$ to $[\langle t,u\rangle,x]$, where 
$\langle t,u\rangle=((1-t)u,t)$.
Also, recall the
isomorphism
\[
\theta: 
\Cell(\Delta^{n+1}, \partial_{n+1}\Delta^{n+1} \cup \{n+1\})
\to
\Cell(\Delta^n)
\]
defined after Lemma \ref{n17}.

The map $\Sigma Q_2\to Q_3$ takes $[t,[u,F]]$ to 
$[\langle t,u\rangle, \theta^*F]$, and thus the counterclockwise 
composite in
the diagram of \ref{n16} takes $[t,[u,F]]$ to 
$[(1,1,\langle t,u\rangle,((\theta^* F)')']$.

Now we need a lemma:

\begin{lemma} 
For every $n\geq 0$ there is an incidence-compatible isomorphism
\begin{multline*}
\mu_n:
\Cell(\Delta^1\times \Delta^0\times \Delta^{n+1},
(\{1\}\times \Delta^0\times \Delta^{n+1})
\cup
(\Delta^1\times\Delta^0\times \{n+1\})
\\
\cup
(\{0\}\times\Delta^0\times\partial_{n+1}\Delta^{n+1}))
\to
\Cell(\Delta^n\times I)
\end{multline*}
(which lowers degrees by 1) such that

{\rm (a)}
$\mu_n$ takes the cell $\Delta^1\times\Delta^0\times \Delta^{n+1}$ (with
its standard orientation) to the cell $\Delta^n\times I$ (with 
its standard orientation).

{\rm (b)}
$\mu_n$ restricts to a morphism
\[
\Cell(\{0\}\times\Delta^0\times \Delta^{n+1},
\{0\}\times \Delta^0\times(\partial_{n+1}\Delta^{n+1} \cup \{n+1\}))
\to
\Cell(\Delta^n\times \{0\})
\]
which agrees with $\theta$. 

{\rm (c)}
$\mu_n$ restricts to a morphism
\[
\Cell(\Delta^1\times\Delta^0\times\partial_{n+1}\Delta^{n+1},
\partial \Delta^1\times\Delta^0\times\partial_{n+1}\Delta^{n+1}
)
\to
\Cell(\Delta^n\times \{1\})
\]
which agrees with $\lambda$.

{\rm (d)}
for $0\leq i\leq n$, $\mu_n$ restricts to a morphism
\begin{multline*}
\Cell(\Delta^1\times \Delta^0\times \partial_i\Delta^{n+1},
(\{1\}\times \Delta^0\times \partial_i\Delta^{n+1})
\cup
(\Delta^1\times\Delta^0\times \{n+1\})
\\
\cup
(\{0\}\times\Delta^0\times\partial_i\partial_{n+1}\Delta^{n+1}))
)
\to
\Cell(\partial_i\Delta^n\times I)
\end{multline*}
which agrees with $i\circ\mu_{n-1}$.
\end{lemma}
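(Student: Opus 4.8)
The statement is purely combinatorial — no ad theory enters — so the plan is to write $\mu_n$ down explicitly as a bijection on oriented cells, check it is a functor of $\Z$-graded categories, choose orientations so that incidence-compatibility holds, and then read off (a)--(d). Since $\Delta^0$ is a point I suppress it throughout, so the source is $\Cell(\Delta^1\times\Delta^{n+1},L)$ with $L=(\{1\}\times\Delta^{n+1})\cup(\Delta^1\times\{n+1\})\cup(\{0\}\times\partial_{n+1}\Delta^{n+1})$. The first step is to list the cells of the source that are not in $L$: they are exactly the cells $\Delta^1\times\langle S\rangle$, $\Delta^1\times\langle S\cup\{n+1\}\rangle$ and $\{0\}\times\langle S\cup\{n+1\}\rangle$, where $S$ ranges over the nonempty subsets of $\{0,\dots,n\}$ and $\langle S\rangle$ is the face of $\Delta^n=\partial_{n+1}\Delta^{n+1}$ on the vertices $S$. (It helps to picture $\Delta^{n+1}$ as the cone on $\Delta^n$ with apex $n+1$, so that $\theta$ is the collapse of that cone.) These three families biject with the three families of cells of $\Delta^n\times I$, namely $\langle S\rangle\times\{1\}$, $\langle S\rangle\times I$ and $\langle S\rangle\times\{0\}$; moreover, once (b) and (c) have fixed the images of the first and third families via $\theta$ and $\lambda$, the middle assignment $\Delta^1\times\langle S\cup\{n+1\}\rangle\mapsto\langle S\rangle\times I$ is the only one compatible with the face order and with the dimension drop by $1$. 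I take this bijection to be $\mu_n$ on underlying cells; since each $\Cell(\cdot)$ has at most one morphism between any two objects, functoriality is just the observation that the bijection preserves the face partial order.

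Next I put orientations on the cells and verify incidence-compatibility. On the families $\Delta^1\times\langle S\rangle$ and $\{0\}\times\langle S\cup\{n+1\}\rangle$ I take the orientation rule coming from $\lambda$ and from $\theta$ respectively (this is forced by (b) and (c); it gives the identity sign for the first family and the sign $(-1)^{\dim-1}$ of $\theta$ for the second), and on the middle family I allow an as-yet-undetermined sign. Incidence-compatibility (a factor $(-1)^1=-1$, since $\mu_n$ lowers degree by $1$) is then checked facet by facet: between two cells of the same family it reduces to the incidence-compatibility of $\lambda$ and of $\theta$, which are already known. The remaining, ``mixed'' incidences are those of $\langle S\rangle\times I$ with its facets $\langle S\rangle\times\{1\}$, $\langle S\rangle\times\{0\}$ and $\langle S'\rangle\times I$ ($S'$ a facet of $S$); together with the product-orientation convention for the ball complex $\Delta^n\times I$ these pin down the sign on the middle family, and one checks that the value obtained is exactly the one already forced by (a), i.e.\ the value that makes $\mu_n$ send the top cell $\Delta^1\times\Delta^{n+1}$, which lies in the middle family, to $\Delta^n\times I$ with standard orientation.

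Finally, (a) is the statement about the top cell and is immediate once the signs are in place, and (b), (c) hold by construction. For (d) one uses that, for $i\le n$, the face $\partial_i\Delta^{n+1}$ still contains the apex $n+1$, so the order-preserving identification of vertex sets $\{0,\dots,n+1\}\setminus\{i\}\cong\{0,\dots,n\}$ carries the entire source pattern (subcomplex, apex, and all) over to the corresponding pattern for $\mu_{n-1}$, and likewise identifies $\partial_i\Delta^n\times I$ with $\Delta^{n-1}\times I$; under these identifications $\mu_n$ restricted to the $i$-th face and $\mu_{n-1}$ agree up to the discrepancy between the canonical orientation of $\partial_i$ of a simplex and the canonical orientation of that face as an abstract simplex, which is the $(-1)^i$ of the simplicial boundary formula, and the chosen signs are arranged so that this amounts to the single involution $i$ in the statement. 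The main obstacle, and really the only content, is the sign bookkeeping: exhibiting one orientation convention that is simultaneously incidence-compatible, agrees with $\theta$ and $\lambda$ on the two distinguished boundary faces, is compatible with the orientation of the top cell in (a), and restricts with precisely the involution twist demanded by (d). The underlying combinatorics is transparent; all the care goes into the signs.
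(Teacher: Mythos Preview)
Your cell-by-cell bijection is correct and your observation that functoriality is automatic (posets with at most one morphism between objects) is the right way to dispose of that part.  The paper's proof, by contrast, is literally the sentence ``The proof is an easy induction'': one builds $\mu_0$ by hand and then manufactures $\mu_n$ so that (d) holds by fiat, using (b), (c), (d) to prescribe the boundary and (a) to fill in the top cell.  So you take a genuinely different route---a global closed formula rather than an induction---and what you gain is that (b) and (c) are immediate and the picture (cone on $\Delta^n$, three families of cells) is transparent.

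What you pay for this is exactly the point where your write-up goes soft.  In the inductive approach (d) is true by construction; in yours it must be \emph{checked}, and your check is not convincing as written.  You invoke ``the $(-1)^i$ of the simplicial boundary formula'' and then assert that the signs are ``arranged so that this amounts to the single involution $i$''.  But the order-preserving identification of $\partial_i\Delta^{n+1}$ with $\Delta^n$ (and of $\partial_i\Delta^n$ with $\Delta^{n-1}$) preserves canonical orientations of simplices, so the boundary sign $(-1)^i$ is not what is doing the work here; the genuine source of the uniform $-1$ is the interaction between your sign convention on the middle family and the product-orientation convention on $\Delta^n\times I$ under passage to a facet.  If you want your direct argument to stand on its own you should actually write the sign on the middle family down---say $\mu_n(\Delta^1\times\langle S\cup\{n+1\}\rangle,\text{std})=(\langle S\rangle\times I,\varepsilon_S\cdot\text{std})$ for an explicit $\varepsilon_S$---and then verify the three mixed incidence relations and the restriction in (d) by a short computation, rather than asserting that a consistent choice exists.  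The combinatorics really is elementary, but ``the chosen signs are arranged so that this works'' is precisely the step the reader cannot do in their head.
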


The proof is an easy induction. 

%
%
%
%
%

Now we can write down the homotopy 
\[
H:(\Sigma Q_2)\times I
\to
M_3
\]
needed for the case $k=2$ of \ref{n16}:
\[
H([t,[u,F]],s)=
\begin{cases}
[(\bar{t},1,\langle 2ts,u\rangle),\mu_n^*(J(F))]
&\text{if $0\leq s\leq 1/2$},\\
[\overline{(2-2s)t},1,\langle t,u\rangle),\mu_n^*(J(F))]
&\text{if $1/2\leq s\leq 1$},
\end{cases}
\]
where $J$ is the cylinder (see Definition \ref{defad}(g)).
It's easy to check that this is well-defined and that it is equal to the 
clockwise composite in the diagram of \ref{n16} when $s=0$ and to 
the
counterclockwise composite when $s=1$.

\section{Multiplicative ad theories}
\label{mult}

\begin{definition} \label{Definition 7.2}Let $\A$ be a $\Z$-graded category.  
A {\it strict monoidal structure} on $\A$ is a strict monoidal structure 
$(\boxtimes, \varepsilon)$ (see \cite[Section VII.1]{MR1712872}) on the 
underlying category such that

(a) the monoidal product $\boxtimes$ adds dimensions and the dimension of the 
unit element $\varepsilon$ is $0$,

(b) $i(x\boxtimes y)=(ix)\boxtimes y=x\boxtimes (iy)$ for all objects $x$ and $y$,
and similarly for morphisms,

(c) $x\boxtimes \emptyset_n=\emptyset_n\boxtimes x=\emptyset_{n+\dim x}$ for 
all $n$ and all objects $x$, and if $f:x\to y$ is any morphism then
$f\boxtimes \emptyset_n$ and $\emptyset_n\boxtimes f$ are each equal to the
canonical map $\emptyset_{n+\dim x}\to \emptyset_{n+\dim y}$.
\end{definition}

The category $\A_\STop$ of Example \ref{STop} and the category $\A_\STopFun$ of
Section \ref{moregeom} are examples.  Another example is the category $\A_C$ 
of Example \ref{gamma} when $C$ is a DGA.

\begin{assumption}
\label{n23}
From now on we will assume that Cartesian products in Section \ref{geom}
and tensor products in Section \ref{sym} are strictly associative (that is, we
assume that the monoidal categories Set and Ab 
have been replaced in those sections by equivalent strict monoidal 
categories; see \cite[Section XI.5]{MR1321145}).  
\end{assumption}

With this assumption, the category $\A_{e,*,1}$ defined in Section \ref{geom} 
(see Remark \ref{j2} for the notation)
and, when $R$ is commutative, the
category $\A^R$ defined in Section \ref{sym} are strict monoidal $\Z$-graded 
categories.

\begin{remark}\label{7.3}
If $\A$ is a $\Z$-graded category with a strict monoidal
structure, there is a natural map
\[
\boxtimes:\pre^k(K)\times\pre^l(L)\to\pre^{k+l}(K\times L)
\]
defined by 
\[
(F\boxtimes G)(\sigma\times\tau,o_1\times o_2)
=
i^{l\dim (\sigma)}F(\sigma,o_1)\boxtimes G(\tau,o_2);
\]
this is well-defined, because 
\[ 
F(\sigma,-o_1)\boxtimes
G(\tau,-o_2)=iF(\sigma,o_1)\boxtimes iG(\tau,o_2)=F(\sigma,o_1)\boxtimes
G(\tau,o_2).
\]
\end{remark}

\begin{definition} 
\label{n20}
A {\it multiplicative ad theory} is an ad theory together 
with a strict monoidal structure on the target category $\A$, such that

(a) the pre $*$-ad with value $\varepsilon$ is an ad, and

(b) the map in Remark \ref{7.3} restricts to a map
\[
\boxtimes:\ad^k(K)\times\ad^l(L)\to\ad^{k+l}(K\times L).
\]
\end{definition}

Examples are $\ad_C$ when $C$ is a DGA, $\ad_\STop$, $\ad_\STopFun$, 
$\ad_{e,*,1}$, and $\ad^R$ when $R$ is commutative; we will put the last two 
examples in a more general context in the next section.

\begin{thm} 
\label{n24}
The symmetric spectrum $\bM$ determined by a multiplicative ad theory is a
symmetric ring spectrum.  
\end{thm}

\begin{remark}
(i) Note that a symmetric ring spectrum satisfies {\it strict}
associativity, not just associativity up to homotopy.

(ii) The diagram
\[
\ad_\STop
\leftarrow
\ad_\STopFun
\rightarrow
\ad_{e,*,1}
\]
constructed in Section \ref{moregeom} gives a diagram
\[
\bM_\STop
\leftarrow
\bM_\STopFun
\rightarrow
\bM_{e,*,1}
\]
of symmetric ring spectra in which the first arrow is a weak equivalence.
\end{remark}

For the proof of Theorem \ref{n24} we need a lemma.  Recall Definitions
\ref{9.2} and \ref{12.4}.

\begin{lemma}
\label{n27}
Let $\bfn=(n_1,\ldots,n_k)$ and let $m\geq 0$.
Let $F\in\ad^k(\Delta^\bfn)$ and let $E$ be the $*$-ad with value 
$\varepsilon$.  Then

{\rm (i)}
$((\lambda^*)^m E)\boxtimes F=(\lambda^*)^m F$, and

{\rm (ii)}
$F \boxtimes ((\lambda^*)^m E)
=
i^{km} \circ (((\lambda^*)^m E)\boxtimes F)\circ\eta_\#$, 
where $\eta\in\Sigma_{k+m}$ is the permutation that moves the first $k$ 
elements to the end.
\end{lemma}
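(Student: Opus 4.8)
The plan is to reduce each identity to a computation on a single cell, use the strict unit law to delete the object $\varepsilon$, and then check that the resulting powers of the involution $i$ agree (all such powers being read modulo $2$, since $i^2=\id$). I would begin with two observations. By induction on $m$, using Definitions \ref{12.4}(i) and \ref{n25}(ii), the pre-ad $(\lambda^*)^m E$ is trivial on every proper cell of $\Delta^1\times\cdots\times\Delta^1$ and sends the top cell $\rho$ (with its standard orientation) to $i^a\varepsilon$ with $a=m(m-1)/2$; likewise $(\lambda^*)^m F$ sends $\rho\times\sigma$ to $i^{mk+a}F(\sigma,o)$, where $k=\deg F$. Also, since $F$ is a $k$-morphism of $\Z$-graded categories it strictly commutes with $i$ (Definition \ref{j1}), so $F(\sigma,-o)=iF(\sigma,o)$ for every oriented cell. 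Combining these with Remark \ref{7.3}, axioms (b), (c) of Definition \ref{Definition 7.2}, and the strict unit law $\varepsilon\boxtimes x=x=x\boxtimes\varepsilon$, one sees that $((\lambda^*)^mE)\boxtimes F$ and $F\boxtimes((\lambda^*)^mE)$ are trivial except on the cells $\rho\times\sigma$, respectively $\sigma\times\rho$, and can be evaluated there explicitly.

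For part (i) I would argue by induction on $m$. The case $m=0$ is the strict unit law $E\boxtimes F=F$, after the identification $*\times\Delta^{\bfn}=\Delta^{\bfn}$. For the inductive step it suffices to prove the compatibility $\lambda^*(G\boxtimes F)=(\lambda^*G)\boxtimes F$ for an arbitrary pre-ad $G$, with $\lambda$ acting on the new first coordinate on both sides. This is a short calculation: on the cell $\Delta^1\times\rho'\times\sigma$ (standard orientation on the $\Delta^1$ factor) both sides equal $i^{\deg G+\deg F+(\deg F)\dim\rho'}\,(G(\rho',o')\boxtimes F(\sigma,o))$ --- on the left the exponent is $\deg G+\deg F$ from $\lambda^*$ plus $(\deg F)\dim\rho'$ from $\boxtimes$, on the right it is $\deg G$ from $\lambda^*$ applied to $G$ plus $(\deg F)(\dim\rho'+1)$ from $\boxtimes$ --- and the cell and morphism data match because the product of ball complexes and the monoidal product are strictly associative. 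Applying this with $G=(\lambda^*)^{m-1}E$ and using the inductive hypothesis gives $((\lambda^*)^mE)\boxtimes F=\lambda^*\big((\lambda^*)^{m-1}E\boxtimes F\big)=\lambda^*\big((\lambda^*)^{m-1}F\big)=(\lambda^*)^mF$. (Alternatively, one checks directly that both sides send $\rho\times\sigma$ to $i^{mk+a}F(\sigma,o)$.)

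For part (ii) I would first use part (i) to replace the right-hand side by $i^{km}\circ((\lambda^*)^mF)\circ\eta_\#$, so that the claim becomes $F\boxtimes((\lambda^*)^mE)=i^{km}\circ((\lambda^*)^mF)\circ\eta_\#$; both sides are pre-ads on $\Cell(\Delta^{(n_1,\dots,n_k,1,\dots,1)})$ supported on the cells $\sigma\times\rho$. On such a cell Remark \ref{7.3} (whose sign is $i^{(\deg(\lambda^*)^mE)\dim\sigma}=i^{m\dim\sigma}$) together with the strict unit law give that the left-hand side equals $i^{m\dim\sigma+a}F(\sigma,o)$. For the right-hand side I would unwind Definition \ref{9.2}: $\eta$ is the block transposition carrying the first $k$ coordinates to the end, and $\eta_\#$ sends $\sigma\times\rho$ to $\rho\times\sigma$ with orientation multiplied by the Koszul sign $(-1)^{m\dim\sigma}$; by $F(\sigma,-o)=iF(\sigma,o)$ this contributes a per-cell factor $i^{m\dim\sigma}$, so $((\lambda^*)^mF)\circ\eta_\#$ sends $\sigma\times\rho$ to $i^{m\dim\sigma}\cdot i^{mk+a}F(\sigma,o)$. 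Multiplying by $i^{km}$ and using $i^{2mk}=\id$ yields $i^{m\dim\sigma+a}F(\sigma,o)$, which matches the left-hand side; agreement on morphisms is again automatic from functoriality and strict associativity.

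The step I expect to be the main obstacle is the sign bookkeeping in part (ii): one must be certain that $\eta_\#$ reorients product cells by precisely the Koszul sign dictated by the conventions of Section \ref{f12} --- so that, through the identity $F(\sigma,-o)=iF(\sigma,o)$, it produces the per-cell factor $i^{m\dim\sigma}$ that cancels the sign coming from Remark \ref{7.3} --- and one must keep in mind throughout that exponents of $i$ live modulo $2$, so that the ostensibly leftover factor $i^{2mk}$ is trivial. By contrast, part (i) should be routine once the compatibility $\lambda^*(G\boxtimes F)=(\lambda^*G)\boxtimes F$ has been isolated.
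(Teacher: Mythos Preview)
Your proposal is correct and follows essentially the same route as the paper: evaluate both sides on a generic top-dimensional cell and compare the powers of $i$. A few differences are worth noting.

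For part (i), the paper simply computes both sides on the cell $(\sigma,o)^{\times m}\times(\tau,o')$ directly; your inductive reduction via the compatibility $\lambda^*(G\boxtimes F)=(\lambda^*G)\boxtimes F$ is a mild repackaging of the same calculation and is fine. You also track the extra factor $i^{a}$ with $a=m(m-1)/2$ coming from iterating $\lambda^*$ (via Definition \ref{n25}(ii)); the paper effectively treats $(\lambda^*)^m$ as $(\lambda^m)^*$ and writes $((\lambda^*)^m E)((\sigma,o)^{\times m})=\varepsilon$ and $((\lambda^*)^m F)=i^{km}\circ F\circ\lambda^m$, suppressing this $i^a$. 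Since $a$ appears identically on both sides of each equation, both computations are valid; yours is simply more literal.

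For part (ii), the paper does not first invoke part (i) but computes both $F\boxtimes((\lambda^*)^m E)$ and $i^{km}\circ(((\lambda^*)^m E)\boxtimes F)\circ\eta_\#$ directly on $(\tau,o')\times(\sigma,o)^{\times m}$, obtaining $i^{lm}F(\tau,o')$ for each. Your route---first replace the right-hand side by $i^{km}\circ((\lambda^*)^m F)\circ\eta_\#$ using part (i), then compare---is an equivalent reorganization. Your identification of the per-cell contribution $i^{m\dim\sigma}$ from $\eta_\#$ is exactly the $i^{lm}$ in the paper's display; this is the one place where the sign conventions need care, and you have it right.
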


\begin{proof}
Let $(\sigma,o)$ be the 1-cell of $\Delta^1$ with its standard orientation and
let $(\tau,o')$ be an oriented cell of $\Delta^\bfn$ of dimension $l$.
For part (i), we have
\begin{align*}
(((\lambda^*)^m E)\boxtimes F)((\sigma,o)^{\times m}\times (\tau,o'))
&=
i^{km}(((\lambda^*)^m E)((\sigma,o)^{\times m})
\boxtimes
F(\tau,o'))
\\
&=
i^{km}(\varepsilon\boxtimes F(\tau,o'))
\\
&=
i^{km}F(\tau,o')
\end{align*}
and (using Definition \ref{n25}(ii))
\begin{align*}
((\lambda^*)^m F)((\sigma,o)^{\times m}\times (\tau,o'))
&=
(i^{km}\circ F\circ \lambda^m)((\sigma,o)^{\times m}\times (\tau,o'))
\\
&=
i^{km}F(\tau,o').
\end{align*}
For part (ii) we have
\begin{align*}
(F \boxtimes ((\lambda^*)^m E))((\tau,o')\times (\sigma,o)^{\times m})
&=
i^{lm}(F(\tau,o')\boxtimes ((\lambda^*)^m E)((\sigma,o)^{\times m}))
\\
&=
i^{lm}(F(\tau,o')\boxtimes\varepsilon)
\\
&=
i^{lm}F(\tau,o')
\end{align*}
and
\begin{align*}
(i^{km} \circ (((\lambda^*)^m E)\boxtimes F)\circ\eta_\#)
&((\tau,o')\times
(\sigma,o)^{\times m})
\\
&=
i^{km+lm}((((\lambda^*)^m E)\boxtimes F)((\sigma,o)^{\times m}\times (\tau,o')))
\\
&=
i^{lm}(((\lambda^*)^m E)((\sigma,o)^{\times m})\boxtimes F(\tau,o'))
\\
&=
i^{lm}(\varepsilon\boxtimes F(\tau,o'))
\\
&=
i^{lm}F(\tau,o').
\end{align*}
\end{proof}

\begin{proof}[Proof of Theorem \ref{n24}]
Recall (\cite[Definition 2.2.3]{MR1695653}) that the smash product 
$\bM\wedge\bM$ is defined to be the coequalizer of
\[
\xymatrix{
\bM\otimes\bS\otimes\bM
\ar@<0.5ex>[r]^-{1\otimes s}
\ar@<-0.5ex>[r]_-{r\otimes 1}
&
\bM\otimes\bM.
}
\]
Here $\otimes$ is the tensor product of the underlying symmetric sequences
(\cite[Definition 2.1.3]{MR1695653}), $\bS$ is the symmetric sphere spectrum
(\cite[Example 1.2.4]{MR1695653}), $s:\bS\otimes\bM\to \bM$ is induced by the
symmetric spectrum structure of $\bM$ (\cite[proof of Proposition
2.2.1]{MR1695653}), and $r$ is the composite
\[
\bM\otimes \bS\stackrel{t}{\to}
\bS\otimes\bM
\stackrel{s}{\to}
\bM,
\]
where $t$ is the twist isomorphism (\cite[page 160]{MR1695653}).

The $\boxtimes$ operation of Definition \ref{n20}(ii) gives an associative 
multiplication
\[
m:\bM\otimes\bM\to \bM
\]
and we need to show that this induces a map $\bM\wedge\bM\to \bM$.
Let
\[
\iota:\bS\to \bM
\]
be the map of symmetric spectra that takes the nontrivial element of $S^0$ to
the $*$-ad with value $\varepsilon$.
Lemma \ref{n27} shows that
the diagrams
\begin{equation}
\label{n21}
\xymatrix{
\bS\otimes \bM 
\ar[rr]^{\iota\otimes 1}
\ar[rd]_s
&&
\bM\otimes\bM
\ar[ld]^m
\\
&\bM&
}
\end{equation}
and
\begin{equation}
\label{n22}
\xymatrix{
\bM\otimes \bS
\ar[r]^{1\otimes \iota}
\ar[dd]_t
&
\bM\otimes \bM
\ar[rd]^m
\\
&&\bM
\\
\bS\otimes \bM
\ar[r]^{\iota\otimes 1}
&
\bM\otimes\bM
\ar[ru]^m
}
\end{equation}
commute.
These in turn imply that the diagram
\[
\xymatrix{
\bM\otimes \bS \otimes \bM
\ar[r]^-{1\otimes s}
\ar[dd]_{t\otimes 1}
&
\bM\otimes \bM
\ar[rd]^m
\\
&&\bM
\\
\bS\otimes \bM\otimes\bM
\ar[r]^-{s\otimes 1}
&
\bM\otimes\bM
\ar[ru]^m
}
\]
commutes, and hence $m$ induces an associative multiplication
\[
\bM\wedge\bM\to \bM.
\]
Moreover, diagrams \eqref{n21} and \eqref{n22} imply that the unit diagrams
\[
\xymatrix{
\bS\wedge \bM
\ar[rr]^{\iota\wedge 1}
\ar[rd]_\cong
&&
\bM\wedge\bM
\ar[ld]^m
\\
&\bM&
}
\]
and
\[
\xymatrix{
\bM\wedge \bS
\ar[rr]^{1\wedge\iota}
\ar[rd]_\cong
&&
\bM\wedge\bM
\ar[ld]^m
\\
&\bM&
}
\]
commute.  Thus $\bM$ is a symmetric ring spectrum. 
\end{proof}

\section{Geometric and symmetric Poincar\'e bordism are monoidal functors}
\label{Nov8}

With the notation of Example \ref{Nov12}, there are product maps
\[
\bM_{\pi,Z,w}\wedge
\bM_{\pi',Z',w'}
\to
\bM_{\pi\times\pi',Z\times Z',w\times w'}
\]
and
\[
\bM^R\wedge \bM^S
\to
\bM^{R\otimes S}
\]
induced by the operations $\times$ and $\otimes$ of Lemmas \ref{prod} and
\ref{prod2}.  There is also a unit map
\[
\bS\to \bM_{e,*,1}
\]
defined as follows.  The one-point space
gives a $*$-ad of degree 0, and this induces a map of spaces from
$S^0$ to the 0-th space of $\bM_{e,*,1}$; the unique extension of this to a map
of symmetric spectra is the desired unit map.  Similarly there is a unit map
\[
\bS\to\bM^\Z
\]
determined by the $*$-ad $(\Z,\Z,\varphi)$, where $\varphi$ is the identity
map.

Assumption \ref{n23} implies that the categories $\T$ and $\R$ introduced in 
Section \ref{fun} are strict monoidal categories.

\begin{definition}
Let $\Mgeom$ be the functor from $\T$ to the category of symmetric spectra 
which takes $(\pi,Z,w)$ to $\bM_{\pi,Z,w}$.  Let $\Msym$ be the functor from 
$\R$ to the category of symmetric spectra which takes $R$ to $\bM^R$.
\end{definition}

\begin{thm}
\label{Nov11}
$\Mgeom$ and $\Msym$ are monoidal functors.  That is, the
following diagrams involving $\Msym$ strictly commute, and similarly for 
$\Mgeom$.

\[
\xymatrix{
(\bM^R\wedge \bM^S)\wedge \bM^T
\ar[rr]^\cong
\ar[d]_\otimes
&
&
\bM^R\wedge(\bM^S\wedge \bM^T)
\ar[d]^\otimes
\\
\bM^{R\otimes S}\wedge \bM^T
\ar[dr]_\otimes
&
&
\bM^R\wedge \bM^{S\otimes T}
\ar[dl]^\otimes
\\
&
\bM^{R\otimes S\otimes T}
&
}
\]
\[
\xymatrix{
\bS\wedge \bM^R
\ar[r]^-\cong
\ar[d]
&
\bM^R
\\
\bM^\Z\wedge \bM^R
\ar[r]^-\otimes
&
\bM^{\Z\otimes R}
\ar[u]_-=
}
\qquad
\xymatrix{
\bM^R
\wedge 
\bS
\ar[r]^-\cong
\ar[d]
&
\bM^R
\\
\bM^R
\wedge 
\bM^\Z
\ar[r]^-\otimes
&
\bM^{R\otimes \Z}
\ar[u]_-=
}
\]
\end{thm}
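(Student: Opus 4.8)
The plan is to reduce all the displayed diagrams to the single statement that the operations $\times$ (for $\Mgeom$) and $\otimes$ (for $\Msym$) on the underlying ad theories are \emph{strictly} associative and unital, and then to verify that statement from the explicit formulas; I will describe the argument for $\Msym$, that for $\Mgeom$ being identical with $\otimes$ of chain complexes replaced by Cartesian product of spaces and $\Z$ by the triple $(e,*,1)$. Recall from the proof of Theorem \ref{n24} that $\bM\wedge\bM$ is a coequalizer of $\bM\otimes\bS\otimes\bM\rightrightarrows\bM\otimes\bM$, where $\otimes$ is the tensor product of underlying symmetric sequences, and that the bivariant product $\bM^R\wedge\bM^S\to\bM^{R\otimes S}$ is likewise induced, through this coequalizer, from a map of symmetric sequences $\bM^R\otimes\bM^S\to\bM^{R\otimes S}$ which in degree $n$ is assembled from the spacewise maps $M^R_p\wedge M^S_q\to M^{R\otimes S}_{p+q}$ coming from $\otimes$ on ads (Lemma \ref{prod2}). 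Since $\otimes$ of symmetric sequences is strictly associative and unital in the standard conventions, each displayed diagram is the image under the coequalizer quotients of the corresponding diagram of maps of symmetric sequences, so it suffices to check commutativity there, spacewise. A point of $(\bM^R\otimes\bM^S\otimes\bM^T)_n$ is represented by a tuple $(\eta;[u_1,F_1],[u_2,F_2],[u_3,F_3])$ with $\eta\in\Sigma_n$ and $F_j\in\ad^{k_j}(\Delta^{\bfn_j})$; both ways around the associativity hexagon send it to a tuple with the same $\Sigma_n$-component and with ad-component $(F_1\otimes F_2)\otimes F_3$, resp.\ $F_1\otimes(F_2\otimes F_3)$, regarded as an ad over $\Delta^{\bfn_1}\times\Delta^{\bfn_2}\times\Delta^{\bfn_3}$. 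So the hexagon reduces to strict associativity of $\otimes$ on ads, and the two unit triangles to the identities $\varepsilon\otimes F=F=F\otimes\varepsilon$, where $\varepsilon$ is the unit $*$-ad whose image under $\iota\colon\bS\to\bM^\Z$ is the unit map.

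Next I would establish strict associativity of $\otimes$ on ads. Three ingredients enter the formula for $F_1\otimes F_2$: the product $\Delta^{\bfn_1}\times\Delta^{\bfn_2}$ of ball complexes, the sign $i^{(\deg F_2)\dim\sigma}$, and the new quasi-symmetric structure obtained by precomposing $\varphi^1\otimes\varphi^2$ with Ranicki's diagonal $\Delta\colon W\to W\otimes W$. Products of ball complexes are strictly associative, $R\otimes S\otimes T$ is unambiguous, and by Assumption \ref{n23} the tensor products of $R$-modules may be taken strictly associative. The sign is compatible: $(F_1\otimes F_2)\otimes F_3$ carries $i^{(\deg F_3)\dim(\sigma\times\tau)}\cdot i^{(\deg F_2)\dim\sigma}=i^{(\deg F_2+\deg F_3)\dim\sigma+(\deg F_3)\dim\tau}$, while $F_1\otimes(F_2\otimes F_3)$ carries $i^{(\deg F_2+\deg F_3)\dim\sigma}\cdot i^{(\deg F_3)\dim\tau}$, and these agree. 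Finally, unwinding the structure maps shows that $(F_1\otimes F_2)\otimes F_3$ uses the composite $W\xrightarrow{\Delta}W\otimes W\xrightarrow{\Delta\otimes 1}W\otimes W\otimes W$ while $F_1\otimes(F_2\otimes F_3)$ uses $W\xrightarrow{\Delta}W\otimes W\xrightarrow{1\otimes\Delta}W\otimes W\otimes W$, so the two agree exactly because $\Delta$ is strictly coassociative (checked from the explicit formula on $W$). Hence $(F_1\otimes F_2)\otimes F_3=F_1\otimes(F_2\otimes F_3)$ and the hexagon commutes. For $\Mgeom$ the corresponding point is the strict associativity of the Eilenberg--Zilber (shuffle) cross product of singular chains, together with strict associativity of Cartesian products from Assumption \ref{n23}; no diagonal on $W$ is involved.

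For the unit triangles, the unit $*$-ad $\varepsilon$ of $\ad^\Z$ is the $0$-dimensional object with $C=\Z$ concentrated in degree $0$ and structure map $\varphi\colon W\to\Z$ the augmentation; the unique cell of $\Delta^0$ has dimension $0$, so all signs occurring in $\varepsilon\otimes F$ and $F\otimes\varepsilon$ vanish, and $\Z\otimes C=C$ together with the counit identities $(\varphi\otimes 1)\Delta=\id_W=(1\otimes\varphi)\Delta$ give $\varepsilon\otimes F=F=F\otimes\varepsilon$ on the nose. (For $\Mgeom$ one instead uses the one-point space with its generating $0$-chain and the fact that the cross product with that $0$-chain is the canonical identification.) Since the unit maps $\bS\to\bM^\Z$ and $\bS\to\bM_{e,*,1}$ are precisely the maps $\iota$ built from $\varepsilon$ as in the proof of Theorem \ref{n24} (see Example \ref{Nov12}), the reduction of the first paragraph turns these identities into the two unit diagrams.

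The step I expect to be the main obstacle is the bookkeeping in the first reduction: one must check that the $\Sigma_n$-components of the tuples genuinely match on the two sides of the hexagon, i.e.\ that the associativity isomorphism of the smash product of symmetric spectra cancels precisely against the permutations that $\boxtimes$ introduces when smash factors are reordered, and one must confirm that $\Delta$ is strictly (not merely homotopy-) coassociative and counital, since anything weaker would yield commutativity only up to homotopy rather than the strict commutativity asserted. Once those two points are pinned down, the remaining work is the routine sign and formula manipulations indicated above.
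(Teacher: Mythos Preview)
Your proposal is correct and follows the same approach as the paper. The paper's own proof is the single sentence ``The proof is similar to that of Theorem \ref{n24},'' and what you have written is precisely a careful unpacking of that sentence: pass to the symmetric-sequence level via the coequalizer description of $\wedge$, and then reduce to identities among ads (strict associativity of $\otimes$ on ads, the sign bookkeeping, and the unit identities), exactly parallel to the role Lemma \ref{n27} plays in the proof of Theorem \ref{n24}. Your flagged ``obstacles'' (matching of the $\Sigma_n$-components under the associator, and strict coassociativity/counitality of Ranicki's $\Delta\colon W\to W\otimes W$) are indeed the only points requiring care, and they are routine once isolated.
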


\bigskip

The proof is similar to that of Theorem \ref{n24}.

\begin{remark}
One can define a ``module functor'' over a monoidal functor in the evident 
way.  Then the functor $\Mquad$ from $\R$ to the category of symmetric spectra
which takes $R$ to $\bM_R$ is a module functor over $\Msym$.
\end{remark}

\begin{remark}
The map
\[
\Sig:\bM_{\pi,Z,w}\to \bM^{\Z[\pi]^w}
\]
in Example \ref{Nov12} is not a monoidal natural transformation,
because the functor which takes a space to its singular chain
complex does not take Cartesian products to tensor products.  We will return to
this point in the sequel.
\end{remark}

\appendix
\section{Ball complex structures on PL manifolds and homology manifolds}

We begin with an elementary fact.

\begin{prop}
\label{f7}
Let $X$ be a compact oriented homology manifold of dimension $n$ with a 
regular CW complex structure such that $\partial X$ is a subcomplex.

{\rm (i)} Let $S$ be the set of $n$-dimensional cells, with their induced 
orientations.  Then the cellular chain $\sum_{\sigma\in S}\, \sigma$ 
represents the fundamental class $[X]\in H_n(X,\partial X)$.

{\rm (ii)} Let $T$ be the set of $(n-1)$-dimensional cells of $\partial X$, 
with their induced orientations.   Then
\[
\partial\left(
\sum_{\sigma\in S}\, \sigma
\right)
=
\sum_{\tau\in T}\, \tau
\]
\end{prop}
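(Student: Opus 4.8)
The plan is to reduce both parts to routine bookkeeping in the cellular chain complex of the pair $(X,\partial X)$, using only standard facts that belong in this appendix: that $\partial X$ (a subcomplex by hypothesis) is a closed oriented homology $(n-1)$-manifold, that no $n$-cell of $X$ lies in $\partial X$, that the fundamental class $[X]\in H_n(X,\partial X)$ is the unique class whose image in $H_n(X,X-x)\cong\Z$ is the chosen local orientation generator for each interior point $x$, and that $\partial X$ is oriented so that its fundamental class $[\partial X]$ is the image of $[X]$ under the connecting homomorphism $\delta\colon H_n(X,\partial X)\to H_{n-1}(\partial X)$. I would also recall the standard identification, for an $n$-cell $\sigma$ with centre $\hat\sigma$, of the restriction $H_n(X,\partial X)\to H_n(X,X-\hat\sigma)\cong\Z$ with the ``coefficient of $\langle\sigma\rangle$'' homomorphism on cellular chains, the generator of the target being the one fixed by the induced orientation of $\sigma$.

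For part (i): since $\dim X=n$ the group $C_{n+1}^{\mathrm{cell}}(X,\partial X)$ vanishes, so $H_n(X,\partial X)$ is the kernel of $\partial$ on $C_n^{\mathrm{cell}}(X,\partial X)=\bigoplus_{\sigma\in S}\Z\langle\sigma\rangle$ (the last equality because no $n$-cell lies in $\partial X$), hence a subgroup of the free abelian group on $S$. I would then read off the coefficient of each $\langle\sigma\rangle$ in the cellular chain $[X]$: by the identification just recalled it equals the image of $[X]$ in $H_n(X,X-\hat\sigma)$, which by definition of the fundamental class and of the induced orientation of $\sigma$ is $1$. Hence $[X]=\sum_{\sigma\in S}\langle\sigma\rangle$, which is (i).

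For part (ii): by (i) the chain $\sum_{\sigma\in S}\langle\sigma\rangle$ represents $[X]$, so it is a relative cycle and $\partial\bigl(\sum_{\sigma\in S}\sigma\bigr)$ already lies in $C_{n-1}^{\mathrm{cell}}(\partial X)=\bigoplus_{\tau\in T}\Z\langle\tau\rangle$; write it as $\sum_{\tau\in T}c_\tau\langle\tau\rangle$. Since $\delta$ is induced by $\partial$ at the chain level, this chain represents $\delta[X]=[\partial X]$; and because no $n$-cell lies in $\partial X$ we have $C_n^{\mathrm{cell}}(\partial X)=0$, so $H_{n-1}(\partial X)=\ker\partial\subseteq C_{n-1}^{\mathrm{cell}}(\partial X)$ and the class $[\partial X]$ is literally the cycle $\sum_{\tau\in T}c_\tau\langle\tau\rangle$. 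Applying part (i) to the closed oriented homology $(n-1)$-manifold $\partial X$ gives $[\partial X]=\sum_{\tau\in T}\langle\tau\rangle$, whence $c_\tau=1$ for every $\tau\in T$, which is the assertion.

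The argument itself is short; the real work is the preliminary package. The main obstacle I expect is making sure the local-homology description of the fundamental class and of the induced orientations is set up consistently — in particular that ``induced orientation on $\partial X$'' means precisely the one making $\delta[X]=[\partial X]$, so that (ii) comes out with coefficient exactly $+1$ and no spurious sign. These facts about (homology) manifolds are standard — they can be obtained from the dual-cone description already exploited in Proposition \ref{f5}, or from a homology collar of $\partial X$ — so I would either cite them or record them in a short preliminary lemma; once that is in place, (i) and (ii) are exactly the two paragraphs above.
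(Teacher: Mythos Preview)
Your proposal is correct and follows essentially the same approach as the paper: part (i) is established by checking the image of $[X]$ in local homology $H_n(X,X-\{x\})$ for $x$ in the interior of each top cell, and part (ii) is obtained by applying part (i) to $\partial X$. The paper's proof is much terser (two sentences), omitting the bookkeeping you spell out about $C_{n+1}^{\mathrm{cell}}=0$, the connecting homomorphism, and the orientation conventions, but the substance is identical.
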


\begin{proof}
Part (i) follows from the fact that if $\sigma\in S$ and $x$ is in the interior
of $\sigma$ then the image of $[X]$ in $H_n(X,X-\{x\})$ is represented by
$\sigma$.  Applying part (i)  to $\partial X$ gives part (ii).
\end{proof}

Next we recall from
\cite{MR0400243}
that the concept of barycentric subdivision generalizes from simplicial 
complexes to ball complexes.

Let $K$ be a ball 
complex.  For each cell $\sigma$ of $K$, choose a point $\hat{\sigma}$ in the 
interior of $\sigma$ and a PL isomorphism from $\sigma$ to the cone 
$C(\partial\sigma)$ which takes $\hat{\sigma}$ to the cone point.  With this 
data, $K$ is a ``structured cone complex'' (\cite[page 274]{MR0400243}).  By 
\cite[Proposition 2.1]{MR0400243}, $K$ has a subdivision $\hat{K}$ which is 
a simplicial complex
with vertices $\hat{\sigma}$. A set of vertices in 
$\hat{K}$ spans a simplex in $\hat{K}$ if and only if it has the form
\[
\{\hat{\sigma}_1,\ldots,\hat{\sigma}_k\}
\]
with $\sigma_1\subset\cdots\subset\sigma_k$.  

Recall that if $S$ is a simplicial complex and $v$ is a vertex of $S$ then 
the {\it closed star} $\text{st}(v)$ is the subcomplex consisting of all 
simplices which contain $v$ together with all of their faces.  The {\it link}
$\text{lk}(v)$ is the subcomplex of $\text{st}(v)$ consisting of simplices
that do not contain $v$.  The realization $|\text{st}(v)|$ is the cone 
$C(|\text{lk}(v)|)$.  

\begin{prop}
\label{f8}
Let $K$ be a ball complex and let $\sigma$ be a cell of $K$.  

{\rm (i)}  For each cell $\tau$ with $\tau\supsetneq \sigma$ the 
subspace $|\lk(\hat\sigma)|\cap \tau$ is a PL ball, and these subspaces,
together with the cells of $K$ contained in $\partial\sigma$, are a 
ball complex structure on $|\lk(\hat\sigma)|$. 

{\rm (ii)}  For each cell $\tau$ with $\tau\supset \sigma$ the 
subspace $|\st(\hat\sigma)|\cap \tau$ is a PL ball, and these subspaces,
together with the cells of $|\lk(\hat\sigma)|$, 
are a ball complex structure on $|\st(\hat\sigma)|$.

\end{prop}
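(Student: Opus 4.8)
The plan is to deduce the proposition from one statement proved by induction on $\dim K$. Throughout I use the description of $\hat K$ recalled above: a simplex is a set $\{\hat\sigma_1,\dots,\hat\sigma_m\}$ with $\sigma_1\subsetneq\dots\subsetneq\sigma_m$. For a fixed $\sigma$, adjoining $\hat\sigma$ to a simplex that misses it again gives a simplex precisely when the underlying chain can be written $\sigma_1\subsetneq\dots\subsetneq\sigma_j\subsetneq\sigma\subsetneq\sigma_{j+1}\subsetneq\dots\subsetneq\sigma_m$. Let $D$ be the subcomplex of $\hat K$ spanned by the vertices $\hat\tau$ with $\tau\supsetneq\sigma$ (the derived link of $\sigma$ in $K$). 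The chain description yields the join decompositions $|\lk(\hat\sigma)|=|\partial\sigma|\ast|D|$ and $|\st(\hat\sigma)|=\sigma\ast|D|$, where $\partial\sigma$ and $\sigma$ carry the ball complex structures inherited from $K$; and, setting $A_\tau:=|D|\cap\tau$ for $\tau\supsetneq\sigma$, one gets $|\lk(\hat\sigma)|\cap\tau=|\partial\sigma|\ast A_\tau$ and $|\st(\hat\sigma)|\cap\tau=\sigma\ast A_\tau$ (with $A_\sigma=\emptyset$, so $|\st(\hat\sigma)|\cap\sigma=\sigma$).

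By induction on $\dim K$ I would prove the combined assertion: (a) the sets $A_\tau$, $\tau\supsetneq\sigma$, are the cells of a ball complex structure on $|D|$, each $A_\tau$ a PL ball of dimension $\dim\tau-\dim\sigma-1$ with $\partial A_\tau=\bigcup_{\sigma\subsetneq\rho\subsetneq\tau}A_\rho$; (b) parts (i) and (ii) of the proposition; (c) if $|K|$ is a PL manifold then $|D|$ is a PL sphere when $\sigma\not\subseteq\partial|K|$ and a PL ball when $\sigma\subseteq\partial|K|$. The inductive step for (a) unwinds the chain description inside a single $\tau$: a simplex of $D$ lying in $\tau$ either has top term $\tau$, and is then $\hat\tau$ joined with a (possibly empty) simplex of the derived link $D_\tau$ of $\sigma$ in $\partial\tau$, or lies already in $\partial\tau$; hence $A_\tau=\hat\tau\ast|D_\tau|$ and $|D_\tau|=\bigcup_{\sigma\subsetneq\rho\subsetneq\tau}A_\rho$. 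Since $\tau$ is a PL ball, $|\partial\tau|$ is a PL sphere; the inductive hypothesis applied to the ball complex $\partial\tau$ (which has strictly smaller dimension), via its part (c), shows $|D_\tau|$ is a PL sphere carrying a ball complex with the $A_\rho$ as cells, so the cone $A_\tau=\hat\tau\ast|D_\tau|$ is a PL ball of the claimed dimension with the claimed boundary; the $A_\tau$ assemble into a ball complex because disjointness of relative interiors is immediate from the uniqueness of the open simplex of $\hat K$ through a point. Part (c) is the same join bookkeeping applied to the classical fact that the derived link of a vertex in a triangulated PL manifold is a sphere or a ball.

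Granting (a) and (c), parts (i) and (ii) become verifications of Definition~\ref{ball}(i). For (i): the proposed cells are the faces of $\sigma$ (PL balls, from $K$) and the sets $|\partial\sigma|\ast A_\tau\cong S^{\dim\sigma-1}\ast B^{\dim\tau-\dim\sigma-1}=B^{\dim\tau-1}$ (PL balls); they cover $|\lk(\hat\sigma)|=|\partial\sigma|\ast|D|$ because joining with the whole sphere $|\partial\sigma|$ absorbs the lower pieces of $|D|$; their relative interiors are disjoint by uniqueness of open simplices of $\hat K$; and since $|\partial\sigma|$ is a PL sphere one has $\partial(|\partial\sigma|\ast A_\tau)=|\partial\sigma|\ast\partial A_\tau=\bigcup_{\sigma\subsetneq\rho\subsetneq\tau}(|\partial\sigma|\ast A_\rho)$, while $\partial$ of a face of $\sigma$ is a union of faces of $\sigma$; so every boundary is a union of cells. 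Part (ii) is the analogous check for $|\st(\hat\sigma)|=\sigma\ast|D|$, whose new cells are $\sigma$ and the $\sigma\ast A_\tau$ for $\tau\supsetneq\sigma$, with $\partial(\sigma\ast A_\tau)=(|\partial\sigma|\ast A_\tau)\cup\bigcup_{\sigma\subsetneq\rho\subsetneq\tau}(\sigma\ast A_\rho)$; one also notes that the ball complex of (i) is a subcomplex, so (i) and (ii) together give a ball complex pair.

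I expect the main obstacle to be organizing the induction so that (c) is exactly strong enough to power the inductive step of (a)—in particular making sure the recursion descends to $\partial\tau$ rather than $\tau$—and importing from PL topology (via \cite{MR0400243} and \cite{MR665919}) the ball complex forms of ``the boundary of a PL ball is a PL sphere'' and ``a PL sphere joined with the complementary-dimension PL sphere is a PL sphere''. Once these are in place, the rest is routine bookkeeping with joins and with the three kinds of chains (below $\sigma$, through $\sigma$, above $\sigma$).
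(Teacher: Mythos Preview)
Your argument is correct, but it is considerably more elaborate than what the paper does. The paper's proof is essentially one observation: for $\tau\supsetneq\sigma$, the cells of $K$ contained in $\tau$ give $\tau$ itself a ball complex structure, and with respect to the barycentric subdivision of \emph{that} structure one has
\[
|\lk(\hat\sigma)|\cap\tau = |\lk_{\hat\tau}(\hat\sigma)|,\qquad
|\st(\hat\sigma)|\cap\tau = |\st_{\hat\tau}(\hat\sigma)|.
\]
Since $\tau$ is a PL ball (hence a PL manifold) and $\hat\sigma$ lies in $\partial\tau$, the link of $\hat\sigma$ in $\hat\tau$ is a PL ball by the standard PL fact about links of boundary vertices; the star is then its cone. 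The ball-complex axioms are then routine, because the boundary of $|\lk_{\hat\tau}(\hat\sigma)|$ is covered by the links of $\hat\sigma$ in the cells of $\partial\tau$. No induction, no join decomposition of $|\lk(\hat\sigma)|$ into $|\partial\sigma|\ast|D|$, and no auxiliary statement~(c) is needed.

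Your route is not wasted effort, however. Your statement~(c) is precisely the content of Proposition~\ref{f5} (your $|D|$ is the paper's $|\dot D(\sigma)|$), and your induction packages Propositions~\ref{f8} and~\ref{f5} together rather than proving them separately. The paper instead proves~\ref{f8} first by the short argument above, records the join decomposition $\lk(\hat\sigma)=\partial\sigma\ast\dot D(\sigma)$ as Lemma~\ref{f1}, and then deduces~\ref{f5} from these plus Morton's join-cancellation theorem. So the two approaches invoke the same PL input (links of vertices in PL manifolds, and join cancellation with spheres), but the paper isolates the pieces while you fuse them into a single induction. The paper's organization is cleaner for~\ref{f8} alone; yours has the virtue of making explicit why the $A_\tau$ themselves form a ball complex on $|\dot D(\sigma)|$, which the paper never states.
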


\begin{proof}
Let $\tau\supsetneq \sigma$.
$|\tau|$ inherits a ball complex structure from $K$, and 
$|\lk(\hat\sigma)|\cap \tau$ (resp., $|\st(\hat\sigma)|\cap \tau$) is the 
realization of the link (resp., star) of $\hat\sigma$ with respect to this 
structure.  It is a PL ball because $|\tau|$ is a PL manifold and
$\hat\sigma$ is a point of its boundary.
\end{proof}

Next we recall from \cite{MR0400243} that the concept of dual cell 
generalizes from simplicial complexes to ball complexes.  
For each cell $\sigma$ of 
$K$, let $D(\sigma)$ (resp., $\dot{D}(\sigma)$) be the subcomplex of $\hat{K}$ 
consisting of simplices
$\{\hat{\sigma}_1,\ldots,\hat{\sigma}_k\}$ with $\sigma\subset\sigma_i$ 
(resp., $\sigma\subsetneq\sigma_i$) for all $i$. 

Two simplices $s,s'$ of a simplicial complex $S$ are {\it joinable} if
their vertex sets are disjoint and the union of their vertices spans a simplex
of $S$; this simplex is called the {\it join}, denoted $s*s'$.  Two
subcomplexes $A$ and $B$ of $S$ are joinable if each pair $s\in A$, $s'\in B$
is joinable, and the join $A*B$ is the subcomplex consisting of the simplices 
$s*s'$ and all of their faces.

\begin{lemma}
\label{f1}
If $K$ is a ball complex and $\sigma$ is a cell of $K$ then
\[
\mathrm{lk}(\hat{\sigma},\hat{K})=\partial \sigma * \dot{D}(\sigma).
\]
\end{lemma}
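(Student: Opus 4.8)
The plan is to prove the identity by comparing both sides directly as sets of simplices of $\hat K$. Recall that a set of vertices $\{\hat\sigma_1,\dots,\hat\sigma_k\}$ of $\hat K$ spans a simplex exactly when the cells $\sigma_i$ can be arranged in a chain $\sigma_1\subsetneq\cdots\subsetneq\sigma_k$. Recall also that, for a simplicial complex $S$ and a vertex $v$, a simplex $s$ of $S$ not containing $v$ lies in $\lk(v,S)$ if and only if $s\cup\{v\}$ spans a simplex of $S$. Hence $\lk(\hat\sigma,\hat K)$ consists of the simplices $\{\hat\sigma_1,\dots,\hat\sigma_k\}$ with $\sigma_i\neq\sigma$ for all $i$ such that the set $\{\sigma,\sigma_1,\dots,\sigma_k\}$ admits an arrangement as a chain. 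On the right-hand side I would first fix notation: $\partial\sigma$ denotes the subdivision of the ball complex $\partial\sigma$, i.e.\ the subcomplex of $\hat K$ whose simplices are the $\{\hat\tau_1,\dots,\hat\tau_j\}$ with $\tau_i\subsetneq\sigma$ for all $i$; this is parallel to $\dot D(\sigma)$, whose simplices are the $\{\hat\rho_1,\dots,\hat\rho_l\}$ with $\sigma\subsetneq\rho_i$ for all $i$.

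Next I would check that $\partial\sigma$ and $\dot D(\sigma)$ are joinable subcomplexes of $\hat K$, so that the join is defined. Their vertex sets are disjoint, since a vertex $\hat\tau$ of $\partial\sigma$ has $\tau\subsetneq\sigma$ while a vertex $\hat\rho$ of $\dot D(\sigma)$ has $\sigma\subsetneq\rho$. Moreover, given $s=\{\hat\tau_1,\dots,\hat\tau_j\}$ in $\partial\sigma$ with $\tau_1\subsetneq\cdots\subsetneq\tau_j$ and $s'=\{\hat\rho_1,\dots,\hat\rho_l\}$ in $\dot D(\sigma)$ with $\rho_1\subsetneq\cdots\subsetneq\rho_l$, the concatenation $\tau_1\subsetneq\cdots\subsetneq\tau_j\subsetneq\sigma\subsetneq\rho_1\subsetneq\cdots\subsetneq\rho_l$ is a chain; dropping $\sigma$ shows $s\cup s'$ spans a simplex of $\hat K$. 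Thus $\partial\sigma * \dot D(\sigma)$ is defined, and consists exactly of the simplices $\{\hat\tau_1,\dots,\hat\tau_j,\hat\rho_1,\dots,\hat\rho_l\}$ with $\tau_i\subsetneq\sigma\subsetneq\rho_m$ for all $i$ and $m$ (allowing $j=0$ or $l=0$).

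Finally I would match the two sets. If $\{\hat\sigma_1,\dots,\hat\sigma_k\}$ is a simplex of $\lk(\hat\sigma,\hat K)$, arrange the $\sigma_i$ so that $\sigma_1\subsetneq\cdots\subsetneq\sigma_k$; since $\{\sigma,\sigma_1,\dots,\sigma_k\}$ is totally ordered by inclusion and $\sigma\neq\sigma_i$ for all $i$, there is a unique $j$ with $\sigma_1\subsetneq\cdots\subsetneq\sigma_j\subsetneq\sigma\subsetneq\sigma_{j+1}\subsetneq\cdots\subsetneq\sigma_k$, which exhibits the simplex as the join of $\{\hat\sigma_1,\dots,\hat\sigma_j\}\in\partial\sigma$ and $\{\hat\sigma_{j+1},\dots,\hat\sigma_k\}\in\dot D(\sigma)$. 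Conversely, a simplex of $\partial\sigma * \dot D(\sigma)$ has the form described in the previous paragraph, and the chain $\tau_1\subsetneq\cdots\subsetneq\tau_j\subsetneq\sigma\subsetneq\rho_1\subsetneq\cdots\subsetneq\rho_l$ shows it lies in $\lk(\hat\sigma,\hat K)$. The degenerate cases are absorbed automatically: if $\sigma$ is a vertex of $K$ then $\partial\sigma=\emptyset$ and $\emptyset*\dot D(\sigma)=\dot D(\sigma)$, while if $\sigma$ is a maximal cell then $\dot D(\sigma)=\emptyset$ and $\partial\sigma*\emptyset=\partial\sigma$, and the description of $\lk(\hat\sigma,\hat K)$ reduces in the same way. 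There is no serious obstacle in this argument; the only points needing care are pinning down that $\partial\sigma$ on the right-hand side means the subcomplex of $\hat K$ spanned by the $\hat\tau$ with $\tau\subsetneq\sigma$, and the ``insertion'' observation that a chain of cells of $K$ containing cells both properly smaller and properly larger than $\sigma$, none equal to $\sigma$, splits uniquely at the position where $\sigma$ would be inserted.
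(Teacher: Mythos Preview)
Your proof is correct and is exactly the unpacking of what the paper means by ``immediate from the definitions,'' which is all the paper offers here. Your care in identifying $\partial\sigma$ with the full subcomplex of $\hat K$ on the vertices $\hat\tau$ with $\tau\subsetneq\sigma$ is the only point that required comment, and you handled it.
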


The proof is immediate from the definitions.  

\begin{prop}
\label{f5}
Let $(L,L_0)$ be a ball complex pair such that $|L|$ is a PL manifold of
dimension $n$ with boundary $|L_0|$.  Let $\sigma$ be a cell of $L$ of
dimension $m$.

{\rm (i)}
If $\sigma$ is not a cell of $L_0$ then $|D(\sigma)|$ is a PL $(n-m)$-ball
with boundary $|\dot{D}(\sigma)|$ and with $\hat{\sigma}$ in 
its interior.

{\rm (ii)}
If $\sigma$ is a cell of $L_0$ then $|\dot{D}(\sigma)|$ is a PL $(n-m-1)$-ball
and $|D(\sigma)|$ is a PL $(n-m)$-ball with $|\dot{D}(\sigma)|$ and 
$\hat{\sigma}$ on its boundary.
\end{prop}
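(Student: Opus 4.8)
The plan is to reduce both parts to a single statement about the open dual cell, namely that $|\dot D(\sigma)|$ is a PL $(n-m-1)$-sphere when $\sigma$ is not a cell of $L_0$, and a PL $(n-m-1)$-ball when it is.

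First I would observe that $D(\sigma)$ is the cone $\hat\sigma*\dot D(\sigma)$. Indeed, a simplex of $D(\sigma)$ is a chain $\sigma\subseteq\sigma_1\subsetneq\cdots\subsetneq\sigma_k$; it contains the vertex $\hat\sigma$ exactly when $\sigma_1=\sigma$, and otherwise (when $\sigma_1\supsetneq\sigma$) it is a simplex of $\dot D(\sigma)$, so $|D(\sigma)|=C(|\dot D(\sigma)|)$ with cone point $\hat\sigma$. Now the cone on a PL $j$-sphere is a PL $(j+1)$-ball with boundary that sphere and with the cone point in its interior, while the cone on a PL $j$-ball is a PL $(j+1)$-ball whose boundary contains both the ball and the cone point; so the displayed statement about $|\dot D(\sigma)|$ yields parts (i) and (ii) respectively.

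To prove that statement, recall (from the discussion preceding Proposition \ref{f8}) that $\hat L$ is a simplicial subdivision of $L$, hence triangulates the PL $n$-manifold $|L|$; since the interior of $\sigma$ meets a cell of $L$ only if that cell contains $\sigma$, the point $\hat\sigma\in\mathrm{int}(\sigma)$ lies on $\partial|L|=|L_0|$ exactly when $\sigma$ is a cell of $L_0$. Therefore $|\mathrm{lk}(\hat\sigma,\hat L)|$ is a PL $(n-1)$-sphere if $\sigma\notin L_0$ and a PL $(n-1)$-ball if $\sigma\in L_0$. By Lemma \ref{f1}, $\mathrm{lk}(\hat\sigma,\hat L)=\partial\sigma*\dot D(\sigma)$, and $|\partial\sigma|$ is a PL $(m-1)$-sphere since $\sigma$ is a PL $m$-ball. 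I would then apply the following standard fact of PL topology: if $S^{m-1}*P$ is a PL ball or sphere then so is $P$, with $\dim P=\dim(S^{m-1}*P)-m$, and $P$ is a sphere if and only if $S^{m-1}*P$ is. Taking $P=\dot D(\sigma)$ gives the claim in both cases, which completes the proof.

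The one substantive ingredient is this fact about joins, which I would cite from standard PL topology (e.g.\ \cite{MR665919}); alternatively it follows by induction on $m$, using that for a vertex $v$ of the sphere factor $\mathrm{lk}(v,S^{m-1}*P)=\mathrm{lk}(v,S^{m-1})*P=S^{m-2}*P$, together with the fact that links of vertices in triangulated PL manifolds are PL spheres (at interior vertices) or PL balls (at boundary vertices), the base case being $S^{-1}*P=P$. In the sphere case this induction is immediate; in the ball case one must also verify that the vertices of the sphere factor lie on the boundary of $S^{m-1}*P$, which is most cleanly handled by inducting simultaneously on the links of the vertices of $P$. I expect this last point to be the only real technicality; the rest is formal.
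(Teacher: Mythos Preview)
Your argument is correct and is essentially the paper's own proof: both use that $\mathrm{lk}(\hat\sigma,\hat L)$ is a PL $(n-1)$-sphere or ball according to whether $\hat\sigma$ is interior or boundary, invoke Lemma~\ref{f1} to write this link as $\partial\sigma*\dot D(\sigma)$, and then appeal to a standard ``unjoin'' result (the paper cites Theorem~1 of \cite{MR0261587}) to conclude that $|\dot D(\sigma)|$ is a sphere or ball of the right dimension, whence the cone $|D(\sigma)|=\hat\sigma*|\dot D(\sigma)|$ is the required ball. Your write-up is more explicit about the cone structure and the reduction, but the substance is identical.
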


\begin{proof}
For part (i), $|\mathrm{lk}(\hat{\sigma})|$ is a  PL $(n-1)$-sphere.
Lemma \ref{f1} and Theorem 1 of 
\cite{MR0261587} imply that $|\dot{D}(\sigma)|$ is a PL $(n-m-1)$-sphere, and
hence $|D(\sigma)|$ is a PL $(n-m)$ ball. 

Part (ii) is similar. 
\end{proof}

\section{Comparison of the Thom and Quinn models for $M\Top$}
\label{a1}

The method of Section \ref{classical} gives an ad theory $\ad_\Top$.  
Let $Q_\Top$ denote the Quinn spectrum obtained from this  ad theory. 
In this appendix we prove

\begin{prop}
\label{a2}
The Thom spectrum $M\Top$ is weakly equivalent to $\bQ_\Top$. 
\end{prop}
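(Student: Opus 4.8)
The plan is to identify $\bQ_\Top$ with the Quinn-style model for topological bordism and then invoke the classical comparison between such a model and the Thom spectrum. The key point is that, by construction, the $n$-simplices of the semisimplicial set $(P_k)_\bullet$ underlying $Q_{\Top,k}$ are the $\Delta^n$-ads of degree $k$ with values in the category of (unoriented) topological manifolds — that is, collections of manifolds $\{X_\sigma\}$ indexed by faces of $\Delta^n$ with $\dim X_\sigma = \dim\sigma - k$, glued along their boundaries. This is precisely the semisimplicial set $P_k$ of ``all manifold $\Delta^n$-ads of degree $k$'' described in the introduction, whose realization Quinn calls $Q_k$.

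First I would recall the transversality setup from the introduction: for the universal $\mathbb{R}^k$-bundle with structure group $\Top_k$, the semisimplicial set $S^{\pitchfork}_\bullet T(\Top_k)$ of maps $\Delta^n \to T(\Top_k)$ transverse (on all faces) to the zero section is a deformation retract of the singular complex $S_\bullet T(\Top_k)$, so $|S^{\pitchfork}_\bullet T(\Top_k)| \simeq T(\Top_k)$; after stabilizing in $k$ this gives $M\Top$ up to weak equivalence. Then I would construct a map of semisimplicial sets
\[
S^{\pitchfork}_\bullet T(\Top_k) \to P_k
\]
sending a transverse map $f:\Delta^n \to T(\Top_k)$ to the $\Delta^n$-ad whose $\sigma$-entry is $f^{-1}(B(\Top_k)) \cap \sigma$, equipped with the normal data coming from the pullback of the universal bundle. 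This map is compatible with face operators and with the suspension maps defining the two spectra, so it induces a map of spectra $M\Top \to \bQ_\Top$ (after passing to the appropriate model; one can use the $\bX$-style telescope from Section \ref{f12} if a strict spectrum map is needed).

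Next I would show this map is a weak equivalence. Since both $\bQ_\Top$ and $M\Top$ are $\Omega$-spectra (the former by Proposition \ref{n5}, the latter classically, or by using the stable homotopy groups directly), it suffices to check it is an isomorphism on homotopy groups, i.e. on bordism groups. By the Remark following Theorem \ref{Sep30} (applied in the unoriented case), $\pi_*\bQ_\Top$ is the unoriented topological bordism group $\Omega^\Top_*$ with addition induced by disjoint union. On the Thom-spectrum side, $\pi_* M\Top$ is also the topological bordism group by the Pontryagin–Thom construction: a class in $\pi_n M\Top$ is represented by a map $S^{n+k} \to T(\Top_k)$, which after making it transverse to the zero section yields a closed $n$-manifold, and the induced map to bordism is the standard Thom isomorphism. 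The map constructed above realizes exactly this Pontryagin–Thom correspondence simplex-by-simplex, so on $\pi_*$ it is the identity of $\Omega^\Top_*$. This proves $M\Top \simeq \bQ_\Top$.

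**The main obstacle** will be the bookkeeping in the first step: making the assignment $f \mapsto \{f^{-1}(B(\Top_k))\cap\sigma\}$ into an honest morphism in the category $\A_\Top$ — in particular verifying that the preimages form a manifold $\Delta^n$-ad in the strict sense of Definition \ref{geomad}'s colimit condition, and that transversality on all faces gives exactly the compatibility required by the axioms (part (b) of the $\A_\STop$-ad definition). This is essentially the transversality argument alluded to in the introduction via \cite[Section 9.6]{MR1201584}, together with the collaring theorem used in Proposition \ref{f2}; the subtlety is purely that one must carry the normal bundle data along so that the stabilization maps match, but since the universal bundle over $B(\Top_k)$ is canonically trivialized on the relevant scale by transversality, this is manageable. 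The homotopy-theoretic half is then essentially formal given the results already in the paper.
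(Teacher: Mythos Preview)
Your approach is correct and is essentially the paper's own: it builds the chain $M\Top \xleftarrow{\simeq} |S_\bullet T(\Top_k)| \xhookleftarrow{\simeq} |S^{\pitchfork}_\bullet T(\Top_k)| \xrightarrow{\mathfrak m} \bQ_\Top$, with the last map given by sending a transverse simplex to the $\Delta^n$-ad of preimages, and checks $\mathfrak m$ is a weak equivalence on $\pi_*$ via the Pontryagin--Thom identification of both sides with $\Omega_*(\Top)$. Two small slips to clean up: $M\Top$ is not an $\Omega$-spectrum (but this is irrelevant, since weak equivalence of spectra is defined as an isomorphism on stable homotopy groups), and objects of $\A_\Top$ carry no normal-bundle data, so the compatibility with suspension is a pure reindexing check (via the isomorphism $\theta$ of Section~\ref{spectrum}) rather than a bundle-theoretic one.
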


\begin{remark}
The analogous result for $\STop$ is true and the same proof works with minor
changes.
\end{remark}

We use the definition of weak equivalence in \cite{MR1806878}, which is
the same as the classical definition: the $i$-th homotopy group of a spectrum
$\mathbf X$ is $\colim_k \pi_{i+k} X_k$, and a weak equivalence is a map which
induces an isomorphism of homotopy groups.  The proof will give an explicit
chain of weak equivalences between $M\Top$ and $\bQ_\Top$.

First we need some background.  For a space $X$ let $CX$ be the unreduced cone
$I\wedge X_+$ (with $1$ as the basepoint of $I$) and let $S_\bu X$ denote the
usual singular complex of $X$,
considered as a semisimplicial set.
There is a homeomorphism
\[
\iota:C\Delta^n\to \Delta^{n+1}
\]
which takes $(t,(s_0,\ldots,s_n))$ to $((1-t)s_0,\ldots,(1-t)s_n,t)$.
There is a map 
\[
\kappa: \Sigma S_\bu X\to S_\bu (\Sigma X)
\]
(where the first $\Sigma$ is the Kan suspension defined in Section
\ref{spectrum}) which takes $f:\Delta^n\to X$ to the composite
\[
\Delta^{n+1}\xrightarrow{\iota^{-1}}
C\Delta^n\xrightarrow{Cf}
CX\to \Sigma X.
\]
It's straightforward to check that the diagram
\begin{equation}
\label{a3}
\xymatrix
{
\Sigma |S_\bu X|
\ar[d]
\ar[r]^-\lambda
&
|\Sigma S_\bu X|
\ar[d]_{|\kappa|}
\\
\Sigma X
&
|S_\bu \Sigma X|
\ar[l]
}
\end{equation}
commutes, where $\lambda$ was given in the proof of Lemma 
\ref{n17}.

Now let $\bX$ be the spectrum whose $k$-th space is $|S_\bu T(\Top_k)|$, with 
structure maps
\[
\Sigma|S_\bu T(\Top_k)|
\xrightarrow{\lambda}
|\Sigma S_\bu T(\Top_k)|
\xrightarrow{|\kappa|}
|S_\bu \Sigma T(\Top_k)|
\to
|S_\bu T(\Top_{k+1})|,
\]
where 
the third map is induced by the structure map of $M\Top$.
The commutativity of diagram \eqref{a3} shows that the natural maps
\[
|S_\bu T(\Top_k)| \to T(\Top_k)
\]
give a map of spectra $\bX\to M\Top$, and this map is a weak equivalence by
\cite[Proposition 2.1]{MR0300281}.

Next let 
$S^{\mathord{\pitchfork}}_\bu T(\Top_k)$ be the sub-semisimplicial set of 
$S_\bu T(\Top_k)$ consisting of maps whose restrictions to each face of
$\Delta^n$ are transverse to the zero section.
Let $\bY$ be the subspectrum of $\bX$ with $k$-th space 
$|S^{\mathord{\pitchfork}}_\bu T(\Top_k)|$. Then the inclusion
$\bY\hookrightarrow \bX$ is a weak equivalence by \cite[Section
9.6]{MR1201584}.

Finally, let $P_k$ be the sequence of semisimplical sets associated to the ad 
theory $\ad_\Top$ as in Section \ref{spectrum}.  For each $k$ let
\[
\mu_k:S^{\mathord{\pitchfork}}_\bu T(\Top_k)
\to 
P_k
\]
be the map which take a transverse map to the $\Delta^n$-ad it determines.
It is straightforward to check that the $|\mu_k|$ give a map of spectra
\[
\mathfrak{m}: \bY\to \bQ_\Top.  
\]
We need to show that $\mathfrak{m}$ is a weak equivalence.

Let $\Omega_*(\Top)$ denote the topological bordism groups.
It is a folk theorem that there is an isomorphism
\[
\Omega_*(\Top)\to \pi_*(M\Top)
\]
whose construction is similar to that on pages 19--20 of \cite{Stong}.  The
proof on pages 19--23 of \cite{Stong} appears to go through with suitable
changes, using some combination of 
\cite{MR0161346}, 
\cite[Theorem 1]{MR0198490}, 
\cite{Marin},
\cite[Proposition IV.8.1]{MR0645390}, 
\cite{Kister}, 
and \cite[Section 9.6]{MR1201584}, 
but the details do not seem to have been written down in the
literature.  In what follows we assume that this folk theorem is true.

Since $S^{\mathord{\pitchfork}}_\bu T(\Top_k)$ is a Kan complex, we can define
a map 
\[
\nu_k: \pi_*|S^{\mathord{\pitchfork}}_\bu T(\Top_k)|\to \Omega_{*-k}(\Top)
\]
as follows: an element of $\pi_n|S^{\mathord{\pitchfork}}_\bu T(\Top_k)|$ is
represented by a map $f:\Delta^n\to T(\Top_k)$ which takes all faces to the basepoint and is
transverse to the 0 section; then $f^{-1}$ of the 0 section is a manifold and
we let $\nu_k([f])$ be the bordism class of this manifold.  The $\nu_k$ induce
a map
\[
\nu:\pi_*\bY\to \Omega_*(\Top)
\]
which is an isomorphism because the composite
\[
\Omega_*(\Top)
\xrightarrow{\cong}
\pi_*(M\Top)
\xleftarrow{\cong}
\pi_*Y
\xrightarrow{\nu}
\Omega_*(\Top)
\]
is the identity.  The diagram
\[
\xymatrix{
\pi_*Y
\ar[r]^-\nu
\ar[rd]_{\mathfrak{m}_*}
&
\Omega_*(\Top)
\ar[d]_=
\\
&
\pi_*\bQ_\Top
}
\]
commutes, and it follows that $\mathfrak{m}$ is a weak equivalence.
This completes the proof of Proposition \ref{a2}.

\bibliographystyle{amsalpha}

\bibliography{bordmult}

\end{document}